\newcommand{\thline}{\specialrule{.005em}{.02em}{.02em}}
\let\@fnsymbol\@arabic
\theoremstyle{plain}
\newtheorem{thm}{Theorem}[section]
\newtheorem{co}{Corollary}[section]
\newtheorem{df}{Definition}[section]
\newtheorem{lm}{Lemma}[section]
\newtheorem{rem}{Remark}[section]
\newcommand{\Ind}{\mathds{1}}
\def\le{\leqslant}
\def\ge{\geqslant}
\def\ve{\varepsilon}
\def\lcl#1{\bigl\lceil #1\bigr\rceil}
\def\pa#1{({#1})}
\def\lpa#1{\bigl({#1}\bigr)}
\def\Lpa#1{\Bigl({#1}\Bigr)}
\def\llpa#1{\biggl({#1}\biggr)}
\def\smath#1{\text{\scalebox{.9}{$#1$}}}
\def\sfrac#1#2{\smath{\frac{#1}{#2}}}
\newcommand\Dx{\frac{\mathrm{d}}{\mathrm{d}x}}
\newcommand\Var{\mathrm{Var}}
\newcommand\Cov{\mathrm{Cov}}
\newcommand\dd{\,\mathrm{d}}
\renewcommand\b{b} 
\newcommand{\mybinom}[3][0.8]{\scalebox{#1}{$\dbinom{#2}{#3}$}}
\newcommand*\overbarr[1]{%
  \vbox{%
    \hrule height 0.6pt
    \kern0.25ex
    \hbox{%
      \kern-0.2em
      \ifmmode#1\else\ensuremath{#1}\fi
      \kern-0.1em
    }
  }
}
\def\overbar#1{\,\overbarr{#1}}
\begin{document}

\begin{frontmatter}
\title{Integrated empirical measures and generalizations of classical
goodness-of-fit statistics\thanks{An early preliminary version of
this paper appeared in \cite{Kuriki13}. Part of the work of both
authors was carried out while visiting each other's institute
(Institute of Statistical Mathematics, Tokyo and Institute of
Statistical Sciences, Taipei); we thank both Institutes for their
support.}}
\runtitle{\,Generalized goodness-of-fit tests}

\begin{aug}
\author[A]{\fnms{Hsien-Kuei}~\snm{Hwang}\ead[label=e1]{hkhwang@stat.sinica.edu.tw}\orcid{0000-0002-9410-6476}},
\author[B]{\fnms{Satoshi}~\snm{Kuriki}\ead[label=e2]{kuriki@ism.ac.jp}\orcid{0009-0000-1509-9013}
}

\address[A]{Institute of Statistical Science,
Academia Sinica\printead[presep={,\ }]{e1}}

\address[B]{The Institute of Statistical Mathematics\printead[presep={,\ }]{e2}}
\runauthor{H.-K.~Hwang and S.~Kuriki\,}
\end{aug}

\begin{abstract}
	
Based on $m$-fold integrated empirical measures, we study three new
classes of goodness-of-fits tests, generalizing Anderson-Darling,
Cram\'er-von Mises, and Watson statistics, respectively, and examine
the corresponding limiting stochastic processes. The limiting null
distributions of the statistics all lead to explicitly solvable cases
with closed-form expressions for the corresponding Karhunen-Lo\`{e}ve
expansions and covariance kernels. In particular, the eigenvalues are
shown to be $\frac1{k(k+1)\cdots (k+2m-1)}$ for the generalized
Anderson-Darling, $\frac1{(\pi k)^{2m}}$ for the generalized
Cram\'er-von Mises, and $\frac1{2\pi\lceil k/2\rceil^{2m}}$ for the
generalized Watson statistics, respectively. The infinite products
of the resulting moment generating functions are further simplified
to finite ones so as to facilitate efficient numerical calculations.
These statistics are capable of detecting different features of the
distributions and thus provide a useful toolbox for goodness-of-fit
testing.

\end{abstract}

\begin{keyword}[class=MSC]
\kwd[Primary ]{62G10}
\kwd{62E20}
\kwd[; secondary ]{05A15}
\end{keyword}

\begin{keyword}
\kwd{Anderson-Darling statistic}
\kwd{boundary-value problem}
\kwd{Cram\'er-von Mises statistic}
\kwd{Fredholm integral}
\kwd{Karhunen-Lo\`{e}ve expansion} 
\kwd{small ball probability} 
\kwd{Watson's statistic}
\end{keyword}

\end{frontmatter}
\tableofcontents

\section{Introduction}
\label{sec:intro}

\subsection{Scope and summary}

For more than seven decades, empirical processes have been one of
the main tools used in understanding the behaviors of many
goodness-of-fit tests defined via empirical distributions. We 
explore in this paper the usage of the less addressed
\emph{iterated empirical measures} in generalizing the classical
goodness-of-fit tests, focusing on the rarely available
\emph{explicitly solvable cases}.

\paragraph{Three classical goodness-of-fit statistics.}
Assume that $\{X_1,\ldots,X_N\}$ is an independent and identically
distributed (i.i.d.) sequence of random variables with a common
cumulative distribution function $F$. In the one sample setting, the
goodness-of-fit test is formulated as a test for the hypotheses
\[
    H_0: F=F_0\ \ \mbox{against}\ \ H_1: F\ne F_0.
\]
When $F_0$ is a continuous distribution, we may assume, without loss
of generality, that $F_0(x)=x$, the uniform distribution on $[0,1]$.
Since the empirical distribution function $F_{N}(x) := \frac1N
\sum_{1\le i\le N} \Ind_{\{X_i\le x\}}$ provides a natural estimator of
$F(x)$ with $\Ind_{\{\cdot\}}$ the indicator function, a good means of
measuring the discrepancy between $F$ and $F_0$ is to compute the
integrated squared difference between $F_N$ and $F_0$ with a suitable
weight function. Among the large number of such statistics, three
simple, effective and widely used ones are: the Cram\'er-von Mises
statistic
\begin{align*}
	\omega_{N} 
	= N \int_0^1 (F_{N}(x)-x)^2 \dd x 
\end{align*}
\citep{Cramer28,vonMises31,Smirnov37}, the Anderson-Darling 
statistic 
\begin{align*}
    A_{N} 
    = N \int_0^1 \frac{(F_{N}(x)-x)^2 }{x(1-x)}\dd x
\end{align*}
\citep{Anderson52}, and Watson's ($U^2$-) statistic
\begin{align*}
    U_{N} 
    = N\int_0^1 \biggl( F_{N}(x)-x 
    -\int_0^1 (F_{N}(t)-t) \dd t \biggr)^2 \dd x
\end{align*}
\citep{Watson61}. For historical and technical reasons (some
explained below), these are generally regarded as the \emph{de facto}
standards in the area of quadratic statistics based on empirical
distribution functions. Briefly, the Anderson-Darling statistic
has more power than the Cram\'er-von Mises statistic in the tail area 
of the distribution due specially to the choice of the variance-based 
weight function $\Var(F_{N}(x))=\frac{x(1-x)}N$ under the null 
hypothesis $H_0$. On the other hand, Watson's statistic is designed 
for testing the uniformity of periodic data with period 1 so that 
$X_i+1\equiv X_i$.

In addition to integral-type goodness-of-fit statistics, the
literature also abounds with other types such as the maximum-type
statistics, one representative being the Kolmogorov-Smirnov statistic,
defined as the maximum of the integrand of the Cram\'er-von Mises 
statistic:
\[
    D_N = \sqrt{N} \sup_{x\in [0,1]} | F_N(x)-x |.
\]
Similarly, the maximum of the integrand of the Anderson-Darling
statistic can be defined. In particular, the maximizer or the point
maximizing the statistic of these maximum-type statistics indicates
more precisely how discrepant the empirical distribution to the null
distribution is. Such test statistics have a similar structure to
estimating the change-point in regression analysis on the unit
interval, where the maximizer is an estimate of the change-point. We
will discuss briefly this viewpoint again later.

\paragraph{Explicit Karhunen-Lo\`{e}ve expansions.}
An important property possessed by the three classical statistics is
their \emph{explicitly solvable eigenstructures} in the corresponding
limiting Gaussian processes of the signed square-root of the
integrand. For example, the stochastic process arising from the
Anderson-Darling statistic, defined as the limit of the map
\[
    x\mapsto \sqrt{N}\frac{F_{N}(x)-x}{\sqrt{x(1-x)}},
\]
under the null hypothesis $H_0$, has the limiting Gaussian process 
($B(x)$ being a Brownian bridge)
\[
    x\mapsto A(x):=\frac{B(x)}{\sqrt{x(1-x)}}, 
\]
which has an explicit eigenfunction expansion referred to as the 
Karhunen-Lo\`{e}ve (KL) expansion
\[
    A(x) 
    = \sum_{k\ge1}\frac{1}{\sqrt{\lambda_k}}\,f_k(x)\xi_k;
\]
here $\lambda_k^{-1}$ are the eigenvalues with $\lambda_k:=k(k+1)$,
$f_k$ are eigenfunctions expressible in terms of the Legendre
polynomials, and $\xi_k$ are i.i.d.\ random variables with the
standard normal distribution $N(0,1)$; see Theorem \ref{thm:KL} for
a more precise formulation.

The corresponding eigenvalues for the other two classical statistics
are given by $\lambda_k^{-1}$ with $\lambda_k=(\pi k)^2$
(Cram\'er-von Mises) and $\lambda_k = 2\pi\lcl{\frac k2}^2$ (Watson),
respectively, and the eigenfunctions are both expressible in terms of
trigonometric functions. Such explicit expansions proved to be
advantageous for most qualitative and numerical purposes, notably in
computing the density or the tail probabilities of the limiting
distribution (of the statistic); see the discussions after
Theorem~\ref{thm:gad-cos} and \eqref{SSF}. Also the feature of a few
leading eigenfunctions $f_1, f_2, \ldots$ can be used to determine
the statistical power of the goodness-of-fit under local alternatives
(\cite{Durbin72}; see also Sec.~\ref{subsec:finite}). In the words of
\cite{Deheuvels08}:
\begin{quote}
    \textsl{Unfortunately, for most Gaussian processes of interest 
    with respect to statistics, the values of the $\lambda_k$'s are 
    unknown, even though their existence remains guaranteed \dots 
    The practical application of this theory to statistics is 
    therefore limited to a small number of particular cases.
    \dots}
\end{quote}
In addition to the three statistics, some other fortunate examples of
tests whose KL expansions are explicitly identified are collected in
Sec.~\ref{subsec:survey}.

On the other hand, the ``small ball asymptotics'' in probability
represents another context that is closely connected to the study of
goodness-of-fit statistics of integral-type. As in the context here,
explicit eigenstructures prove useful and informative; see e.g.,
\cite{Chen03}, \cite{Gao03a}, \cite{Nikitin13}, and the references
therein.

In general, the goodness-of-fit statistics of integral-type are
constructed as positive-definite quadratic forms of the empirical
measure $\dd F_{N}(x)$. Thus weighted infinite sums of chi-square
random variables of the form
\begin{equation}\label{weightedsum}
    \sum_{k\ge1} \frac{\xi_k^2}{\lambda_k}, 
    \quad\xi_k\sim N(0,1)\; \mbox{i.i.d.}
\end{equation}
appear naturally as the limit laws. However, to determine the
eigenstructure, we are led to the boundary-value problem involving an
integral equation whose covariance kernel is often not exactly
solvable. For example, \cite{Lockhart98} mentioned that the limiting
distribution of \cite{Shapiro65}'s statistic was of the form
\eqref{weightedsum}, but its weights remained unknown in general.

On the other hand, it is well-known that the decay order of the
eigenvalues is determined by the smoothness of the covariance kernel,
so that even no closed-form expressions are available, one can often
deduce the required properties from smoothness of the covariance
kernel; see \cite{Chang99,Cochran88} for more information.

\paragraph{Explicit formulae for the moment generating functions of 
the limiting null distributions.} From \eqref{weightedsum}, one has
the following explicit form for the moment generating function (MGF)
\begin{equation}\label{infinite-product}
    \mathbb{E}\exp\biggl(s 
    \sum_{k\ge 1}\frac{\xi_k^2}{\lambda_k}\biggr)
    = \prod_{k\ge1}\frac1{\sqrt{1-\frac{2s}{\lambda_k}}},
\end{equation}
and for the three classical statistics, the corresponding MGFs can be 
further simplified as follows.
\begin{center}
\begin{tabular}{ccccc}
Anderson-Darling &&
Cram\'er-von Mises &&
Watson \\ \hline
\\ [-1em]
\raisebox{0pt}{$\displaystyle
    \sqrt{\frac{-2\pi s}{\cos\frac{\pi}{2}\sqrt{1+8s}}}$} &&
\raisebox{0pt}{$\displaystyle
    \sqrt{\frac{\sqrt{2s}}{\sin\sqrt{2s}}}$} &&
\raisebox{0pt}{$\displaystyle 
    \frac{\sqrt{\frac{s}{2}}}{\sin\sqrt{\frac{s}{2}}}$} 
    \\ 
\end{tabular}    
\end{center}
These representations in lieu of the infinite products 
\eqref{infinite-product} are more useful for computational and 
analytic purposes; see Sec.~\ref{subsec:mgf-ad}.

\paragraph{Contributions of this paper.} In this paper, we propose
three classes of goodness-of-fit statistics based on the $m$-fold
integral of the empirical measure
\begin{equation*}\label{FNm}
    \mathop{\int\cdots\int}_{0<x_{1}<\cdots<x_{m}<x} 
    \dd F_{N}(x_{1}) \dd x_{1}\cdots \dd x_{m}
    = \int_{0}^{x} \frac{(x-t)^{m-1}}{(m-1)!}\dd F_{N}(t)
    \qquad(m\ge1),
\end{equation*}
where $F_{N}(x)=\int_{0<x_1<x} \dd F_N(x_1)$, as generalizations of
Anderson-Darling, Cram\'er-von Mises, and Watson statistics, so
that they inherit the advantages (such as explicit KL expansions and
explicit MGFs for the limit laws) of the original three statistics.
We provide a systematic approach based on the use of ``template 
functions'' to generalize the classical statistics, and examine their 
limiting properties. Some properties of these generalized statistics 
are summarized in Table~\ref{tab:three-classes}.

\begingroup
\begin{footnotesize}
\renewcommand{\arraystretch}{2.7}
\begin{table}[h]
\begin{center}
\caption{\emph{Generalizations of the three classical statistics 
based on $m$-fold integrated empirical measure. Here $\eta_j$ denote 
the zeros of a polynomial equation; see \eqref{poly-eq}.}}
\label{tab:three-classes}
\bigskip
\addtolength{\tabcolsep}{-1.5pt}    
\begin{tabular}{cccc}
\makecell[c]{Proposed\\test} 
&\makecell[c]{Generalized\\Anderson-\\Darling} 
&\makecell[c]{Generalized\\Cram\'er-\\von Mises}  
&\makecell{Generalized\\Watson} \\ \hline
\makecell[c]{Eigenvalue\\ $\lambda_k^{-1},k\ge 1$} 
& $\frac{1}{k(k+1)\cdots(k+2m-1)}$ 
& $\frac{1}{(\pi k)^{2m}}$ 
& $\frac{1}{(2\pi\lceil k/2\rceil)^{2m}}$ \\
\makecell[c]{Eigenfunction} 
& \makecell[c]{associated\\Legendre} 
& \makecell[c]{trigonometric\\function} 
& \makecell[c]{trigonometric\\function} \\
\makecell[c]{Covariance\\kernel} 
& \makecell[c]{incomplete\\beta} 
& \makecell[c]{Bernoulli\\polynomial} 
& \makecell[c]{Bernoulli\\polynomial} \\
\makecell[c]{MGF of\\the limit law}
& \makecell[c]{$\sqrt{\prod\limits_{j=0}^{m-1}
    \frac{-2\pi s}{(2j)!(2j+1)!\cos\lpa{\pi\eta_j^{\frac12}}}}$} 
& $\sqrt{\frac{e^{\frac{m-1}{2}\pi i} \sqrt{2s}}
    {\prod\limits_{j=0}^{m-1}\sin\bigl((2s)^{\frac{1}{2m}}
    e^{\frac{j\pi i}{m}}\bigr)}}$
& $\frac{2^{-m} e^{\frac{m-1}{2}\pi i} \sqrt{2s}}
    {\prod\limits_{j=0}^{m-1}\sin\bigl(\frac12 
    (2s)^{\frac{1}{2m}}e^{\frac{j\pi i}{m}}\bigr)}$ \\
\end{tabular}
\end{center}
\end{table}
\end{footnotesize}
\endgroup

For example, the moment generating functions of the limiting
distributions of the generalized Anderson-Darling statistic for $m=2$
has the following closed-form expression
\begin{small}
\begin{equation*}
\prod_{k\ge1}\frac1{\sqrt{1-\frac{2s}{k(k+1)(k+2)(k+3)}}} =
    \frac{\pi s}{\sqrt{3\cos\left(\frac\pi2
    \sqrt{5-4\sqrt{1+2s}}\right)
    \cos\left(\frac\pi2
    \sqrt{5+4\sqrt{1+2s}}\right)}}
\end{equation*}
\end{small}
which is (Eq.~(\ref{adm2}) in Sec.~\ref{subsec:mgf-ad}).
The explicit form for $m=3$ is also given in (\ref{adm3}).

\paragraph{Some integrated empirical measures or Gaussian processes 
in the literature.}\hfill\\
Goodness-of-fit tests based on the one-time integrated empirical
measures was proposed by \cite{Henze00} in a way different
from this paper. They obtained the asymptotic distribution in terms
of one-time integrated Brownian bridge and the corresponding
eigenvalues $\lambda_k$ are shown to be connected to the solutions of
the equation $\tan\lambda^{\frac14} +\tanh\lambda^{\frac14}=0$, the
corresponding eigenfunctions being expressible in terms of
trigonometric and hyperbolic functions. Similarly, Watson-type tests
and two-sample tests based on the integrated empirical measures were
studied in \cite{Henze02} and in \cite{Henze03},
respectively. Recently \cite{Durio16} pointed out that some 
integrated goodness-of fit tests have good statistical power for
skew alternative.

In connection with ball probabilities of stochastic processes, 
\cite{Gao03a} showed that the eigenvalues $\lambda_k$ associated 
with the squared integral of the one-time integrated Brownian
motion are expressible in terms of the real zeros of the equation 
$\cosh\lambda^{\frac14}+\sec\lambda^{\frac14} =0$, and provided the 
moment generating function explicitly. In the case of multi-fold 
integrated Brownian motion, \cite{Tanaka08} illustrated the use of 
Fredholm determinant approach to deriving the KL expansion; see also 
Chap.\,5 of \cite{Tanaka17}.

\paragraph{Integrated processes and change-point analysis in regression}
The detection of change-points is of fundamental interest in
statistical data analyses. In a typical setting, the change-point
analysis aims at detecting the change of the mean in time series
data. For that purpose, the difference in the sample mean before and
after a candidate of change-point is used as a scan statistic.
However, the changes to be detected are not necessarily limited to
the mean parameter. \cite{MacNeill78a,MacNeill78} discussed the
change of parameters when a polynomial regression is used to fit the
time series data. He proposed test statistics based on the
accumulated (integrated) residuals, and derived their asymptotic
distributions in terms of functionals of a Brownian motion. The same
idea was also examined in \cite{Hirotsu86,Hirotsu17}. In the analysis
of ordered categorical data, he proposed to classify rows and/or
columns by detecting change-points among the rows/columns. He pointed
out that sometimes the change is observed as the change of
convexity/concavity of response curves, and the accumulated statistic
is useful for such detection purposes. The integrated empirical
processes proposed here are regarded as generalizations of their
statistics.

\paragraph{Organization of the paper.}
In Sec.~\ref{subsec:survey}, we give a brief historical account
relating to the statistics we propose. In particular, some
goodness-of-fit statistics of integral-type with explicit
eigenstructure are gathered there. In Sec.~\ref{sec:ad}, a class of
generalized Anderson-Darling statistics is proposed. From the
limiting null distribution of the integrand of the statistics, we
define and study the (generalized) Anderson-Darling process, with its
KL expansion and the covariance kernel given in explicit forms.
Sections~\ref{sec:w} and \ref{sec:cvm} then deal with the generalized
Watson and Cram\'er-von Mises statistics and the associated
processes, and discuss the corresponding KL expansions, the
covariance kernels, and eigenfunction expansions. We also simplify
the infinite-product representations in all cases of the moment
generating functions of the limiting distributions, and compare the
statistical power of the statistics by Monte Carlo simulations.

\subsection{A brief review of related literature}
\label{subsec:survey}

As the literature on integral types of goodness-of-fit tests is 
vast, we content ourselves with a brief summary of integral tests of 
the form 
\begin{align}\label{weighted-form}
    N \int_0^1 (F_N(x)-x)^2 \varphi(x) \dd x,
\end{align}
(with different $\varphi$) whose eigen-structures are explicitly 
known. For more information on the subject, see for example the books 
\cite{Durbin73}, \cite{Shorack86}, \cite{Nikitin95}, and 
\cite{Martynov15}, \cite{DAgostino17}, and
the survey papers \cite{Gonzalez13} and \cite{Broniatowski22}.

\paragraph{Test statistics with explicit KL expansions.}
We list some examples connected to goodness-of-fit and related tests
whose eigenstructures are explicitly known; this list is not intended
to be exhaustive or comprehensive, but for easier comparison with our
results.

As already stated, \cite{MacNeill78,MacNeill78a} proposed test
statistics in terms of an integrated empirical measure to detect the
change points in polynomial regressions. The eigenstructure of the
limiting Gaussian process is explicitly given; in particular, the
eigenvalues are expressible in terms of the zeros of spherical Bessel
functions, and the eigenfunctions in terms of associated Legendre
polynomials.

\cite{Deheuvels81} proposed a statistic for testing independence in
$m$-variate observations. An explicit KL expansion for the process
defined as the limit of the integral test is derived. The eigenvalue
has multiple indices $\lambda_{k_1,\ldots,k_m}= \frac{2^{m/2}}
{k_1\cdots k_m\pi^m}$ ($k_j\ge 1$) with the corresponding
eigenfunctions given by $\prod_{1\le j\le m}\sin(k_j\pi u_j)$.
\cite{Deheuvels03} defined the weighted Wiener process and Brownian
bridge with weight function $\varphi(t)=t^\beta$ ($\beta>-1$), and
obtained their Karhunen-Lo\`{e}ve expansions in terms of the zeros of
Bessel functions. A multivariate extension with weight function
$\varphi(t)=\prod_i t_i^{\beta_i}$ was given in \cite{Deheuvels05}.
In testing independence for a bivariate survival function,
\cite{Deheuvels08} proved that under the null hypothesis, the
proposed test statistic has an explicit Karhunen-Lo\`{e}ve expansion
with eigenvalues $\frac1{k(k+1)(k+2)(k+3)}$ ($k\ge1$) and
eigenfunctions in terms of the modified Jacobi polynomials. Note that
this sequence of eigenvalues is the same as our generalized
Anderson-Darling statistic when $m=2$; see Table
\ref{tab:three-classes}.

\cite{Pycke03} studied an $m$-variate extension of the
Anderson-Darling statistic. The eigenvalues now have the form
$\lambda_k=\frac{m}{k(k+1)}$ ($k\ge 1$) and the corresponding
eigenfunctions are expressible using the associated Legendre function.

\cite{Baringhaus08} proposed a class of goodness-of-fit
statistics for exponentiality. The KL expansion appearing in the
limit law has eigenvalues in terms of the zeros of the Bessel
functions with Bessel eigenfunctions.

As mentioned in Introduction, eigenvalue problem for deriving KL
expansion is closely related to the boundary-value problem. For
example, \cite{Chang01} discussed a boundary-value problem with
Bernstein and Euler polynomials as kernels, where the eigenvalues and
eigenfunctions are given explicitly.  Their results have a very
close connection to our generalized Watson statistics; see
Sec.~\ref{sec:w}. These results are summarized in Table
\ref{tab:explicit-KL}.

\begingroup
\renewcommand{\arraystretch}{2}
\begin{table}[h]
\begin{center}
\caption{Tests with an explicit KL expansion}
\label{tab:explicit-KL}
\bigskip
\begin{small}
\begin{tabular}{ccc}
Test & Eigenvalue & Eigenfunction 
\\ \hline
\makecell{$m$-variate test\\for independence\\ \citep{Deheuvels81}} 
& $\frac{2^{m/2}}{k_1\cdots k_m\pi^m}$ 
& $\prod_{1\le j\le m}\sin(k_j\pi u_j)$ 
\\ \thline
\makecell{Weighted Brownian\\motion/bridge\\  \citep{Deheuvels03}}
& \makecell{zeros of Bessel\\(1st kind)} 
& \makecell{Bessel function}
\\ \thline
\makecell{Independence of\\bivariate survival function\\ \citep{Deheuvels08}} 
& $\frac{1}{k(k+1)(k+2)(k+3)}$ 
& \makecell{modified \\Jacobi polynomial} 
\\ \thline
\makecell{$m$-variate \\Anderson-Darling\\ \citep{Pycke03}} 
& $\frac{m}{k(k+1)}$ 
& \makecell{Legendre} 
\\ \thline
\makecell{Decomposition of\\Anderson-Darling\\ \citep{Rodriguez99}} 
& \makecell{zeros of Bessel\\(1st kind)} 
& \makecell{Bessel function} 
\\ \thline
\makecell{Goodness-of-fit for\\exponentiality\\ \citep{Baringhaus08}} 
& \makecell{zeros of Bessel\\(1st kind)} 
& \makecell{Bessel function}
\\ \thline
\makecell{Bernoulli/Euler kernel \\boundary-value problem\\ \citep{Chang01}} 
& & \makecell{Bernoulli/Euler\\polynomial}
\\ \thline
\makecell{Weighted, time-change\\ Brownian Bridge \\ \citep{Pycke21a,Pycke23}} 
& & \makecell{Jacobi, Laguerre, Hermite,\\
Krawtchouk polynomials} 
\\
\end{tabular}
\end{small}
\end{center}
\end{table}
\endgroup

\paragraph{Weighted Cram\'{e}r-von Mises statistics.}
A natural extension of Anderson-Darling and Cram\'er-von Mises
statistics is the quadratic statistics of the form
\eqref{weighted-form}, often referred to as weighted Cram\'{e}r-von
Mises statistics and extensively studied in the literature. Table
\ref{tab:weight} summarizes some variants studied in the literature.
The eigenstructures of many of these statistics are not given
explicitly.

\begingroup
\renewcommand{\arraystretch}{1.5}
\begin{table}[h]
\begin{center}
\caption{Some weighted Cram\'{e}r-von Mises statistics}
\label{tab:weight}
\bigskip
\begin{small}
\begin{tabular}{cl}
$\varphi(t)$ & \qquad\qquad References \\ \hline
$1$ & \cite{Cramer28}, \cite{vonMises31} \\ \thline
$\frac1{t(1-t)}$
& \cite{Anderson52} \\ \thline
$\Bigl(\log\frac{1}{t(1-t)}\Bigr)^2$ &
\makecell[l]{\cite{Groeneboom81} \\
\cite[p.\,227]{Shorack86}} \\ \thline
$\frac{1}{t}$, $\frac{1}{1-t}$ &
\makecell[l]{\cite{Sinclair90}, \cite{Scott99} \\
\cite{Rodriguez99}} \\ \thline
$\frac1{1-t^2}$, $\frac1{t(2-t)}$ &
\makecell[l]{\cite{Rodriguez95} \\
\cite{Viollaz96} \\
\cite{Shi22}} \\ \thline
$t^\beta$ &
\makecell[l]{\cite{Deheuvels03} \\ \cite{Deheuvels05}} \\ \thline
$\frac1{\alpha^2 t^{2-\frac1\alpha}
\bigl(1-t^{\frac{1}\alpha}\bigr)}$ &
\cite{Mansuy05} \\ \thline
$\frac1{t^2}$, $\frac1{(1-t)^2}$ &
\makecell[l]{\cite{Luceno06} \\ \cite{Chernobai15}} \\ \thline
$\frac1{(1-t)^\alpha}$ &
\cite{Feuerverger16} \\ \thline
$\begin{array}{c}
 1,\ t^{2\beta},\ t^2,\ t(1-t) \\
 \bigl(t-\tfrac{1}{2}\bigr)^2,\ \frac{1}{t(1-t)}
 \end{array}$ &
\cite{Medovikov16} \\ \thline
$\begin{array}{c}
\mbox{\footnotesize
$I^{-1}_{\nu_1,\nu_2}(t)^{1-2\nu_1}I^{-1}_{\nu_2,\nu_1}(1-t)^{1-2\nu_2}$} \\
\mbox{\footnotesize ($I(\cdot)$: incomplete beta function)}
\end{array}$ & \cite{Pycke21} \\ \thline
$\frac1{t^{3/2}}$, $\frac1{(1-t)^{3/2}}$ &
\cite{Ma22} \\ \thline
$\frac{1}{t^2(1-t)^2}$ &
\cite{Liu23} \\
\end{tabular}
\end{small}
\end{center}
\end{table}
\endgroup

\section{The generalized Anderson-Darling statistics}
\label{sec:ad}

In this section, we propose our new generalized Anderson-Darling 
statistics and examine their small-sample and large-sample 
properties in detail.

\subsection{Legendre polynomials and the template functions}
\label{subsec:proposal-ad}

We construct first the template functions needed using Legendre 
polynomials.

\paragraph{Construction of the template functions.}
The original Anderson-Darling statistic \eqref{AN} can be written as 
\begin{equation}\label{AN}
    A_{N} 
    = \int_0^1 \frac{B_{N}(x)^2}{x(1-x)}\dd x, 
    \quad \mbox{where }\ 
    B_{N}(x)
    =\sqrt{N} (F_{N}(x)-x).
\end{equation}
Here we express $B_{N}(x)$ in terms of a \emph{template function} 
$\tau_1(t;x)$:
\begin{equation}\label{BN}
    B_{N}(x) 
    = \sqrt{N} \int_0^1 \tau_1(t;x) \dd F_{N}(t), 
    \quad\mbox{where }\  
    \tau_1 (t;x) 
    := \Ind_{\{t\le x\}}-x,
\end{equation}
with $\Ind_{\{t\le x\}}=1$ if $t\le x$, and $0$ otherwise.

The template function $t\mapsto \tau_1(t;x)$ is a piecewise constant
function for $t\in[0,1]$ with a breakpoint at $x$, and orthogonal to
any constant function, namely, $\int_0^1 \tau_1(t;x)\dd t=0$.

To extend this template function, we define $\tau_2(t;x)$ for
$t\in[0,1]$ to be a function such that it is a continuous and
piecewise linear function with a change point at $x$, and orthogonal
to any linear function, namely, $\int_0^1 \tau_2(t;x) (a t + b) \dd
t =0$ for all $a,b\in\mathbb{R}$.

In general, we define the $m$-th template function $\tau_m(t;x)$ for
$t\in[0,1]$ to be a $C^{m-2}$ and piecewise polynomial function of
degree $m-1$ in $t$ with a turning point at $x$, and orthogonal to any
polynomial function of degree $\le m-1$, namely,
\begin{align}\label{tau_m-ortho}
    \int_0^1 \tau_m(t;x)t^j\dd t =0
    \qquad(j=0,1,\dots,m-1).
\end{align}
\begin{df} [Legendre polynomials $P_k(x)$] The (shifted and 
normalized) Legendre polynomials are defined to be the polynomials 
of degree $k$ satisfying the orthogonality relations
\begin{align}\label{Pk-ortho}
    \int_0^1 P_k(t)P_l(t) \dd t = \Ind_{\{k=l\}}
    \qquad(k,l\ge0).
\end{align}
\end{df}
Note that the $P_k(x)$ are nothing but the polynomials generated by
the Gram-Schdmit orthogonalization procedure in the unit interval 
with the weight function $1$. They have the closed-form 
expression
\begin{align}\label{Pkx-closed-form}
    P_k(x) = \sqrt{2k+1}
    \sum_{0\le l\le k}\mybinom{k}{l}
    \mybinom{k+l}{l}(-1)^{k+l}x^l,
\end{align}
and satisfy the relation $P_k(1-x)=(-1)^kP_k(x)$.
In particular, $P_0(x)=1$ and 
\begin{center}
\renewcommand*{\arraystretch}{1.5}
  \begin{tabular}{ccc}
    $P_1(x)$ & $P_2(x)$ & $P_3(x)$ \\  \hline
    $\sqrt{3}(2x-1)$ & $\sqrt{5}(6x^2-6x+1)$ &
    $\sqrt{7}(20x^3-30x^2+12x-1)$ \\
  \end{tabular}
\end{center}
Also the leading coefficient is given by 
\begin{align*}
    [x^k]P_k(x) = \mybinom{2k}{k}\sqrt{2k+1}, 
\end{align*}
where $[x^k]f(x)$ denotes the coefficient of $x^k$ in the Taylor 
expansion of $f(x)$.

For convenience, define the $m$-fold integral of the polynomial 
$P_k(x)$
\begin{align}\label{Pk-m}
    P_k^{(-m)}(x) 
    = \mathop{\int\cdots\int}_{0<t_{1}<\cdots<t_{m}<x} 
    P_k(t_{1}) \dd t_{1}\cdots \dd t_{m}
    =\int_0^x \frac{(x-t)^{m-1}}{(m-1)!}\,P_k(t) \dd t.
\end{align}
The template functions we need are then given as follows.
\begin{lm} For $m\ge1$ and $x\in[0,1]$, let
\begin{align}\label{ad-tau_mtx}
    \tau_m(t;x)
    = \frac{(x-t)^{m-1}}{(m-1)!}\, \Ind_{\{t\le x\}}
    -\sum_{0\le k<m} P_k(t)P_k^{(-m)}(x).
\end{align}
Then $\tau_m(t;x)\in C^{m-2}$ for $t\in[0,1]$, satisfies 
\eqref{tau_m-ortho} and is a piecewise polynomial function.
\end{lm}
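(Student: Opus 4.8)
The plan is to verify the three asserted properties separately, treating the orthogonality relation \eqref{tau_m-ortho} as the substantive claim and the smoothness and piecewise-polynomial structure as direct consequences of inspecting the two summands in \eqref{ad-tau_mtx}.

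For the orthogonality, since $\{P_0,\dots,P_{m-1}\}$ is a basis of the space of polynomials of degree $\le m-1$, it suffices to prove $\int_0^1 \tau_m(t;x)P_l(t)\dd t=0$ for each $l=0,\dots,m-1$. I would compute the two contributions. The first summand contributes
\[
    \int_0^1 \frac{(x-t)^{m-1}}{(m-1)!}\,\Ind_{\{t\le x\}}P_l(t)\dd t
    = \int_0^x \frac{(x-t)^{m-1}}{(m-1)!}\,P_l(t)\dd t = P_l^{(-m)}(x),
\]
which is exactly the defining integral \eqref{Pk-m}. The subtracted sum contributes, by the orthonormality \eqref{Pk-ortho},
\[
    \sum_{0\le k<m} P_k^{(-m)}(x)\int_0^1 P_k(t)P_l(t)\dd t
    = \sum_{0\le k<m}P_k^{(-m)}(x)\,\Ind_{\{k=l\}} = P_l^{(-m)}(x),
\]
the last equality holding because $l<m$, so the index $k=l$ occurs in the sum. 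The two contributions cancel, giving \eqref{tau_m-ortho}. This is the cleanest part: orthogonality is built into the construction, the subtracted polynomial $\sum_{0\le k<m}P_k(t)P_k^{(-m)}(x)$ being precisely the orthogonal projection of the first summand onto the space of polynomials of degree $\le m-1$, so that $\tau_m$ is its orthogonal complement.

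The piecewise-polynomial claim is immediate: on $[0,x]$ the function $t\mapsto\tau_m(t;x)$ is a polynomial of degree $\le m-1$ (the summand $\frac{(x-t)^{m-1}}{(m-1)!}$ minus the degree-$\le(m-1)$ correction), while on $(x,1]$ it equals only the polynomial correction; the single breakpoint is at $t=x$. For the $C^{m-2}$ regularity I would localize at $t=x$, where only the first summand is non-smooth. Writing $g(t)=\frac{(x-t)^{m-1}}{(m-1)!}$, one has $g^{(j)}(t)=\frac{(-1)^j(x-t)^{m-1-j}}{(m-1-j)!}$ for $0\le j\le m-1$, so $g^{(j)}(x)=0$ for $j\le m-2$ while $g^{(m-1)}(x)=(-1)^{m-1}$. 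Hence the left-hand derivatives of order $\le m-2$ of the first summand vanish at $t=x$, matching the identically-zero right-hand piece, whereas the $(m-1)$-th derivative jumps; since the polynomial correction is $C^\infty$, this shows $\tau_m(\cdot;x)\in C^{m-2}[0,1]$ (and, incidentally, not $C^{m-1}$).

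The argument requires essentially no deep ideas; the one point needing care is the derivative bookkeeping at $t=x$ together with the degenerate small-$m$ cases, where $m=1$ gives $C^{-1}$ (no continuity constraint, consistent with the jump of $\tau_1$ at $x$) and $m=2$ gives $C^0$. As a sanity check I would also record that for $m=1$ the formula \eqref{ad-tau_mtx} reduces to $\tau_1(t;x)=\Ind_{\{t\le x\}}-x$ of \eqref{BN}, since $P_0\equiv 1$ and $P_0^{(-1)}(x)=x$.
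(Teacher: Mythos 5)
Your proof is correct and follows essentially the same route as the paper's: the orthogonality is verified by computing $\int_0^1\tau_m(t;x)P_l(t)\dd t = P_l^{(-m)}(x)-P_l^{(-m)}(x)=0$ via \eqref{Pk-m} and \eqref{Pk-ortho}, exactly as in the paper. The only difference is that you spell out the $C^{m-2}$ regularity and the piecewise-polynomial structure (with the derivative bookkeeping at $t=x$ and the degenerate cases $m=1,2$), which the paper dismisses as obvious; that extra detail is sound.
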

\begin{proof}
Obviously, the template function $\tau_m$ is of $C^{m-2}$. Only
\eqref{tau_m-ortho} needs to be proved.
For that purpose, it suffices to show that
\begin{align}\label{tau_m-Pj}
    \int_0^x \tau_m(t;x) P_j(t) \dd u 
    = 0\qquad(0\le j\le m-1).
\end{align}
By definition, 
\[
    \int_0^1 \tau_m(t;x) P_j(t) \dd t
    = P_j^{(-m)}(x)
    - \sum_{0\le k<m}P_k^{(-m)}(x)\int_0^1 P_j(t) P_k(t)\dd t,
\]
which, by the orthogonality relation \eqref{Pk-ortho}, yields
\eqref{tau_m-Pj}.
\end{proof}
Figure \ref{fig:template} depicts $\tau_m(\cdot;x)$ for
$1\le m\le 4$.
\begin{figure}[h]
\begin{center}
\begin{tabular}{cc}
    \includegraphics[height=3.2cm]{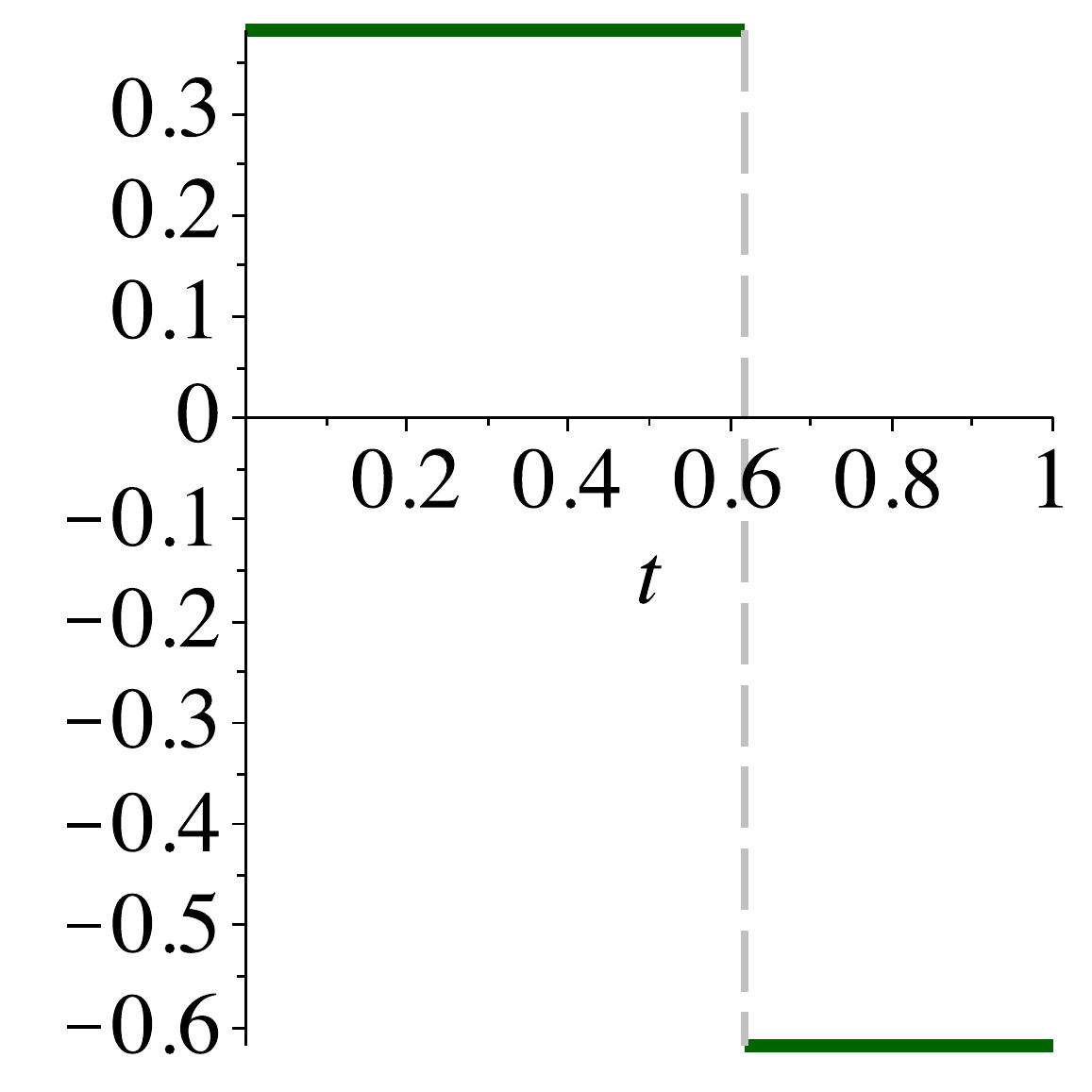} &
    \includegraphics[height=3.2cm]{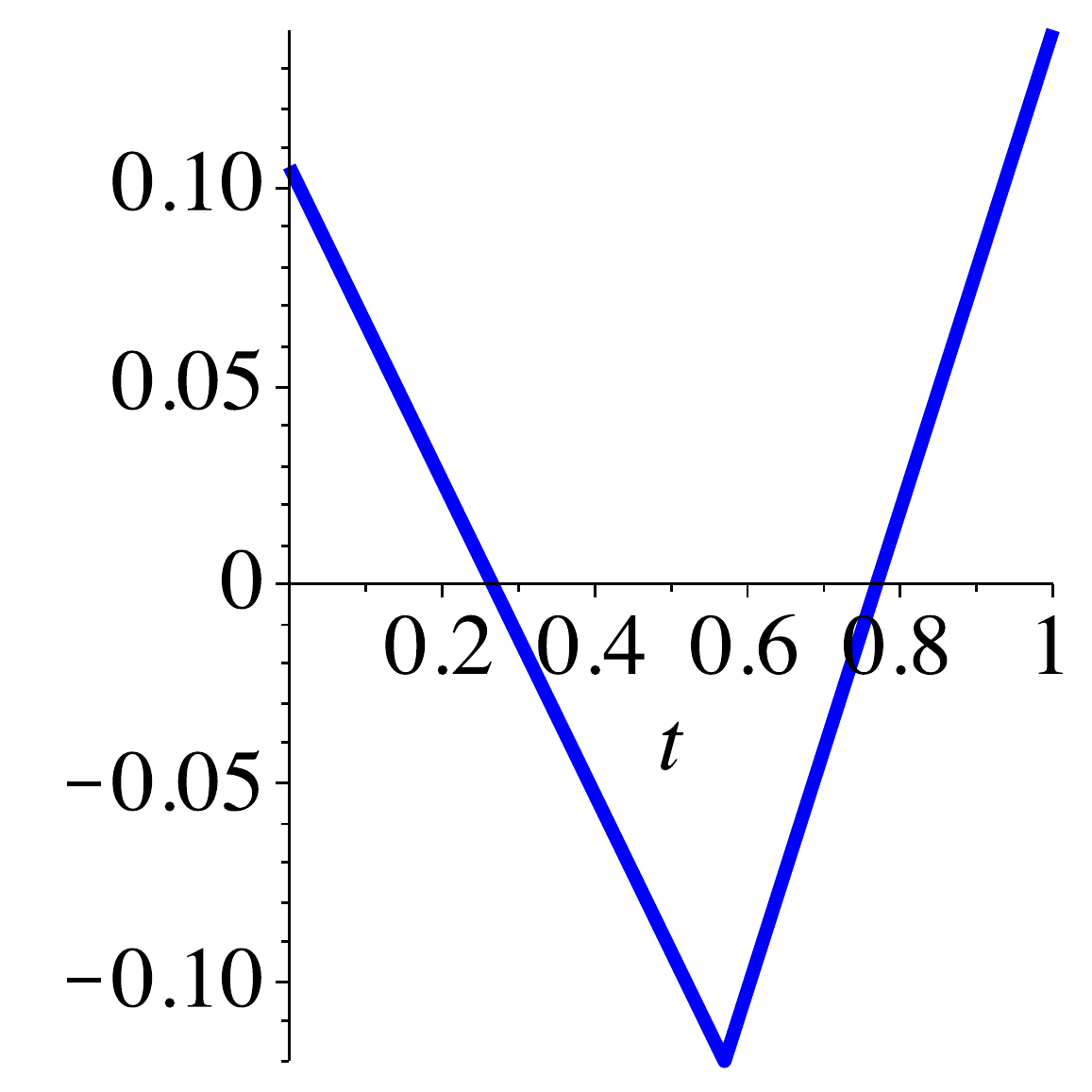} \\
    $\tau_1(t;x_0)$ & $\tau_2(t;x_0)$ \\ 
    \includegraphics[height=3.2cm]{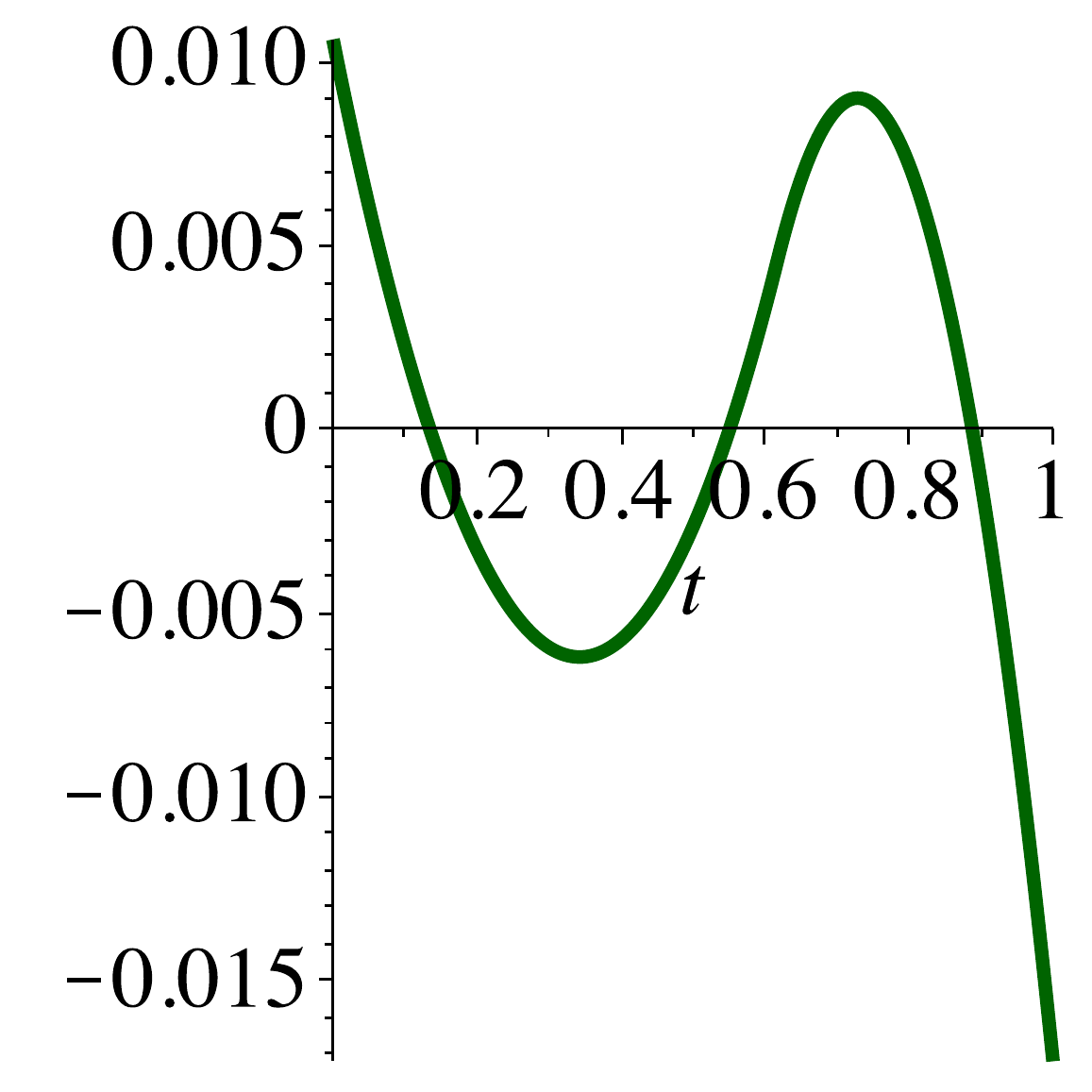} &
    \includegraphics[height=3.2cm]{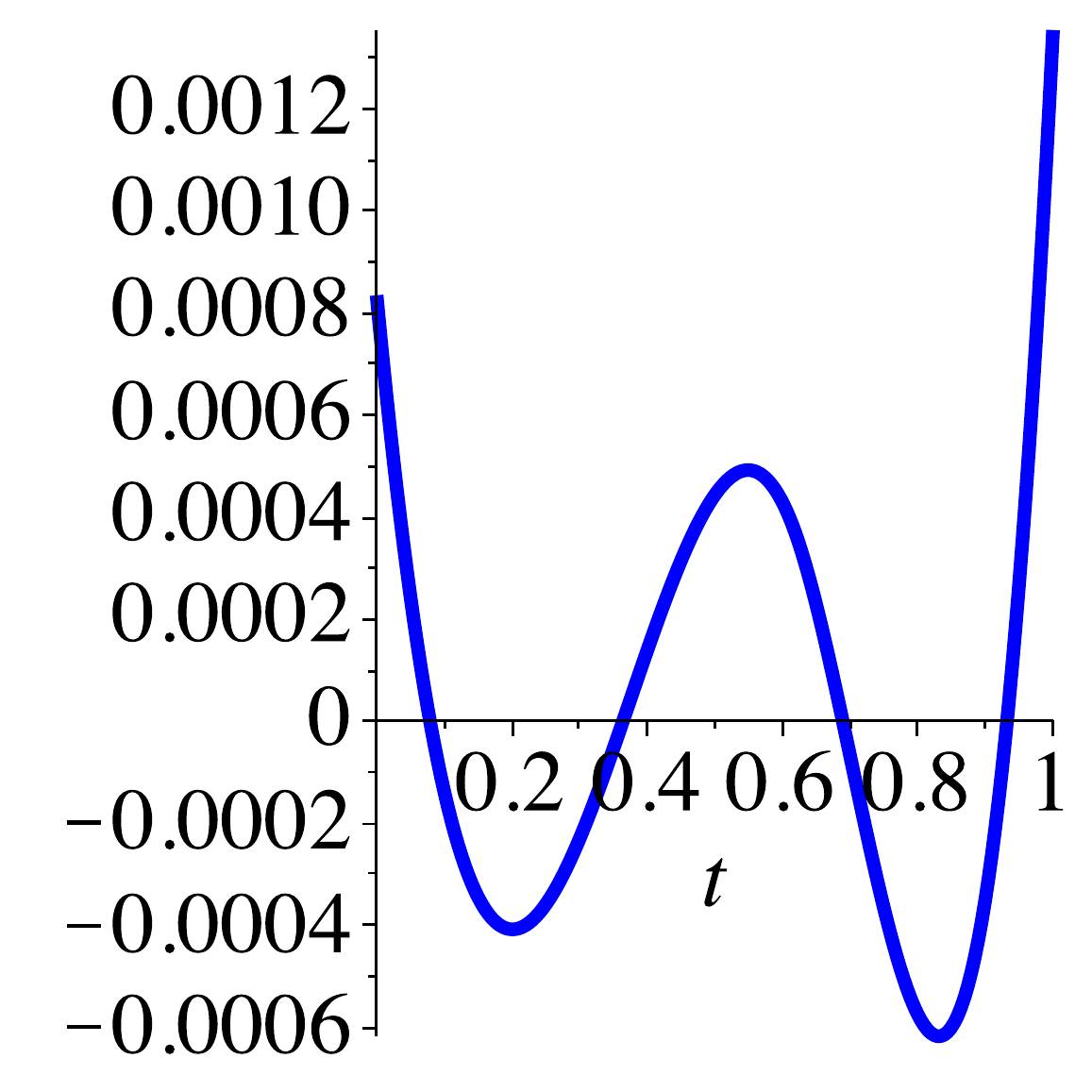} \\
	$\tau_3(t;x_0)$ & $\tau_4(t;x_0)$ 
\end{tabular}
\end{center}
\vspace*{-.3cm}
\caption{\emph{The template functions $\tau_m(t;x_0)$ for the 
generalized Anderson-Darling statistics for $1\le m\le 4$; here 
$x_0=\frac{\sqrt{5}-1}2$, the reciprocal of the golden ratio.}}
\label{fig:template}
\end{figure}

Observe, by \eqref{Pkx-closed-form} and by a direct integration, that 
\begin{align}\label{Pkx-m}
    P_k^{(-m)}(x)
    = \sqrt{2k+1}\sum_{0\le j\le k} 
    \frac{(-1)^{k+j}(k+j)!}{j!(k-j)!(m+j)!}\,
    x^{m+j}.
\end{align}
Without signs and the normalizing factor $\sqrt{2k+1}$, $P_k(x)$
corresponds to sequence A063007 (and, up to a shift of index,
A109983) in the Online Encyclopedia of Integer Sequences
\citep{OEIS}, and $P_k^{(-1)}(x)$ to A088617; they all
have direct combinatorial interpretations in terms of parameters on
certain lattice paths. Note that $P_k^{(-1)}(x)$ also appeared in
\cite{MacNeill78} but in the form  
\[
    \frac{(2k)!}{k!}
    \sum_{0\le j\le \lfloor k/2\rfloor}
    \frac{(-1)^j}
    {16^j j!^2(k-2j)!\binom{k-\frac12}{j}}
    \int_0^x \lpa{v-\tfrac12}^{k-2j}\dd v,
\]
which can be proved to be identical to \eqref{Pkx-m} with $m=1$ and 
without the factor $\sqrt{2k+1}$.

\subsection{The generalized Anderson-Darling statistic}
\label{subsec:sample-ad}

With the template functions $\tau_m(t;x)$ at hand, we now define, 
similar to \eqref{BN},
\begin{align}
    B^{[m]}_{N}(x)
    &:= \sqrt{N} \int_0^1 \tau_m(t;x) 
    \dd F_{N}(t) \nonumber \\
    &= \sqrt{N} \Biggl( 
    \mathop{\int\cdots\int}_{0<t_{1}<\cdots<t_{m}<x} 
    \dd F_{N}(t_1) \dd t_2 \cdots \dd t_m \nonumber\\
    &\qquad\qquad - \sum_{0\le k<m} P_k^{(-m)}(x) 
    \int_0^1 P_k(t) \dd F_{N}(t) \Biggr).
\label{BNm}
\end{align}
Note that $B^{[1]}_{N}(x)=B_N(x)$.
Then, as in \eqref{AN}, we define
the generalized Anderson-Darling (GAD) statistic as
\begin{equation}\label{ANm}
    A^{[m]}_{N} 
    := \int_0^1 \frac{B^{[m]}_{N}(x)^2}{(x(1-x))^{m}}
    \dd x,
\end{equation}
which equals the original Anderson-Darling statistic when $m=1$\,:
$A^{[1]}_{N}=A_{N}$. However, unlike $A_N^{[1]}$, the variance of
$B^{[m]}_{N}(x)$ in \eqref{BNm} with $x$ fixed is proportional to
$(x(1-x))^{2m-1}$ (see Theorem \ref{thm:cov}), which is different
from the denominator $(x(1-x))^{m}$ we used in \eqref{ANm} except
for the case $m=1$. The motivation of choosing such a weight will
become clear later.

On the other hand, it is less obvious whether the integral \eqref{ANm}
exists because of the seemingly singular factors in the denominator
at the end points. We now show that the GAD statistics $A^{[m]}_{N}$ 
can be expressed in terms of the samples $X_i$'s, $1\le i\le N$.

\begin{lm}
\label{lm:invariance}
Assume $X_i\in (0,1)$ for $1\le i\le N$. Then the GAD statistics 
$A^{[m]}_{N}$ in \eqref{ANm} are well-defined for $m\ge1$, and are 
invariant with respect to the changes $X_i\mapsto 1-X_i$ for all 
$i=1,\dots,N$.
\end{lm}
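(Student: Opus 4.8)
The plan is to base both assertions on a single reflection symmetry of the template functions, the pointwise identity
\[
    \tau_m(1-t;1-x)=(-1)^m\,\tau_m(t;x)
    \qquad(t,x\in[0,1]),
\]
which I expect to be the heart of the matter. To establish it I would fix $x$, set $D(t):=\tau_m(1-t;1-x)-(-1)^m\tau_m(t;x)$, and show $D\equiv0$. As a function of $t$, $\tau_m(1-t;1-x)$ is piecewise polynomial of degree $\le m-1$ with its single breakpoint at $t=x$ (since $1-t=1-x$ iff $t=x$) and is $C^{m-2}$; moreover the substitution $u=1-t$, together with the fact that $(1-u)^j$ is a polynomial of degree $j\le m-1$, reduces $\int_0^1\tau_m(1-t;1-x)t^j\dd t$ to an integral covered by \eqref{tau_m-ortho}, so it vanishes for $0\le j\le m-1$. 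Hence $D$ is piecewise polynomial of degree $\le m-1$, of class $C^{m-2}$, and orthogonal to every polynomial of degree $\le m-1$. The only possible jump in $\partial_t^{m-1}D$ sits at $t=x$ and comes from the leading piecewise terms $\frac{(x-t)^{m-1}}{(m-1)!}\Ind_{\{t\le x\}}$; a short chain-rule computation gives this jump as $1$ for $\tau_m(1-t;1-x)$ and as $(-1)^m\cdot(-1)^m=1$ for $(-1)^m\tau_m(t;x)$, so they cancel and $D\in C^{m-1}$. A degree-$\le m-1$ piecewise polynomial that is $C^{m-1}$ across its one breakpoint must be a single global polynomial of degree $\le m-1$ (its two pieces share value and first $m-1$ derivatives at $x$, hence agree by Taylor's formula); being orthogonal to all such polynomials it is orthogonal to itself, so $\int_0^1 D^2=0$ and $D\equiv0$.

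Granting the identity, I would settle well-definedness next. Put $a_k:=\int_0^1 P_k(t)\dd F_N(t)$. For $0<x<\min_i X_i$ the integral term in \eqref{BNm} is empty, so $B^{[m]}_N(x)=-\sqrt N\sum_{0\le k<m}a_k\,P_k^{(-m)}(x)$, and since by \eqref{Pkx-m} each $P_k^{(-m)}(x)$ has lowest-order term a multiple of $x^m$, we get $B^{[m]}_N(x)=O(x^m)$ as $x\to0^+$. Replacing $X_i$ by $1-X_i$ (still in $(0,1)$) and using the identity in the form $B^{[m]}_N(1-x)=(-1)^m\widetilde B^{[m]}_N(x)$, where $\widetilde B^{[m]}_N$ is built from the reflected sample, the same near-zero estimate applied to $\widetilde B^{[m]}_N$ yields $B^{[m]}_N(x)=O((1-x)^m)$ as $x\to1^-$. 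Consequently the integrand $B^{[m]}_N(x)^2/(x(1-x))^m$ is $O(x^m)$ near $0$ and $O((1-x)^m)$ near $1$, while on each $[\delta,1-\delta]$ it is bounded because $B^{[m]}_N$ is bounded and $(x(1-x))^m$ is bounded away from $0$; thus the integrand is bounded on $(0,1)$ and $A^{[m]}_N$ is a finite (proper) integral for every $m\ge1$.

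Finally, for the invariance let $\widetilde X_i=1-X_i$ with empirical measure $\dd\widetilde F_N$, so that $\int\phi(t)\dd\widetilde F_N(t)=\int\phi(1-s)\dd F_N(s)$. Applying the reflection identity with $x$ replaced by $1-x$ gives $\tau_m(1-s;x)=(-1)^m\tau_m(s;1-x)$, whence
\[
    \widetilde B^{[m]}_N(x)
    =\sqrt N\int_0^1\tau_m(1-s;x)\dd F_N(s)
    =(-1)^m B^{[m]}_N(1-x),
\]
so that $\widetilde B^{[m]}_N(x)^2=B^{[m]}_N(1-x)^2$. Substituting $y=1-x$ in the defining integral \eqref{ANm} for the reflected sample, and noting $(x(1-x))^m=(y(1-y))^m$, returns $A^{[m]}_N$ unchanged, which is the asserted invariance. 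I expect the one genuinely delicate point to be the reflection identity for $\tau_m$—specifically the matching of the top-derivative jumps that forces $D$ to be a global polynomial; once that is in hand, the endpoint estimates and the change of variables are routine.
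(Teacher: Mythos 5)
Your proposal is correct, and it reaches the two conclusions by a genuinely different route from the paper. The paper's proof never states the reflection identity $\tau_m(1-t;1-x)=(-1)^m\tau_m(t;x)$ explicitly; instead it re-expands the polynomial $\frac{(x-t)^{m-1}}{(m-1)!}$ in the Legendre basis to obtain the two-sided representation \eqref{tau_mtx-PP}, substitutes this into the square of \eqref{ANm} to get the explicit triple of integrals \eqref{parentheses}, reads off finiteness from the factors $x^{m+j}$ and $(1-x)^{m+j}$ in $P_k^{(-m)}(x)$ and $P_k^{(-m)}(1-x)$, and reads off invariance from the manifest symmetry of \eqref{parentheses} under $(t,u)\mapsto(1-t,1-u)$; that expansion then does double duty as the starting point for the sample formula of Lemma~\ref{lm:ANm-ab} in the Appendix. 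You instead isolate the reflection identity itself and prove it by a uniqueness argument (a piecewise polynomial of degree $\le m-1$, of class $C^{m-1}$ across its single breakpoint after the top-derivative jumps cancel, is a global polynomial, and orthogonality to all polynomials of degree $\le m-1$ forces it to vanish); well-definedness then comes from the endpoint decay $B_N^{[m]}(x)=O(x^m)$ and $O((1-x)^m)$, which actually shows the integrand is bounded, and invariance is a one-line change of variables. Both arguments are sound; yours is more self-contained and conceptually cleaner for this lemma, while the paper's computation is reused downstream. Two harmless caveats in your version: for $m=1$ the identity and the conclusion $D\equiv0$ hold only almost everywhere (the one-sided limits of $D$ agree at $t=x$ but the value at the point need not vanish), which is all that the integrals require; and your jump computation, giving $1$ on each side so that the jumps cancel in $D$, is correct as stated.
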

\begin{proof}
We first rewrite the template function $\tau_m(t;x)$. By
the completeness of the Legendre polynomial system and
the relation $P_k(1-x)=(-1)^k P_k(x)$,
\begin{align*}
    \frac{(x-t)^{m-1}}{(m-1)!}
    &= \sum_{0\le k<m} P_k(t)
    \biggl( \int_x^1 + \int_0^x \biggr)
     \frac{(x-u)^{m-1}}{(m-1)!} P_k(u) \dd u \\
    &= (-1)^{m-1} \sum_{0\le k<m} 
    P_k(1-t) P_k^{(-m)}(1-x)  
    + \sum_{0\le k<m} P_k(t)P_k^{(-m)}(x).
\end{align*}
Substituting this into \eqref{ad-tau_mtx}, we obtain
\begin{align}\label{tau_mtx-PP}
    \tau_m(t;x)
    &= (-1)^{m-1} \sum_{0\le k<m} 
    P_k(1-t)P_k^{(-m)}(1-x) \Ind_{\{t\le x\}}
	\nonumber \\
    &\qquad\qquad - \sum_{0\le k<m} 
	P_k(t)P_k^{(-m)}(x) \Ind_{\{t>x\}}.
\end{align}
Now expanding the square in \eqref{ANm} as a double integral, giving
\[
    A^{[m]}_{N} 
    = \int_0^1 \int_0^1 \biggl( 
    \int_0^1 \frac{\tau_m(t;x) \tau_m(u;x)}
    {(x(1-x))^m}\dd x \biggr) \dd F_{N}(t) \dd F_{N}(u),
\]
where the expression between the parentheses becomes, by 
\eqref{tau_mtx-PP},
\begin{equation}\label{parentheses}
\begin{aligned}
    & \sum_{0\le k,l<m} P_k(1-t) P_l(1-u)\int_{t\vee u}^1 
    \frac{P_k^{(-m)}(1-x) P_{l}^{(-m)}(1-x)}
    {(x(1-x))^m}\dd x \\
    &  - (-1)^{m-1} \sum_{0\le k,l<m} 
    P_k(1-t\wedge u) P_l(t\vee u)
    \int_{t\wedge u}^{t\vee u} 
    \frac{P_k^{(-m)}(1-x) 
    P_{l}^{(-m)}(x)}{(x(1-x))^m}\dd x \\
    & + \sum_{0\le k,l<m}P_k(t) P_l(u) \int_0^{t\wedge u}
    \frac{P_k^{(-m)}(x) P_{l}^{(-m)}(x)}
    {(x(1-x))^m}\dd x.
\end{aligned}
\end{equation}
Since $P_k^{(-m)}(x)$ and $P_k^{(-m)}(1-x)$ contain factors of the
forms $x^{m+j}$ and $(1-x)^{m+j}$ for $j\ge0$ (see \eqref{Pkx-m}), 
respectively, the three integrals in \eqref{parentheses} are 
well-defined if $t\vee u<1$ and $t\wedge u>0$. On the other hand, 
\eqref{parentheses} is invariant with respect to the change of 
variables $(t,u)\mapsto (1-t,1-u)$.
\end{proof}

With more calculations, it is possible to derive a more precise 
expression for $A_N^{[m]}$.
\begin{lm}\label{lm:ANm-ab}
Let $a_{ij}=X_i\wedge X_j$ and $b_{ij}=1-X_i\vee X_j$. Then
\begin{equation}\label{ANm-PQ}
    A_N^{[m]} 
    = -\sfrac1N\sum_{1\le i,j\le N}
    R_m(a_{ij},b_{ij})(\log a_{ij} + \log b_{ij})
    + \sfrac{1}{N}\sum_{1\le i,j\le N}Q_m(a_{ij},b_{ij}),
\end{equation}
where $Q_m(a,b)$ and $R_m(a,b)$ are symmetric polynomials in $a$ and $b$ 
of degree $2(m-1)$.    
\end{lm}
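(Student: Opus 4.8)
The plan is to evaluate $A_N^{[m]}$ in closed form by passing to the empirical measure and integrating out the variable $x$. Substituting $\dd F_N=\frac1N\sum_i\delta_{X_i}$ into \eqref{BNm} and squaring, the factor $N$ in the definition of $B_N^{[m]}$ cancels one power of $N$, giving
\[
    A_N^{[m]}=\frac1N\sum_{1\le i,j\le N}J_m(X_i,X_j),\qquad
    J_m(t,u):=\int_0^1\frac{\tau_m(t;x)\,\tau_m(u;x)}{(x(1-x))^m}\dd x .
\]
For $0<t,u<1$ the inner integral $J_m(t,u)$ is exactly the bracketed expression \eqref{parentheses} obtained in the proof of Lemma~\ref{lm:invariance}. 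It therefore suffices to show that, with $a=t\wedge u$ and $b=1-t\vee u$, the scalar $J_m(t,u)$ equals $-R_m(a,b)L_m(a,b)+Q_m(a,b)$, where $L_m$ is the relevant sum of two boundary logarithms and $Q_m,R_m$ are symmetric polynomials of degree $2(m-1)$; summing over $i,j$ then produces \eqref{ANm-PQ}.

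I would evaluate the three integrals of \eqref{parentheses} one by one. In the middle integral the weight $(x(1-x))^m$ cancels \emph{exactly} against the factors $x^m$ and $(1-x)^m$ carried by $P_l^{(-m)}(x)$ and $P_k^{(-m)}(1-x)$ (visible in \eqref{Pkx-m}), leaving a polynomial integrand whose antiderivative, evaluated at the breakpoints $t\wedge u$ and $t\vee u$, contributes only to $Q_m$. The same cancellation turns the other two integrands into rational functions of $x$ with a single pole of order $m$, at $x=1$ for the integral over $(0,t\wedge u)$ and at $x=0$ for the integral over $(t\vee u,1)$. A partial-fraction decomposition splits each into a polynomial part (again feeding $Q_m$), a collection of proper rational terms with denominators $(1-x)^r$ or $x^r$ for $2\le r\le m$, and a single simple-pole term whose antiderivative is a logarithm; evaluating at the breakpoints produces one logarithm at each boundary, with polynomial coefficient.

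The main obstacle is that, for a fixed pair $(k,l)$, the proper rational terms integrate to negative powers such as $(1-a)^{-(r-1)}$ and $(1-b)^{-(r-1)}$, which are \emph{not} polynomials. The heart of the argument is to show that, once these are weighted by the Legendre prefactors $P_k(\cdot)P_l(\cdot)$ and summed over $0\le k,l<m$, every negative power of $1-a$ (and of $1-b$) cancels. Concretely, writing $c_r(k,l)$ for the coefficient of $(1-x)^{-r}$ in the partial fraction of the integrand over $(0,t\wedge u)$, this is the assertion that $\sum_{0\le k,l<m}P_k(a)P_l(1-b)\,c_r(k,l)$ is divisible by $(1-a)^{r-1}$ for each $r\ge2$; I expect this to follow from the explicit coefficients in \eqref{Pkx-m} together with the orthogonality \eqref{Pk-ortho}. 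As a check, for $m=2$ one verifies by hand that the entire $(1-a)^{-1}$ contribution collapses to the polynomial $b$. These cancellations are what make $Q_m$ and $R_m$ polynomial; the analogous, more delicate bookkeeping of the surviving polynomial degrees is what I expect to pin the degree at $2(m-1)$, since the prefactors $P_k,P_l$ already contribute degree up to $2(m-1)$ while the higher-degree pieces of the antiderivatives must cancel among themselves.

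Finally I would record the symmetry. The product $\tau_m(t;x)\tau_m(u;x)$ is symmetric in $t\leftrightarrow u$, and by the invariance of \eqref{parentheses} under $(t,u)\mapsto(1-t,1-u)$ noted in the proof of Lemma~\ref{lm:invariance}, which interchanges $a\leftrightarrow b$, the scalar $J_m$ is a symmetric function of $a$ and $b$. This forces $Q_m$ and $R_m$ to be symmetric and forces the coefficients of the two boundary logarithms to coincide, so that they combine into a single factor $R_m(a,b)$ multiplying the sum of the two logarithms, which is \eqref{ANm-PQ}.
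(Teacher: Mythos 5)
Your reduction to the kernel $J_m(t,u)=\int_0^1\tau_m(t;x)\tau_m(u;x)(x(1-x))^{-m}\dd x$, your reading of \eqref{parentheses} (middle integral polynomial after exact cancellation of the weight; the two outer integrals rational with a single pole of order $m$ at $x=1$, resp.\ $x=0$), and your closing symmetry argument are all correct, and your $m=2$ check of the collapse to $b$ is right. But the proposal has two genuine gaps, both sitting exactly where you flag them. The first is the divisibility assertion that $\sum_{k,l}P_k(\cdot)P_l(\cdot)c_r(k,l)$ is divisible by $(1-a)^{r-1}$: this is the entire content of the lemma, and ``I expect this to follow'' plus one instance is not a proof. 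The claim is true, and the mechanism is the one the paper isolates: for any polynomial $p$ of degree $<m$ one has $\sum_{0\le k<m}P_k(t)\int_0^1p(s)P_k(s)\dd s=p(t)$; since $c_r(k,l)$ is, by Leibniz applied to $x^mf_{m,k}(x)f_{m,l}(x)$ at $x=1$, a combination of products $f_{m,k}^{(q)}(1)f_{m,l}^{(s)}(1)$ with $q+s\le m-r$, and each partial sum $\sum_{k<m}P_k(t)f_{m,k}^{(q)}(1)$ collapses to a polynomial divisible by $(1-t)^{m-q-1}$, the weighted sum is divisible by $\bigl((1-t)(1-u)\bigr)^{r-1}=\bigl((1-a)b\bigr)^{r-1}$. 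You would need to write this out. The paper performs this collapse \emph{before} integrating: it introduces $\phi_{m,k}(x)=\frac1{(m-1)!}\int_0^1(1-x-t)^{m-1}P_k(t)\dd t$, uses $P_k^{(-m)}(1-x)+(-1)^{m+k+1}P_k^{(-m)}(x)=(-1)^{m+k+1}\phi_{m,k}(x)$ to split off the polar part, and then the identity $\sum_{0\le k<m}P_k(1-a)\phi_{m,k}(x)=(-1)^{m-1}(x-a)^{m-1}/(m-1)!$ makes the numerator vanish to order $m-1$ at the integration endpoint, so each outer integral is manifestly a polynomial plus a single logarithm (Lemma \ref{lmm:inta1}), with the coefficients $R_m$, $c_{m,\ell,r}$, $d_{m,k,\ell}$ coming out explicitly.

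The second, and more serious, gap is the degree bound $2(m-1)$, for which you give no argument. In your scheme a naive count yields total degree up to $4m-3$ (Legendre prefactors of degree $2(m-1)$ times antiderivatives of degree up to $2m-1$ evaluated at the breakpoints), so a further cancellation of all terms of degree greater than $2(m-1)$ must be established; ``the higher-degree pieces must cancel among themselves'' restates the claim rather than proving it. In the paper this bound is not argued abstractly either --- it falls out of the closed forms produced by the $[z^{m-1}]$ coefficient extractions in Lemmas \ref{lmm:inta1} and \ref{int01} --- and I see no soft argument for it inside your partial-fraction framework, so you would end up carrying out essentially the same explicit computation. In short: the route is viable and organizationally different from the paper's (integrate first, cancel afterwards, versus restructure the integrand so the cancellation is visible before integrating), but as written it proves the lemma only for $m\le 2$.
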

In particular, $R_m$ is given explicitly by  
\begin{align}
    R_m(a,b)
    &:= \frac{(1-a-b)^{m-1}}{\sqrt{2m-1}\,(m-1)!^2}\,
    P_m\left(\frac{-ab}{1-a-b}\right) \nonumber \\
    &= \frac{(-1)^{m-1}}{(m-1)!^2}\sum_{0\le \ell< m}
    \binom{m-1}{\ell}\binom{m-1+\ell}{\ell}
    (ab)^\ell(1-a-b)^{m-1-\ell}.
\label{Rm}
\end{align}
But the expression for $Q_m$ we obtained is very lengthy and 
complicated. A proof of Lemma~\ref{lm:ANm-ab} is long and provided
in Appendix~\ref{subsec:Anm-PQ}.

In particular, the two statistics $A^{[1]}_{N}$ and $A^{[2]}_{N}$ 
have the following expressions in terms of the sample $X_i$'s. Let 
$a_{ij}=X_i\wedge X_j$ and $b_{ij}=1-X_i\vee X_j$.
\begin{align*}
    A^{[1]}_{N}
    &= \sfrac{1}{N}\sum_{1\le i,j\le N} 
    ( -1 - \log(1-a_{ij}) - \log(1-b_{ij}) ) \\
    &= -N -\sfrac{1}{N}\sum_{1\le i\le N} 
    \bigl( (2N-2i+1)\log (1-X_{(i)}) 
    + (2i-1)\log X_{(i)} \bigr),
\end{align*}
where $X_{(1)}\le\cdots\le X_{(N)}$ are the order statistics, and
\begin{align*}
    A^{[2]}_{N}
    &= \sfrac{1}{N}\sum_{1\le i,j\le N} \Bigl(
    \sfrac{1}{6} (1+a_{ij}+b_{ij}+10 a_{ij} b_{ij}) \\
    & \qquad\quad
    + (1-a_{ij}-b_{ij}+2 a_{ij}b_{ij})
    (\log(1-a_{ij})+\log(1-b_{ij})) \Bigr) \\
    &= \sfrac{1}{N}\sum_{1\le i,j\le N} \Bigl(
    \sfrac{1}{6} (2-|X_i-X_j|+10 (X_i\wedge X_j - X_i X_j)) \\
    & \qquad\quad
    + (|X_i-X_j|+2 (X_i\wedge X_j - X_i X_j)) 
    \log(X_i\vee X_j - X_i X_j) \Bigr).
\end{align*}
A direct calculation of this double sum leads to quadratic time (in 
$N$) complexity. In terms of the order statistics, we can rearrange 
the terms on the right-hand side and obtain a procedure with linear 
time complexity:
\begin{align*}
    A_N^{[2]}
    &:= \sfrac13N +\sfrac1N\sum_{1\le i\le N}\Lpa{\sfrac{11}3N-4i+2}X_{(i)}
    + \sfrac2N\sum_{1\le i\le N}(N-i+X_{(i)})X_{(i)}\log X_{(i)} \\
    &\quad +\sfrac2N\sum_{1\le i\le N}
    (i-X_{(i)})X_{(i)}\log(1-X_{(i)})
    + \sfrac{2}{N}\Lpa{\sum_{1\le i\le N}X_{(i)}}
    \Lpa{\sum_{1\le i\le N}\log X_{(i)}} \\
    &\quad 
    -\sfrac5{3N}\Lpa{\sum_{1\le i\le N}X_{(i)}}^2
    -\sfrac{4}{N}\Lpa{\sum_{1\le i\le N}X_{(i)}}
    \Lpa{\sum_{1\le i\le N}X_{(i)}\log X_{(i)}} \\
    &\quad +\sfrac{2}N\sum_{1\le i\le N}(2X_{(i)}-1)
    (\log X_{(i)} -\log(1-X_{(i)}))\sum_{1\le j<i}X_{(j)}.
\end{align*}
Based on Lemma~\ref{lm:ANm-ab}, we now show that $A^{[m]}_N$ can be 
computed in time $O(N\log N)$.
\begin{thm}
\label{thm:ANm}
The GAD statistics $A_N^{[m]}$ ($m\ge 1$) can be computed in 
worst-case time complexity $O(N\log N)$. If in addition the $X_i$'s 
are already sorted, then $A_N^{[m]}$ can be computed in linear time.
\end{thm}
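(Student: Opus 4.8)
The plan is to start from the closed form in Lemma~\ref{lm:ANm-ab}, which writes $A_N^{[m]}$ as two double sums over pairs $(i,j)$ of summands depending only on $a_{ij}=X_i\wedge X_j$ and $b_{ij}=1-X_i\vee X_j$. Evaluating these $N^2$ terms directly is quadratic, so the goal is to collapse each double sum to a single pass through the sorted sample. First I would sort the $X_i$ into order statistics $X_{(1)}\le\cdots\le X_{(N)}$; this is the only step costing $O(N\log N)$, and it is skipped when the input is already sorted, leaving everything else linear. The crucial simplification is that for order-statistic ranks $i<j$ one has $a_{ij}=X_{(i)}$ and $b_{ij}=1-X_{(j)}$, so that after sorting $a_{ij}$ depends only on the smaller rank and $b_{ij}$ only on the larger one — and relabelling by rank does not change the value of a sum taken over \emph{all} pairs.

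Next, since $R_m$ and $Q_m$ are symmetric polynomials of degree $2(m-1)$, each is a sum of only $O(m^2)$ monomials $a^p b^q$ with $p+q\le 2(m-1)$, a number independent of $N$. Writing $Q_m(a,b)=\sum_{p,q} c_{pq}\, a^p b^q$ and similarly for $R_m$, and using that every summand in \eqref{ANm-PQ} is symmetric under $i\leftrightarrow j$, I would split each double sum into a diagonal part ($i=j$, where $a_{ii}=X_{(i)}$ and $b_{ii}=1-X_{(i)}$) plus twice the strictly upper-triangular part ($i<j$). The $N$ diagonal terms are evaluated directly in $O(N)$.

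For the off-diagonal part the key is that, after sorting, each monomial's contribution \emph{decouples} across the two ranks. For the polynomial piece $Q_m$ this reads
\[
    \sum_{i<j} X_{(i)}^p (1-X_{(j)})^q
    = \sum_{1\le j\le N} (1-X_{(j)})^q\, S_j^{(p)},
    \qquad S_j^{(p)} := \sum_{1\le i<j} X_{(i)}^p,
\]
where the prefix sum $S_j^{(p)}$ is updated in $O(1)$ as $j$ increases, so the whole expression is computed in one $O(N)$ sweep. The logarithmic piece $R_m(a_{ij},b_{ij})(\log a_{ij}+\log b_{ij})$ splits into terms carrying $\log a_{ij}=\log X_{(i)}$, which depend only on the inner rank and are absorbed into the accumulator $\sum_{i<j} X_{(i)}^p\log X_{(i)}$, and terms carrying $\log b_{ij}=\log(1-X_{(j)})$, which depend only on the outer rank and are folded into the outer factor $(1-X_{(j)})^q\log(1-X_{(j)})$. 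In each case the factorization through a single running prefix sum survives, giving $O(N)$ per monomial.

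Summing over the $O(m^2)$ monomials of $Q_m$ and $R_m$ yields $O(m^2 N)=O(N)$ arithmetic operations once the sample is sorted, hence $O(N\log N)$ overall. The only place where something could go wrong — and thus the main point to verify — is this decoupling of the logarithmic factors: it works precisely because in the regime $i<j$ the quantities $a_{ij}$ and $b_{ij}$, and therefore $\log a_{ij}$ and $\log b_{ij}$, each involve a single rank, so no log factor obstructs the separation of the double sum into an outer loop in $j$ over a prefix sum in $i$.
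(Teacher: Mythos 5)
Your proposal is correct and follows essentially the same route as the paper: sort once in $O(N\log N)$, observe that after sorting $a_{ij}$ and $b_{ij}$ (and hence their logarithms) each depend on a single rank when $i<j$, expand $Q_m$ and $R_m$ into their $O(m^2)$ monomials, and collapse each resulting double sum to a single pass via running prefix sums. The paper's proof performs the identical reduction, writing $\sum_{i,j}(a^k b^\ell + a^\ell b^k)$ in terms of pre-computed sums $\sum_{i<j}X_i^k$ and treating the $\log a$ and $\log b$ terms analogously.
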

\begin{proof}
We may assume that the $X_i$'s are sorted in increasing order, for
otherwise, we can apply any standard sorting algorithm with
worst-case time complexity $O(N\log N)$. From the decomposition
\eqref{ANm-PQ}, it suffices to examine sums of the form
($a=1-X_i\wedge X_j$, $b=X_i\vee X_j$)
\begin{align*}
    \sum_{1\le i,j\le N}(a^k b^\ell +a^\ell b^k)
    &= 2\sum_{1\le i\le N}X_i^\ell
    \sum_{1\le j<i}(1-X_j)^k +
    \sum_{1\le i\le N}X_i^k \sum_{1\le j<i}(1-X_j)^\ell \\
    &\qquad + \sum_{1\le i\le N} 
    \lpa{(1-X_i)^k X_i^\ell + (1-X_i)^\ell X_i^k}
\end{align*}
Note that the sum
\begin{align*}
    \sum_{1\le i\le N}X_i^\ell \sum_{1\le j<i}(1-X_j)^k
    &= \sum_{1\le j\le N}(1-X_j)^k\sum_{j<i\le N}X_i^\ell \\
    &= \sum_{1\le j\le N}(1-X_j)^k
    \llpa{\sum_{1\le i\le N}X_i^\ell 
    -\sum_{1\le i<j}X_i^\ell -X_j^\ell}
\end{align*}
So all calculations can be reduced to computing sums of the forms
$\sum_{1\le i<j}X_i^k$, which can be pre-computed once and then
stored (in $O(N)$ time). The computation of other sums of the forms 
$\sum_{1\le i,j\le N}a^k b^\ell \log a$ and $\sum_{1\le i,j\le N}a^k
b^\ell \log b$ are similar. Except for sorting, all calculations can
be done in $O(N)$ time. This proves the theorem.
\end{proof} 
Sorting can be made more efficient on average if more \emph{a priori} 
information of the data is known, for example, if $X_i$ are regularly 
distributed over the unit interval; see \citep[Sec.\ 5.2.5]{Knuth98}
or \cite{Devroye86}.

\begin{rem}
In the one-way ANOVA model, \cite{Hirotsu86} proposed two 
statistics $\chi^{*2}$ and $\chi^{\dagger 2}$ for testing
the equality against the monotonicity of mean profile, and for
testing monotonicity of mean profile against the convexity,
respectively. These two tests correspond to $A^{[1]}_{N}$ and 
$A^{[2]}_{N}$ above.
\end{rem}

\subsection{The limiting GAD processes and their covariance 
kernels}\label{subsec:kl-cov-ad}

In this section, we examine the limiting distribution of the GAD  
statistic under the null hypothesis $H_0$.

\paragraph{The limiting GAD processes.} Let $W(x)$ be the Wiener
process on $[0,1]$, and let $B(x)=W(x)-x W(1)$ be the Brownian
bridge. Define
\begin{align}
    B^{[m]}(x)
    &:= \int_0^1 \tau_m(t;x) \dd B(t) \nonumber \\
    &= \mathop{\int\cdots\int}_{0<t_{1}<
       \cdots<t_{m}<x} B(t_{1}) \dd t_{1}\cdots \dd t_{m-1} 
       - \sum_{0\le k<m} P_k^{(-m)}(x) \int_0^1 P_k(t) \dd B(t),
\label{Bm}
\end{align}
where $\tau_m$ is defined in \eqref{ad-tau_mtx}. Since the Brownian
bridge is continuous on the unit interval, namely, $B(\cdot)\in
C^0[0,1]$, we see that $B^{[m]}(\cdot)\in C^{m-1}[0,1]$ with
probability one.

The following lemma follows from a weak convergence argument in $L^2$
(Sec.~1.8 and Example 1.8.6 of \cite{vanderVaart96}; see
also \cite{Tsukuda14}).

\begin{lm}\label{lm:L2} 
Let $\mu$ be any finite measure on $[0,1]$. Then, as $N\to\infty$,
$B^{[m]}_{N}(\cdot) \stackrel{d}{\to} B^{[m]}(\cdot)$ in
$L^2([0,1],\mu)$ and $A^{[m]}_{N}\stackrel{d}{\to} A^{[m]}$, where
\begin{equation}\label{Am}
    A^{[m]} 
    = \int_0^1 \frac{B^{[m]}(x)^2}{(x(1-x))^{m}} \dd x.
\end{equation}
\end{lm}
\begin{proof}
The stochastic map $x\mapsto B_{N}(x)=\sqrt{N}(F_{N}(x)-x)$ converges
weakly to $B(\cdot)$ in $L^2([0,1],\mu)$. Because of $\int_0^1 P_k(t)
\dd B(t) = - \int_0^1 P_k'(t) B(t) \dd t$, $B^{[m]}(\cdot)$ in
\eqref{Bm} is continuous in $B(\cdot)$ in $L^2([0,1],\mu)$.
Similarly, $A^{[m]}$ is continuous in $B(\cdot)$. The convergences
$B^{[m]}_{N}(\cdot) \stackrel{d}{\to} B^{[m]}(\cdot)$ and
$A^{[m]}_{N}\stackrel{d}{\to} A^{[m]}$ then follow from the
continuous mapping theorem.
\end{proof}

We refer to the signed square-root of the integrand function in
\eqref{Am} $x\mapsto \frac{B^{[m]}(x)}{(x(1-x))^{m/2}}$ as the
\emph{generalized Anderson-Darling (GAD) processes}. We now clarify
the stochastic structure of such processes, starting first with
defining the required eigenfunctions, sketching a heuristic
derivation of the KL expansions and then proving rigorously all the
claims later.

\paragraph{Associated Legendre polynomials.} To pave way for deriving 
the KL expansions, we introduce the following definition.
\begin{df} Define the normalized shifted associated Legendre 
polynomials on $[0,1]$ by
\begin{align}\label{Pkm}
    P^{[m]}_{k}(x)
    :=(-1)^m\sqrt{\frac{(k-m)!}{(k+m)!}}\,
    (x(1-x))^{m/2} P_k^{(m)}(x),
\end{align}
where $P_k^{(m)}(x)$ denotes the $m$-th derivative of $P_k(x)$ with 
respect to $x$.
\end{df}
For each $m$, $\{P^{[m]}_{k}(x)\}_{k\ge m}$ forms an orthonormal 
basis, that is, 
\[
    \int_0^1 P^{[m]}_{k}(x) P^{[m]}_{l}(x) \dd x 
    = \Ind_{\{k=l\}} \qquad(k,l\ge0). 
\]
These associated polynomials are connected to the integrated 
Legendre polynomials $P_k^{(-m)}(x)$ by the following relation.
\begin{lm}\label{lm:dublin}
For $0\le m\le k$
\begin{align}\label{dublin}
    P_{k}^{(-m)}(x) 
    = \sqrt{\frac{(k-m)!}{(k+m)!}} \,(x(1-x))^{m/2} 
    P^{[m]}_{k}(x).
\end{align}
\end{lm}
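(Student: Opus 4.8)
The plan is to substitute the definition \eqref{Pkm} of $P^{[m]}_k$ into the right-hand side of \eqref{dublin}, which reduces the claim to the single closed-form identity
\[
P_k^{(-m)}(x) = (-1)^m \frac{(k-m)!}{(k+m)!}\,(x(1-x))^m\,P_k^{(m)}(x)
\qquad(0\le m\le k).
\]
Indeed, the two factors $\sqrt{(k-m)!/(k+m)!}\,(x(1-x))^{m/2}$ in \eqref{dublin} and in the definition of $P^{[m]}_k$ multiply to $\tfrac{(k-m)!}{(k+m)!}(x(1-x))^m$, and the sign $(-1)^m$ is carried by $P^{[m]}_k$. I would then recognize both sides as constant multiples of derivatives of one polynomial, the key tool being the shifted, normalized Rodrigues formula
\[
P_k(x) = \frac{\sqrt{2k+1}}{k!}\,\frac{\mathrm{d}^k}{\mathrm{d}x^k}\bigl[(x^2-x)^k\bigr],
\]
which I would first verify by matching the leading coefficient against $[x^k]P_k = \binom{2k}{k}\sqrt{2k+1}$ and checking the orthogonality \eqref{Pk-ortho} via $k$-fold integration by parts (the boundary terms vanish since $(x^2-x)^k$ has a zero of order $k$ at both endpoints).

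Writing $G(x):=(x^2-x)^k$, differentiation immediately gives $P_k^{(m)}(x)=\frac{\sqrt{2k+1}}{k!}G^{(k+m)}(x)$. For the $m$-fold integral I would argue by uniqueness of antiderivatives: the function $\frac{\sqrt{2k+1}}{k!}G^{(k-m)}(x)$ has $m$-th derivative $P_k(x)$, and since $G$ vanishes to order $k$ at $x=0$, its $(k-m)$-th derivative vanishes to order exactly $m$ there; because $P_k^{(-m)}$ is, by \eqref{Pk-m}, the unique $m$-fold antiderivative of $P_k$ with a zero of order $m$ at the origin, the two coincide, so $P_k^{(-m)}(x)=\frac{\sqrt{2k+1}}{k!}G^{(k-m)}(x)$. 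After these identifications, and using $(x(1-x))^m=(-1)^m(x^2-x)^m$, the reduced identity becomes the purely algebraic statement
\[
G^{(k-m)}(x) = \frac{(k-m)!}{(k+m)!}\,(x^2-x)^m\,G^{(k+m)}(x).
\]

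To prove this last identity I would pass to the symmetric variable $y=2x-1$, so that $G=4^{-k}(y^2-1)^k$ and the chain-rule factors of $2$ cancel, reducing it to the classical associated-Legendre relation $\frac{\mathrm{d}^{k-m}}{\mathrm{d}y^{k-m}}(y^2-1)^k=\frac{(k-m)!}{(k+m)!}(y^2-1)^m\frac{\mathrm{d}^{k+m}}{\mathrm{d}y^{k+m}}(y^2-1)^k$. Setting $\gamma_n:=\frac{\mathrm{d}^n}{\mathrm{d}y^n}(y^2-1)^k$ and differentiating the first-order identity $(y^2-1)\frac{\mathrm{d}}{\mathrm{d}y}(y^2-1)^k=2ky(y^2-1)^k$ a total of $n$ times by Leibniz yields the three-term recurrence
\[
(y^2-1)\gamma_{n+1}+2(n-k)y\,\gamma_n+n(n-1-2k)\gamma_{n-1}=0.
\]
Specializing $n=k+j$ and $n=k-j$ gives two recurrences, and I would prove $\gamma_{k-m}=\frac{(k-m)!}{(k+m)!}(y^2-1)^m\gamma_{k+m}$ by induction on $m$, with base cases $m=0$ (immediate) and $m=1$ (the $n=k$ instance), and an inductive step that eliminates $\gamma_{k+m-1}$ between the two recurrences. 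There the factorial identities $\frac{(k-m+1)!}{(k+m-1)!}=\frac{(k-m)!}{(k+m)!}(k+m)(k+1-m)$ and $\frac{(k-m-1)!}{(k+m+1)!}=\frac{(k-m)!}{(k+m)!}\frac{1}{(k-m)(k+m+1)}$ make all coefficients collapse correctly.

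I expect the recurrence induction to be the main obstacle: deriving the three-term relation cleanly and, above all, keeping the factorial coefficients $c_m=(k-m)!/(k+m)!$ straight through the elimination step. Everything else — the Rodrigues formula, the two (anti)differentiation identifications, and the $y=2x-1$ normalization — is routine. As an alternative to the self-contained induction one could simply invoke the standard Rodrigues formulas for associated Legendre functions of positive and negative order together with $P_k^{-m}=(-1)^m\frac{(k-m)!}{(k+m)!}P_k^{m}$, but the recurrence argument keeps the proof elementary and within the notation already set up.
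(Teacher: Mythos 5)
Your proof is correct, but it follows a genuinely different route from the paper's. The paper proves the differential relation $\bigl(\tfrac{\mathrm{d}}{\mathrm{d}x}\bigr)^m\bigl[(x(1-x))^m P_k^{(m)}(x)\bigr]=(-1)^m\tfrac{(k+m)!}{(k-m)!}P_k(x)$ by showing that the left-hand side is orthogonal to every polynomial of degree at most $k-1$ (integration by parts, with boundary terms killed by the factor $(x(1-x))^m$), so it must be a constant multiple of $P_k$, the constant being read off from the leading term; it then integrates $m$ times, using the vanishing of $(x(1-x))^mP_k^{(m)}$ and its first $m-1$ derivatives at $x=0$. You instead package the orthogonality once and for all into the Rodrigues representation $P_k=\tfrac{\sqrt{2k+1}}{k!}G^{(k)}$ with $G=(x^2-x)^k$, identify $P_k^{(-m)}$ with $\tfrac{\sqrt{2k+1}}{k!}G^{(k-m)}$ by essentially the same uniqueness-of-antiderivative/initial-condition argument the paper uses, and reduce everything to the algebraic identity $\gamma_{k-m}=\tfrac{(k-m)!}{(k+m)!}(y^2-1)^m\gamma_{k+m}$, proved by Leibniz and a two-step induction. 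I checked the recurrence $(y^2-1)\gamma_{n+1}+2(n-k)y\gamma_n+n(n-1-2k)\gamma_{n-1}=0$ and the elimination: specializing at $n=k-m$ and $n=k+m$ and inserting the two inductive hypotheses, the $2my\,\gamma_{k+m}$ terms cancel and your factorial identities give exactly $c_{m+1}=c_m/((k-m)(k+m+1))$, with $k-m\ge1$ making the division legitimate; everything closes. What each approach buys: the paper's ``orthogonal to lower degrees, hence proportional to $P_k$'' trick avoids all recurrence bookkeeping and is shorter, while yours is more computational but fully elementary and makes the classical associated-Legendre Rodrigues identity explicit. Two minor points to tidy in a write-up: the Rodrigues base case needs the normalization $\int_0^1 P_k^2\,\mathrm{d}x=1$ (a beta integral) in addition to the sign fixed by the leading coefficient, and since the recurrence is three-term your induction must carry the hypotheses at $m-1$ and $m$ simultaneously, which your two base cases already set up correctly.
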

\begin{proof}
The Legendre polynomial $P_k(x)$ satisfies the following differential
equations, called Legendre's differential equation when $m=1$.
For 
$1\le m\le k$,
\begin{align}\label{diffeq}
    \Bigl(\Dx\Bigr)^m \lpa{(x(1-x))^m P_{k}^{(m)}(x)}
    - (-1)^m \frac{(k+m)!}{(k-m)!} P_{k}(x) = 0.
\end{align}
To prove \eqref{diffeq}, let $q_{k-1}(x)$ be an arbitrary polynomial 
of degree $k-1$. We show that
\begin{equation}
    \int_0^1 q_{k-1}(x) \Bigl(\Dx\Bigr)^m \lpa{(x(1-x))^m 
    P_{k}^{(m)}(x)} \dd x =0.
\label{I}
\end{equation}
In fact, by integration by parts,
\begin{align*}
    \mbox{LHS of \eqref{I}}
    &= (-1)^m \int_0^1 q_{k-1}^{(m)}(x) 
    (x(1-x))^m P_{k}^{(m)}(x) \dd x \\
    &= \int_0^1 \Bigl(\Dx\Bigr)^m 
    \lpa{q_{k-1}^{(m)}(x) (x(1-x))^m}
    P_{k}(x) \dd x \\
    &= 0.
\end{align*}
The last equality holds because the degree of the polynomial
$\bigl(\Dx\bigr)^m\lpa{q_{k-1}^{(m)}(x) (x(1-x))^m}$ is less than or
equal to $k-1$. Since $q_{k-1}(x)$ is arbitrary, there is a constant
$M$ such that
\[
    \Bigl(\Dx\Bigr)^m \lpa{(x(1-x))^m 
    P_{k}^{(m)}(x)} = M P_{k}(x).
\]
By checking the leading term, we see that $M=(-1)^m
\frac{(k+m)!}{(k-m)!}$. This proves \eqref{diffeq}, which,
together with the initial condition
\[
    \Bigl(\Dx\Bigr)^j \lpa{(x(1-x))^m 
    P_{k}^{(m)}(x)}\big|_{x=0} = 0 \qquad (0\le j<m),
\]
implies
\[
    (x(1-x))^m
    P_{k}^{(m)}(x) = 
   (-1)^m \frac{(k-m)!}{(k+m)!} P_{k}^{(-m)}(x),
\]
that is, \eqref{dublin} holds.
\end{proof}

Lemma \ref{lm:dublin}, coupling with \eqref{Pkm} and \eqref{dublin},  
yields
\begin{equation}\label{orthogonality}
    \int_0^1 P_{k}^{(m)}(x) P_{\ell}^{(-m)}(x) \dd x 
    = (-1)^m \int_0^1 P^{[m]}_{k}(x) P^{[m]}_{\ell}(x) \dd x 
    = (-1)^m \Ind_{\{k=\ell\}},
 \end{equation}
for $0\le m\le  \ell,k$.

\paragraph{Covariance kernel.} Let
\[
    K^{[m]}(x,y)
    = \Cov\biggl(\frac{B^{[m]}(x)}{(x(1-x))^{m/2}},
    \frac{B^{[m]}(y)}{(y(1-y))^{m/2}}\biggr)
\]
be the covariance functions of the GAD processes.
\begin{thm}
\label{thm:cov}
For $m\ge1$
\begin{align}
\label{Kmxy2}
    K^{[m]}(x,y)
    &= \frac{1}{(m-1)!^2}\sum_{0\le k<m} 
    \frac{1}{m+k} \binom{m-1}{k} 
    \frac{(x\wedge y - x y)^{m/2+k} 
    |x-y|^{m-1-k}}{(x\vee y - x y)^{m/2}}.
\end{align}
\end{thm}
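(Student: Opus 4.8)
The plan is to reduce everything to a deterministic $L^2$ inner product and then to a finite algebraic identity. By the symmetry of both sides of \eqref{Kmxy2} under swapping $x,y$ it suffices to treat $x\le y$. Since $B^{[m]}(x)=\int_0^1\tau_m(t;x)\dd B(t)$ is a Wiener integral against the Brownian bridge, the covariance identity $\Cov(\int_0^1 f\dd B,\int_0^1 g\dd B)=\int_0^1 fg\dd t-(\int_0^1 f\dd t)(\int_0^1 g\dd t)$ applies; because $\tau_m(\cdot;x)$ is orthogonal to constants by \eqref{tau_m-ortho}, the cross term vanishes and $\Cov(B^{[m]}(x),B^{[m]}(y))=C_m(x,y):=\int_0^1\tau_m(t;x)\tau_m(t;y)\dd t$. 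A routine simplification shows that, for $x\le y$, the right-hand side of \eqref{Kmxy2} multiplied by $(x(1-x))^{m/2}(y(1-y))^{m/2}$ collapses to $\frac{1}{(m-1)!^2}\sum_{0\le k<m}\frac{1}{m+k}\binom{m-1}{k}(x(1-y))^{m+k}(y-x)^{m-1-k}$, which equals $\frac{1}{(m-1)!^2}\int_0^{x(1-y)}v^{m-1}(v+y-x)^{m-1}\dd v$. Hence the theorem is equivalent to evaluating $C_m(x,y)$ and matching this closed form.

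Second, I would compute $C_m$ by exploiting the orthogonality built into $\tau_m$. Writing $g_x(t)=\frac{(x-t)^{m-1}}{(m-1)!}\Ind_{\{t\le x\}}$ and using that $\tau_m(\cdot;x)$ is $g_x$ minus its orthogonal projection onto the polynomials of degree $<m$, the projection identity gives $C_m(x,y)=\int_0^{x}\frac{(x-t)^{m-1}(y-t)^{m-1}}{(m-1)!^2}\dd t-\sum_{0\le k<m}P_k^{(-m)}(x)P_k^{(-m)}(y)$. A cleaner, single-sum form comes from substituting the reflected representation \eqref{tau_mtx-PP} of $\tau_m(\cdot;y)$ into $C_m(x,y)=\int_0^x g_x(t)\tau_m(t;y)\dd t$ and invoking the elementary identity $\int_0^x\frac{(x-t)^{m-1}}{(m-1)!}P_l(1-t)\dd t=(-1)^l P_l^{(-m)}(x)$ (proved by the change of variable $t\mapsto 1-t$ together with $P_l(1-s)=(-1)^lP_l(s)$ and \eqref{Pk-m}); this yields $C_m(x,y)=(-1)^{m-1}\sum_{0\le l<m}(-1)^l P_l^{(-m)}(x)P_l^{(-m)}(1-y)$.

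Third, to finish I would set up an induction on $m$. Differentiating \eqref{ad-tau_mtx} gives $\partial_x\tau_m(t;x)=\tau_{m-1}(t;x)-P_{m-1}(t)P_{m-1}^{(-(m-1))}(x)$ (the boundary term at $t=x$ vanishes for $m\ge2$), and since $\int_0^1\tau_{m-1}(t;x)P_{m-1}(t)\dd t=P_{m-1}^{(-(m-1))}(x)$ and $\int_0^1 P_{m-1}^2\dd t=1$, differentiating under the integral yields the recursion $\partial_x\partial_y C_m(x,y)=C_{m-1}(x,y)-\pi_{m-1}(x)\pi_{m-1}(y)$, where \eqref{Pkx-m} evaluates $\pi_{m-1}(x):=P_{m-1}^{(-(m-1))}(x)=\frac{(-1)^{m-1}\sqrt{2m-1}}{(m-1)!}(x(1-x))^{m-1}$. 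Together with the boundary conditions $C_m(x,0)=C_m(0,y)=0$ (because $\tau_m(\cdot;0)\equiv0$), this determines $C_m$ uniquely; the base case $m=1$ is the Brownian-bridge covariance $C_1(x,y)=x\wedge y-xy$. It then remains to check that the claimed closed form satisfies the recursion and the boundary conditions.

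I expect the main obstacle to be this last matching step. Either one re-integrates the recursion twice in $(x,y)$ — which is elementary but must be carried out across the diagonal $x=y$, since $C_{m-1}$ is only piecewise given — or one evaluates the finite sum of products of integrated Legendre polynomials in closed form directly from \eqref{Pkx-m}, which amounts to a Christoffel--Darboux-type telescoping. Both routes are finite and elementary, but the bookkeeping of the diagonal split and of the $\pi_{m-1}(x)\pi_{m-1}(y)$ correction is where the real work lies; the continuity of $C_m$ across $x=y$ provides a useful consistency check on the piecewise algebra.
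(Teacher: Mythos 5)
Your reduction and setup coincide with the paper's up to the point where the closed form must actually be verified: both arguments pass to $C_m(x,y)=\int_0^1\tau_m(t;x)\tau_m(t;y)\dd t$, peel off the projection term $\sum_{0\le k<m}P_k^{(-m)}(x)P_k^{(-m)}(y)$, and recast the target (for $x\le y$) as $\frac{1}{(m-1)!^2}\int_0^{x(1-y)}v^{m-1}(v+y-x)^{m-1}\dd v$. From there the routes genuinely diverge. The paper evaluates the projection sum in closed form via the combinatorial identity \eqref{inner_sum}, turns all three pieces into Euler-type integrals, and proves the resulting polynomial identity \eqref{cov-id} by packaging all $m$ at once into an exponential generating function and cancelling three imaginary-error-function expressions in \eqref{int-exp}. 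You instead induct on $m$ through the differential recursion $\partial_x\partial_y C_m=C_{m-1}-\pi_{m-1}(x)\pi_{m-1}(y)$ with $\pi_{m-1}(x)=\frac{(-1)^{m-1}\sqrt{2m-1}}{(m-1)!}(x(1-x))^{m-1}$, the boundary conditions $C_m(x,0)=C_m(0,y)=0$, and the Brownian-bridge base case; every one of these ingredients checks out against \eqref{ad-tau_mtx}, \eqref{tau-exp} and \eqref{Pkx-m}. Your route is more structural --- it explains the incomplete-beta shape of the kernel one integration at a time --- at the price of integrating a piecewise-defined right-hand side across the diagonal; the paper's route avoids the diagonal entirely but hinges on the rather magical identity \eqref{int-exp}.

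The step you defer does close, and more easily than you fear. With $u=x(1-y)$, $w=y-x$, so $u+w=y(1-x)$ and $T_m=\frac{1}{(m-1)!^2}\int_0^u v^{m-1}(v+w)^{m-1}\dd v$ on $\{x\le y\}$, a direct computation using $x(1-y)=u$ and $xy+1-y=1-(u+w)$ gives
\begin{align*}
(m-1)!^2\,\partial_x\partial_y T_m
&=-(2m-1)u^{m-1}(u+w)^{m-1}+(m-1)u^{m-1}(u+w)^{m-2}\\
&\qquad-(m-1)(m-2)\int_0^u v^{m-1}(v+w)^{m-3}\dd v,
\end{align*}
and a single integration by parts identifies the last two terms with $(m-1)^2\int_0^u v^{m-2}(v+w)^{m-2}\dd v=(m-1)!^2\,T_{m-1}$, while $u(u+w)=xy(1-x)(1-y)$ identifies the first term with $-(m-1)!^2\,\pi_{m-1}(x)\pi_{m-1}(y)$. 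For the uniqueness step across the diagonal it suffices to observe that $T_m$ and $\partial_xT_m$ are continuous at $x=y$ for $m\ge2$ (immediate from the integral form as $w\to0$) and that $T_m$ vanishes with its first partials on the coordinate axes, so that $T_m(x,y)=\int_0^x\int_0^y\partial_s\partial_tT_m\dd t\dd s$ equals $C_m(x,y)$ by the induction hypothesis. With that last page of algebra supplied, your proposal is a correct and complete alternative proof.
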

When $x\ne y$, \eqref{Kmxy2} can alternatively be expressed as
\[
    K^{[m]}(x,y)
    = \frac{(-1)^m}{(m-1)!^2}\,
    \text{Beta}_{m,m}\biggl(-\frac{x\wedge y - x y}{|x-y|}\biggr) 
    \frac{|x-y|^{2m-1}}{(x(1-x)y(1-y))^{m/2}},
\]
where $\text{Beta}_{a,b}$ is the incomplete beta function with 
parameters $(a,b)$. In particular, when $x=y$,
\begin{align*}
    K^{[m]}(x,x)
    = \frac{(x(1-x))^{m-1}}{(2m-1) (m-1)!^2}.
\end{align*}

The covariance functions $K^{[m]}(x,y)$ for $m=1,2,3$ are listed 
as follows.
\begin{small}
\begin{align*}
    K^{[1]}(x,y) 
    &= \sqrt{\frac{x\wedge y -xy}{x\vee y -xy}}, \\
    K^{[2]}(x,y) 
    &= \frac{x\wedge y- xy}{x\vee y -xy}\cdot
    \frac{3(x\vee y)-(x\wedge y)+2xy}{6}, \\
    K^{[3]}(x,y) 
    &= \frac{(x\wedge y -xy)^{\frac{3}{2}}}
    {(x\vee y -xy)^{\frac{3}{2}}}
    \biggl(\frac{\lpa{(x\vee y) -\frac34 xy}^2}{12}+
    \frac{\lpa{(x\wedge y) +\frac32 xy}^2}{120} 
    -\frac{xy(3xy+8)}{192}\biggr).
\end{align*}
\end{small}

\begin{rem}
Since $|x-y|=x\vee y-x\wedge y$ and $xy=(x\wedge y)(x\vee y)$,
$K^{[m]}(x,y)$ in \eqref{Kmxy2} is a function of $x\wedge y$ and
$x\vee y$. More precisely, $K^{[m]}(x,y)$ is a symmetric function
in $x\wedge y$ and $1-x\vee y$ because of the invariance property
of $A^{[m]}_N$ given in Lemma \ref{lm:invariance}.
\end{rem}

We use a direct combinatorial proof, which is somewhat lengthy but 
self-contained.
\begin{proof}[Proof of Theorem \ref{thm:cov}]
It suffices to show that
\begin{align*}
    &\Cov\lpa{B^{[m]}(x),B^{[m]}(y)}\\
    &\qquad = \frac{1}{(m-1)!^2}\sum_{0\le k<m} 
    \frac{1}{m+k} \binom{m-1}{k} 
    (x\wedge y - x y)^{m+k} |x-y|^{m-1-k}.
\end{align*}
Note that when $m=1$, the right-hand side becomes $x\wedge y - x y$,
which is the covariance function of the Brownian bridge. By definition
\begin{align}
    &\Cov\lpa{B^{[m]}(x),B^{[m]}(y)}\nonumber\\
    &\qquad = \int_0^1 \tau_m(t;x) \tau_m(t;y) 
	\dd t\nonumber \\
    &\qquad = \int_0^{x\wedge y}
    \frac{(t-x)^{m-1}(t-y)^{m-1}}{(m-1)!^2}\dd t
    - \sum_{0\le k<m} P_{k}^{(-m)}(x) P_{k}^{(-m)}(y).
\label{covBm}
\end{align}
Without loss of generality, assume from now on $m\ge1$ and 
$0\le x\le y\le 1$. Let 
\begin{align}\label{Jmxy}
    J_m(x,y)
    := \int_0^x \frac{(x-v)^{m-1}
    (y-v)^{m-1}}{(m-1)!^2}\dd v-
    \sum_{0\le k<m} P_{k}^{(-m)}(x) 
    P_{k}^{(-m)}(y).
\end{align}
We show that 
\begin{align}\label{Qmx}
    J_m(x,y) 
    = \frac{(x(1-y))^{m}}{(m-1)!} \sum_{0\le k<m}
    \frac{1}{m+k} \binom{m-1}{k} (x(1-y))^k (y-x)^{m-1-k},
\end{align}
starting with the first term on the right-hand side of \eqref{Jmxy}, 
which can be written as
\[
    \int_0^x \frac{(x-v)^{m-1}
    (y-v)^{m-1}}{(m-1)!^2}\dd v
    =\frac{x^{m}}{(m-1)!^2} 
    \int_0^1 (1-v)^{m-1} (y-xv)^{m-1} \dd v.    
\]
For the second term on the right-hand side of \eqref{Jmxy}, 
we have, by \eqref{Pk-m}, 
\begin{align*}
    &\sum_{0\le k<m} 
    P_{k}^{(-m)}(x) P_{k}^{(-m)}(y) \\
    &\qquad
    = \sum_{0\le j,\ell<m} \frac{(-1)^{j+\ell}
    x^{m+j}{y^{m+\ell}}}{j!\ell!(m+j)!(m+\ell)!}
    \sum_{(j\vee \ell)\le k<m} 
    (2k+1)\frac{(k+j)!(k+\ell)!}{(k-j)!(k-\ell)!}.
\end{align*}
By a simple induction on $m$, the inner sum equals 
\begin{equation}
\label{inner_sum}
    \sum_{(j\vee \ell)\le k<m}(2k+1)
    \frac{(k+j)!(k+\ell)!}{(k-j)!(k-\ell)!}
    = \frac{(m+j)!(m+\ell)!}{(m-1-j)!(m-1-\ell)!(j+\ell+1)},
\end{equation}
so that
\begin{align*} 
    &\sum_{0\le k<m} P_{k}^{(-m)}(x) P_{k}^{(-m)}(y)\\
    &\qquad= \sum_{0\le j,\ell<m}
    \frac{(-1)^{j+\ell} x^{m+j} y^{m+\ell}}
    {j!\ell!(m-1-j)!(m-1-\ell)!(j+\ell+1)} \\
    &\qquad= \frac{1}{(m-1)!^2} 
    \sum_{0\le j,\ell<m} \binom{m-1}{j} 
    \binom{m-1}{\ell} \frac{(-1)^{j+\ell}}
    {j+\ell+1}\, x^{m+j}y^{m+\ell} \\
    &\qquad= \frac{x^{m}y^{m}}{(m-1)!^2} 
    \int_0^1 (1-vx)^{m-1} (1-vy)^{m-1} \dd v.
\end{align*}
On the other hand, 
\[
    \mbox{RHS of \eqref{Qmx}}
    = \frac{(x(1-y))^{m}}{(m-1)!^2}
    \int_0^1 v^{m-1} (x(1-y)v+y-x)^{m-1} \dd v.
\]
Thus \eqref{Qmx} will follow from the identity
\begin{align*}
    & \frac{x^{m}}{(m-1)!^2} 
    \int_0^1 (1-v)^{m-1} (y-xv)^{m-1} \dd v \\
    &\qquad\qquad - \frac{x^{m} y^{m}}{(m-1)!^2} 
    \int_0^1 (1-vx)^{m-1} (1-vy)^{m-1} \dd v \\
    &\qquad
    = \frac{(x(1-y))^{m}}{(m-1)!^2} 
    \int_0^1 v^{m-1} (x(1-y)v+y-x)^{m-1} \dd v,
\end{align*}
or, equivalently,
\begin{equation}\label{cov-id}
\begin{aligned}
    & \int_0^1 v^{m-1} (y-x+xv)^{m-1} \dd v - y^{m} 
    \int_0^1 (1-vx)^{m-1} (1-vy)^{m-1} \dd v \\
    & \qquad= (1-y)^{m} \int_0^1 v^{m-1} (x(1-y)v+y-x)^{m-1} \dd v.
\end{aligned}
\end{equation}
By multiplying both sides by $\frac{z^{m-1}}{(m-1)!}$ and then 
summing over all $m\ge1$, it suffices only to prove that 
\begin{align}\label{int-exp}
    \int_0^1 \lpa{e^{zv(y-x+xv)} - y e^{zy(1-vx)(1-vy)}
    -(1-y)e^{zv (x(1-y)v+y-x)} } \dd v = 0.
\end{align}
Let 
\[
     I(x) := \int_0^x e^{t^2}\dd t,
\]
which equals (up to a constant $\sqrt{\pi}/2$) the imaginary error 
function. With this integral, all integrals in \eqref{int-exp} are
of the form 
\[
    \int_0^1 e^{tv+wv^2}\dd v 
    = \frac{e^{-\frac{t^2}{4w}}}{\sqrt{w}}
    \llpa{I\llpa{\frac{t}{2\sqrt{w}}+\sqrt{w}} 
    - I\llpa{\frac{t}{2\sqrt{w}}}}.
\]
Applying this identity, we then have
\begin{align*}
    &\int_0^1 e^{zv(y-x+xv)} \dd v\\
    &\qquad= \frac{e^{-\frac{z(y-x)^2}{4x}}}{\sqrt{xz}} 
    I\llpa{\frac{(x+y)\sqrt{z}}{2\sqrt{x}}}
    - \frac{e^{-\frac{z(y-x)^2}{4x}}}{\sqrt{xz}} 
    I\llpa{\frac{(y-x)\sqrt{z}}{2\sqrt{x}}}, \\
    &-y\int_0^1 e^{zy(1-vx)(1-vy)} \dd v \\
    &\qquad= -\frac{e^{-\frac{z(y-x)^2}{4x}}}{\sqrt{xz}} 
    I\llpa{\frac{(y+x-2xy)\sqrt{z}}{2\sqrt{x}}}
    + \frac{e^{-\frac{z(y-x)^2}{4x}}}{\sqrt{xz}} 
    I\llpa{\frac{(y-x)\sqrt{z}}{2\sqrt{x}}},
\end{align*}
and
\begin{align*}
    & -(1-y)\int_0^1 e^{zv\pa{x(1-y)v+y-x}} \dd v \\
    & \qquad= -\frac{e^{-\frac{z(y-x)^2}{4x}}}{\sqrt{xz}} 
    I\llpa{\frac{(x+y)\sqrt{z}}{2\sqrt{x}}}
    +\frac{e^{-\frac{z(y-x)^2}{4x}}}{\sqrt{xz}} 
    I\llpa{\frac{(y+x-2xy)\sqrt{z}}{2\sqrt{x}}}.
\end{align*}
Since the sum of the right-hand sides of the three relations is zero, 
this proves \eqref{int-exp} and thus \eqref{cov-id}.
\end{proof}

\subsection{Eigenfunction expansions and KL expansions} 
\label{subsec:eigen-kl}

\begin{lm}
\label{lm:eigeneq}
For $m\ge1$, the following eigenequation  
\begin{align}\label{eigeneq}
    \int_0^1 K^{[m]}(x,y) P^{[m]}_{k}(y) \dd y 
    = \frac{(k-m)!}{(k+m)!} P^{[m]}_{k}(x)\qquad (1\le m\le k)
\end{align}
holds.
\end{lm}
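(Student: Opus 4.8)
The plan is to work directly from the template-function representation of the covariance obtained inside the proof of Theorem~\ref{thm:cov}, namely $\Cov\lpa{B^{[m]}(x),B^{[m]}(y)}=\int_0^1\tau_m(t;x)\tau_m(t;y)\dd t$ (see \eqref{covBm}), so that
\[
    K^{[m]}(x,y)=\frac{1}{(x(1-x))^{m/2}(y(1-y))^{m/2}}\int_0^1\tau_m(t;x)\tau_m(t;y)\dd t.
\]
First I would invoke Lemma~\ref{lm:dublin} in two ways. From \eqref{dublin} alone, $\frac{P^{[m]}_k(y)}{(y(1-y))^{m/2}}=\sqrt{\frac{(k+m)!}{(k-m)!}}\,\frac{P_k^{(-m)}(y)}{(y(1-y))^m}$; and combining the definition \eqref{Pkm} with \eqref{dublin} yields the differential relation $\frac{P_k^{(-m)}(y)}{(y(1-y))^m}=(-1)^m\frac{(k-m)!}{(k+m)!}P_k^{(m)}(y)$. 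Substituting the first relation into $\int_0^1 K^{[m]}(x,y)P^{[m]}_k(y)\dd y$, interchanging the two integrations (legitimate because, as in Lemma~\ref{lm:invariance}, the apparent singularities at the endpoints are cancelled by the $y^{m+\cdots}$ and $(1-y)^{m+\cdots}$ factors of $P_k^{(-m)}$), and then applying the differential relation reduces the entire computation to the single inner integral $\int_0^1\tau_m(t;y)\,P_k^{(m)}(y)\dd y$.

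The heart of the argument, and the step I expect to be the main obstacle, is the identity
\[
    \int_0^1\tau_m(t;y)\,P_k^{(m)}(y)\dd y=(-1)^m P_k(t)\qquad(k\ge m).
\]
I would prove it by inserting the explicit form \eqref{ad-tau_mtx} of $\tau_m$ and integrating by parts $m$ times in each resulting piece. In the term arising from $\frac{(y-t)^{m-1}}{(m-1)!}\Ind_{\{t\le y\}}$, the boundary contributions at the lower endpoint $y=t$ all vanish except the last, which produces exactly $(-1)^m P_k(t)$, while the surviving integral vanishes since the $m$-th derivative of $(y-t)^{m-1}$ is zero. In the terms arising from the subtracted sum $\sum_{0\le j<m}P_j(t)P_j^{(-m)}(y)$, the boundary contributions at $y=0$ vanish because each $P_j^{(-m)}$ carries a factor $y^{m+\cdots}$ (see \eqref{Pk-m}), and the interior integrals $\int_0^1 P_j(y)P_k(y)\dd y$ vanish for $j<m\le k$ by orthogonality \eqref{Pk-ortho}. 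What remains on both sides are boundary terms at $y=1$, and these must cancel. This cancellation is the crux: it follows from expanding the polynomial $\frac{(1-t)^{m-1-i}}{(m-1-i)!}$ (of degree $\le m-1$) in the Legendre basis, the key observation being that its $j$-th coefficient is precisely $\int_0^1\frac{(1-t)^{m-1-i}}{(m-1-i)!}P_j(t)\dd t=P_j^{(-(m-i))}(1)$, so that after interchanging the two finite sums the two families of $y=1$ boundary terms become term-by-term identical.

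Finally I would assemble the pieces. With the inner integral evaluated, the two factors $(-1)^m\frac{(k-m)!}{(k+m)!}$ and $(-1)^m$ combine to $\frac{(k-m)!}{(k+m)!}$, and the remaining outer integral is $\int_0^1\tau_m(t;x)P_k(t)\dd t$. Since $\tau_m(\cdot;x)$ is orthogonal to $P_0,\dots,P_{m-1}$ and the subtracted sum in \eqref{ad-tau_mtx} involves only those polynomials, this outer integral equals $\int_0^x\frac{(x-t)^{m-1}}{(m-1)!}P_k(t)\dd t=P_k^{(-m)}(x)$ for $k\ge m$. Converting $P_k^{(-m)}(x)$ back to $P^{[m]}_k(x)$ via \eqref{dublin} and collecting the accumulated prefactors $\frac{1}{(x(1-x))^{m/2}}$, $\sqrt{\frac{(k+m)!}{(k-m)!}}$ and $\sqrt{\frac{(k-m)!}{(k+m)!}}(x(1-x))^{m/2}$ (whose product is $1$) leaves exactly $\frac{(k-m)!}{(k+m)!}P^{[m]}_k(x)$, which is the claimed eigenequation \eqref{eigeneq}.
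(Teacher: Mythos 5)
Your proof is correct, and while it follows the same overall strategy as the paper's, the key computational step is carried out by a genuinely different route. Both arguments reduce the eigenequation, via \eqref{Pkm} and Lemma~\ref{lm:dublin}, to a statement in which $P^{[m]}_k$ is traded for $P_k^{(m)}$ and the kernel for $\int_0^1\tau_m(t;x)\tau_m(t;y)\dd t$. The paper then substitutes the explicit covariance formula \eqref{covBm}, applies Fubini to the double-integral piece, expands the inner integral $\int_t^1\frac{(t-y)^{m-1}}{(m-1)!}P_k^{(m)}(y)\dd y$ in the Legendre basis, and invokes the orthogonality \eqref{orthogonality}; the contributions of the low indices $\ell<m$ (to which \eqref{orthogonality} does not apply) are exactly what cancels against the subtracted sum $\sum_{\ell<m}P_\ell^{(-m)}(x)P_\ell^{(-m)}(y)$ in \eqref{covBm}, a cancellation the paper's write-up leaves implicit. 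You instead isolate the single identity $\int_0^1\tau_m(t;y)P_k^{(m)}(y)\dd y=(-1)^mP_k(t)$ --- which the paper states without proof at the start of Sec.~\ref{subsec:finite} --- and establish it by $m$-fold integration by parts, disposing of the $y=1$ boundary terms through the observation that the Legendre coefficients of $\frac{(1-t)^{m-1-i}}{(m-1-i)!}$ are precisely $P_j^{(-(m-i))}(1)$. This makes the low-degree cancellation fully explicit (arguably more carefully than the paper does) and, as a by-product, supplies a verification of the identity quoted in Sec.~\ref{subsec:finite}. Your final assembly, using $\int_0^1\tau_m(t;x)P_k(t)\dd t=P_k^{(-m)}(x)$ for $k\ge m$ and converting back to $P^{[m]}_k$ via \eqref{dublin}, coincides with the paper's, and your bookkeeping of the prefactors is accurate.
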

\begin{proof}
Because of \eqref{Pkm} and \eqref{dublin}, it suffices to show that
\[
    \int_0^1 \Cov(B^{[m]}(x),B^{[m]}(y)) P_k^{(m)}(y) \dd y 
    = (-1)^{m} P_k^{(-m)}(x).
\]
By \eqref{covBm}, we consider first the double integral
\begin{align*}
    &\int_0^1 P_k^{(m)}(y) \int_0^{x\wedge y}
    \frac{(t-x)^{m-1}(t-y)^{m-1}}
    {(m-1)!^2}\dd t \dd y \\
    &\qquad= \int_0^{x} \frac{(t-x)^{m-1}}{(m-1)!}
    \int_{t}^1 \frac{(t-y)^{m-1}}{(m-1)!} 
    P_k^{(m)}(y) \dd y \dd t.
\end{align*}
Since the inner integral on the right-hand side is a polynomial in 
$t$ of degree $k$, we have 
\begin{align*}
    &\int_{t}^1 \frac{(t-y)^{m-1}}{(m-1)!} P_k^{(m)}(y) \dd y\\
    &\qquad= \sum_{0\le \ell\le k} P_{\ell}(t)
    \int_0^1 P_{\ell}(s) \int_{s}^1 \frac{(s-y)^{m-1}}{(m-1)!} 
    P_k^{(m)}(y) \dd y \dd s \\
    &\qquad= (-1)^{m-1}\sum_{0\le \ell\le k} P_{\ell}(t)
    \int_0^1 \int_0^{y} \frac{(y-s)^{m-1}}{(m-1)!} P_{\ell}(s) \dd s
    P_k^{(m)}(y) \dd y \\
    &\qquad= (-1)^{m-1} \sum_{0\le \ell\le k} P_{\ell}(t)
    \int_0^1 P_{\ell}^{(-m)}(y) P_k^{(m)}(y) \dd y \\
    &\qquad= -P_k(t),
\end{align*}
where we used \eqref{orthogonality}. Thus
\begin{align*}
    &\int_0^1 
	\Cov\lpa{B^{[m]}(x),B^{[m]}(y)} P_k^{(m)}(y) \dd y \\
    &\qquad= \int_0^{x} \frac{(t-x)^{m-1}}{(m-1)!}
    \bigl(-P_k(t) \bigr) \dd t 
	- \sum_{0\le \ell<m} P_{\ell}^{(-m)}(x) 
    \int_0^1 P_{\ell}^{(-m)}(y) P_k^{(m)}(y) \dd y \\
    &\qquad= (-1)^m P_k^{(-m)}(x).
\end{align*} 
This proves \eqref{eigeneq}.
\end{proof}

Since Legendre polynomial system $\{P_k(t)\}_{k\ge 0}$ is 
complete, we have 
\[
    \frac{(x-t)^{m-1}}{(m-1)!}\, \Ind_{\{t\le x\}}
    = \sum_{k\ge 0} P_k(t)
    \int_0^x \frac{(x-u)^{m-1}}{(m-1)!} P_k(u) \dd u
    = \sum_{k\ge 0} P_k(t) P_k^{(-m)}(x).
\]
Thus
\begin{equation}\label{tau-exp}
    \tau_m(t;x) 
    = \sum_{k\ge m} P_k(t) P_k^{(-m)}(x),
\end{equation}
which, together with Lemma \ref{lm:dublin}, yields 
\begin{equation}\label{tau_m-P}
    \frac{\tau_m(t;x)}{(x(1-x))^{m/2}} 
    = \sum_{k\ge m} \sqrt{\frac{(k-m)!}{(k+m)!}}\,
    P_k(t) P^{[m]}_{k}(x) \qquad(m\ge1).
\end{equation}
Let 
\begin{equation*}
    \xi_k 
    = \int_0^1 P_k(t) \dd B(t)
    = \int_0^1 P_k(t) \dd W(t), 
\end{equation*}
be a standard i.i.d.\ Gaussian sequence. Then $\mathbb{E}(\xi_k\xi_l)
=\int_0^1 P_k(t) P_l(t) \dd t = \Ind_{\{k=l\}}$. By definition 
\eqref{BNm} and \eqref{tau_m-P}, we expect that 
\begin{equation}\label{KL}
    \frac{B^{[m]}(x)}{(x(1-x))^{m/2}}
    = \sum_{k\ge m} \sqrt{\frac{(k-m)!}{(k+m)!}}\,
    P^{[m]}_{k}(x) \xi_k.
\end{equation}
On the other hand, by \eqref{eigeneq}, the following eigenfunction 
expansion is also expected to be true
\begin{align}\label{Kmxy}
    K^{[m]}(x,y)
    &= \sum_{k\ge m} \frac{(k-m)!}{(k+m)!} \,
    P^{[m]}_{k}(x) P^{[m]}_{k}(y).
\end{align}
It then follows that 
\begin{equation}\label{KL2}
    A^{[m]} 
    = \sum_{k\ge m} \frac{(k-m)!}{(k+m)!}\, \xi_k^2.
\end{equation}

We now justify all these expansions.
\begin{thm}[Expansions for $B^{[m]}(x)$, $K^{[m]}$ and $A^{[m]}$] 
\label{thm:KL} If $m=1$, then the KL expansion \eqref{KL} holds
uniformly in $x\in[\ve,1-\ve]$ with probability $1$ for any $\ve>0$,
the eigenfunction expansion \eqref{Kmxy} holds uniformly in
$x,y\in[\ve,1-\ve]$, and \eqref{KL2} converges with probability 1.

If $m\ge 2$, then \eqref{KL} holds uniformly in $x\in[0,1]$ with
probability 1, \eqref{Kmxy} holds uniformly in $x,y\in[0,1]$, and
\eqref{KL2} converges with probability 1.
\end{thm}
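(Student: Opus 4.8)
The plan is to treat the three expansions in the order \eqref{Kmxy}, \eqref{KL}, \eqref{KL2}, deducing each from the preceding one and a single structural input. By Lemma~\ref{lm:eigeneq} the pairs $\bigl(P^{[m]}_k,\frac{(k-m)!}{(k+m)!}\bigr)_{k\ge m}$ are the eigenpairs of the integral operator $T_m f(x)=\int_0^1 K^{[m]}(x,y)f(y)\dd y$, and by the stated completeness the system $\{P^{[m]}_k\}_{k\ge m}$ is a complete orthonormal set in $L^2[0,1]$. The operator $T_m$ is self-adjoint and positive semidefinite, and its eigenvalues $\frac{(k-m)!}{(k+m)!}=\frac1{(k-m+1)\cdots(k+m)}=O(k^{-2m})$ are summable, with $\sum_{k\ge m}\frac{(k-m)!}{(k+m)!}=\int_0^1 K^{[m]}(x,x)\dd x=\frac1{(2m-1)(2m-1)!}<\infty$; thus $T_m$ is trace class and the $\{P^{[m]}_k\}$ exhaust a complete eigenbasis.

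First I would establish the kernel expansion \eqref{Kmxy} by Mercer's theorem. The decisive point, and the source of the $m=1$ versus $m\ge2$ dichotomy, is the continuity of $K^{[m]}$ up to the boundary. From Theorem~\ref{thm:cov} one has $K^{[m]}(x,x)=\frac{(x(1-x))^{m-1}}{(2m-1)(m-1)!^2}$, so for $m\ge2$ the variance vanishes at the endpoints and one checks from the closed form \eqref{Kmxy2} that $K^{[m]}$ extends continuously to all of $[0,1]^2$; Mercer then gives absolute and uniform convergence of \eqref{Kmxy} on $[0,1]^2$. For $m=1$ the diagonal $K^{[1]}(x,x)\equiv1$ does not vanish and $K^{[1]}$ is discontinuous at the corners $(0,0),(1,1)$, so I would instead apply Mercer on each square $[\ve,1-\ve]^2$, where continuity is restored, obtaining uniform convergence of \eqref{Kmxy} there.

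Next I would obtain the process expansion \eqref{KL}, checking first that the coefficients are the advertised $\xi_k$. Applying the stochastic Fubini theorem to $G_m(x):=B^{[m]}(x)/(x(1-x))^{m/2}=\int_0^1\frac{\tau_m(t;x)}{(x(1-x))^{m/2}}\dd B(t)$ and using the normalized expansion \eqref{tau_m-P} together with orthonormality yields $\int_0^1 G_m(x)P^{[m]}_k(x)\dd x=\sqrt{\frac{(k-m)!}{(k+m)!}}\,\xi_k$, so the Karhunen--Lo\`{e}ve coefficient of $G_m$ against $P^{[m]}_k$ is exactly the $\xi_k$ defined before \eqref{KL}. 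The Karhunen--Lo\`{e}ve theorem applied to the mean-zero Gaussian process $G_m$ with the Mercer-continuous covariance from the previous step then gives \eqref{KL} with convergence in $L^2(\Omega)$ uniformly in $x$. To upgrade this to uniform almost-sure convergence I would invoke the It\^o--Nisio theorem: the summands $\sqrt{\frac{(k-m)!}{(k+m)!}}\,P^{[m]}_k(\cdot)\xi_k$ are independent symmetric random elements of $C[0,1]$ for $m\ge2$ (where the vanishing endpoint variance makes $G_m$ sample-continuous on the closed interval with $G_m(0)=G_m(1)=0$) and of $C[\ve,1-\ve]$ for $m=1$, and convergence of their partial sums in probability in that space, which follows from the uniform $L^2(\Omega)$ convergence, forces almost-sure uniform convergence.

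Finally, \eqref{KL2} is Parseval's identity for $G_m$: by completeness of $\{P^{[m]}_k\}$ one has $A^{[m]}=\int_0^1 G_m(x)^2\dd x=\sum_{k\ge m}\bigl(\int_0^1 G_m P^{[m]}_k\bigr)^2=\sum_{k\ge m}\frac{(k-m)!}{(k+m)!}\xi_k^2$, and since the eigenvalues are summable this series of independent nonnegative terms converges almost surely (by monotone convergence of the means, or the Kolmogorov three-series theorem). I expect the only genuine difficulty to lie in the boundary analysis for $m\ge2$, namely verifying from \eqref{Kmxy2} that $K^{[m]}$ is continuous at the corners and that $G_m$ admits a continuous modification on the closed interval, since everything else is a routine application of Mercer, Karhunen--Lo\`{e}ve, It\^o--Nisio, and Parseval.
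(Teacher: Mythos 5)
Your proposal follows essentially the same route as the paper: Mercer's theorem for \eqref{Kmxy} (with the $m=1$ boundary discontinuity of $K^{[1]}$ forcing the restriction to $[\ve,1-\ve]$, and the vanishing endpoint variance for $m\ge2$ giving continuity on all of $[0,1]^2$), the Karhunen--Lo\`{e}ve/Kac--Siegert construction for \eqref{KL} upgraded to almost-sure uniform convergence by a standard theorem on Gaussian series (the paper cites Adler's Theorems 3.7--3.8 where you cite It\^o--Nisio), and Parseval for \eqref{KL2}. The one loose point is your assertion that uniform-in-$x$ $L^2(\Omega)$ convergence already yields convergence in probability in the sup-norm --- that implication is false in general, and supplying the missing tightness is precisely what the cited theorems on sample-continuous Gaussian processes are for --- but this does not alter the validity of the overall approach.
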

\begin{proof}
The results for $m=1$, namely, the original Anderson-Darling 
statistic, are well-known; 
see, for example, Chapter 5 of \cite{Shorack86} and \cite{Pycke03}.

Assume now $m\ge 2$. Since $B^{[m]}(x)\in C^{m-1}[0,1]$ and 
\[
    \frac{\text{d}^k}{\text{d}x^k} B^{[m]}(x)\Bigl|_{x=0}
    = \frac{\text{d}^k}{\text{d}x^k} B^{[m]}(x)\Bigl|_{x=1}
    = 0 \qquad (0\le k<m),
\]
we define $\frac{B^{[m]}(x)}{(x(1-x))^{m/2}}$ at the end points $x=0$
and $1$ to be $0$, so that $\frac{B^{[m]}(x)} {(x(1-x))^{m/2}}$
becomes a continuous function in $x$ on $[0,1]$. Also the covariance
function $K^{[m]}(x,y)$ of the GAD process $\frac{B^{[m]}(x)} 
{(x(1-x))^{m/2}}$ is continuous on $[0,1]\times[0,1]$ by using
\eqref{Kmxy2} of Theorem \ref{thm:cov}.

Now, all required results will follow from standard arguments of 
orthogonal decompositions of processes \cite[Ch.\ 5, Sec.\ 
2]{Shorack86}. First, the kernel $K^{[m]}(x,y)$ is symmetric 
and positive-definite, and satisfies 
\[
    \int_0^1\int_0^1 K^{[m]}(x,y)^2 \dd x \dd y <\infty.
\]
Then, by Lemma \ref{lm:eigeneq} and Mercer's theorem, (\ref{Kmxy})
holds uniformly in $x,y\in[0,1]$. On the other hand, by Kac and 
Siegert's theorem, the KL expansion \eqref{KL} also holds for each 
$x\in[0,1]$ in the mean-square convergence:
\[
    \mathbb{E}\Biggl(\frac{B^{[m]}(x)}{(x(1-x))^{m/2}} 
    - \sum_{m\le k\le M} \sqrt{\frac{(k-m)!}{(k+m)!}}
    \,P^{[m]}_{k}(x) \xi_k\Biggr)^2 \to 0\quad (M\to\infty).
\]
Moreover, \eqref{KL2} holds with probability 1 (see Theorem 2 of
\cite{Shorack86}, pp.\,210--211, for the details). The uniform
and almost sure convergence in \eqref{KL} is a consequence of
Theorems of 3.7 and 3.8 of \cite{Adler90}; see also \cite{Pycke03}.
\end{proof}

\subsection{Finite sample expansions and statistical power}
\label{subsec:finite}

The KL expansion \eqref{KL} and the eigenfunction expansions
\eqref{KL2} describe the limiting distribution when the sample size
$N$ goes to infinity. In this section we obtain the counterparts of 
these expansions when $N$ is finite.
This is obtained by replacing 
$B(x)$ by the ``sample Brownian bridge'' $B_{N}(x) =\sqrt{N} 
(F_{N}(x)-x)$.

First note that by direct calculations, $\tau_m(t;x)$ in 
(\ref{ad-tau_mtx}) satisfies 
\[
    \int_0^1 \tau_m(t;x) P_k^{(m)}(x) \dd x 
    = (-1)^m P_k(t),
\]
from which, together with the completeness of $P_k^{[m]}(x)$ and
\eqref{Pkm}, we see that (\ref{tau-exp}) holds for all $t$ in
$L^2(\dd x|_{[0,1]})$. The integration with respect to the measure
$\dd B_{N}(x)$ is nothing but a finite sum. Thus, substituting
$t:=X_i$ into (\ref{tau-exp}), and summing up for $1\le i\le N$,
we have the sample version of the expansion
\[
    \frac{B^{[m]}_{N}(x)}{(x(1-x))^{m/2}}
    = \sum_{k\ge m} \sqrt{\frac{(k-m)!}{(k+m)!}}\,
    P^{[m]}_{k}(x)\,\widehat\xi_k,
\]
where
\[
    \widehat\xi_k
    = \int_0^1 P_k(x) \dd B_{N}(x)
    = \sfrac{1}{\sqrt{N}} \sum_{1\le i\le N} P_k(X_i).
\]
This holds for each $(X_i)_{1\le i\le N}\in [0,1]^N$ in $L^2(\dd
x|_{[0,1]})$. For the generalized Anderson-Darling statistic
\eqref{Am}, we then have
\[
    A^{[m]}_{N} 
    = \sum_{k\ge m} \frac{(k-m)!}{(k+m)!} \,\widehat\xi_k^2
\]
for each $(X_i)_{1\le i\le N}\in [0,1]^N$. In particular,
$A^{[1]}_{N}$, $A^{[2]}_{N}$ and $A^{[3]}_{N}$, when expressed in terms of the
$\widehat\xi_k$'s, have the forms
\begin{align*}
    A^{[1]}_{N}
    &= \sum_{k\ge 1} \frac{\widehat\xi_k^2}{k(k+1)} 
    = \frac{\widehat\xi_1^2}{2} + \frac{\widehat\xi_2^2}{6}
    + \frac{\widehat\xi_3^2}{12}
    +\cdots, \\
    A^{[2]}_{N}
    &= \sum_{k\ge 2} \frac{\widehat\xi_k^2 }{(k-1)k(k+1)(k+2)} 
    = \frac{\widehat\xi_2^2}{24} 
    + \frac{\widehat\xi_3^2}{120} + \cdots, \\
    A^{[3]}_{N}
    &= \sum_{k\ge 3} \frac{\widehat\xi_k^2 }{(k-2)(k-1)k(k+1)(k+2)(k+3)} 
    = \frac{\widehat\xi_3^2}{720} +  \frac{\widehat\xi_4^2}{5040} + \cdots,
\end{align*}
where
$\widehat\xi_1 = 2\sqrt{3 N} m_1$, $\widehat\xi_2 = 6 \sqrt{5 N} \lpa{m_2 -\frac1{12}}$,
$\widehat\xi_3 = 20 \sqrt{7 N} \lpa{m_3 -\frac{3}{20}m_1}$
with $m_k = \frac{1}{N}
\sum_{1\le i\le N} \lpa{X_i-\frac12}^k$, the sample $k$th moment around $\frac12$.
Since $\widehat\xi_1$ and $\widehat\xi_2$ are 
linear functions of the sample mean and the sample variance,
respectively, they convey the information of the mean
and dispersion of the distribution, respectively.  Thus, 
one expects that $A^{[1]}_{N}$
whose dominant component is $\widehat\xi_1^2$ has
statistical power especially for mean-shift alternative, and
$A^{[2]}_{N}$ whose dominant component is $\widehat\xi_2^2$
has statistical power for dispersion-shift alternative.
Similarly, since $\widehat\xi_3$ contains the information of the skewness,
$A^{[3]}_{N}$ is expected to detect the skewness (or asymmetry)
of the distribution.

Note that when
$N$ is large and the distribution $F$ of $X_i$ is not too much away
from the null hypothesis $\mathrm{Unif}(0,1)$, $\widehat\xi_k^2$ are 
approximately distributed as the independent chi-square distributions with one degree of freedom and
the non-centrality parameter
$\mathbb{E}\bigl[\widehat\xi_k\bigr]^2$ with
$\mathbb{E}\bigl[\widehat\xi_k\bigr]=\sqrt{N}\bigl(\int_0^1 P_k(x) \dd F(x)\bigr)$.
For example, 
let $Y_i$ be i.i.d.\ Gaussian random variables  with
mean $\Delta\mu/\sqrt{N}$ and
variance $1+\Delta\sigma^2/\sqrt{N}$, 
and consider testing the null hypothesis that
the true distribution is $N(0,1)$. 
Then, $F$ is the distribution of $X_i=\Phi(Y_i)$, where $\Phi$ the distribution function of $N(0,1)$.
By simple calculations, 
\begin{equation}
\label{contiguous}
 \mathbb{E}\bigl[\widehat\xi_1\bigr] \approx
 0.977\Delta\mu, \qquad 
 \mathbb{E}\bigl[\widehat\xi_2\bigr] \approx
 0.616\Delta\sigma^2, \qquad 
 \mathbb{E}\bigl[\widehat\xi_3\bigr] \approx
 0.183\Delta\mu 
\end{equation}
up to an error of $O(1)$.
The local property (\ref{contiguous}) is consistent with the interpretations of $\widehat\xi_k$ stated above.

The decomposition of goodness-of-fit statistics is also discussed in
\cite{Durbin72} based on the Cram\'er-von Mises statistic. While the
eigenfunctions for the Cram\'er-von Mises statistic are trigonometric
functions and different from our associate Legendre polynomials, the
interpretation of our components $\widehat\xi_k^2$ is similar to that
given in \cite{Durbin72}.

In Figures \ref{fig:normal1}--\ref{fig:skew}, the powers of
the test statistics $A_N^{[1]}$, $A_N^{[2]}$ and $A_N^{[3]}$
are compared based on Monte Carlo simulations.
We use the null hypothesis $H_0$ that $F$ is the normal distribution
with zero mean and unit variance, and assume that $N=100$ samples
are available.
The size of the tests is 0.05, and the critical values were
estimated by simulations.  The number of replications is 100,000.

First, we assume that the true distribution is the normal
distribution with mean $\mu$ and variance $\sigma^2$.
The estimated power functions of $A_N^{[1]}$, $A_N^{[2]}$
and $A_N^{[3]}$ are summarized in Figures \ref{fig:normal1}
and \ref{fig:normal2}, where 
the power functions are plotted
as functions of the mean $\mu$ 
(Figure \ref{fig:normal1}) and of the standard deviation $\sigma$
(Figure \ref{fig:normal2}), respectively.

One sees that $A_N^{[1]}$ performs the best in many cases.
$A_N^{[2]}$ has more power that $A_N^{[1]}$ when $\mu$ is
close to 0 and $\sigma$ is away from 1, while $A_N^{[1]}$ has
more power than $A_N^{[2]}$ when $\mu$ is not close to
0 or $\sigma$ is not away from 1.  As suggested in
(\ref{contiguous}), $A_N^{[3]}$ has less power than
$A_N^{[1]}$ and $A_N^{[2]}$ in any configuration.

\newcommand\blueredgreen{blue circle: $m=1$, red square: $m=2$, green triangle: $m=3$}

\begin{figure}
\begin{center}
\begin{tabular}{cc}
\includegraphics[scale=0.25]{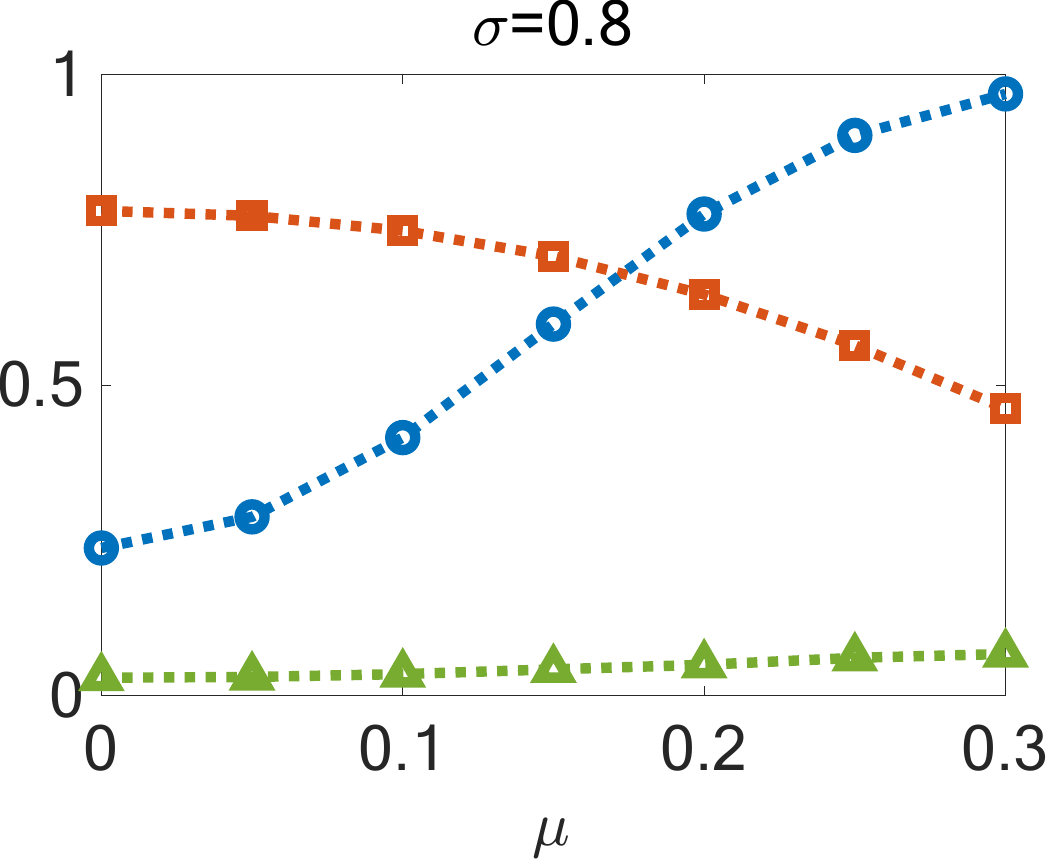} &
\includegraphics[scale=0.25]{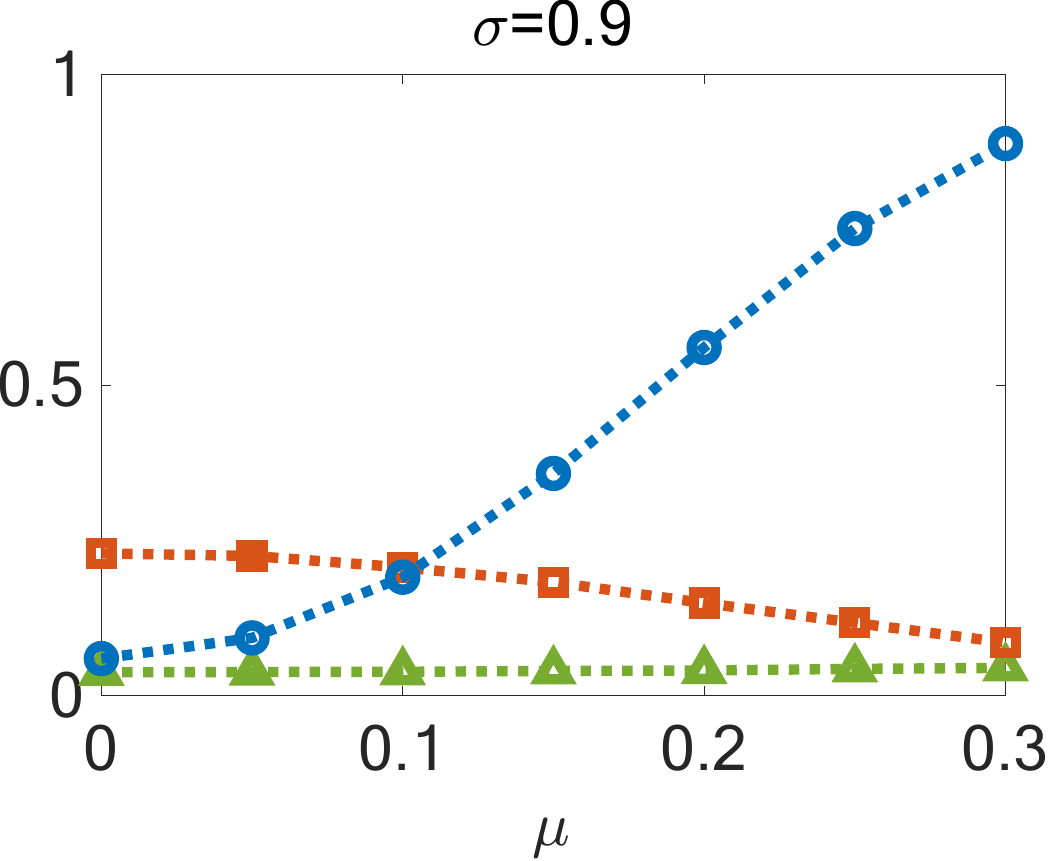} \\
\multicolumn{2}{c}{\includegraphics[scale=0.25]{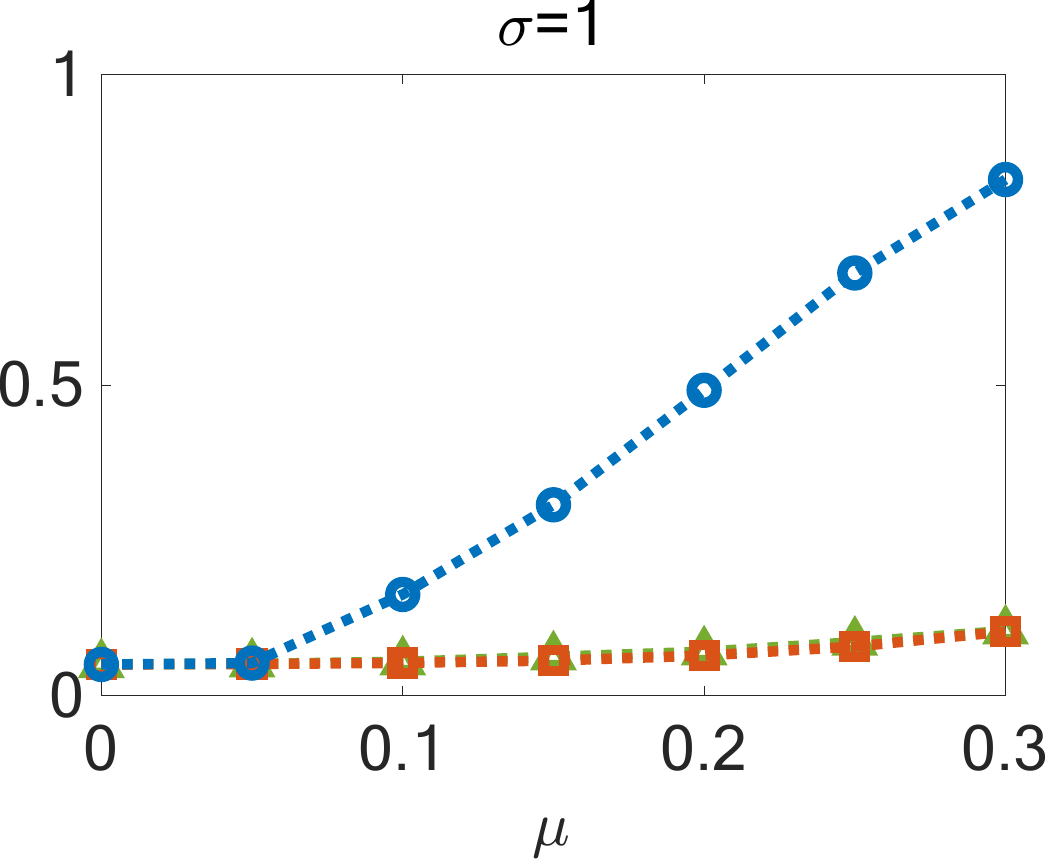}} \\
\includegraphics[scale=0.25]{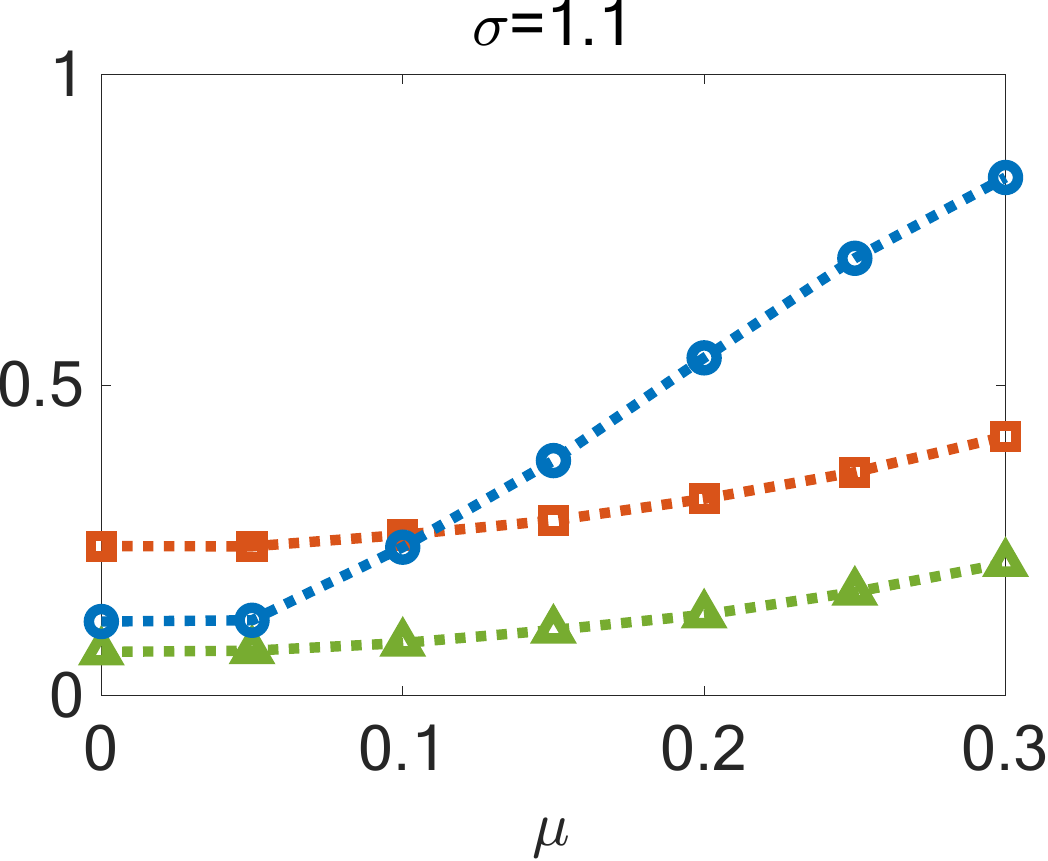} &
\includegraphics[scale=0.25]{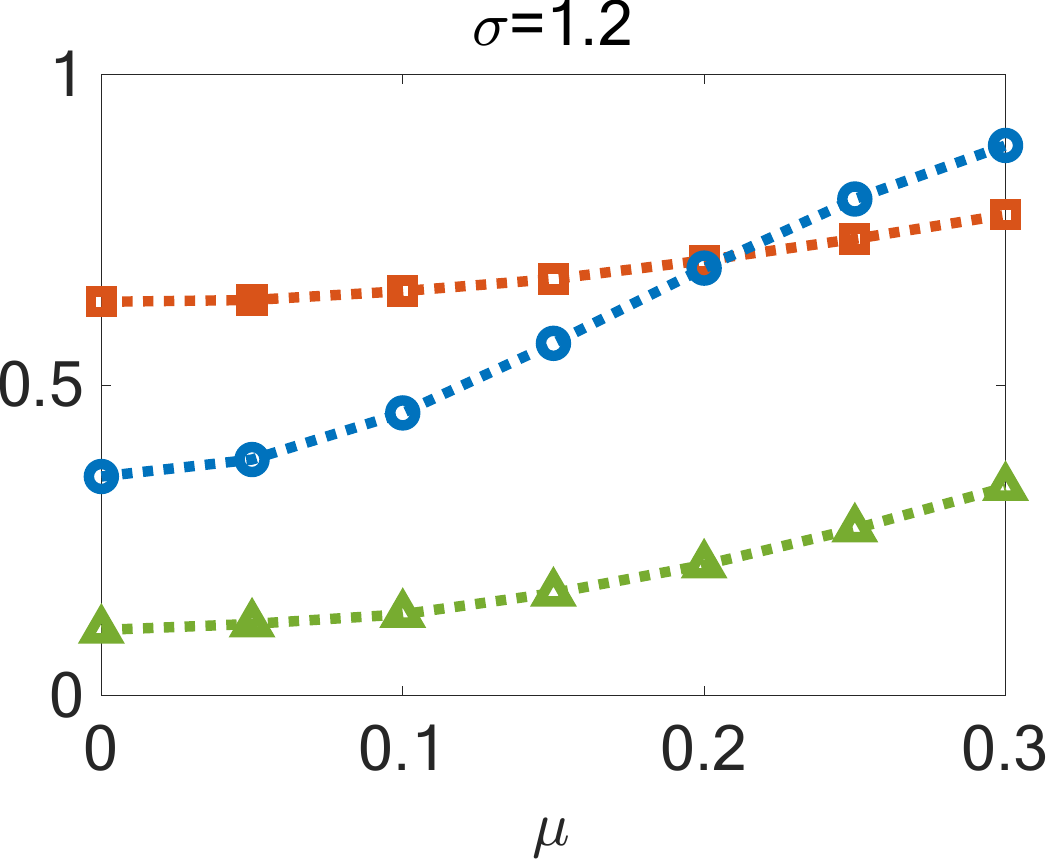}
\end{tabular}
\caption{Powers of $A^{[m]}_N$ for $\mu\in[0,0.3]$ and $\sigma\in\{0.8,0.9,1,1.1,1.2\}$.}
\smallskip (\blueredgreen)
\label{fig:normal1}
\end{center}
\end{figure}

\begin{figure}
\begin{center}
\begin{tabular}{cc}
\includegraphics[scale=0.25]{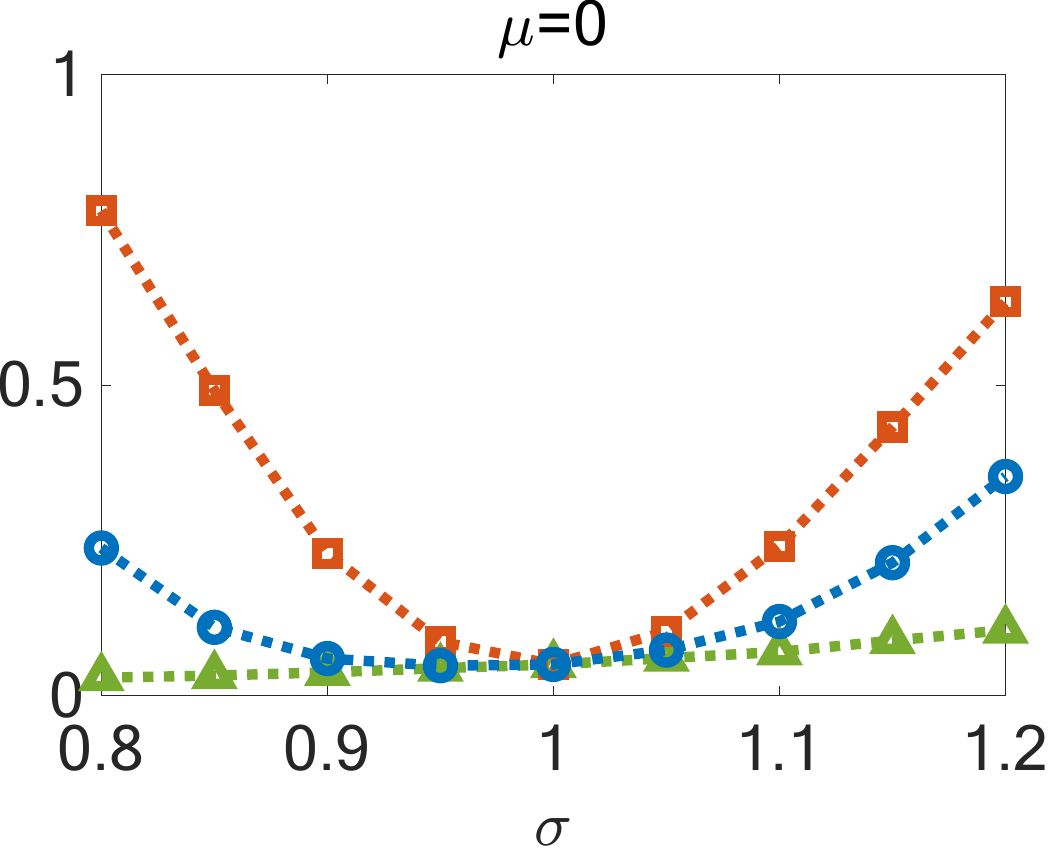} &
\includegraphics[scale=0.25]{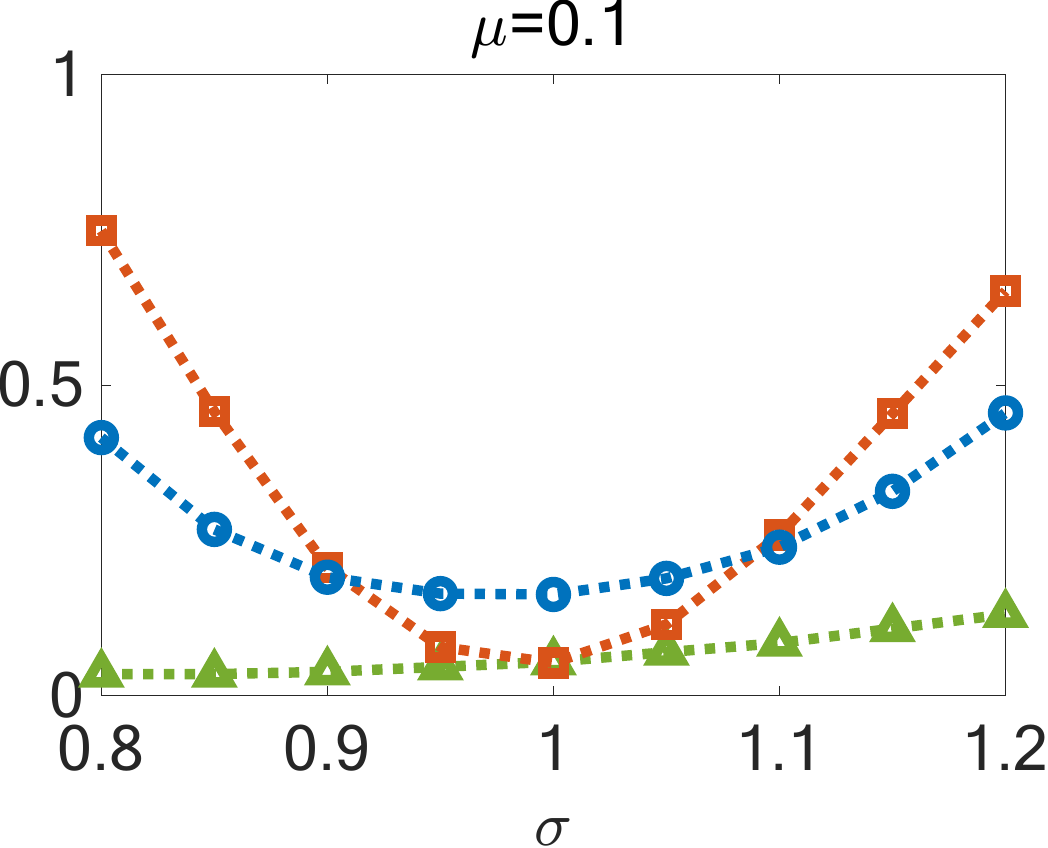} \\
\includegraphics[scale=0.25]{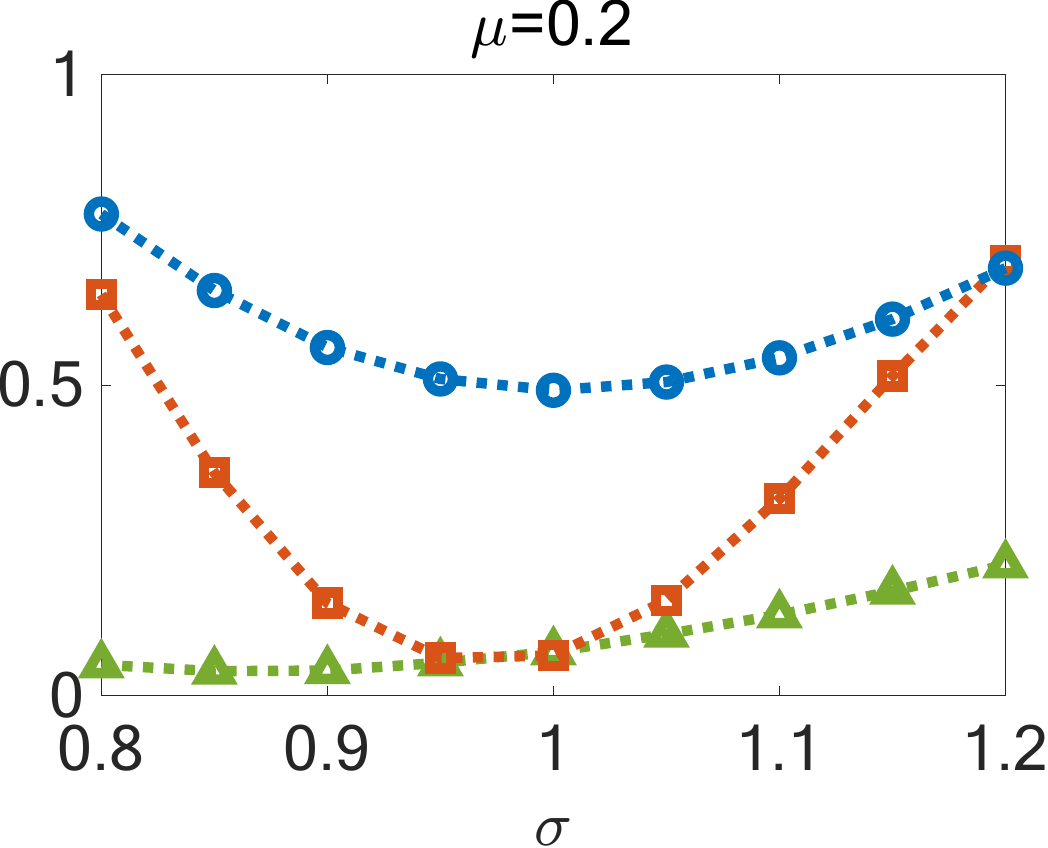} &
\includegraphics[scale=0.25]{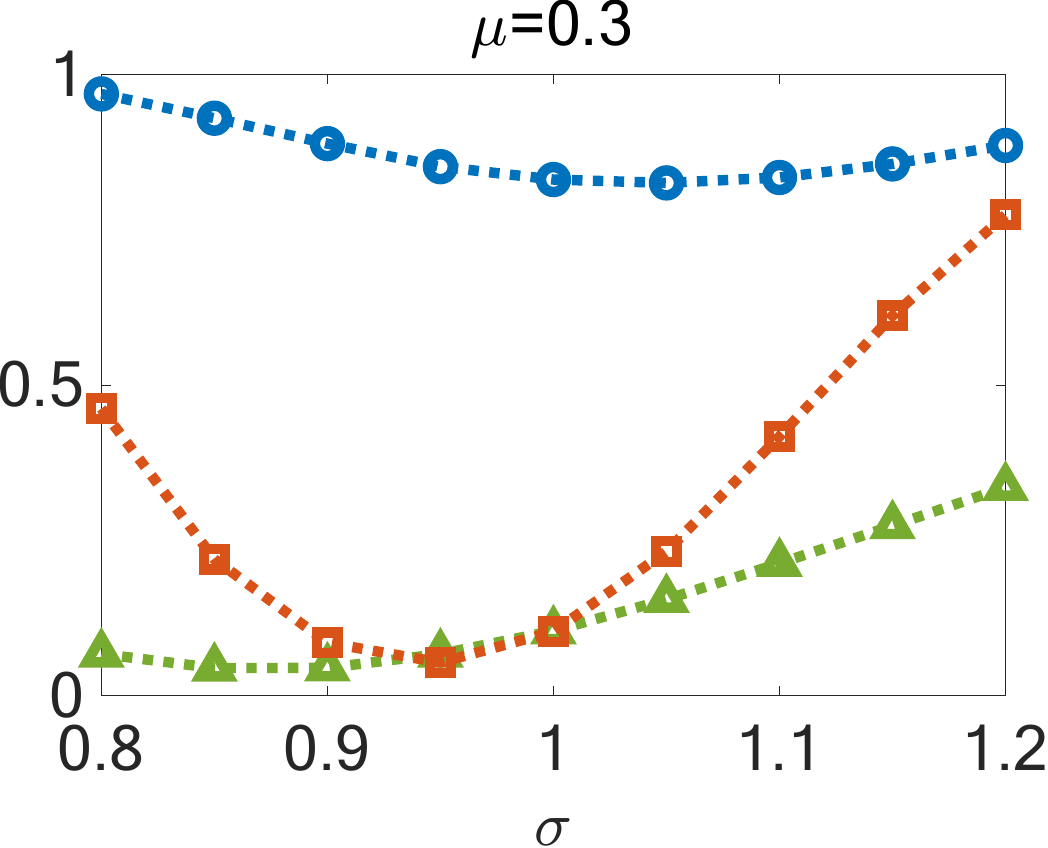}
\end{tabular}
\caption{Powers of $A^{[m]}_N$ for $\sigma\in[0.8,1.2]$ and $\mu\in\{0,0.1,0.2,0.3\}$.}
\smallskip
(\blueredgreen)
\label{fig:normal2}
\end{center}
\end{figure}

Second, we assume the skew-normal distribution with density
\begin{equation}
\label{skew-normal}
 \frac{2}{\omega}\phi\Bigl(\frac{x-\xi}{\omega}\Bigr)\Phi\Bigl(\alpha\frac{x-\xi}{\omega}\Bigr)
\end{equation}
as alternatives \citep{Azzalini85}, where $\phi$ and $\Phi$ are the probability
density function and the cumulative distribution function of
the standard normal distribution, respectively.
The mean and the variance of the skew-normal distribution are
\[
  \mu = \xi+\omega\delta\sqrt{\frac{2}{\pi}} \quad\mbox{and}\quad
  \sigma^2 = \omega^2\Bigl(1-\frac{2\delta^2}{\pi}\Bigr) \quad\mbox{with}\quad
  \delta=\frac{\alpha}{\sqrt{1+\alpha^2}},
\]
respectively.
The null hypothesis (the standard normal distribution) corresponds
to the case $\xi=0$, $\omega=1$, and $\alpha=0$.
For arbitrary $\alpha$, $(\mu,\sigma)=(0,1)$ is equivalent to
\[
    (\xi,\omega)=
    \llpa{-\omega\delta\sqrt{\frac2\pi},
    \frac1{\sqrt{1-2\delta^2/\pi}}}.
\]
The powers of $A_N^{[1]}$, $A_N^{[2]}$ and $A_N^{[3]}$ are
summarized in Figure \ref{fig:skew}.  Note that for skewed normal distribution the power
functions $(\alpha,\mu)$ and $(-\alpha,-\mu)$ are identical.

From the figures, we see that, when $\sigma=1$ and the absolute value of
$\mu$ is large, $A_N^{[1]}$ is superior to $A_N^{[2]}$, and
when $\mu=0$ and $\sigma$ is away from 1, $A_N^{[2]}$ is
superior to $A_N^{[1]}$.  This is similar to what has already
been observed in Figures \ref{fig:normal1} and \ref{fig:normal2},
and is consistent with the findings in \cite{Durio16}, where they 
showed that, even after adjusting the mean and the variance,
integrated statistics have considerable statistical power.

Moreover, when $|\alpha|$ increases, $A_N^{[3]}$ begins to
perform better, and can detect $\alpha\ne 0$ even
when $(\mu,\sigma)=(0,1)$.  Indeed, when
$(\mu,\sigma)\approx(0,1)$, $A_N^{[3]}$ has considerable power
when compared with $A_N^{[1]}$ and $A_N^{[2]}$.

\begin{figure}
\begin{center}
\begin{tabular}{ccc}
\includegraphics[scale=0.21]{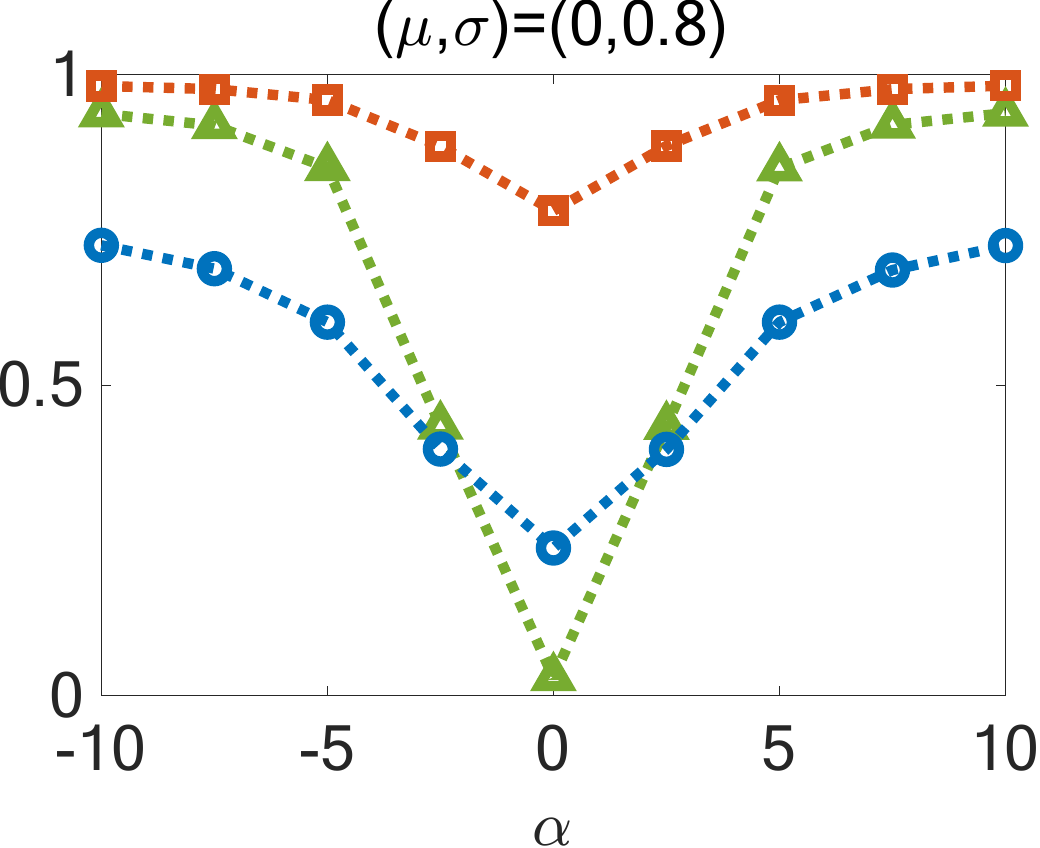} &
\includegraphics[scale=0.21]{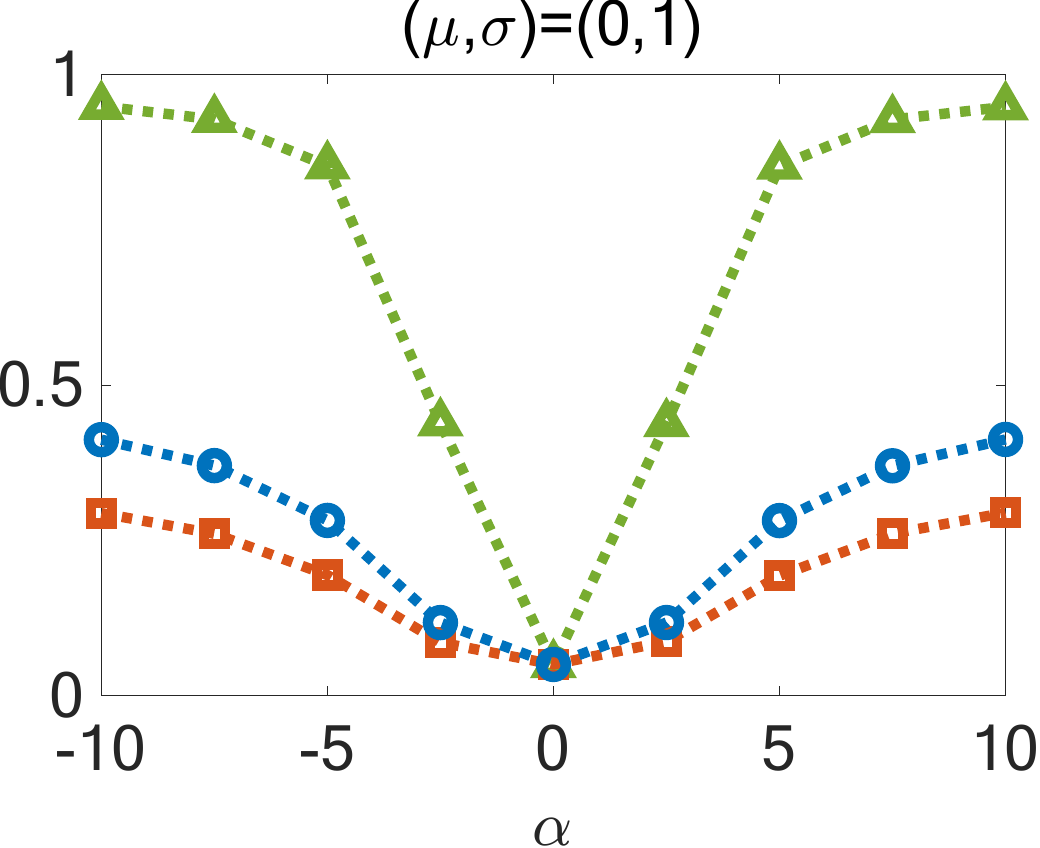} &
\includegraphics[scale=0.21]{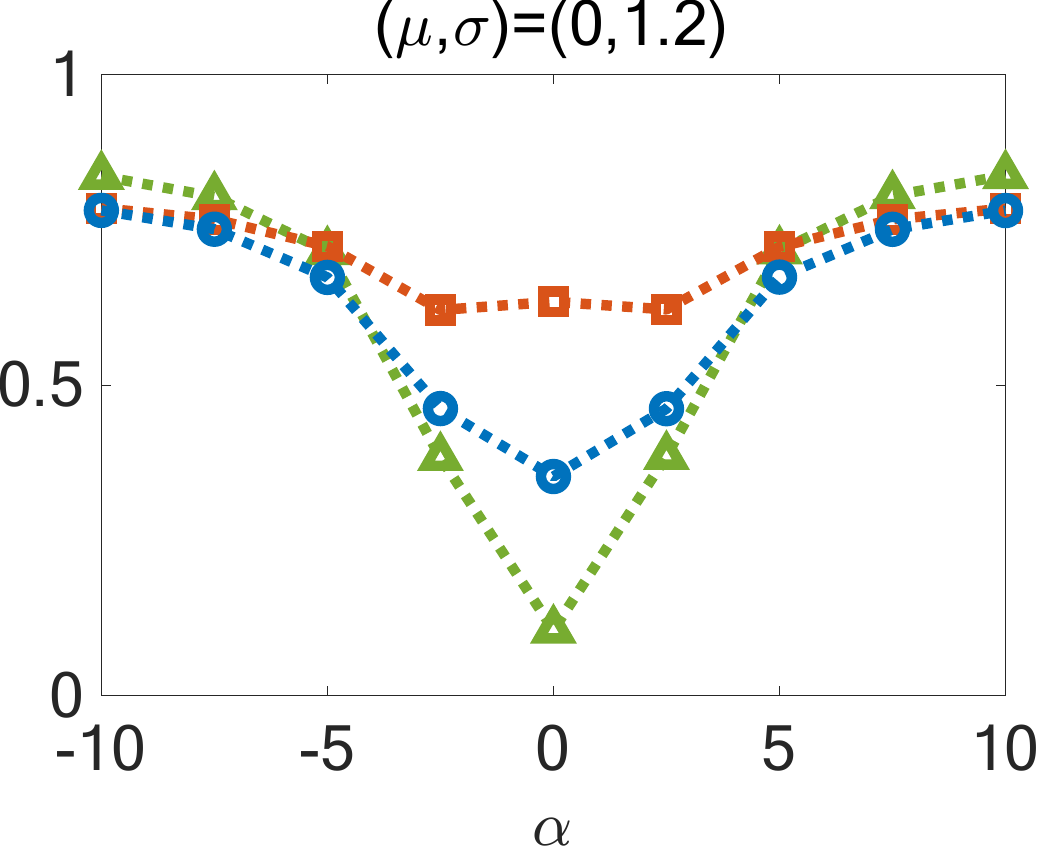} \\
\includegraphics[scale=0.21]{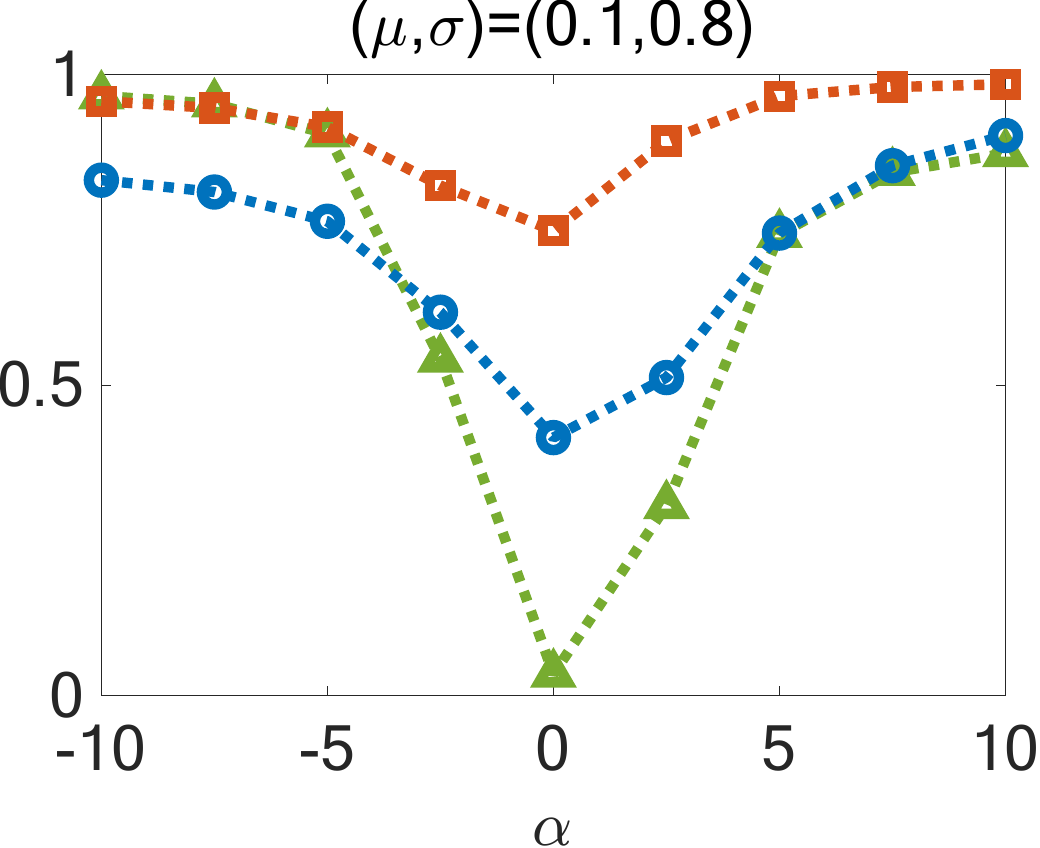} &
\includegraphics[scale=0.21]{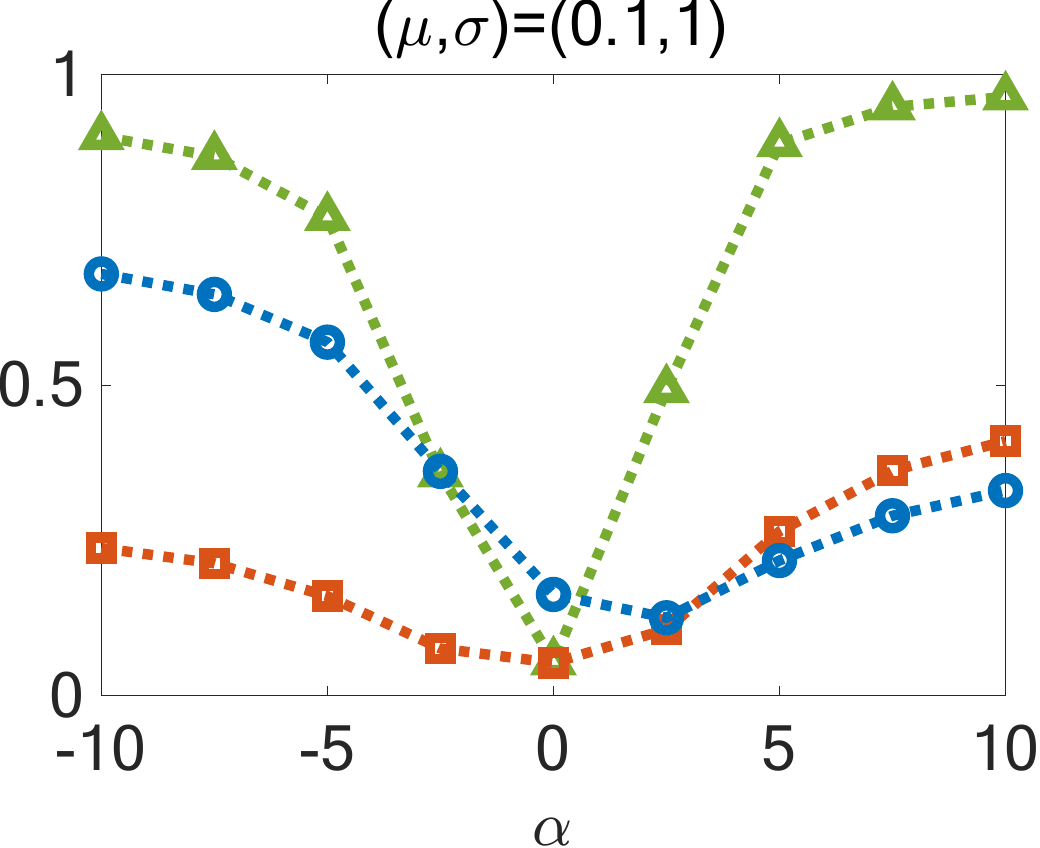} &
\includegraphics[scale=0.21]{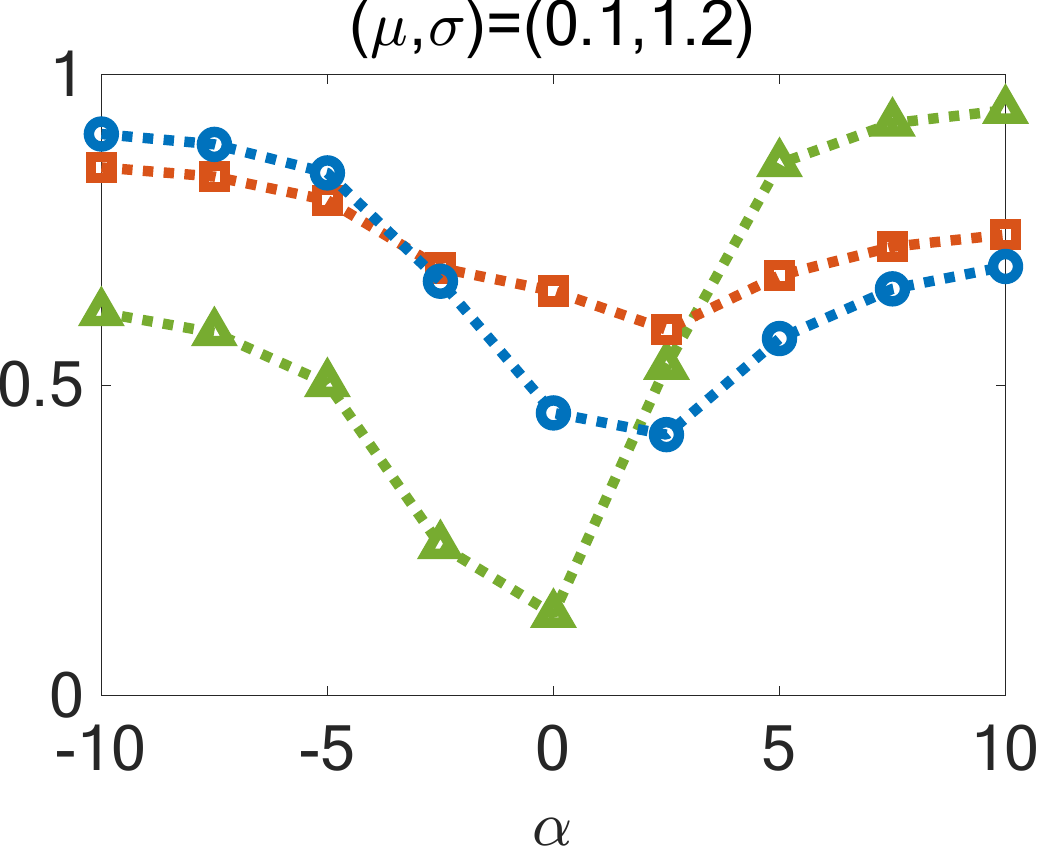} \\
\includegraphics[scale=0.21]{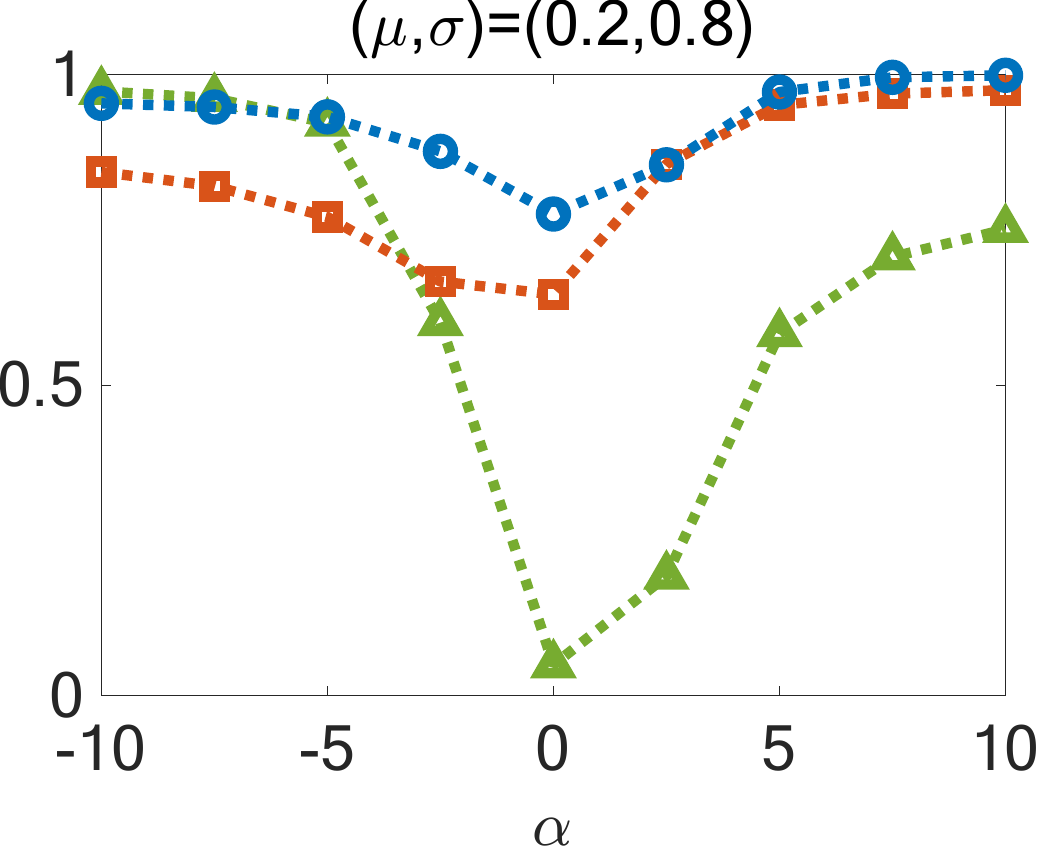} &
\includegraphics[scale=0.21]{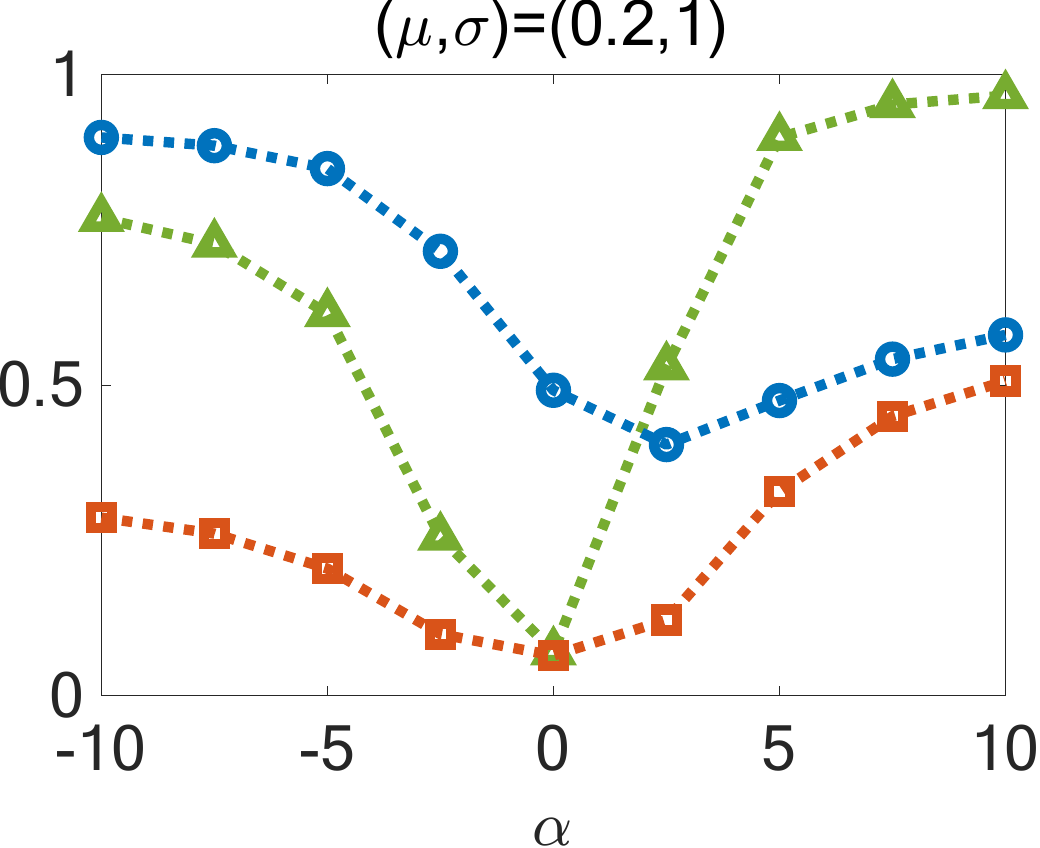} &
\includegraphics[scale=0.21]{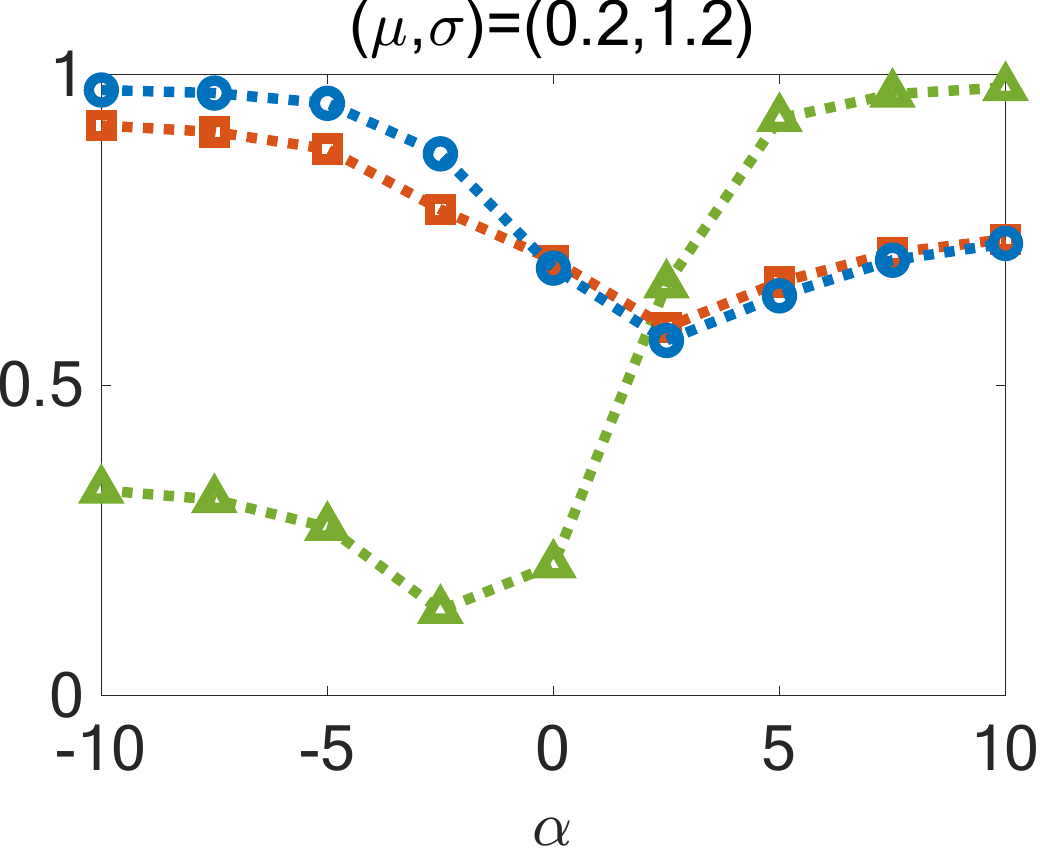}
\end{tabular}
\caption{Powers of $A^{[m]}_N$ under skew-normal distributions
with index $\alpha$, $\mu\in\{0,0.1,0.2\}$ and $\sigma\in\{0.8,1,1.2\}$.}
\smallskip
(\blueredgreen)
\label{fig:skew}
\end{center}
\end{figure}

\subsection{Two-sample test statistic}
\label{subsec:two-sample}

For the original Anderson-Darling statistic, \cite{Pettitt76}
proposed a two-sample statistic for testing equality of two empirical
distributions. Given two distribution functions $F^{(\ell)}$,
$\ell=1,2$, assume that two i.i.d.\ samples $X^{(\ell)}_1,\ldots,
X^{(\ell)}_{N_\ell}$ from $F^{(\ell)}$ are available. The merged 
dataset of $X^{(\ell)}_j$'s is denoted by $Y_1,\ldots,Y_N$, 
$N=N_1+N_2$. Let
\[
    R^{(\ell)}_j 
    = \sharp\left(i \mid Y_i\le X^{(\ell)}_j \right)
\]
be the rank of $X^{(\ell)}_j$ in the whole dataset $Y_1,\ldots,Y_N$.

Then, the $m$-fold integrated two-sample Anderson-Darling statistic 
for testing $H_0: F^{(1)}=F^{(2)}$ against $H_1: F^{(1)}\ne
F^{(2)}$ is given by
\begin{equation}\label{two-sample}
    \widetilde A^{\,\,[m]}_{N_1,N_2}
    = \frac{N_1 N_2}{N^2} \sum_{1\le i<N}
    \frac{\lpa{B^{(1)}_i-B^{(2)}_i}^2}
    {\left( \frac iN\lpa{1-\frac iN} \right)^m},
\end{equation}
where
\begin{align*}
    B^{(\ell)}_i
    &= \frac{1}{N_\ell}\sum_{1\le j\le N_\ell} 
    \frac{\lpa{\frac iN-\frac1NR^{(\ell)}_j}^{m-1}}{(m-1)!} 
    \Ind\lpa{R^{(\ell)}_j\le i} \\
    &\qquad- \frac{1}{N_\ell}
    \sum_{0\le k<m}P_k^{(-m)}\lpa{\tfrac iN}
    \sum_{1\le j\le N_\ell} P_k\Lpa{\tfrac{R^{(\ell)}_j}N}.
\end{align*}

To prove \eqref{two-sample}, note first that the $m$-fold GAD 
statistic for testing
\[
    H_0: F=F_0 \quad \mbox{vs.}\quad H_1: F\ne F_0
\]
without assuming that $F_0$ is the uniform distribution on $[0,1]$
is given as
\[
    A^{[m]}_{N} 
    = N \int_{-\infty}^\infty
    \frac{\lpa{\frac{1}{\sqrt{N}} B^{[m]}_{N}(x;F_0;F_N)}^2}
    {\left( F_0(x)(1-F_0(x))\right)^m}\dd F_0(x),
\]
where
\begin{align*}
    \frac{1}{\sqrt{N}}\, B^{[m]}_{N}(x;F_0;F_N)
    =& \int_{-\infty}^x 
    \frac{(F_0(x)-F_0(t))^{m-1}}{(m-1)!} \dd F_N(t) \\
    & - \sum_{0\le k<m}P_k^{(-m)}(F_0(x))
    \int_{-\infty}^\infty P_k(F_0(x)) \dd F_N(x).
\end{align*}

Let $F_{N_\ell}^{(\ell)}$ be the underlying empirical distribution 
function of $X^{(\ell)}_1,\ldots,X^{(\ell)}_{N_\ell}$ for $\ell=1,2$.
Denote by $F_N$ the empirical distribution function of the merged
data $Y_1,\ldots,Y_N$, $N=N_1+N_2$.
Obviously,
\[
    F_N(x) 
    = \frac{N_1 F^{(1)}_{N_1}(x) 
    + N_2 F^{(2)}_{N_2}(x)}{N}.
\]
The two-sample Anderson-Darling statistic is defined by
\[
    \widetilde A^{\,\,[m]}_{N_1,N_2}
    = \frac{N_1 N_2}{N} \int_{-\infty}^\infty
    \frac{\left(\frac{1}{\sqrt{N_1}} 
    B^{[m]}_{N_1}\lpa{x;F_N;F^{(1)}_{N_1}}-\frac{1}{\sqrt{N_2}}
    B^{[m]}_{N_2}\lpa{x;F_N;F^{(2)}_{N_2}}\right)^2}{\left( 
    F_N(x)(1-F_N(x))\right)^m}\dd F_N(x).
\]
Since $F_N(X^{(\ell)}_j) = \frac{1}{N} R^{(\ell)}_j$, a simple 
calculation then yields the result \eqref{two-sample}.

As \cite{Pettitt76} proved, the original Anderson-Darling
statistic has the same limiting null distribution as
the one-sided case. This holds for the general $m$ as well, but
the details are omitted here.

\subsection{The MGF of the limit law $A^{[m]}$}
\label{subsec:mgf-ad}

By the expansion \eqref{KL2}, we see that the MGF of the limit law 
$A^{[m]}$ has the infinite product representation
\[
    \mathbb{E}\lpa{e^{s A^{[m]}}}
    = \prod_{k\ge1} \frac1{\sqrt{1-\frac{2s}{\lambda_k}}}
    \quad\text{where}\quad
    \lambda_k := k(k+1)\cdots(k+2m+1).
\]
This expression holds for complex $s$ except for the branch-cut 
$[\frac12\lambda_1,\infty)$. It is known that 
\begin{align}\label{EesA1}
    \mathbb{E}\lpa{e^{sA^{[1]}}} 
    = \sqrt{\frac{-2\pi s}{\cos\frac{\pi}{2}\sqrt{1+8s}}}.
\end{align}
We derive a similar representation for all $m\ge1$.

\begin{thm}\label{thm:gad-cos}
Let $\eta_j$ ($0\le j<m$) represent the zeros of the 
polynomial equation (in $z$)
\begin{align}\label{poly-eq}
    \prod_{0\le j<m} \left(z-(j+\tfrac12)^2\right)
    -2s =0.
\end{align}
Then for $m\ge1$
\begin{align}\label{EesAm}
    \mathbb{E}\lpa{e^{sA^{[m]}}}
    = \sqrt{\frac{(-2\pi s)^{m}}{1!2!\cdots(2m-1)!}
    \prod_{0\le j<m}
    \frac1{\cos\lpa{\pi\sqrt{\eta_j^{}}}}},
\end{align}
where the square-root denotes the principal branch.
\end{thm}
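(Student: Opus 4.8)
The plan is to collapse the infinite product
\[
D_m(s) := \prod_{k\ge1}\Bigl(1-\tfrac{2s}{\lambda_k}\Bigr),\qquad \lambda_k=k(k+1)\cdots(k+2m-1),
\]
for which $\mathbb{E}(e^{sA^{[m]}})=D_m(s)^{-1/2}$ by \eqref{KL2}, onto the finite cosine product \eqref{EesAm} by means of the Euler factorization $\cos(\pi w)=\prod_{n\ge1}\bigl(1-w^2/(n-\tfrac12)^2\bigr)$. The key first step is a \emph{centering} identity: because $\lambda_k$ is a product of $2m$ consecutive integers symmetric about $k+m-\tfrac12$, it factors as $\lambda_k=\prod_{0\le j<m}\bigl[(k+m-\tfrac12)^2-(j+\tfrac12)^2\bigr]$. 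Writing $P(z):=\prod_{0\le j<m}(z-(j+\tfrac12)^2)$, the defining equation \eqref{poly-eq} says precisely that $P(z)-2s=\prod_{0\le j<m}(z-\eta_j)$; hence simultaneously $\lambda_k=P\bigl((k+m-\tfrac12)^2\bigr)$ and $\lambda_k-2s=\prod_{0\le j<m}\bigl[(k+m-\tfrac12)^2-\eta_j\bigr]$.

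Substituting $n=k+m$ and interchanging the (absolutely convergent) double product then gives
\[
D_m(s)=\prod_{0\le j<m}\ \prod_{n\ge m+1}\frac{(n-\tfrac12)^2-\eta_j}{(n-\tfrac12)^2-(j+\tfrac12)^2}.
\]
Each tail over $n\ge m+1$ is evaluated by comparing it with the full product over $n\ge1$, supplied by the cosine factorization, and dividing out the finite block $n=1,\dots,m$. The numerators assemble into the target factor $\prod_{0\le j<m}\cos(\pi\sqrt{\eta_j})$, while the factor $(-2\pi s)^{-m}$ and the numerical constant emerge from the $\eta_j$-independent prefactors. The crucial observation here is that $P\bigl((n-\tfrac12)^2\bigr)=0$ for $1\le n\le m$, so the finite numerator block contributes $\prod_{n=1}^m\bigl[P((n-\tfrac12)^2)-2s\bigr]=(-2s)^m$, whereas the deleted denominator terms — exactly the ones that would have made the naive cosine value vanish, since $\cos(\pi(j+\tfrac12))=0$ — must be replaced by a finite residue limit.

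I would make this rigorous through Hadamard's factorization theorem rather than by juggling the divergent blocks directly. Set $G(s):=\frac{1!\cdots(2m-1)!}{(-2\pi s)^m}\prod_{0\le j<m}\cos(\pi\sqrt{\eta_j})$. Because cosine is even and the product is symmetric in the $\eta_j$, every branch ambiguity of $\sqrt{\eta_j}$ cancels, so $G$ is single-valued; moreover the apparent pole at $s=0$ is removable since there each $\eta_j\to(j+\tfrac12)^2$ forces the corresponding cosine to vanish. One checks, using $|\eta_j|\sim|2s|^{1/m}$ as $|s|\to\infty$, that $G$ is entire of order $1/(2m)<1$, with zeros exactly at the simple points $s=\lambda_k/2$ ($k\ge1$): indeed $\prod_j\cos(\pi\sqrt{\eta_j})=0$ away from $s=0$ forces some $\eta_j=(\ell-\tfrac12)^2$ with $\ell\ge m+1$, i.e.\ $2s=\lambda_{\ell-m}$. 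By Hadamard's theorem for functions of order $<1$, $G(s)=G(0)\prod_{k\ge1}(1-2s/\lambda_k)=G(0)\,D_m(s)$, so it remains only to verify $G(0)=1$.

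The main obstacle will be this $s\to0$ local analysis, which pins down the normalizing constant $1!\,2!\cdots(2m-1)!$. Using $P(\eta_j)=2s$ and $P'((j+\tfrac12)^2)=\prod_{i\ne j}\bigl[(j+\tfrac12)^2-(i+\tfrac12)^2\bigr]\ne0$, one has $\eta_j-(j+\tfrac12)^2\sim 2s/P'((j+\tfrac12)^2)$ and $\cos(\pi\sqrt{\eta_j})\sim-(-1)^j\pi\bigl(\eta_j-(j+\tfrac12)^2\bigr)/(2(j+\tfrac12))$, so the $m$-fold pole of $(-2\pi s)^{-m}$ is cancelled exactly and $G(0)$ reduces to a finite product of differences $(j+\tfrac12)^2-(i+\tfrac12)^2=(j-i)(i+j+1)$. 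The remaining delicate bookkeeping is to confirm that this product of factorial-type terms collapses to precisely $1!\,2!\cdots(2m-1)!$ (so that $G(0)=1$); this I would establish by a short induction on $m$, cross-checked against the explicit case \eqref{EesA1} for $m=1$ and the worked evaluation for $m=2$.
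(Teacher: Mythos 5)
Your proposal is correct, but it takes a genuinely different route from the paper. The paper first proves an intermediate identity (Lemma~\ref{thm:mgf-ad}): using the Weierstrass product for the Gamma function together with the trace identity $-\sum_j\zeta_j(s)=\sum_j j$, it collapses the infinite product into the finite Gamma product $\prod_{0\le j<2m}\Gamma(1-\zeta_j(s))/j!$ over the $2m$ roots of $z(z+1)\cdots(z+2m-1)=2s$; it then exploits the same symmetry of the roots about $-m+\tfrac12$ that underlies your centering identity, pairs the Gammas, and applies Euler's reflection formula to produce the cosines. You bypass the Gamma function entirely: after the (shared) observation that $\lambda_k=P\bigl((k+m-\tfrac12)^2\bigr)$, you identify the candidate function $G(s)$, verify it is entire of order $1/(2m)<1$ with simple zeros exactly at $s=\lambda_k/2$, and invoke Hadamard factorization, reducing everything to the normalization $G(0)=1$. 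I checked your local analysis at $s=0$: with $\eta_j-(j+\tfrac12)^2\sim 2s/P'\bigl((j+\tfrac12)^2\bigr)$ and $(2j+1)P'\bigl((j+\tfrac12)^2\bigr)=(-1)^{m-1-j}(m-1-j)!\,(m+j)!$, the signs $(-1)^{m(m-1)/2}$ cancel and the product of factorials telescopes to $0!\,1!\cdots(2m-1)!$, so $G(0)=1$ as you claim (a direct computation; no induction is needed). The trade-off: the paper's route is fully constructive and yields the Gamma-product formula \eqref{gad-1st-prod} as a by-product of independent interest, with convergence handled once and for all by the Gamma function's canonical product; your route avoids that machinery but shifts the burden to global function-theoretic verifications (single-valuedness, order, exact zero set with multiplicities) and to the local computation at $s=0$, which is where the constant $1!\,2!\cdots(2m-1)!$ is actually earned and which you only sketch. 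Both arguments are sound.
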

The finite product representation \eqref{EesAm} is preferable for 
most analytic or numerical purposes.  For example, the distribution 
function of $A^{[m]}$ can be computed by the following formula
\begin{align}\label{SSF}
    \mathbb{P}\lpa{A^{[m]}>x} 
    = \sfrac1{\pi}\sum_{k\ge1} (-1)^{k-1}
    \int_{\frac12\lambda_{2k-1}}^{\frac12\lambda_{2k}} 
    \frac{e^{-xs}}
    {s\sqrt{\bigl|\mathbb{E}\lpa{e^{s A^{[m]}}}\bigr|}}\dd s,
\end{align}
which may be called \emph{the Smirnov-Slepian formula}; see 
\cite{Smirnov37,Slepian58} for the original papers where such 
a technique was introduced. See also \cite{Matsui08} and 
\cite{Deheuvels08} for other applications. In the above 
formula, we see that effective numerical procedures such as 
\eqref{EesAm} are crucial in computing the tails to high precision.

In particular, when $m=1$, the solution to $\eqref{poly-eq}$ is 
obviously given by $\eta_0 = \frac14+2s$, so we obtain \eqref{EesA1}.
When $m=2$, the solutions to \eqref{poly-eq} are 
$\frac54\pm\sqrt{1+2s}$, so we obtain the identity
\begin{equation}\label{adm2}
    \begin{split}
   	&\prod_{k\ge1}\frac1{\sqrt{1-\frac{2s}{k(k+1)(k+2)(k+3)}}}\\
    &\qquad= \frac{\pi s}{\sqrt{3\cos\left(\frac\pi2
    \sqrt{5-4\sqrt{1+2s}}\right)
    \cos\left(\frac\pi2
    \sqrt{5+4\sqrt{1+2s}}\right)}}.
    \end{split}
\end{equation}
A very different construction based on Jacobi polynomials leading
to an MGF of the same form but with $s\mapsto 6s$ can be found in 
\cite{Deheuvels08}. For $m=3$, 
\begin{equation}\label{adm3}
    \begin{split}
   	&\prod_{k\ge1}
    \frac1{\sqrt{1-\frac{2s}{k(k+1)(k+2)(k+3)(k+4)(k+5)}}}\\
    &\qquad= \frac{(\pi s)^{3/2}}{\sqrt{-4320\cos\left(\pi
    \sqrt{\eta_1}\right)
    \cosh\left(\pi\sqrt{\eta_2}\right)
    \cosh\left(\pi\sqrt{\eta_3}\right)}}
	\end{split}
\end{equation}
where $\eta := \sqrt[3]{27s+80+3\sqrt{81s^2+480s-1728})}$ and 
\begin{align*}
    \eta_1 &:= \frac1{12\eta}\left(4\eta^2+35\eta+112\right), \\
    \eta_2 &:= \frac{1}{12\eta}
    \left(4e^{-\pi i/3}\eta^2-35\eta+112e^{\pi i/3}\right), \\
    \eta_3 &:= \frac{1}{12\eta}
    \left(4e^{\pi i/3}\eta^2-35\eta+112e^{-\pi i/3}\right).
\end{align*}

While the exact expressions readily become lengthy for higher
values of $m$, numerical evaluation is rather straightforward
once the value of $s$ is specified.

\begin{lm}\label{thm:mgf-ad}
Let $\zeta_j(s)$, $0\le j<2m$, denote the zeros of the polynomials 
$z(z+1)\cdots(z+2m-1)-2s$: 
\begin{align}\label{ad-ie}
    z(z+1)\cdots(z+2m-1)-2s = \prod_{0\le j<2m}
    \left(z-\zeta_j(s)\right).
\end{align}  
Then
\begin{align}\label{gad-1st-prod}
    \mathbb{E}\lpa{e^{s A^{[m]}}}
    =\sqrt{\prod_{0\le j<2m} \frac{\Gamma(1-\zeta_j(s))}
    {j!}}.
\end{align}
\end{lm}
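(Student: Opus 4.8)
The plan is to start from the infinite-product representation of the moment generating function that follows from the Karhunen-Lo\`eve expansion \eqref{KL2}, namely
\[
    \mathbb{E}\lpa{e^{sA^{[m]}}} = \prod_{k\ge1}\frac1{\sqrt{1-\frac{2s}{\lambda_k}}},
    \qquad \lambda_k = k(k+1)\cdots(k+2m-1),
\]
and to convert the infinite product over $k$ into the finite product over the roots $\zeta_j(s)$ of \eqref{ad-ie}. Since $\lambda_k$ is precisely the polynomial $z(z+1)\cdots(z+2m-1)$ evaluated at $z=k$, the factorization \eqref{ad-ie} gives, term by term,
\[
    1-\frac{2s}{\lambda_k}
    = \frac{k(k+1)\cdots(k+2m-1)-2s}{k(k+1)\cdots(k+2m-1)}
    = \prod_{0\le j<2m}\frac{k-\zeta_j(s)}{k+j}.
\]
The whole argument then reduces to evaluating the double product $\prod_{k\ge1}\prod_{0\le j<2m}\frac{k-\zeta_j}{k+j}$.

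Next I would work with the partial products up to $k=N$, where Fubini for finite products legitimately interchanges the two products. For each fixed $j$ the telescoping identities $\prod_{k=1}^N(k-\zeta_j)=\Gamma(N+1-\zeta_j)/\Gamma(1-\zeta_j)$ and $\prod_{k=1}^N(k+j)=\Gamma(N+1+j)/j!$ yield
\[
    \prod_{k=1}^N\frac{k-\zeta_j}{k+j}
    = \frac{j!}{\Gamma(1-\zeta_j)}\cdot\frac{\Gamma(N+1-\zeta_j)}{\Gamma(N+1+j)}.
\]
Multiplying over $0\le j<2m$ isolates the target constant $\prod_{j}\frac{j!}{\Gamma(1-\zeta_j)}$ times a boundary factor $\prod_{j}\frac{\Gamma(N+1-\zeta_j)}{\Gamma(N+1+j)}$, and it remains to show that this boundary factor tends to $1$ as $N\to\infty$. (I keep the factors over $j$ together, since no single one of them converges on its own.)

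The crux---and the only step needing genuine care---is the vanishing of the boundary factor. By Stirling's formula $\Gamma(N+1-\zeta_j)/\Gamma(N+1+j)=N^{-\zeta_j-j}(1+O(1/N))$, so the boundary product behaves like $N^{-\sum_j(\zeta_j+j)}$ up to a factor tending to $1$. Here I would invoke the coefficient identity obtained by comparing the coefficient of $z^{2m-1}$ on both sides of \eqref{ad-ie}: this gives $\sum_{0\le j<2m}\zeta_j=-\sum_{0\le j<2m}j=-m(2m-1)$, whence $\sum_j(\zeta_j+j)=0$ and the power of $N$ disappears. As there are only finitely many factors, the $(1+O(1/N))$ corrections also converge to $1$, so the boundary factor tends to $1$.

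Assembling the pieces gives
\[
    \prod_{k\ge1}\lpa{1-\frac{2s}{\lambda_k}}
    = \prod_{0\le j<2m}\frac{j!}{\Gamma(1-\zeta_j(s))},
\]
and taking reciprocals followed by square roots produces \eqref{gad-1st-prod}. Two minor points I would verify along the way are the absolute convergence of the original product (guaranteed by $\lambda_k\sim k^{2m}$, so that $2s/\lambda_k$ is summable), which justifies passing to the limit of partial products, and the consistency of the square-root branch, which holds because both sides are analytic in $s$ off the branch-cut $[\tfrac12\lambda_1,\infty)$ and agree at $s=0$, where $\zeta_j(0)=-j$ makes the right-hand side equal to $\prod_j\frac{j!}{j!}=1$. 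The main obstacle is therefore not any isolated hard computation but the careful bookkeeping of the Gamma-ratio asymptotics, which the coefficient identity $\sum_j\zeta_j=-m(2m-1)$ makes collapse cleanly.
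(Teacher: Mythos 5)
Your proof is correct and follows essentially the same route as the paper: both factor $1-\frac{2s}{\lambda_k}=\prod_{0\le j<2m}\frac{k-\zeta_j(s)}{k+j}$ and hinge on the coefficient identity $\sum_j\zeta_j(s)=-m(2m-1)=-\sum_j j$ to make the divergent parts cancel. The only cosmetic difference is that you control the partial products via Gamma-ratio (Stirling) asymptotics, whereas the paper invokes the Weierstrass product $\Gamma(1+s)e^{\gamma s}=\prod_{k\ge1}\frac{e^{s/k}}{1+s/k}$ so that the exponential factors cancel by the same identity.
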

\begin{proof}
By \eqref{ad-ie}
\[
    1-\frac{2s}{k(k+1)\cdots(k+2m-1)} = \prod_{0\le j<2m}
    \frac{k-\zeta_{j}(s)}{k+j} = 
    \prod_{0\le j<2m}\frac{1-\frac{\zeta_{j}(s)}{k}}
    {1+\frac jk}.
\]
Note that 
\begin{align}\label{id}
    -\sum_{0\le j<2m}\zeta_{j}(s) 
    = [z^{2m-1}]\prod_{0\le j<2m}(z+j)
    = m(2m-1) = \sum_{0\le j<2m}j,
\end{align}
where $[z^m]f(z)$ denotes the coefficient of $z^m$ in the Taylor
expansion of $f(z)$. Using the identity \eqref{id} and
the infinite product representation of the Gamma function
\[
    \Gamma(1+s)e^{\gamma s} 
    = \prod_{k\ge1}\frac{e^{\frac sk}}{1+\frac sk},
\]
we then have
\begin{align*}
    \prod_{k\ge1}
    \frac1{1-\frac{2s}{k(k+1)\cdots(k+2m-1)}}
    &= \prod_{k\ge1}\prod_{0\le j<2m}
    \frac{\left(1+\frac jk\right)e^{-\frac jk}}
    {\lpa{1-\frac{\zeta_{j}(s)}k}
    e^{-\frac{\zeta_{j}(s)}k}} \\
    &= \prod_{0\le j<2m}
    \frac{\Gamma(1-\zeta_{j}(s)) 
    e^{-\gamma \zeta_{j}(s)}}{\Gamma(j+1)e^{j\gamma}} \\
    &= \prod_{0\le j<2m}
    \frac{\Gamma(1-\zeta_{j}(s))}{j!}.
\end{align*}
This proves \eqref{gad-1st-prod}.
\end{proof}

\begin{proof}[Proof of Theorem~\ref{thm:gad-cos}.]
Note that the roots of the equation \eqref{ad-ie} are symmetric
with respect to $-m+\frac12$, namely, if $-m+\frac12+\zeta$ is a
root of the equation \eqref{ad-ie}, then $-m+\frac12-\zeta$ is
also a root. Thus we can regroup the zeros in pairs and use 
the factorization
\begin{align*}
    &\Gamma\lpa{m+\tfrac12-\zeta}
    \Gamma\lpa{m+\tfrac12+\zeta} \\
    &\qquad= \Gamma\lpa{-m+\tfrac12-\zeta}
    \Gamma\lpa{m+\tfrac12+\zeta}
    \prod_{0\le \ell<2m} \left(-m+\tfrac12
    -\zeta +\ell\right),
\end{align*}
which equals, by Euler's reflection formula
\[
    \Gamma(s)\Gamma(1-s) = \frac{\pi}{\sin \pi s},
\]
and \eqref{ad-ie}, to 
\[
    \Gamma\lpa{m+\tfrac12-\zeta}
    \Gamma\lpa{m+\tfrac12+\zeta}
    = \frac{2\pi s}{\sin\lpa{\pi(m+\tfrac12+\zeta)}}
    = \frac{(-1)^m 2\pi s}{\cos(\pi\zeta)}.
\]
Now if we replace $x$ in the equation $x\cdots(x+2m-1)=2s$ by 
$x\mapsto-m+\tfrac12+z$, then equation \eqref{ad-ie} becomes 
\[
    \prod_{0\le j<m} \left(z^2-(-m+\tfrac12+j)^2\right)
    = 2s,
\]
which is equivalent to 
\[
    \prod_{0\le j<m} \left(z^2-(j+\tfrac12)^2\right)
    = 2s.
\]
Thus it suffices to consider the equation \eqref{poly-eq}
\[
    \prod_{0\le j<m} \left(z-(j+\tfrac12)^2\right)
    = 2s.
\]
It then follows that 
\[
    \prod_{0\le j<2m} \Gamma(1-\zeta_j(s))
    = \prod_{0\le j<m} \frac{(-1)^m 2\pi s}
    {\cos\lpa{\pi\sqrt{\eta_j}}}.
\] 
This proves \eqref{EesAm}.
\end{proof}

\section{The generalized Watson statistics}
\label{sec:w}

We follow the same approach developed above and examine a 
generalization of Watson's statistic by the same iterated empirical 
measures.

\subsection{A new class of Watson statistics}
\label{subsec:proposal-w}

Let $X_1,\ldots,X_N$ be an i.i.d.\ sequence on
$[0,1)=\{ \{x\} \,|\,x \in \mathbb{R} \}$, where $\{x\}= x \bmod 1$ denotes the fractional part of 
$x$. Here the $X_i$'s are referred to as ``circular data'' or 
``directional data''.
\cite{Watson61}'s statistic to test the 
uniformity of $X_i$'s on $[0,1)$ is defined as
\begin{equation}\label{UN}
    U_{N} 
    = \int_0^1 \overbar{B}_{N}(x)^2 \dd x, \quad \overbar{B}_{N}(x) 
    = B_{N}(x)-\int_0^1 B_{N}(x) \dd x,
\end{equation}
where $B_{N}(x) = \sqrt{N}(F_{N}(x)-x) = \sqrt{N} \int_0^1
(\Ind_{\{t\le x\}}-x) \dd F_{N}(t)$ as before.

The generalized Watson (GW) statistics we propose here are defined 
similarly as the GAD statistics. In particular, the correction term 
$-\int_0^1 B_{N}(x) \dd x$ in \eqref{UN} makes the statistic 
invariant with respect to the choice of the origin of the data 
$X_i\mapsto\{X_i+a\}$ ($a$ is a constant), and the reflection of the 
data $X_i\mapsto\{-X_i\}$; see Corollary~\ref{co:invariance}. Such a 
correction will also be incorporated in our generalized Watson 
statistics.

Let $\overbar{B}^{[1]}_{N}(x) = \overbar{B}_{N}(x)$ and 
\begin{align}
    \overbar{B}^{[m+1]}_{N}(x) 
    &= \int_0^x \overbar{B}^{[m]}_{N}(t) \dd t - \int_0^1 
    \int_0^x \overbar{B}^{[m]}_{N}(t) \dd t \dd x \nonumber \\
    &= \int_0^x \overbar{B}^{[m]}_{N}(t) \dd t 
    - \int_0^1 (1-t) \overbar{B}^{[m]}_{N}(t) \dd t \qquad(m\ge1).
\label{rec-C}
\end{align}
Then $\int_0^1 \overbar{B}^{[m]}_{N}(t) \dd t = 0$ and
$\overbar{B}^{[m]}_{N}(x)$ is an $m$-fold iterated integral of the
empirical measure $\dd F_{N}$ with mean zero. Thus we define the GW statistic as
\begin{align*}
    U^{[m]}_{N} := \int_0^1 \overbar{B}^{[m]}_{N}(x)^2 \dd x.
\end{align*}

\paragraph{Normalized Bernoulli polynomials.}
As in the case of the GAD statistic, $\overbar{B}^{[m]}_{N}(x)$ can be
expressed in terms of a class of template functions. For that 
purpose, we introduce the following polynomials. For $m\ge 1$, let 
$\b_{m}(y)$ be a polynomial in $y$ of degree $m$ satisfying
\begin{equation}\label{bernoulli}
    \b_{m}(y+1)
    =\b_{m}(y)+\frac{y^{m-1}}{(m-1)!}\quad \text{and}\quad
    \int_0^1 \b_{m}(y) \dd y 
    = 0.
\end{equation}
It turns out that the polynomial $m! \b_m(y)$ is identical to
Bernoulli polynomial of order $m$. Alternatively, $\b_m(y)$ can be
computed recursively by
\begin{equation}\label{recursive}
    \b_{m+1}(y)
    = \int_0^y \b_{m}(t) \dd t 
    - \int_0^1 (1-t) \b_{m}(t) \dd t,
\end{equation}
with $b_0(y)=1$. In particular, 
\begin{center}
  \begin{tabular}{ccccc} 
    $b_0(y)$ & $b_1(y)$ & $b_2(y)$ & $b_3(y)$ 
    & $b_4(y)$ \\ \hline
    1 & $y-\frac12$ & $\frac{y^2}2-\frac{y}2+\frac1{12}$ &
    $\frac{y^3}6-\frac{y^2}4+\frac{y}{12}$ &
    $\frac{y^4}{24}-\frac{y^3}{12}+\frac{y^2}{24}
    -\frac1{720}$ \\ 
  \end{tabular}
\end{center}
The generating function of $b_m(y)$ is
\begin{equation}\label{gen_func}
    V(z;x) = \sum_{m\ge 0}\b_{m}(x) z^m
    = \frac{z e^{xz}}{e^z-1} 
    \qquad (|z|<2\pi),
\end{equation}
and the Fourier series is given, for $m\ge1$, by 
\begin{align}
    \b_{m}(x)
    = -2 \sum_{k\ge 1}\frac{\cos\bigl(2k\pi x 
    -\frac{m\pi}{2}\bigr)}{(2k\pi)^{m}}\qquad (0<x<1);
\label{fourier}
\end{align}
see \cite[p.\,805]{Abramowitz64}. Another property we need is 
$b_m(1-x) = (-1)^mb_m(x)$.
Note that, from (\ref{bernoulli}) and (\ref{recursive}),
the generating function satisfies
\begin{equation}\label{rec-gen}
\left\{
\begin{aligned}
    & \int_0^1 V(z;y) \dd y = 1, \\
    & z^{-1}(V(z;y)-1) = \int_0^y V(z;t) \dd t - \int_0^1 (1-t) V(z;t) \dd t.
\end{aligned}
\right.
\end{equation}

\paragraph{The template functions.}
Let
\begin{align}
    \bar{\tau}_m(t;x)
    &= \frac{(x-t)^{m-1}}{(m-1)!}\Ind_{\{t\le x\}} 
    + (-1)^{m-1} \b_{m}(t-x)\nonumber \\
    &= (-1)^{m-1} \b_{m}(t-x+1) \Ind_{\{t\le x\}} 
    + (-1)^{m-1} \b_{m}(t-x) \Ind_{\{t>x\}}.
\label{tau_m-w}
\end{align}

The $\overbar{B}_{N}^{[m]}(x)$ are linked to the template functions 
by the following integral representation.
\begin{lm}\label{lm:hm-w} For $m\ge1$
\begin{equation}\label{CNm}
    \overbar{B}^{[m]}_{N}(x) 
    = \int_0^1 \bar{\tau}_m(t;x) \dd F_{N}(x).
\end{equation}
\end{lm}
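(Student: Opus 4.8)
The plan is to argue by induction on $m$, exploiting the fact that the defining recursion \eqref{rec-C} characterizes $\overbar{B}^{[m+1]}_N$ as the \emph{unique antiderivative} (in $x$) of $\overbar{B}^{[m]}_N$ with vanishing mean on $[0,1]$. Indeed, swapping the order of integration in $\int_0^1\!\int_0^x \overbar{B}^{[m]}_N(s)\,ds\,dx=\int_0^1(1-s)\overbar{B}^{[m]}_N(s)\,ds$ shows that the constant subtracted in \eqref{rec-C} is exactly the mean of the antiderivative. Writing $G_m(x):=\int_0^1 \bar{\tau}_m(t;x)\,dF_N(t)$, which is the finite sum $\tfrac1N\sum_i\bar{\tau}_m(X_i;x)$ so that differentiation and integration in $x$ pass trivially through the empirical measure, it then suffices to prove that the template functions obey the same two properties in the variable $x$: first that $\partial_x \bar{\tau}_{m+1}(t;x)=\bar{\tau}_m(t;x)$, and second that $\int_0^1\bar{\tau}_{m+1}(t;x)\,dx=0$. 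Granting these, $G_{m+1}$ is the mean-zero antiderivative of $G_m$, which by the inductive hypothesis $G_m=\overbar{B}^{[m]}_N$ forces $G_{m+1}=\overbar{B}^{[m+1]}_N$, giving \eqref{CNm}.

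For the base case $m=1$ I compute directly. Since $b_1(y)=y-\tfrac12$ and $\bar{\tau}_1(t;x)=\Ind_{\{t\le x\}}+b_1(t-x)$, integrating against $dF_N$ yields $F_N(x)+\tfrac1N\sum_i X_i-x-\tfrac12$, whereas $\overbar{B}_N(x)=B_N(x)-\int_0^1 B_N(y)\,dy$ from \eqref{UN} evaluates, using $\int_0^1 F_N(y)\,dy=1-\tfrac1N\sum_i X_i$, to the same expression; the two coincide once the $\sqrt N$ carried by the representation $B_N(x)=\sqrt N\int_0^1(\Ind_{\{t\le x\}}-x)\,dF_N(t)$ is accounted for.

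The two template identities follow from the properties of $b_m$ in \eqref{bernoulli}--\eqref{recursive}. For the derivative relation I differentiate $\bar{\tau}_{m+1}(t;x)=\frac{(x-t)^m}{m!}\Ind_{\{t\le x\}}+(-1)^m b_{m+1}(t-x)$: the polynomial part contributes $\frac{(x-t)^{m-1}}{(m-1)!}\Ind_{\{t\le x\}}$ with no boundary mass (for $m\ge1$ the factor $(x-t)^m$ vanishes at $x=t$), while the chain rule together with $b_{m+1}'=b_m$ gives $(-1)^{m+1}b_m(t-x)=(-1)^{m-1}b_m(t-x)$, reproducing $\bar{\tau}_m(t;x)$. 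For the mean-zero relation I integrate each piece over $x\in[0,1]$: the polynomial part gives $\frac{(1-t)^{m+1}}{(m+1)!}$, and the substitution $u=t-x$ with $b_{m+2}'=b_{m+1}$ reduces the Bernoulli part to $(-1)^m\bigl(b_{m+2}(t)-b_{m+2}(t-1)\bigr)$, which by the functional equation in \eqref{bernoulli} equals $(-1)^m\frac{(t-1)^{m+1}}{(m+1)!}=-\frac{(1-t)^{m+1}}{(m+1)!}$, cancelling the polynomial contribution.

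The routine work is the sign and factorial bookkeeping around the $(-1)^{m-1}$ factors. The only genuine obstacle is the mean-zero verification (the second template identity): it is precisely here that the defining functional equation and normalization of the Bernoulli polynomials enter, and one must check that the indicator in \eqref{tau_m-w} produces no spurious boundary term upon differentiation, as well as that the two equivalent forms of $\bar{\tau}_m$ in \eqref{tau_m-w} agree via the same shift identity $b_m(y+1)=b_m(y)+\frac{y^{m-1}}{(m-1)!}$.
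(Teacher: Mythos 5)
Your proof is correct, and while it shares the paper's overall skeleton (induction on $m$, reducing the recursion \eqref{rec-C} to an identity about the template functions), the way you verify that identity is genuinely different. The paper substitutes \eqref{CNm} into \eqref{rec-C}, interchanges the order of integration, and then proves the \emph{integral} identity $\int_0^1 \lpa{u\Ind_{\{u\le x\}}-(1-u)\Ind_{\{u>x\}}}\bar{\tau}_m(t;u)\dd u = -\bar{\tau}_{m+1}(t;x)$ (up to sign) by encoding $\bar{\tau}_m$ through the generating function $V(z;x)$ of the $\b_m$ and computing the resulting exponential integrals. You instead establish the \emph{dual, differential} form: $\partial_x\bar{\tau}_{m+1}(t;x)=\bar{\tau}_m(t;x)$ and $\int_0^1\bar{\tau}_{m+1}(t;x)\dd x=0$, and then invoke uniqueness of the mean-zero antiderivative (which is exactly what \eqref{rec-C} produces, after the Fubini step you carry out). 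The two template identities are logically equivalent, but yours needs only $\b_{m+1}'=\b_m$ from \eqref{recursive} and the shift relation in \eqref{bernoulli}, bypassing the generating-function computation entirely; the paper's version buys a reusable technique (the same $V(z;x)$ manipulations reappear in the covariance computation of Theorem \ref{thm:cov-w} and in Theorem \ref{thm:sample-w}), whereas yours is shorter and more elementary for this lemma in isolation. Your checks are sound: the absence of a boundary term when differentiating $\frac{(x-t)^m}{m!}\Ind_{\{t\le x\}}$ is correctly justified by the vanishing of $(x-t)^m$ at $x=t$, and the cancellation $\frac{(1-t)^{m+1}}{(m+1)!}+(-1)^m\frac{(t-1)^{m+1}}{(m+1)!}=0$ in the mean-zero verification is right. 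You are also right to flag the $\sqrt{N}$: as printed, \eqref{CNm} is off by that factor from $\overbar{B}^{[1]}_N=\overbar{B}_N$ in \eqref{UN} (compare the normalization $\frac{1}{\sqrt{N}}\sum_i\bar{\tau}_m(X_i;x)$ used in the proof of Corollary~\ref{co:invariance}); this is a normalization slip in the statement rather than a flaw in your argument.
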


\begin{proof}
Substituting \eqref{CNm} into the right-hand side of \eqref{rec-C}, 
we obtain
\begin{align*}
    \overbar{B}^{[m+1]}_{N}(x) 
    &=\int_0^x \overbar{B}^{[m]}_{N}(u) \dd u 
    - \int_0^1 (1-u) \overbar{B}^{[m]}_{N}(u) \dd u \\ 
    &= \int_{0}^x 
    \int_{0}^1 \bar{\tau}_m(t;u) \dd F_{N}(t) \dd u 
    - \int_{0}^1 (1-u) 
    \int_{0}^1 \bar{\tau}_m(t;u) \dd F_{N}(t) \dd u \\ 
    & = \int_{0}^1 \dd F_{N}(t) \int_{0}^1 
    \left(\Ind_{\{u\le x\}} -(1-u) \right) \bar{\tau}_m(t;u) \dd u \\ 
    & = \int_{0}^1 \dd F_{N}(t) 
    \int_{0}^1 \left( u \Ind_{\{u\le x\}} 
    - (1-u) \Ind_{\{u>x\}}\right) \bar{\tau}_m(t;u) \dd u.
\end{align*}
It suffices, by induction, to prove that
\begin{equation}\label{indep}
    (-1)^{m-1} \int_0^1 
    \lpa{ u \Ind_{\{u\le x\}} - (1-u) \Ind_{\{u>x\}} } 
    \bar{\tau}_m(t;u) \dd u 
    = -(-1)^m \bar{\tau}_{m+1}(t;x).
\end{equation}
By the definition of the generating function \eqref{gen_func},
$\b_{m}(x) = [z^m]V(z;x)$ and thus $\bar{\tau}_m(t;x)$
in \eqref{tau_m-w} can be written as
\begin{equation}\label{tau-V}
    \bar{\tau}_m(t;x)
    = (-1)^{m-1} [z^m] \bigl(V(z;t-x+1) \Ind_{\{t\le x\}} 
    + V(z;t-x) \Ind_{\{t>x\}}\bigr).
\end{equation}
With this relation, together with (\ref{rec-gen}), the
left-hand side of \eqref{indep} becomes
\begin{align*}
    & [z^m] \int_0^1 (u \Ind_{\{u\le x\}} 
    - (1-u) \Ind_{\{u>x\}}) 
    \bigl(V(z;t-u+1) \Ind_{\{t\le u\}} 
    + V(z;t-u) \Ind_{\{t>u\}}\bigr) \dd u \\
    &= [z^m] \biggl\{ \Ind_{\{t\le x\}} 
    \int_t^x u V(z;t-u+1) \dd u 
    - \int_{x\vee t}^1 (1-u) V(z;t-u+1) \dd u \\
    &\qquad\quad\ \ + \int_0^{x\wedge t} u V(z;t-u) \dd u 
    - \Ind_{\{t>x\}} \int_x^t (1-u) V(z;t-u) \dd u \biggr\} \\
    &=\begin{cases}\displaystyle
        [z^m] \left\{ -z^{-1} V(z;t-x+1) + t 
        + z^{-1} \right\} & (t\le x), \\
        \displaystyle
        [z^m] \left( -z^{-1} V(z;t-x) 
        - (1-t) + z^{-1} \right) & (t>x)
    \end{cases} \\
    & = -[z^{m+1}] \bigl(V(z;t-x+1) \Ind_{\{t\le x\}} 
    + V(z;t-x) \Ind_{\{t>x\}}\bigr) \\
    & = -(-1)^m \bar{\tau}_{m+1}(t;x),
\end{align*}
which proves \eqref{indep} and completes the proof of the lemma.
\end{proof}

Following a similar construction principle used for GAD statistic 
where the template functions are orthogonal to eigenfunctions of 
lower degrees, we propose the truncated version of the template 
function
\begin{equation}\label{tau_m*-w}
    \bar{\tau}^*_m(t;x) 
    = \bar{\tau}_m(t;x) - 2 \sum_{1\le k< m} \frac{1}{(2k\pi)^m} 
    \cos\Bigl(2 k\pi (t-x)+\sfrac{m\pi}{2}\Bigr),
\end{equation}
(compare \eqref{ad-tau_mtx}) and define the truncated version of the 
generalized Watson statistics
\begin{align*}
    U^{*[m]}_{N} 
    = \int_0^1 \overbar{B}^{*[m]}_{N}(x)^2 \dd x,\quad
    \overbar{B}^{*[m]}_{N}(x) 
    = \int_0^1 \bar{\tau}^*_m(t;x) \dd F_{N}(x);
\end{align*}
see also \eqref{eigen_exp}.

The first few $\bar{\tau}^*_m$ are plotted in 
Figure \ref{fig:template2}.
\begin{figure}[h]
\begin{center}
\begin{tabular}{cc}
$\bar{\tau}^*_1(t;x_0)$ & $\bar{\tau}^*_2(t;x_0)$ \\
\includegraphics[height=3.2cm]{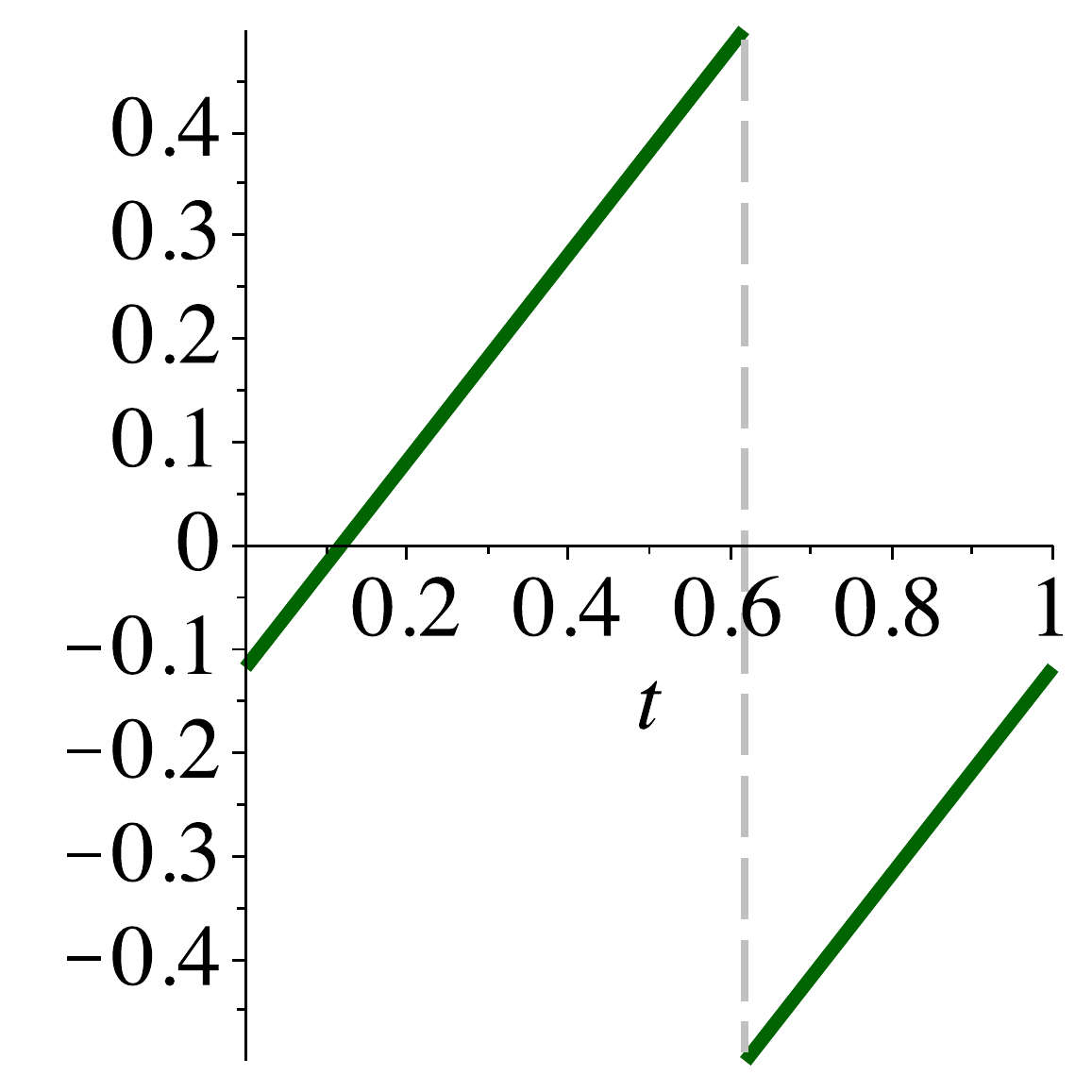} &
\includegraphics[height=3.2cm]{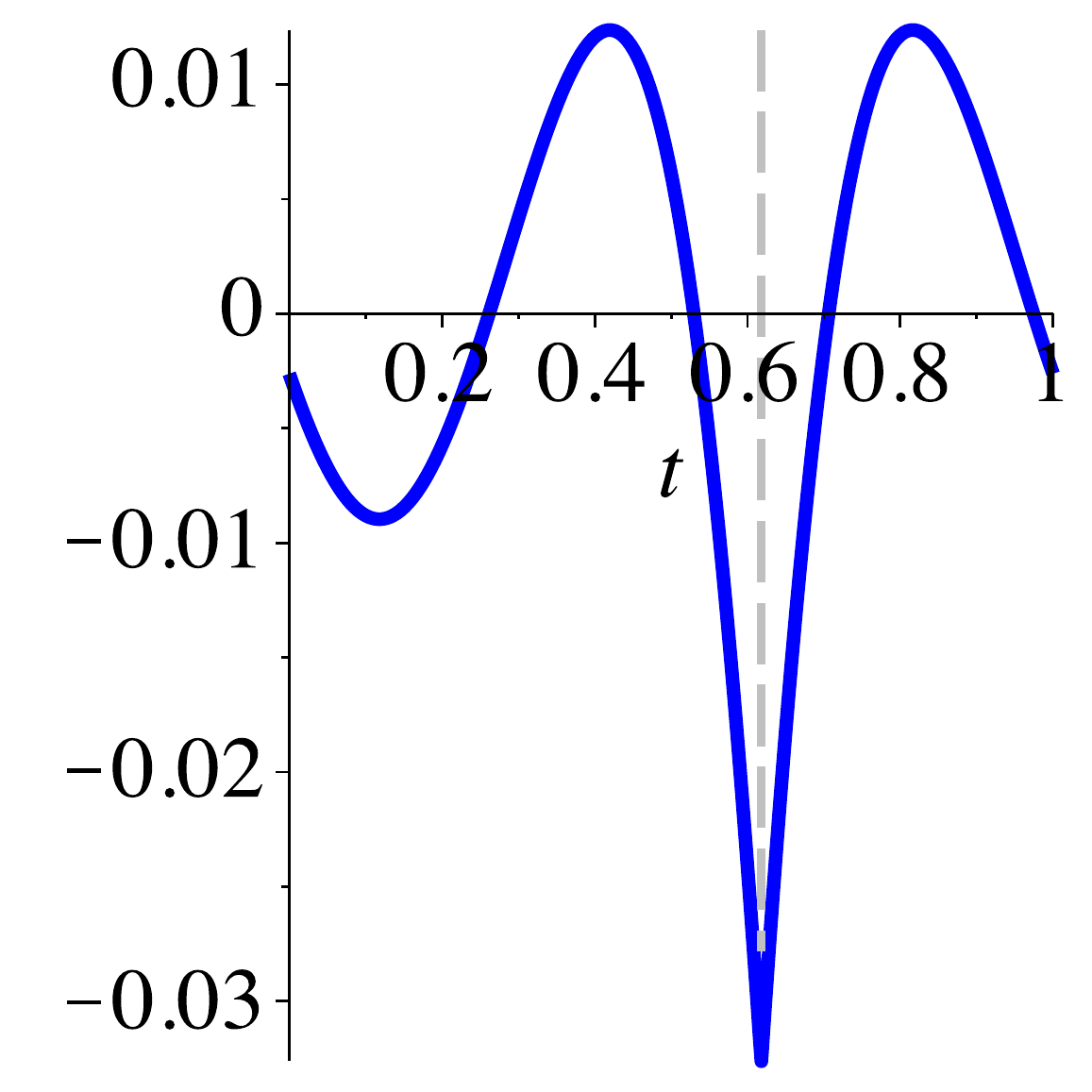} \\
$\bar{\tau}^*_3(t;x_0)$ & $\bar{\tau}^*_4(t;x_0)$ \\
\includegraphics[height=3.2cm]{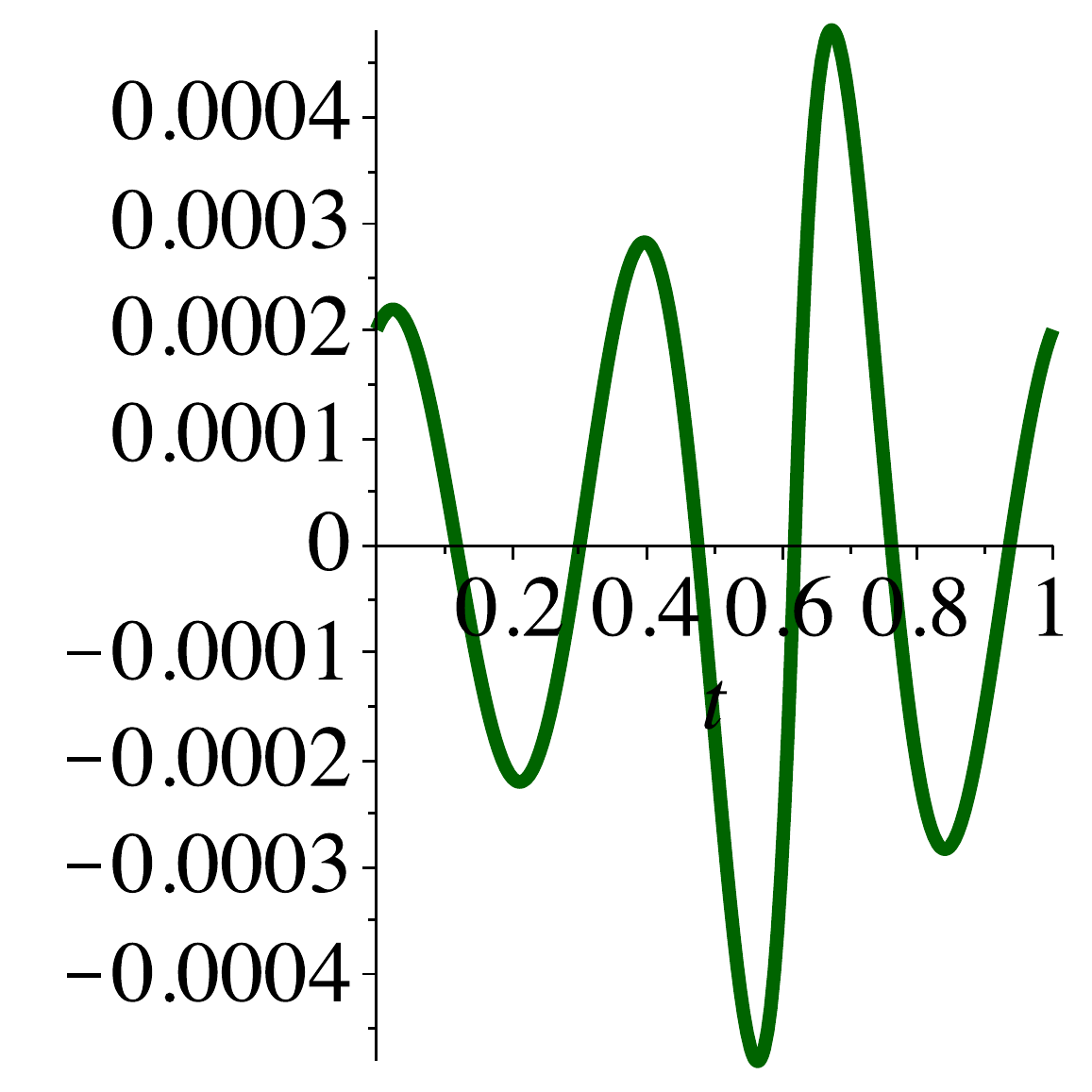} &
\includegraphics[height=3.2cm]{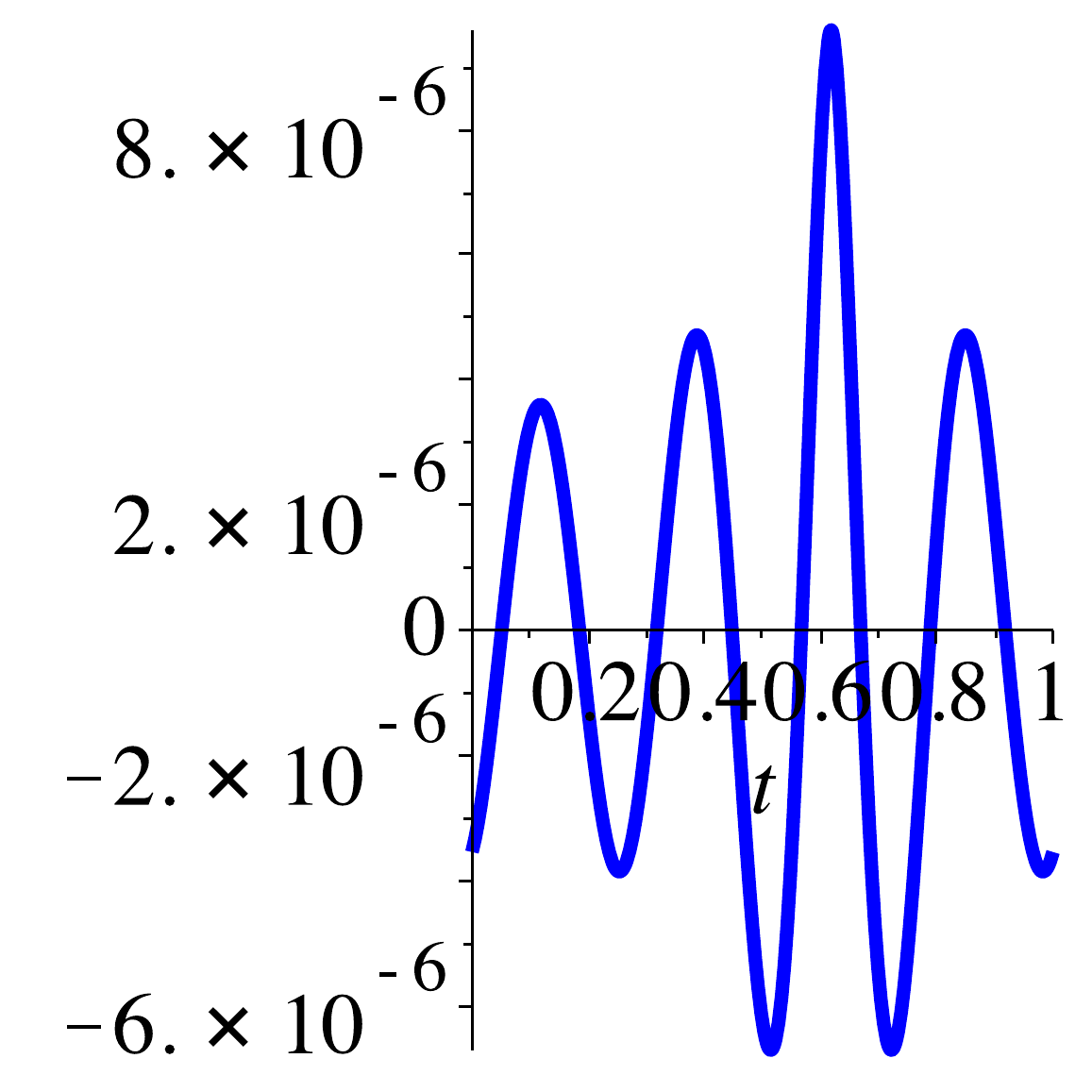}\\
\end{tabular}
\end{center}
\vspace*{-.3cm}
\caption{\emph{Template functions 
$\bar{\tau}^*_m(\cdot;x_0)$ for the generalized Watson 
statistics for $1\le m\le 4$; here $x_0= \frac{\sqrt{5}-1}2$, the 
reciprocal of the golden ratio.}}
\label{fig:template2}
\end{figure}

\begin{co}
\label{co:invariance}
The statistics $U_N^{[m]}$ and $U_N^{*[m]}$ are well-defined and
independent of the choice of the origin. They are invariant with
respect the sign change $X_i\mapsto \{-X_i\}=1-X_i$.
\end{co}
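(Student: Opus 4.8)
The plan is to reduce all three assertions to two structural facts about the template functions: that each is a $1$-periodic function of the circular difference $t-x$, and that this periodic function has a definite parity under $y\mapsto -y$. Well-definedness is the easiest part and I would dispose of it first. Unlike the generalized Anderson--Darling statistics, here there is no singular weight: both $\bar{\tau}_m(t;x)$ and $\bar{\tau}^*_m(t;x)$ are bounded, piecewise-polynomial (resp.\ bounded trigonometric corrections thereof) in $t$ for each fixed $x$, so $\overbar{B}^{[m]}_{N}(x)=\frac1N\sum_i\bar{\tau}_m(X_i;x)$ and its starred analogue are bounded and piecewise continuous in $x$; hence $\int_0^1\overbar{B}^{[m]}_{N}(x)^2\dd x$ and $\int_0^1\overbar{B}^{*[m]}_{N}(x)^2\dd x$ converge.

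The key observation, from which both origin- and reflection-invariance flow, is that \eqref{tau_m-w} can be rewritten as $\bar{\tau}_m(t;x)=(-1)^{m-1}\b_{m}(\{t-x\})$, where $\{\cdot\}$ denotes the fractional part and $\b_{m}(\{\cdot\})$ is the $1$-periodic extension of $\b_{m}$. Indeed, for $t>x$ one has $\{t-x\}=t-x$, while for $t\le x$ one has $\{t-x\}=t-x+1$, which matches the two indicator branches of \eqref{tau_m-w}. The correction terms in \eqref{tau_m*-w} are $\cos\bigl(2k\pi(t-x)+\tfrac{m\pi}2\bigr)$, which are likewise $1$-periodic functions of $t-x$. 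Consequently both $x\mapsto\overbar{B}^{[m]}_{N}(x)$ and $x\mapsto\overbar{B}^{*[m]}_{N}(x)$ extend to $1$-periodic functions on $\mathbb{R}$.

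For origin independence I would set $X_i'=\{X_i+a\}$ and use the periodicity of $\b_{m}(\{\cdot\})$, via $\{X_i'-x\}=\{X_i-(x-a)\}$, to identify the perturbed process with $\overbar{B}^{[m]}_{N}(\{x-a\})$; integrating the square over the full period $[0,1]$ and substituting $y=\{x-a\}$ then leaves $U^{[m]}_{N}$ unchanged, and the identical computation handles $U^{*[m]}_{N}$ because its correction terms are $1$-periodic in $t-x$ as well. For reflection invariance I would invoke the parity relation $\b_{m}(1-y)=(-1)^m\b_{m}(y)$, equivalently $\b_{m}(\{-y\})=(-1)^m\b_{m}(\{y\})$, to show that replacing each $X_i$ by $1-X_i$ sends $\overbar{B}^{[m]}_{N}(x)$ to $(-1)^m\overbar{B}^{[m]}_{N}(1-x)$; the sign then disappears upon squaring, and the substitution $y=1-x$ closes the argument.

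The only step requiring genuine care, and the one I expect to be the main obstacle, is checking that the trigonometric corrections in $\bar{\tau}^*_m$ obey the same reflection law $(-1)^m$ as the Bernoulli part, so that $U^{*[m]}_{N}$ is reflection-invariant too. This reduces to the elementary identity $\cos\bigl(-\theta+\tfrac{m\pi}2\bigr)=(-1)^m\cos\bigl(\theta+\tfrac{m\pi}2\bigr)$, which I would verify by splitting into the case $m$ even (where $\sin\tfrac{m\pi}2=0$) and $m$ odd (where $\cos\tfrac{m\pi}2=0$). Combined with the $1$-periodicity already noted, this gives the starred process the same transformation rules under $X_i\mapsto\{X_i+a\}$ and $X_i\mapsto 1-X_i$, completing the proof of the corollary.
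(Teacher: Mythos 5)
Your proposal is correct and follows essentially the same route as the paper: the paper's proof likewise rests on the facts that $\bar{\tau}_m(t;x)$ depends only on $t-x$ modulo $1$ (so the shift $X_i\mapsto\{X_i+a\}$ turns $\overbar{B}^{[m]}_{N}(x)$ into $\overbar{B}^{[m]}_{N}(\{x-a\})$) and that $\bar{\tau}_m(1-t;x)=(-1)^m\bar{\tau}_m(t;1-x)$ via $\b_m(1-y)=(-1)^m\b_m(y)$, after which a change of variables in the integral over the full period finishes the argument. Your explicit rewriting $\bar{\tau}_m(t;x)=(-1)^{m-1}\b_m(\{t-x\})$ and your verification of the cosine parity identity for the truncated version merely make explicit what the paper leaves as ``the proof for $U_N^{*[m]}$ is similar.''
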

\begin{proof}
Consider first $U_N^{[m]}$. If $X_i \mapsto \{X_i+a\}$, 
$i=1,\ldots,N$, then 
\begin{align*}
    \overbar{B}^{[m]}_{N}(x)
    = \sfrac{1}{\sqrt{N}}\sum_{1\le i\le N} 
    \bar{\tau}_m(X_i;x)\mapsto 
    & \sfrac{1}{\sqrt{N}}\sum_{1\le i\le N} 
    \bar{\tau}_m(\{X_i+a\};x) \\
    & = \sfrac{1}{\sqrt{N}}\sum_{1\le i\le N} 
    \bar{\tau}_m(X_i;\{x-a\}) \\
    & = \overbar{B}^{[m]}_{N}(\{x-a\}).
\end{align*}
Thus
\[
    U^{[m]}_{N}
    = \int_0^1 \overbar{B}^{[m]}_{N}(x)^2 \dd x 
    \mapsto \int_0^1 \overbar{B}^{[m]}_{N}(\{x-a\})^2 \dd x 
    = U^{[m]}_{N},
\]
where we used the relation
\begin{align*}
    (\{X_i+a\}-x) - (X_i-\{x-a\}) 
    &= - (X_i+a-\{X_i+a\}) - (x-a-\{x-a\}) \\
    &= 0 \bmod 1.
\end{align*}
On the other hand, if $X_i \mapsto \{-X_i\}=1-X_i$, $i=1,\ldots,N$, 
then 
\begin{align*}
    \overbar{B}^{[m]}_{N}(x)
    = \sfrac{1}{\sqrt{N}}\sum_{1\le i\le N} 
    \bar{\tau}_m(X_i;x)\mapsto 
    & \sfrac{1}{\sqrt{N}}\sum_{1\le i\le N} 
    \bar{\tau}_m(1-X_i;x) \\
    & = \sfrac{1}{\sqrt{N}}\sum_{1\le i\le N} 
    (-1)^m \bar{\tau}_m(X_i;1-x) \\
    & = (-1)^m \overbar{B}^{[m]}_{N}(1-x).
\end{align*}
It follows that 
\[
    U^{[m]}_{N} 
    = \int_0^1 \overbar{B}^{[m]}_{N}(x)^2 \dd x 
    \mapsto \int_0^1 \overbar{B}^{[m]}_{N}(1-x)^2 \dd x 
    = U^{[m]}_{N}.
\]
The proof for $U_N^{*[m]}$ is similar.
\end{proof}

\subsection{The limiting GW processes: KL expansions and covariance kernel}
\label{subsec:kl-cov-w}

In this section, we examine the limiting distribution of the GW 
statistic under the null hypothesis $H_0$.

From their constructions, $U^{[m]}_{N}$ and $U^{*[m]}_{N}$ have
almost the same eigenstructure. In this subsection, we mainly deal
with the non-truncated version $U^{[m]}_{N}$. As in
Sec.~\ref{subsec:kl-cov-ad}, we denote by $W(x)$ and $B(x)$
the Wiener process on $[0,1]$ and the Brownian bridge,
respectively. Using the template functions $\bar{\tau}_m(t;x)$
in \eqref{tau_m-w} and $\bar{\tau}^*_m(t;x)$ in \eqref{tau_m*-w},
we now define the limiting GW processes
\begin{equation}\label{Cm}
\begin{aligned}
    & \overbar{B}^{[m]}(x)
    = \int_0^1 \bar{\tau}_m(t;x) \dd W(t)
    = \int_0^1 \bar{\tau}_m(t;x) \dd B(t), \\
    & \overbar{B}^{*[m]}(x)
    = \int_0^1 \bar{\tau}^*_m(t;x) \dd W(t)
    = \int_0^1 \bar{\tau}^*_m(t;x) \dd B(t).
\end{aligned}
\end{equation}

Since the Brownian bridge $B(\cdot)$ is continuous on $[0,1]$,
both $\overbar{B}^{[m]}(\cdot)$ and $\overbar{B}^{*[m]}(\cdot)$ are 
of the class $C^{m-1}$ on $[0,1]$.

We now describe the weak convergence of these processes in the
functional spaces $L^2([0,1],\mu)$ and $L^\infty([0,1])$, the space
of all essentially bounded functions on $[0,1]$.

\begin{lm}\label{lm:Um}
Let $\mu$ be any finite measure on $[0,1]$. As $N\to\infty$,
$\overbar{B}^{[m]}_{N}(\cdot) \stackrel{d}{\to}
\overbar{B}^{[m]}(\cdot)$ and $\overbar{B}^{*[m]}_{N}(\cdot)
\stackrel{d}{\to} \overbar{B}^{*[m]}(\cdot)$ in $L^2([0,1],\mu)$ and
in $L^\infty([0,1])$, and $U^{[m]}_{N}\stackrel{d}{\to} U^{[m]}$ and
$U^{*[m]}_{N}\stackrel{d}{\to} U^{*[m]}$, where
\[
    U^{[m]} 
    = \int_0^1 \overbar{B}^{[m]}(x)^2 \dd x \quad
    \text{and}\quad
    U^{*[m]} 
    = \int_0^1 \overbar{B}^{*[m]}(x)^2 \dd x.
\]
\end{lm}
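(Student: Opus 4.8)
The plan is to follow the same route as in the proof of Lemma~\ref{lm:L2}: start from the weak convergence of the empirical process, exhibit each of the maps producing $\overbar{B}^{[m]}_{N}$, $\overbar{B}^{*[m]}_{N}$, $U^{[m]}_{N}$ and $U^{*[m]}_{N}$ as a \emph{continuous} functional of that process, and then invoke the continuous mapping theorem. The one new ingredient compared with Lemma~\ref{lm:L2} is that convergence is asserted here also in $L^\infty([0,1])$, so I would begin by recording the stronger input: by Donsker's theorem the empirical process $x\mapsto B_{N}(x)=\sqrt{N}(F_{N}(x)-x)$ converges weakly to the Brownian bridge $B(\cdot)$ in $D[0,1]$ equipped with the uniform metric (the Skorokhod and sup topologies agreeing because the limit has continuous paths), and hence in $L^\infty([0,1])$; the convergence in $L^2([0,1],\mu)$ for any finite $\mu$ is the same $L^2$-argument cited before Lemma~\ref{lm:L2} (Sec.~1.8 and Example~1.8.6 of \cite{vanderVaart96}; see also \cite{Tsukuda14}).

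Next I would rewrite $\overbar{B}^{[m]}_{N}$ purely in terms of $B_{N}$. Since the correction terms built into $\bar{\tau}_m$ make it orthogonal to constants, $\int_0^1\bar{\tau}_m(t;x)\dd t=0$, and therefore
\[
    \overbar{B}^{[m]}_{N}(x)=\sqrt{N}\int_0^1\bar{\tau}_m(t;x)\dd F_{N}(t)
    =\int_0^1\bar{\tau}_m(t;x)\dd B_{N}(t),
\]
mirroring \eqref{Cm}. Integrating by parts and using that $\bar{\tau}_m(\cdot;x)$ is built from Bernoulli polynomials (so that it and its $t$-derivative are bounded on $[0,1]$) turns this into
\[
    \overbar{B}^{[m]}_{N}(x)=-\int_0^1\frac{\partial}{\partial t}
    \bar{\tau}_m(t;x)\,B_{N}(t)\dd t+\text{(boundary terms)},
\]
which displays $B\mapsto\overbar{B}^{[m]}(\cdot)$ as a \emph{bounded linear} operator, continuous both from $L^2([0,1],\mu)$ into $L^2$ and from $(D[0,1],\|\cdot\|_\infty)$ into $C[0,1]\subseteq L^\infty([0,1])$. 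The same reasoning applies verbatim to $\bar{\tau}^*_m$, which differs from $\bar{\tau}_m$ only by finitely many smooth cosine terms. For the boundary case $m=1$ I would instead use the direct identity $\overbar{B}_{N}=B_{N}-\int_0^1 B_{N}(t)\dd t$, so that the operator is just the bounded shift $g\mapsto g-\int_0^1 g$, avoiding any appeal to the jump of $\bar{\tau}_1$.

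With continuity in hand, the continuous mapping theorem immediately yields $\overbar{B}^{[m]}_{N}(\cdot)\stackrel{d}{\to}\overbar{B}^{[m]}(\cdot)$ and $\overbar{B}^{*[m]}_{N}(\cdot)\stackrel{d}{\to}\overbar{B}^{*[m]}(\cdot)$ in both $L^2([0,1],\mu)$ and $L^\infty([0,1])$. Finally, $U^{[m]}_{N}=\int_0^1\overbar{B}^{[m]}_{N}(x)^2\dd x$ is the image of $\overbar{B}^{[m]}_{N}$ under the squared-$L^2$-norm functional $h\mapsto\int_0^1 h(x)^2\dd x$, which is continuous on $L^2([0,1],\dd x)$; applying the continuous mapping theorem once more (with $\mu$ the Lebesgue measure) gives $U^{[m]}_{N}\stackrel{d}{\to} U^{[m]}$, and likewise $U^{*[m]}_{N}\stackrel{d}{\to} U^{*[m]}$.

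The step I expect to require the most care is the $L^\infty$ assertion rather than the (routine) $L^2$ one: one must ensure that the functional input converges in the uniform rather than merely the $L^2$ topology, and that the linear operators above genuinely land in and are continuous into $C[0,1]$. For $m\ge 2$ this is automatic, since the iterated integration smooths $B_{N}$ into a $C^{m-1}$ path, but the case $m=1$ must be treated separately via the explicit shift representation, because there $\bar{\tau}_1(\cdot;x)$ is only piecewise constant and no smoothing occurs.
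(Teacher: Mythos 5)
Your proposal is correct and follows essentially the same route as the paper's proof: weak convergence of the empirical process $B_N$ to the Brownian bridge in both $L^2([0,1],\mu)$ and $L^\infty([0,1])$, continuity of the maps $B\mapsto\overbar{B}^{[m]}$, $B\mapsto\overbar{B}^{*[m]}$ and of the squared $L^2$-norm, and then the continuous mapping theorem as in Lemma~\ref{lm:L2}. The paper simply asserts the continuity of these maps without detail, whereas you verify it explicitly (via the vanishing mean of $\bar{\tau}_m$, integration by parts, and the separate shift representation for $m=1$), so your write-up is a more detailed rendering of the same argument.
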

\begin{proof}
The stochastic map $x\mapsto B_{N}(x)=\sqrt{N}(F_{N}(x)-x)$ converges
weakly to $B(\cdot)$ in both $L^2([0,1],\mu)$ and $L^\infty([0,1])$.
$B^{[m]}(\cdot)$ and $B^{*[m]}(\cdot)$ in \eqref{Cm} are continuous
in $B(\cdot)$ in both $L^2([0,1],\mu)$ and $L^\infty([0,1])$. The
rest of the proof is similar to that of Lemma \ref{lm:L2}.
\end{proof}

We now clarify the covariances of the $\overbar{B}^{[m]}(\cdot)$
and $\overbar{B}^{*[m]}(\cdot)$, which are centered Gaussian
processes on $[0,1]$.
\begin{thm}\label{thm:cov-w}
For $m\ge1$
\begin{align}
\label{Kmxy2-w}
    & \Cov\lpa{\overbar{B}^{[m]}(x),\overbar{B}^{[m]}(y)}
    = (-1)^{m-1} \b_{2m}(|x-y|)
    = 2 \sum_{k\ge 1}\frac{\cos(2k\pi(x-y))}{(2k\pi)^{2m}}, \\
\label{Kmxy2*-w}
    & \Cov\lpa{\overbar{B}^{*[m]}(x),\overbar{B}^{*[m]}(y)}
    = (-1)^{m-1} \b_{2m}(|x-y|) 
    -2 \sum_{1\le k<m}\frac{\cos(2k\pi(x-y))}{(2k\pi)^{2m}}.
\end{align}
In particular,
\begin{equation}
\label{Kmxx2-w}
\begin{aligned}
    & \Var\lpa{\overbar{B}^{[m]}(x)}
    = (-1)^{m-1} \b_{2m}(0), \\
    & \Var\lpa{\overbar{B}^{*[m]}(x)}
    = (-1)^{m-1} \b_{2m}(0) 
    - 2 \sum_{1\le k< m}\frac{1}{(2k\pi)^{2m}},
\end{aligned}
\end{equation}
where the $(2m)!b_{2m}(0)$ are Bernoulli numbers.
\end{thm}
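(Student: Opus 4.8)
The plan is to reduce each covariance to an $L^2$ inner product of template functions and then evaluate that inner product through Parseval's identity. Since $\overbar{B}^{[m]}(x)=\int_0^1 \bar{\tau}_m(t;x)\dd W(t)$ is a Wiener integral, the isometry for such integrals gives
\[
    \Cov\lpa{\overbar{B}^{[m]}(x),\overbar{B}^{[m]}(y)}
    = \int_0^1 \bar{\tau}_m(t;x)\,\bar{\tau}_m(t;y)\dd t,
\]
and likewise for the truncated process with $\bar{\tau}^*_m$ in place of $\bar{\tau}_m$. The $W$- and $B$-integrals in \eqref{Cm} coincide because $\int_0^1\bar{\tau}_m(t;x)\dd t=0$ (the normalization in \eqref{bernoulli}), so replacing $W$ by the bridge $B$ changes nothing. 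Thus the whole theorem reduces to evaluating two $L^2$ inner products over $[0,1]$.

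The second line of \eqref{tau_m-w} is the key: since $\{t-x\}=t-x+1$ when $t\le x$ and $\{t-x\}=t-x$ when $t>x$, it shows that $\bar{\tau}_m(t;x)=(-1)^{m-1}\b_m(\{t-x\})$, a shifted periodized normalized Bernoulli polynomial. Feeding in the Fourier series \eqref{fourier} and using the phase identity $(-1)^m\cos(\theta-\tfrac{m\pi}2)=\cos(\theta+\tfrac{m\pi}2)$, I would rewrite this as $\bar{\tau}_m(t;x)=2\sum_{k\ge1}(2k\pi)^{-m}\cos\lpa{2k\pi(t-x)+\tfrac{m\pi}2}$, whose complex Fourier coefficient at frequency $k\ne0$ is $a_k(x)=(-1)^m e^{-2\pi ikx}/(2\pi ik)^m$. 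Comparison with \eqref{tau_m*-w} then shows that $\bar{\tau}^*_m$ is the identical series with the frequencies $1\le k<m$ deleted.

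I would then apply Parseval. Simplifying $(2\pi ik)^{2m}=(-1)^m(2\pi k)^{2m}$, the product $a_k(x)\overline{a_k(y)}$ collapses to $e^{2\pi ik(y-x)}/(2\pi k)^{2m}$; summing over all $k\ne0$ gives $2\sum_{k\ge1}(2k\pi)^{-2m}\cos\lpa{2k\pi(x-y)}$, which is the right-hand series in \eqref{Kmxy2-w}. For the truncated kernel the sum instead runs over $|k|\ge m$, which removes exactly the terms $1\le k<m$ and yields \eqref{Kmxy2*-w}. To recognize the first series as $(-1)^{m-1}\b_{2m}(|x-y|)$, I would apply \eqref{fourier} at index $2m$: since $\cos(2k\pi z-m\pi)=(-1)^m\cos(2k\pi z)$, this gives $\b_{2m}(z)=2(-1)^{m-1}\sum_{k\ge1}(2k\pi)^{-2m}\cos(2k\pi z)$, and the evenness of cosine lets me replace $z$ by $|x-y|$. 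Setting $x=y$, so that every cosine equals $1$, produces the two variance formulas \eqref{Kmxx2-w}.

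The computations are routine once the Fourier picture is in place; the single delicate point is the bookkeeping of the half-integer phase shifts $\pm\tfrac{m\pi}2$ against the factors $(-1)^m$ and $i^{2m}$, which is where sign errors would most easily creep in. Convergence is not an issue: the coefficients decay like $k^{-m}$ with $m\ge1$, hence lie in $\ell^2$, and for $m=1$ the template is simply the square-integrable sawtooth $\b_1(\{t-x\})$, so the $L^2$ theory applies throughout. One could instead expand $\int_0^1\b_m(\{t-x\})\b_m(\{t-y\})\dd t$ directly by splitting at the breakpoints and invoking a N\"orlund-type multiplication formula for Bernoulli polynomials, but the Fourier route is shorter and makes the truncated identity \eqref{Kmxy2*-w} transparent.
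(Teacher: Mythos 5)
Your argument is correct, and it reaches \eqref{Kmxy2-w} by a different route than the paper. Both proofs start from the same It\^o-isometry reduction to $\int_0^1 \bar{\tau}_m(t;x)\bar{\tau}_m(t;y)\dd t$, but the paper then splits this integral at $x$ and $y$, encodes the three pieces through the bivariate generating function $\frac{zw}{z+w}\bigl(\frac{e^{\Delta z}}{e^z-1}+\frac{e^{(1-\Delta)w}}{e^w-1}\bigr)$ with $\Delta=y-x$, and extracts the coefficient of $z^mw^m$ by a formal power-series manipulation to land directly on $(-1)^{m-1}\b_{2m}(\Delta)$; the Fourier form and the expansion \eqref{eigen_exp} of $\bar{\tau}_m$ appear only afterwards, in the KL-expansion subsection. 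You instead identify $\bar{\tau}_m(t;x)$ as the periodized Bernoulli function $(-1)^{m-1}\b_m(\{t-x\})$ up front, expand it by \eqref{fourier}, and apply Parseval. What your route buys is uniformity: the truncated identity \eqref{Kmxy2*-w} is immediate (frequencies $1\le k<m$ are simply deleted from the sum), whereas the paper handles it by separate cosine orthogonality computations; your route also makes the eigenstructure visible before the covariance is even computed. What the paper's route buys is that the closed form $(-1)^{m-1}\b_{2m}(|x-y|)$ emerges by purely algebraic coefficient extraction, with no appeal to the pointwise validity of the Fourier series \eqref{fourier} at the endpoints; in your version you must (and do, implicitly) note that for the index $2m\ge 2$ the series converges absolutely to the continuous periodic extension of $\b_{2m}$, so the evaluation at $|x-y|\in\{0,1\}$ in \eqref{Kmxx2-w} is legitimate, while for the template itself only $L^2$ convergence is needed and Parseval covers the $m=1$ sawtooth. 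Your bookkeeping of the phase shifts and of $(2\pi ik)^{2m}=(-1)^m(2\pi k)^{2m}$ checks out, so there is no gap.
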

\begin{proof}
Assume without loss of generality that $0\le x\le y\le 1$. We use
again the generating function \eqref{gen_func} of $\b_{m}(x)$.
By the representation for $\bar{\tau}_m(t;x)$ in \eqref{tau-V}, we
see that the covariance function satisfies
\begin{align*}
    \Cov\lpa{\overbar{B}^{[m]}(x),\overbar{B}^{[m]}(y)}
    &= \int_0^1 \bar{\tau}_m(t;x) \bar{\tau}_m(t;y) \dd t \\
    &= [z^m w^m] \biggl(\int_0^x V(z;t-x+1)V(w;t-y+1) \dd t \\
    &\qquad\qquad
    + \int_x^y V(z;t-x) V(w;t-y+1) \dd t \\
    &\qquad\qquad
    + \int_y^1 V(z;t-x) V(w;t-y) \dd t\biggr) \\
    &= [z^m w^m]\frac{zw}{z+w}\left( \frac{e^{\Delta z}}{e^z-1}
    + \frac{e^{(1-\Delta) w}}{e^w-1} \right),
\end{align*}
where $\Delta=y-x$. Now 
\begin{align*}
    \frac{zw}{z+w}\left( \frac{e^{\Delta z}}{e^z-1}
    + \frac{e^{(1-\Delta) w}}{e^w-1} \right) 
    &= \sum_{k\ge 0} \frac{w \b_{k}(\Delta) z^{k} 
    + z \b_k(1-\Delta) w^{k}}{z+w} \\
    &= zw \sum_{k\ge 1} \b_{k}(\Delta) \frac{z^{k-1} 
    - (-w)^{k-1}}{z-(-w)} +1 \\
    &= zw \sum_{k\ge 2} \b_{k}(\Delta) \frac{z^{k-1} 
    + (-w)^{k-1}}{z-(-w)} +1 \\
    &= zw \sum_{k\ge 2} \b_{k}(\Delta) 
    \sum_{0\le j\le k-2} z^j (-w)^{k-j-2} +1.
\end{align*}
It follows that when $x\le y$, $\Cov\lpa{\overbar{B}^{[m]}(x),
\overbar{B}^{[m]}(y)} = (-1)^{m-1} \b_{2m}(\Delta)$. Similarly,
when $x<y$,
$\Cov\lpa{\overbar{B}^{[m]}(x), \overbar{B}^{[m]}(y)} =
(-1)^{m-1} \b_{2m}(-\Delta)$. We then conclude \eqref{Kmxy2-w}.

The proof of \eqref{Kmxy2*-w} is based on the identities 
\begin{equation*}
    \int_0^1 \bar{\tau}_m(t;x)
    \cos\Bigl(2k\pi(t-y)+\sfrac{m\pi}{2} \Bigr) \dd t
    = \frac{\cos(2 k\pi (x-y))}{(2 k\pi)^m},
\end{equation*}
and
\begin{equation}\label{cos-cos}
	\begin{split}
	&\int_0^1 \cos\Bigl(2k\pi(t-x)+\sfrac{m\pi}{2} \Bigr)
    \cos\Bigl(2\ell\pi(t-y)+\sfrac{m\pi}{2} \Bigr) \dd t\\
    &\qquad= \begin{cases} 
	    \sfrac{1}{2}\cos(2 k\pi (x-y)), & k=\ell, \\
        0, & k\ne\ell, 
	\end{cases}
	\end{split}
\end{equation}
whose proofs are similar and omitted.
\end{proof}

Let $\psi_{0}(x)=1$, and $\psi_{2k-1}(x)=\sqrt{2}\sin(2k\pi x)$,
$\psi_{2k}(x)=\sqrt{2}\cos(2k\pi x)$ for $k\ge 1$. Then,
$\{\psi_{k}(x)\}_{k\ge 0}$ is a complete orthonormal basis of
$L^2[0,1]$. For each $m\ge 1$, let
\[
    \psi^{[m]}_{2k-1}(x) 
    = \sqrt{2} \sin\Bigl(2k\pi x-\sfrac{m\pi}{2} \Bigr) \quad
    \text{and} \quad
    \psi^{[m]}_{2k}(x) 
    = \sqrt{2} \cos\Bigl(2k\pi x-\sfrac{m\pi}{2} \Bigr).
\]
Then $\{\psi^{[m]}_{k}(x)\}_{k\ge 0}$ also form an orthonormal basis.
By expressing $\bar{\tau}_m(\cdot;x)$ in \eqref{tau_m-w} as
a Fourier series \eqref{fourier}, and by expanding it on the basis
$\psi_k(\cdot)$, we obtain the expansion
\begin{align}
    \bar{\tau}_m(t;x) 
    &= \sum_{k\ge 1} \frac{1}{(2k\pi)^m} 
    \left(\psi^{[m]}_{2k-1}(x) \psi_{2k-1}(t) 
    + \psi^{[m]}_{2k}(x) \psi_{2k}(t) \right) \nonumber \\
    &=
    2 \sum_{k\ge 1} \frac{1}{(2k\pi)^m}
    \cos\Bigl(2 k\pi (t-x)+\sfrac{m\pi}{2}\Bigr),
\label{eigen_exp}
\end{align}
and
\[
    \bar{\tau}^*_m(t;x)
    = \sum_{k\ge m} \frac{1}{(2k\pi)^m} 
    \left(\psi^{[m]}_{2k-1}(x) \psi_{2k-1}(t) 
    + \psi^{[m]}_{2k}(x) \psi_{2k}(t) \right).
\]

By taking the stochastic integral $\int_0^1 \dd B(t)$ on both sides, 
we get the KL expansions
\begin{align}
\label{KL-w}
    & \overbar{B}^{[m]}(x) 
    = \sum_{k\ge 1}\frac{1}{(2k\pi)^{m}}
    \left( \psi^{[m]}_{2k-1}(x) \xi_{2k-1} 
    + \psi^{[m]}_{2k}(x) \xi_{2k} \right), \\
\label{KL*-w}
    & \overbar{B}^{*[m]}(x) 
    = \sum_{k\ge m}\frac{1}{(2k\pi)^{m}}
    \left( \psi^{[m]}_{2k-1}(x) \xi_{2k-1} 
    + \psi^{[m]}_{2k}(x) \xi_{2k} \right),
\end{align}
where
\begin{equation}\label{xi-w}
    \xi_j 
    = \int_0^1 \psi_{j}(y) \dd B(y) 
    = \int_0^1 \psi_{j}(y) \dd W(y) \sim N(0,1)\ \ \rm i.i.d.
\end{equation}
These KL expansions can again be justified by Mercer's theorem.

\begin{thm}\label{thm:KL-w}
Let $\xi_k$, $k\ge 0$, be i.i.d. standard Gaussian random variables
defined by \eqref{xi-w}. For $m\ge 1$, \eqref{KL-w} and \eqref{KL*-w}
hold uniformly in $x\in[0,1]$ with probability 1, and both
\begin{equation}\label{KL2-w}
    U^{[m]} 
    = \sum_{k\ge 1}\frac{\xi_{2k-1}^2 + \xi_{2k}^2}
    {(2k\pi)^{2m}}\quad
    \text{and}\quad
    U^{*[m]} 
    = \sum_{k\ge m}\frac{\xi_{2k-1}^2 + \xi_{2k}^2}
    {(2k\pi)^{2m}}
\end{equation}
converge with probability 1.
\end{thm}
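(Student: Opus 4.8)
The plan is to mirror the proof of Theorem~\ref{thm:KL}, exploiting the fact that here the limiting processes are \emph{stationary}: by Theorem~\ref{thm:cov-w} the covariance of $\overbar{B}^{[m]}$ depends on $x,y$ only through $x-y$. First I would record that the kernels in \eqref{Kmxy2-w} and \eqref{Kmxy2*-w} are continuous, symmetric, positive-definite, and square-integrable on $[0,1]\times[0,1]$, the first two being inherited from $(-1)^{m-1}\b_{2m}(|x-y|)$ and the nonnegativity from the manifestly nonnegative Fourier coefficients $(2k\pi)^{-2m}$. The key observation is that \eqref{Kmxy2-w} is already a Mercer-type diagonal expansion: using $\sin A\sin B+\cos A\cos B=\cos(A-B)$ with $A=2k\pi x-\frac{m\pi}{2}$ and $B=2k\pi y-\frac{m\pi}{2}$ gives
\[
    \psi^{[m]}_{2k-1}(x)\psi^{[m]}_{2k-1}(y)+\psi^{[m]}_{2k}(x)\psi^{[m]}_{2k}(y)
    =2\cos\bigl(2k\pi(x-y)\bigr),
\]
so that $\Cov(\overbar{B}^{[m]}(x),\overbar{B}^{[m]}(y))=\sum_{k\ge1}(2k\pi)^{-2m}\bigl(\psi^{[m]}_{2k-1}(x)\psi^{[m]}_{2k-1}(y)+\psi^{[m]}_{2k}(x)\psi^{[m]}_{2k}(y)\bigr)$. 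Since $\{\psi^{[m]}_{k}\}_{k\ge0}$ is orthonormal, this identifies $\psi^{[m]}_{2k-1}$ and $\psi^{[m]}_{2k}$ as eigenfunctions with common eigenvalue $(2k\pi)^{-2m}$; the truncated kernel \eqref{Kmxy2*-w} is handled identically, with the sum starting at $k\ge m$.

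With the eigenstructure in hand, I would apply the standard machinery of orthogonal decompositions of Gaussian processes exactly as in the proof of Theorem~\ref{thm:KL} (Ch.\,5, Sec.\,2 of \cite{Shorack86}). Mercer's theorem upgrades the diagonal representation above to an expansion converging uniformly on $[0,1]\times[0,1]$; Kac and Siegert's theorem then yields the mean-square KL expansion \eqref{KL-w}, whose coefficients are the $\xi_j=\int_0^1\psi_j(y)\,\dd B(y)$ of \eqref{xi-w}, and these are i.i.d.\ $N(0,1)$ because $\{\psi_k\}$ is orthonormal and $B$ is a Brownian bridge. The template expansion \eqref{eigen_exp} makes the stochastic-integration step transparent: integrating \eqref{eigen_exp} against $\dd B(t)$ reproduces \eqref{KL-w} term by term. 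Parseval applied to \eqref{KL-w}, using the orthonormality of $\{\psi^{[m]}_k\}$, then gives the quadratic-form representation \eqref{KL2-w} of $U^{[m]}$, and likewise for $U^{*[m]}$.

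It remains to promote mean-square convergence to uniform almost-sure convergence and to settle the a.s.\ convergence of the series in \eqref{KL2-w}. For the latter I would note that it is a series of independent nonnegative terms with $\sum_{k\ge1}\mathbb{E}\bigl[(\xi_{2k-1}^2+\xi_{2k}^2)(2k\pi)^{-2m}\bigr]=\sum_{k\ge1}2(2k\pi)^{-2m}<\infty$ for every $m\ge1$, so convergence with probability $1$ follows from Kolmogorov's three-series theorem (equivalently, monotone convergence furnishes a finite a.s.\ limit). For the uniform a.s.\ convergence of \eqref{KL-w} and \eqref{KL*-w} I would invoke Theorems 3.7 and 3.8 of \cite{Adler90}, just as in Theorem~\ref{thm:KL}; the hypotheses hold because the stationary kernel $(-1)^{m-1}\b_{2m}(|x-y|)$ is continuous on $[0,1]^2$, and for $m\ge2$ even continuously differentiable, the only loss of smoothness being a corner at the diagonal when $m=1$. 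The main obstacle is precisely this boundary case $m=1$, where the eigenvalues decay only like $k^{-2}$ and the kernel is merely Lipschitz at $x=y$; there I would fall back on the classical theory of Watson's statistic, since $U^{[1]}$ is the usual $U^2$-process whose uniform KL expansion is well known, whereas for $m\ge2$ the increased smoothness of $\b_{2m}$ makes the entropy conditions of \cite{Adler90} immediate. Finally, the truncated statements differ only by deletion of the finitely many terms $1\le k<m$, which affects none of the convergence arguments.
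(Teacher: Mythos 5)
Your argument is correct and takes essentially the same route as the paper: continuity of the covariance kernel on $[0,1]^2$, verification of the eigenequation via the diagonal Fourier form of \eqref{Kmxy2-w}, and then the Mercer/Kac--Siegert/Adler machinery already deployed for Theorem~\ref{thm:KL}, with the a.s.\ convergence of \eqref{KL2-w} from the summability of the eigenvalues. The only superfluous step is your special-casing of $m=1$: unlike the generalized Anderson--Darling setting, there is no singular weight here, the kernel $(-1)^{m-1}\b_{2m}(|x-y|)$ is continuous on all of $[0,1]^2$ for every $m\ge1$ (a corner on the diagonal is harmless for Mercer and for the uniform a.s.\ convergence of the KL series of a continuous Gaussian process), so no fallback to the classical Watson theory is needed.
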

\begin{proof}
Note that $\overbar{B}^{[m]}(\cdot)$ is continuous on $[0,1]$. The
covariance function of $\overbar{B}^{[m]}(\cdot)$ is given in
\eqref{Kmxy2-w}, which is continuous on $[0,1]\times [0,1]$.
Moreover, by \eqref{Kmxx2-w},
\[
    \int_0^1 \Cov(\overbar{B}^{[m]}(x),
    \overbar{B}^{[m]}(x)) \dd x <\infty.
\]
In addition, $\Cov\lpa{\overbar{B}^{[m]}(x),\overbar{B}^{[m]}(y)}$
satisfies the eigenequation 
\[
     \int_0^1 \Cov\lpa{\overbar{B}^{[m]}(x),\overbar{B}^{[m]}(y)}
     \psi^{[m]}_{k}(y) \dd y 
     = \frac{\psi^{[m]}_{k}(x)}
     {(2\lceil k/2\rceil\pi)^{2m}}, \qquad k\ge 1.
\]
The remaining proof then follows the same line as that of Theorem
\ref{thm:KL}. The required results for $\overbar{B}^{*[m]}(\cdot)$ and
$U^{*[m]}$ are proved similarly.
\end{proof}

The covariance function for $B^{[m]}(\cdot)$ when $m=1$ is given 
explicitly by 
\[
    \Cov\lpa{\overbar{B}^{[1]}(x),\overbar{B}^{[1]}(y)}
    = \sfrac{1}{12} -\sfrac{1}{2}|x-y| +\sfrac{1}{2}(x-y)^2,
\]
(see also \cite[p.\,111]{Watson61}), and when $m=2$
\[
    \Cov\lpa{\overbar{B}^{[2]}(x),\overbar{B}^{[2]}(y)}
    = \sfrac{1}{720}-\sfrac{1}{24}(x-y)^2
    +\sfrac{1}{12}|x-y|^3-\sfrac{1}{24}(x-y)^4;
\]
see also \cite{Henze02}.

\subsection{Statistic in terms of the samples}
\label{subsec:sample-w}

We now derive an expression for GW statistic in terms of the sample 
$X_i$'s, which turns out to be simpler than that of the GAD 
statistic. Since
\begin{equation}\label{UNm2}
    U^{[m]}_{N}
    = N \int_0^1 \biggl( \int_0^1 
    \bar{\tau}_m(t;x) \bar{\tau}_m(u;x) \dd x \biggr) \dd F_{N}(t) 
    \dd F_{N}(u),
\end{equation}
it suffices to evaluate the integral within the parentheses above, 
which can be done in essentially the same way as the proof of Theorem 
\ref{thm:cov-w}.
\begin{thm}\label{thm:sample-w}
The generalized Watson statistics are expressed in terms of samples 
$X_i$'s as
\begin{equation}\label{UNm-sample}
    U^{[m]}_{N}
    = \frac{1}{N} \sum_{1\le i,j\le N}  
    (-1)^{m-1} \b_{2m}(|X_i-X_j|),
\end{equation}
and
\begin{equation}
\label{UN*m-sample}
    U^{*[m]}_{N}
    = \frac{1}{N} \sum_{1\le i,j\le N} \biggl\{
    (-1)^{m-1} \b_{2m}(|X_i-X_j|)
    -2 \sum_{1\le k<m}\frac{\cos(2k\pi(X_i-X_j))}
    {(2k\pi)^{2m}}\biggr\}.
\end{equation}
\end{thm}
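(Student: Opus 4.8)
The plan is to start from the double-integral representation \eqref{UNm2} and reduce everything to evaluating the inner kernel $\int_0^1 \bar{\tau}_m(t;x)\bar{\tau}_m(u;x)\dd x$ as a function of the two sample points $t$ and $u$. The crucial observation is that, by the Fourier expansion \eqref{eigen_exp}, the template function $\bar{\tau}_m(t;x)$ depends on $t$ and $x$ only through the difference $t-x \bmod 1$; consequently the integral over the ``space'' variable $x$ has exactly the same circular-autocorrelation structure as the integral over the ``time'' variable $t$ that produced the covariance kernel in Theorem~\ref{thm:cov-w}. I therefore expect the inner kernel to coincide with that covariance function, namely $\int_0^1 \bar{\tau}_m(t;x)\bar{\tau}_m(u;x)\dd x = (-1)^{m-1}\b_{2m}(|t-u|)$, even though one cannot quote Theorem~\ref{thm:cov-w} verbatim, since there the roles of the two variables are interchanged.

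To establish this identity I would substitute the Fourier series \eqref{eigen_exp} for both factors $\bar{\tau}_m(t;x)$ and $\bar{\tau}_m(u;x)$ and integrate the resulting double sum of products of cosines over $x\in[0,1]$. A computation entirely parallel to \eqref{cos-cos} — writing each product as a sum of two cosines via the product-to-sum formula — shows that the off-diagonal terms ($k\ne\ell$) vanish and that each diagonal term ($k=\ell$) contributes $\sfrac12\cos(2k\pi(t-u))$; summing gives $2\sum_{k\ge1}(2k\pi)^{-2m}\cos(2k\pi(t-u))$, which by \eqref{Kmxy2-w} equals $(-1)^{m-1}\b_{2m}(|t-u|)$.

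With the inner kernel in hand, \eqref{UNm-sample} follows at once by substituting into \eqref{UNm2} and replacing the empirical measure by its atoms: since $\dd F_N(t)\dd F_N(u)$ places mass $N^{-2}$ at each pair $(X_i,X_j)$, the prefactor $N$ yields $\sfrac1N\sum_{1\le i,j\le N}(-1)^{m-1}\b_{2m}(|X_i-X_j|)$. For the truncated statistic the same argument applies verbatim to $\bar{\tau}^*_m$, whose Fourier expansion is obtained from \eqref{tau_m*-w} by deleting the terms with $1\le k<m$, so that $\bar{\tau}^*_m(t;x) = 2\sum_{k\ge m}(2k\pi)^{-m}\cos(2k\pi(t-x)+\sfrac{m\pi}{2})$; the orthogonality step then produces $2\sum_{k\ge m}(2k\pi)^{-2m}\cos(2k\pi(t-u))$ for the inner kernel, and splitting this as $\sum_{k\ge1}-\sum_{1\le k<m}$ gives precisely the bracketed expression in \eqref{UN*m-sample}. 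The only genuine subtlety I anticipate is the bookkeeping for the interchanged variable roles — one must confirm that integrating against $\dd x$ rather than $\dd t$ still reproduces the same Bernoulli polynomial — but because the representation \eqref{eigen_exp} is manifestly $1$-periodic in each argument, no boundary or non-periodicity issues arise and the whole proof collapses to the single orthogonality calculation above.
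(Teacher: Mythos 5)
Your argument is correct, and for the identity \eqref{UNm-sample} it takes a genuinely different route from the paper. The paper evaluates the inner kernel $\int_0^1 \bar{\tau}_m(t;x)\bar{\tau}_m(u;x)\dd x$ by splitting the range of $x$ at $t$ and $u$, inserting the generating function $V(z;\cdot)$ of the normalized Bernoulli polynomials, and extracting the coefficient $[z^mw^m]$ exactly as in the proof of Theorem~\ref{thm:cov-w}; you instead substitute the Fourier expansion \eqref{eigen_exp} for both factors and invoke orthogonality of the trigonometric system, so that only the diagonal terms survive and the kernel collapses to $2\sum_{k\ge1}(2k\pi)^{-2m}\cos(2k\pi(t-u))=(-1)^{m-1}\b_{2m}(|t-u|)$ by \eqref{fourier}. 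Your route is shorter, unifies the truncated and untruncated cases in a single Parseval-type calculation, and makes transparent why integrating over the second argument $x$ reproduces the covariance kernel obtained by integrating over the first (the template depends only on $t-x \bmod 1$ and cosine is even); the paper's route, by contrast, works only with finite polynomial algebra and never needs to interchange an infinite sum with an integral. That interchange is the one point you should make explicit: for $m=1$ the Fourier coefficients decay only like $k^{-1}$, so \eqref{eigen_exp} converges merely in $L^2$ (and pointwise away from the jump at $t=x$), and the termwise integration must be justified via Parseval's identity in $L^2[0,1]$ rather than by absolute convergence of the double series. For $m\ge2$ the series converges absolutely and uniformly and no issue arises. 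Your treatment of the truncated statistic coincides with the paper's, which likewise uses the identity \eqref{cos-cos} and the deletion of the modes $1\le k<m$ from \eqref{tau_m*-w}.
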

\begin{proof}
Let $\Delta=u-t$.
First assume that $0\le t\le u\le 1$.  The integral between
the large parentheses in \eqref{UNm2} is
\begin{align}
    \int_0^1 \bar{\tau}_m(t;x) \bar{\tau}_m(u;x) \dd x
    &= [z^m w^m] \biggl(\int_u^1 V(z;t-x+1)V(w;u-x+1)\dd t \nonumber \\
    &\qquad\qquad
    + \int_t^u V(z;t-x+1) V(w;u-x) \dd t \nonumber \\
    &\qquad\qquad + \int_0^t V(z;t-x) V(w;u-x) \dd t\biggr) \nonumber \\
    &= [z^m w^m]\frac{zw}{z+w}\left( \frac{e^{(1-\Delta) z}}{e^z-1}
    + \frac{e^{\Delta w}}{e^w-1} \right).
\label{zw}
\end{align}
By the same calculation in the proof of Theorem \ref{thm:cov-w}, it holds that $(\ref{zw})=  b_m(1-\Delta)= (-1)^m b_m(\Delta)$.
Similarly for $0\le u\le t\le 1$,
$(\ref{zw})=  b_m(1+\Delta)= (-1)^m b_m(-\Delta)$.
In both cases, $(\ref{zw}) = (-1)^m b_m(|\Delta|)$.
We then conclude \eqref{UNm-sample}.

For the proof of \eqref{UN*m-sample}, we use the identities
\[
    \int_0^1 \bar{\tau}_m(t;x)
    \cos\Bigl(2k\pi(u-x)+\sfrac{m\pi}{2} \Bigr) \dd x
    = \frac{\cos(2 k\pi (t-u))}{(2 k\pi)^m}
\]
and (\ref{cos-cos}).
\end{proof}

When $m=1$, the statistic is
\begin{align*}
    U^{[1]}_{N}
    &= \sfrac{1}{2N} \sum_{1\le i,j\le N} \Bigl(
    \sfrac{1}{6} -|X_i-X_j|
    + (X_i-X_j)^2 \Bigr) \\
    &= \sfrac{1}{2N} \sum_{1\le i,j\le N} 
    \Bigl(-\sfrac{1}{12} + \Lpa{|X_i-X_j|-\sfrac{1}{2}}^2 \Bigr).
\end{align*}
This is equivalent to $N$ times
\[
    \sum_{1\le i\le N} 
    \Bigl( X_{(i)}-\sfrac{2i-1}{2 N}-\bar X+\sfrac{1}{2} \Bigr)^2 
    + \sfrac{1}{12 N},
\]
where $\bar X = \frac{1}{N}\sum_{1\le i\le N} X_i$ represents the 
sample mean; see \cite{Watson61}.

When $m=2$,
\[
    U^{[2]}_{N}
    = \sfrac{1}{N} \sum_{1\le i,j\le N}  
    \Bigl(\sfrac{1}{720} -\sfrac{(X_i-X_j)^2}{24} 
    +\sfrac{|X_i - X_j|^3}{12} -\sfrac{(X_i-X_j)^4}{24} \Bigr),
\]
and
\[
    U^{*[2]}_{N} 
    = U^{[2]}_{N} - \sfrac{2}{N} 
    \sum_{1\le i,j\le N} \frac{\cos(2\pi (X_i-X_j))}{(2\pi)^4};
\]
see also \cite{Henze02}.

\subsection{Power comparisons}
\label{subsec:power-w}

Figure \ref{fig:U} summarizes the the powers of the truncated 
statistics $U_N^{*[1]}=U_N^{[1]}$, $U_N^{*[2]}$ and $U_N^{*[3]}$ for 
testing the uniformity estimated by numerical simulations.
We generate the random variables in $[0,1)$ from a mixture of three 
von Mises distributions:
\[
    w_0 f(x;\theta_0,\kappa) + w_1 f(x;\theta_1,\kappa) 
    + w_2 f(x;\theta_2,\kappa)
\]
with the density
\[
    f(x;\theta,\kappa) 
    = \frac{e^{\kappa\cos(x-\theta)}}{2\pi I_0(\kappa)}
    \qquad (x\in[0,2\pi)),
\]
where $I_0(\kappa)$ is the modified Bessel function of order 0.

By changing the parameters $\theta_i$ and $w_i$, this model can 
describe unimodal, bimodal, and trimodal distributions. The 
configurations we used in our numerical simulations are summarized 
below:
\begin{center}
\begin{tabular}{ccccl}
Model & ($\theta_0,\theta_1,\theta_2)$ & $(w_0,w_1,w_2)$ 
& \# of modes \\ \hline
I & $(0,*,*)$ & $(1,0,0)$ & $1$ & \\[0.5mm]
II & $(0,\pi,*)$ & $(\frac{1}{2},\frac{1}{2},0)$ & 2 \\[1mm]
III & $(0,\frac{2\pi}{3},\frac{4\pi}{3})$ 
& $(\frac{1}{3},\frac{1}{3},\frac{1}{3})$ & 3 \\[1mm]
I+II & $(0,\pi,*)$ & $(\frac{2}{3},\frac{1}{3},0)$ & 2 \\[1mm]
I+III & $(0,\frac{2\pi}{3},\frac{4\pi}{3})$ 
& $(\frac{1}{2},\frac{1}{4},\frac{1}{4})$ & 3 \\[1mm]
I'+II & $(0,\frac{\pi}{3},\pi)$ 
& $(\frac{1}{4},\frac{1}{2},\frac{1}{4})$ & 3 \\[1mm] 
\end{tabular}
\end{center}

Model I is unimodal, Models II is anti-modal with the modes located
at $0$ and $\pi$, and Models III is equally-spaced trimodal; the 
modes in each case are of the same magnitude. The other models
I+II, I+III, I'+II are mixture models of I and II, I and III,
and I (shifted) and II, respectively. They all become the uniform
distribution when $\kappa=0$.

All simulation parameters are as in Sec.~\ref{subsec:finite}: the
sample size $N$ is $100$, the size of the tests is 0.05.  Monte Carlo
simulations to estimate the powers of $U_N^{*[m]}$ ($m=1,2,3$) are
set to run with 300,000 replications, and the critical values are 
estimated by simulations in advance.
The results are summarized in Figure \ref{fig:U} as in
Sec.~\ref{subsec:finite}.

One sees that, the statistics $U_N^{*[1]}$, $U_N^{*[2]}$, and
$U_N^{*[3]}$ perform the best in Models I, II, and III, respectively.
This is reasonable because the finite sample expansions for
$U_N^{*[m]}$ are given by
\[
    U^{*[m]}_{N}
    = \sum_{k\ge m}
    \frac{\widehat\xi_{2k-1}^2+\widehat\xi_{2k}^2}{(2k\pi)^{2m}} 
    = \frac{\widehat\xi_{2m-1}^2+\widehat\xi_{2m}^2}{(2m\pi)^{2m}}
    +\frac{\widehat\xi_{2m+1}^2+\widehat\xi_{2m+2}^2}{(2(m+1)\pi)^{2m}}
    +\cdots
\]
where
\[
    \widehat\xi_{2k-1} 
    = \frac{\sqrt{2}}{\sqrt{N}}\sum_{1\le \ell \le N} 
    \sin(2k\pi X_{\ell}) \quad\text{and}\quad
    \widehat\xi_{2k} 
    = \frac{\sqrt{2}}{\sqrt{N}}\sum_{1\le \ell \le N}
    \cos(2k\pi X_{\ell}).
\]
In particular, $\widehat\xi_{2m-1}$ and $\widehat\xi_{2m}$,
being the leading terms of $U_N^{*[m]}$ and
representing cyclic components with cycle $2m\pi$, are
expected to detect the shape with $m$ modes. For the other
three models, there is a dominating mode, and thus $U_N^{[1]}$ 
has better power than the others.

\begin{figure}
\begin{center}
\begin{tabular}{cc}
\includegraphics[scale=0.25]{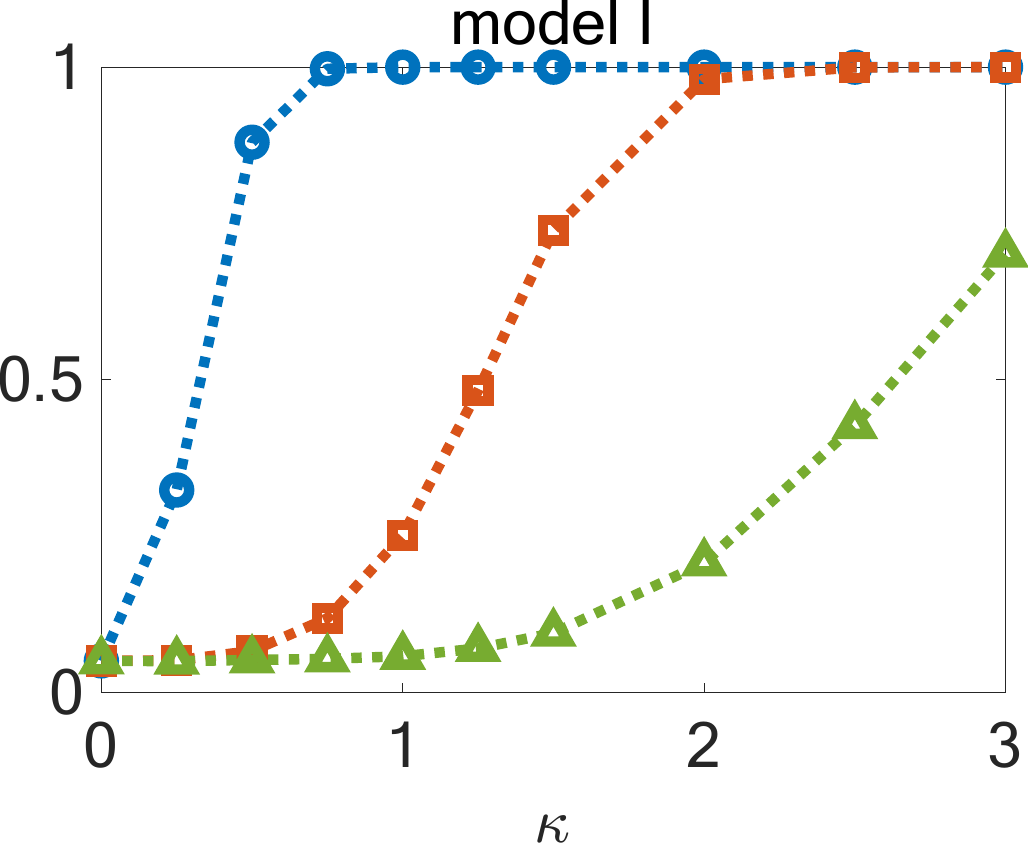} &
\includegraphics[scale=0.25]{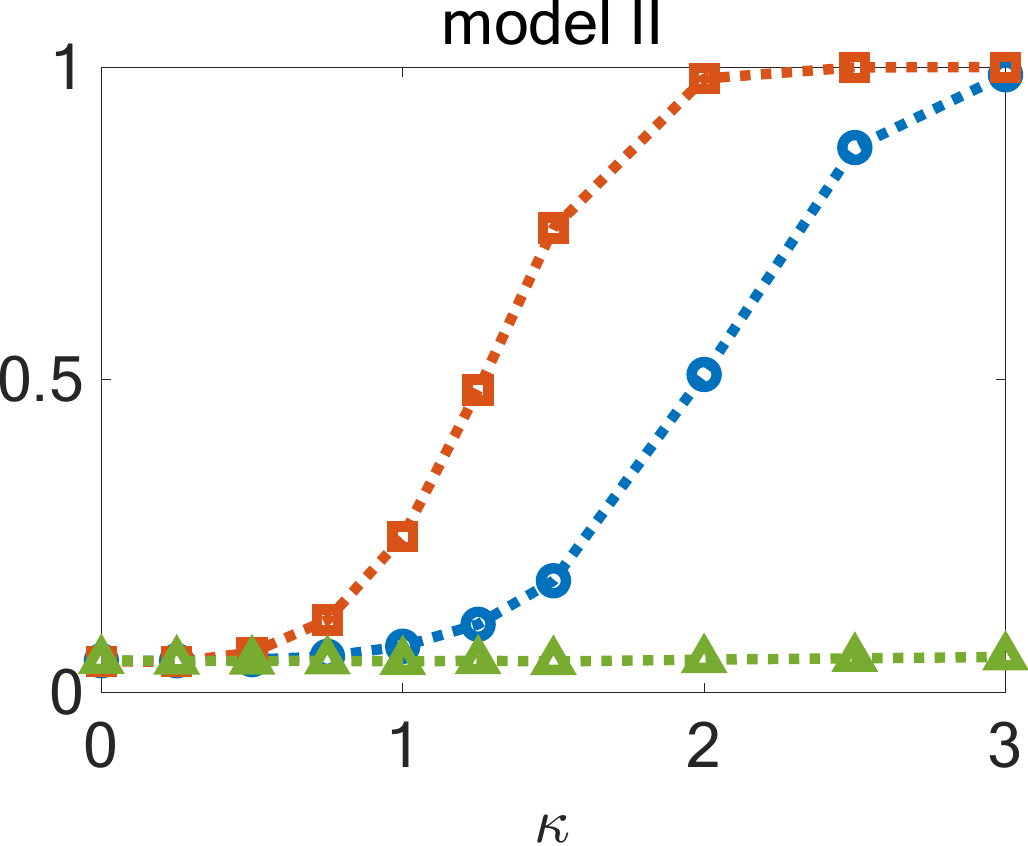} \\
\includegraphics[scale=0.25]{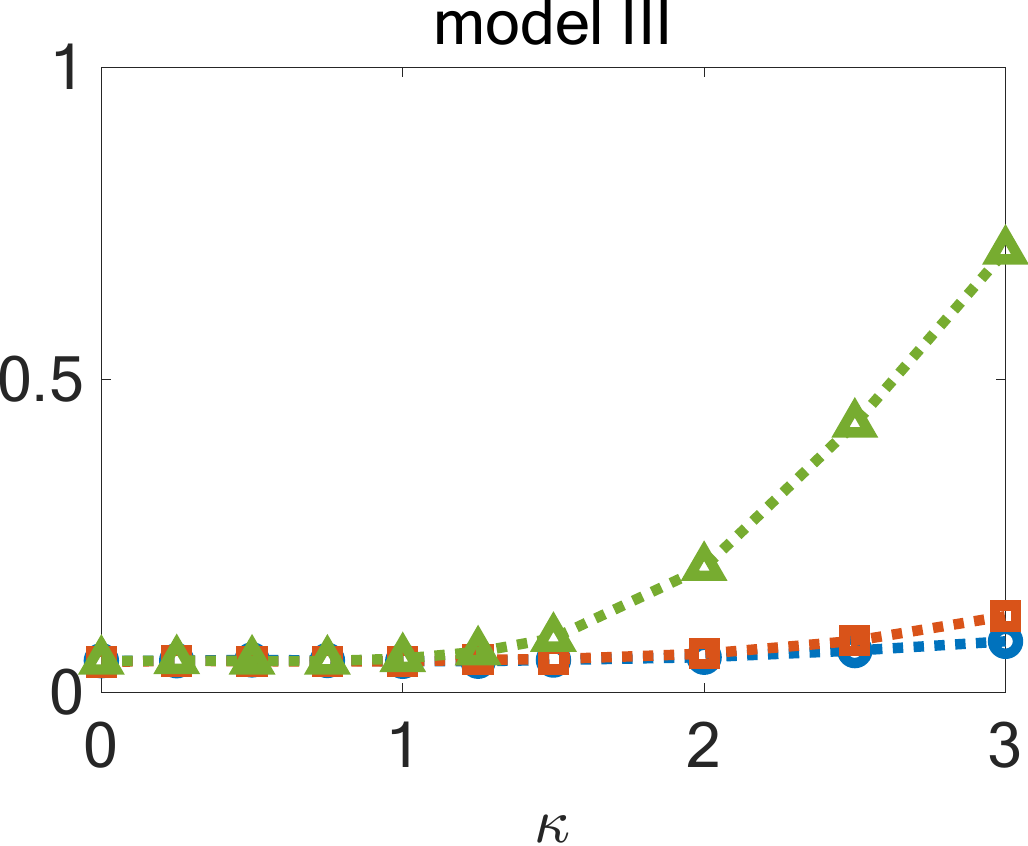} &
\includegraphics[scale=0.25]{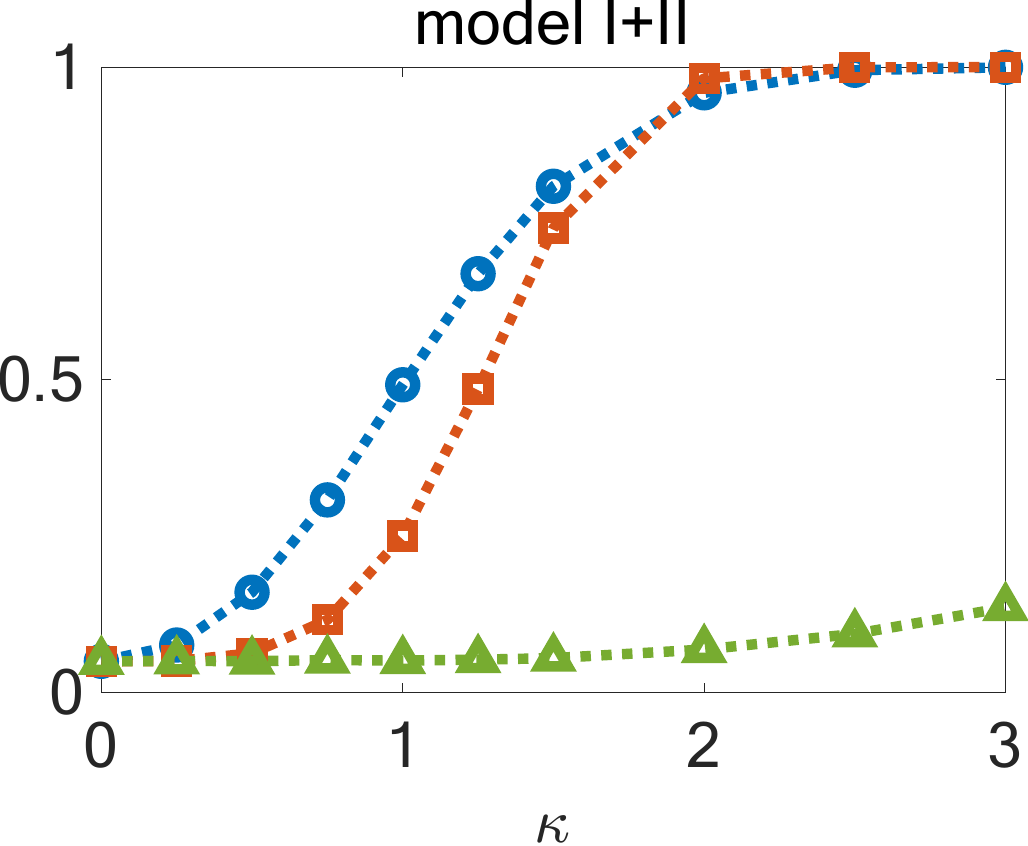} \\
\includegraphics[scale=0.25]{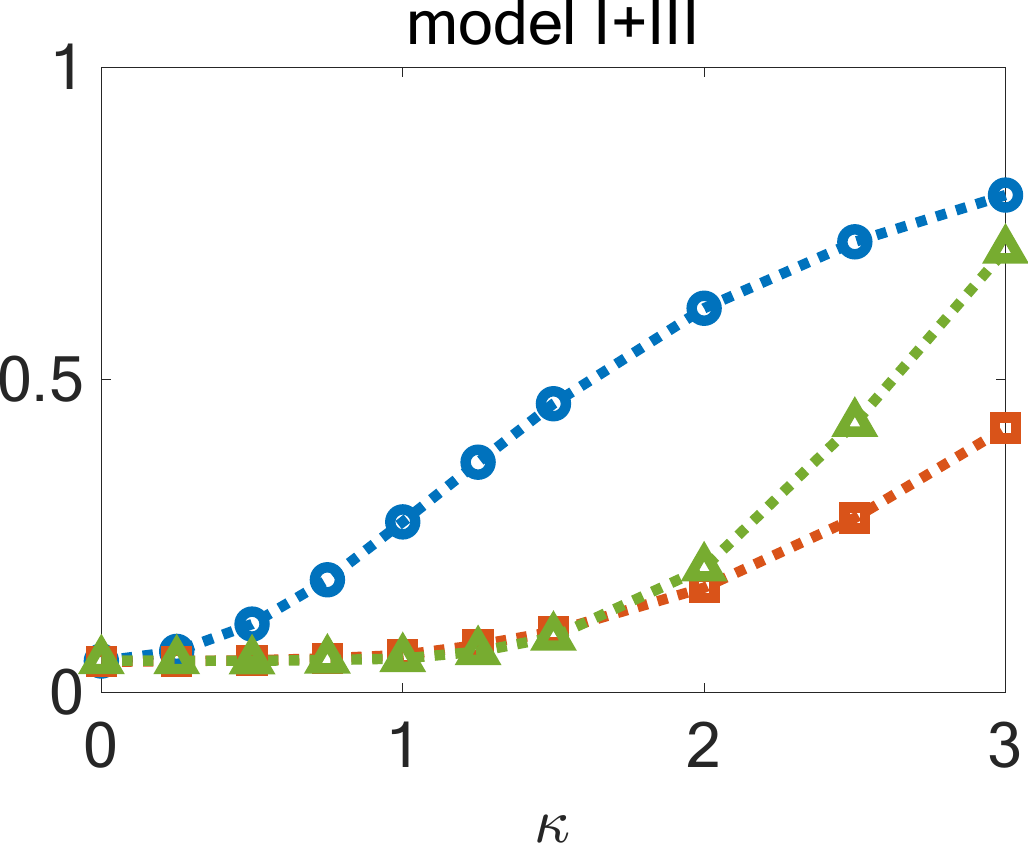} &
\includegraphics[scale=0.25]{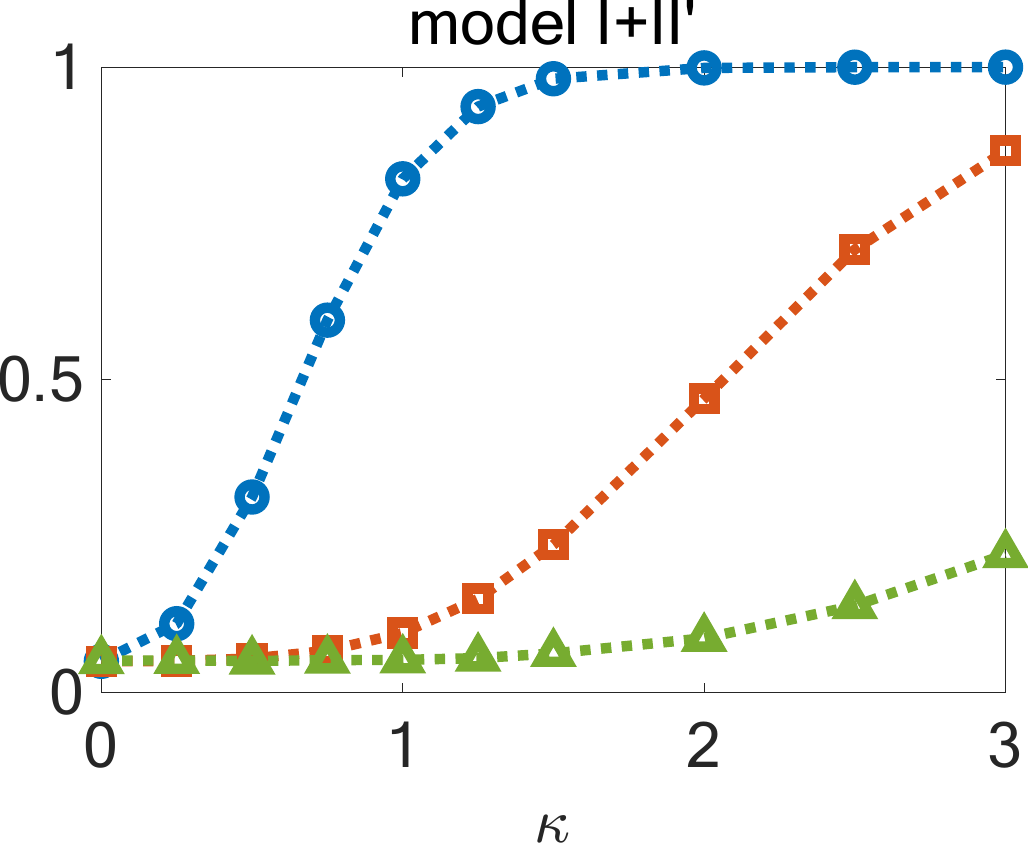}
\end{tabular}
\caption{Powers of $U^{*[m]}_N$ under the mixture models of von Mises distributions.}
\smallskip
(\blueredgreen)
\label{fig:U}
\end{center}
\end{figure}

\subsection{Moment generating function}
\label{subsec:mgf-w}

By the series representation \eqref{KL2-w}, the MGF of the limiting 
GW statistics now have the representations 
\begin{align}\label{EesUm}
    \mathbb{E}\lpa{e^{s U^{[m]}}}
    = \prod_{k\ge1} \frac1{1-\frac{2s}{\lambda_k}}
    \quad\text{and}\quad
    \mathbb{E}\lpa{e^{s U^{*[m]}}}
    = \mathbb{E}\lpa{e^{s U^{[m]}}} 
    \prod_{1\le k<m} \Bigl(1-\frac{2s}{\lambda_k}\Bigr),
\end{align}
where $\lambda_k:=(2k\pi)^{2m}$. The infinite series represents a 
meromorphic function in the complex $s$-plane with simple poles at 
$s=\frac12\lambda_k$ for $k=1,2,\dots$. Similar to 
Theorem~\ref{thm:gad-cos}, the infinite product in 
$\mathbb{E}\lpa{e^{s U^{[m]}}}$ can be simplified.
\begin{thm}\label{thm:mgf-w}
For $m\ge1$
\begin{equation}\label{gw-fp}
    \mathbb{E}\lpa{e^{s U^{[m]}}}
    = \frac{\sqrt{2s}\, e^{\frac{m-1}{2}\pi i} }  
    {2^m\prod_{0\le j<m}\sin\lpa{\frac12 
    (2s)^{\frac{1}{2m}}e^{\frac{j\pi i}{m}}}}.
\end{equation}
\end{thm}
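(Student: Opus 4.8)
The plan is to start from the infinite-product representation in \eqref{EesUm}, namely $\mathbb{E}\lpa{e^{s U^{[m]}}} = \prod_{k\ge1}\lpa{1-\frac{2s}{(2k\pi)^{2m}}}^{-1}$ with $\lambda_k=(2k\pi)^{2m}$, and to factor each denominator into linear pieces indexed by the $2m$-th roots of unity. Writing $w:=(2s)^{1/2m}$ on the principal branch, one has $2s=w^{2m}$ and the polynomial factorization
\[
    1-\frac{2s}{(2k\pi)^{2m}}
    = 1-\Lpa{\frac{w}{2k\pi}}^{2m}
    = \prod_{0\le j<2m}\Lpa{1-\frac{w\,e^{j\pi i/m}}{2k\pi}},
\]
valid because $e^{j\pi i/m}$ ($0\le j<2m$) runs over all $2m$-th roots of unity. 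This reduces the evaluation of the MGF to a product over these $2m$ roots of partial products in $k$.

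Next I would exploit the symmetry $j\mapsto j+m$, which sends $e^{j\pi i/m}$ to $-e^{j\pi i/m}$. Pairing the factors labelled $j$ and $j+m$ and setting $c_j:=\frac{w\,e^{j\pi i/m}}{2\pi}$ collapses two linear terms into a single quadratic one,
\[
    \Lpa{1-\frac{c_j}{k}}\Lpa{1+\frac{c_j}{k}}
    = 1-\frac{c_j^2}{k^2}.
\]
After this regrouping the product over $k$ is absolutely convergent for each fixed $j$, so I may invoke Euler's product formula $\prod_{k\ge1}\lpa{1-\frac{z^2}{k^2}}=\frac{\sin(\pi z)}{\pi z}$ with $z=c_j$ to get $\prod_{k\ge1}\lpa{1-c_j^2/k^2}^{-1}=\frac{\pi c_j}{\sin(\pi c_j)}$. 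Since $\pi c_j=\frac12(2s)^{1/2m}e^{j\pi i/m}$, the denominators reproduce exactly the sine factors in \eqref{gw-fp}.

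It then remains to collect the numerators $\prod_{0\le j<m}\pi c_j$. Using $\prod_{0\le j<m}e^{j\pi i/m}=e^{\frac{\pi i}{m}\sum_{0\le j<m}j}=e^{\frac{m-1}{2}\pi i}$ (from $\sum_{0\le j<m}j=\tfrac{m(m-1)}2$) together with $\prod_{0\le j<m}\tfrac12(2s)^{1/2m}=2^{-m}(2s)^{1/2}$, the numerator equals $2^{-m}\sqrt{2s}\,e^{\frac{m-1}{2}\pi i}$, which is precisely the claimed \eqref{gw-fp}.

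The main obstacle I anticipate is convergence bookkeeping rather than algebra. The single-root products $\prod_{k\ge1}\lpa{1-c_j/k}^{-1}$ individually diverge, since their $O(1/k)$ contributions are only conditionally summable; it is exactly the pairing $j\leftrightarrow j+m$ that cancels these linear terms and restores absolute convergence, so the regrouping of the $2m$ roots into $m$ conjugate-type pairs must be justified before passing to Euler's formula. One must also fix the branch of $(2s)^{1/2m}$ consistently so that the right-hand side of \eqref{gw-fp} is single-valued in $s$ (its apparent branch points cancel), matching the left-hand meromorphic function; this is checked on a neighbourhood of $s=0$ and extended by analytic continuation, the simple poles at $s=\tfrac12\lambda_k$ matching the zeros of the sine factors since these reproduce $(2k\pi)^{2m}=\lambda_k$.
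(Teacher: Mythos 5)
Your argument is correct and follows essentially the same route as the paper: the paper also factors $1-2s/(2k\pi)^{2m}$ over the zeros $\zeta_j(s)=\frac1{2\pi}(2s)^{1/2m}e^{j\pi i/m}$, groups the zeros in the pairs $j\leftrightarrow j+m$, and then invokes Euler's reflection formula after first passing through the finite Gamma-product $\prod_{0\le j<2m}\Gamma(1-\zeta_j(s))$ (the analogue of Lemma~\ref{thm:mgf-ad}). Your only deviation is to pair the roots before taking the product over $k$ and apply the sine product $\prod_{k\ge1}(1-z^2/k^2)=\frac{\sin\pi z}{\pi z}$ directly, which is equivalent to the paper's Gamma-function detour and, as you note, handles the conditional-convergence bookkeeping a little more transparently.
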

\begin{proof}
The proof is similar to but simpler than that of
Theorem~\ref{thm:gad-cos} because the zeros of the polynomial
equation $(2\pi z)^{2m} = 2s$ have the closed-forms $\zeta_j(s)
=\frac1{2\pi} (2s)^{\frac1{2m}} e^{\frac{j\pi i}m}$ for $0\le j<2m$.
So we first have the finite-product expression
\[
    \mathbb{E}\lpa{e^{s U^{[m]}}}
    =\prod_{0\le j<2m}\Gamma\lpa{1-\zeta_j(s)}.
\]
Then we obtain \eqref{gw-fp} by rearranging the terms by 
grouping the zeros in pairs and by Euler's reflection formula.
Details are omitted here.
\end{proof}
In particular, with $\sigma_m := \frac12(2s)^{\frac1{2m}}$, we have 
\begin{align*}
    \mathbb{E}\lpa{e^{sU^{[1]}}} 
    &=\frac{\sigma_1}{\sin(\sigma_1)}, \\
    \mathbb{E}\lpa{e^{sU^{[2]}}} 
    &= \frac{\sigma_2^2}{\sin(\sigma_2)\sinh(\sigma_2)}, \\
    \mathbb{E}\lpa{e^{sU^{[3]}}}
    &= \frac{2\sigma_3^3}
    {\sin(\sigma_3)\lpa{\cosh(\sqrt{3}\sigma_3)-\cos(\sigma_3)}}, \\
    \mathbb{E}\lpa{e^{sU^{[4]}}}
    &= \frac{2\sigma_4^4}
    {\sin(\sigma_4)\sinh(\sigma_4)\lpa{\cosh(\sqrt{2}\sigma_4)
    -\cos(\sqrt{2}\sigma_4)}}.
\end{align*}

Since only simple poles instead of branch singularities are involved 
in the infinite product representation \eqref{EesUm} of 
$\mathbb{E}\lpa{e^{sU^{[m]}}}$, we can easily invert the MGF by using 
Laplace transform and obtain the following expressions.
\begin{thm} For $m\ge1$, the density functions $f_m(x)$ of $U^{[m]}$ 
are given by
\[
    f_m(x) = m4^m\sum_{k\ge1} 
    \frac{(-1)^{k-1}(k\pi)^{3m-1}e^{\frac{m-1}2\pi i}}
    {\prod_{1\le j<m}\sin \lpa{k\pi e^{\frac{j\pi i}m}}}
    \,e^{-\frac12(2k\pi)^{2m}x},
\]  
and the distribution functions by
\[
    1 - 2m\sum_{k\ge1} 
    \frac{(-1)^{k-1}(k\pi)^{m-1}e^{\frac{m-1}2\pi i}}
    {\prod_{1\le j<m}\sin \lpa{k\pi e^{\frac{j\pi i}m}}}
    \,e^{-\frac12(2k\pi)^{2m}x},
\]
for $x>0$.
\end{thm}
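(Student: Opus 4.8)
The plan is to recognize that $\phi(s):=\mathbb{E}\lpa{e^{sU^{[m]}}}$ is the Laplace transform (evaluated at $-s$) of the density $f_m$, which is supported on $[0,\infty)$ because $U^{[m]}\ge 0$, and to recover $f_m$ by Bromwich inversion, exploiting the meromorphic structure already in hand. By \eqref{EesUm}, $\phi$ is meromorphic with only simple poles, located at $s_k=\tfrac12(2k\pi)^{2m}$ ($k\ge1$) on the positive real axis, and its region of convergence is $\Re(s)<s_1$. First I would write, for a vertical line $\Re(s)=\sigma_0<s_1$ and $x>0$,
\[
    f_m(x)=\frac1{2\pi i}\int_{\sigma_0-i\infty}^{\sigma_0+i\infty}\phi(s)\,e^{-sx}\dd s,
\]
and then close the contour to the right (legitimate for $x>0$ since $e^{-sx}\to0$ as $\Re(s)\to+\infty$), so that $f_m(x)=-\sum_{k\ge1}e^{-s_kx}\operatorname{Res}_{s=s_k}\phi(s)$, the overall sign reflecting the clockwise orientation of the rightward closure.

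The residues are where the finite-product form \eqref{gw-fp} of Theorem~\ref{thm:mgf-w} does the real work. Only the $j=0$ factor $g(s)=\sin\bigl(\tfrac12(2s)^{1/2m}\bigr)$ vanishes at $s_k$, so each pole is simple with residue $N(s_k)/D'(s_k)$, where $N,D$ are the numerator and denominator of \eqref{gw-fp}. I would differentiate $g$, use the defining relation $(2s_k)^{1/2m}=2k\pi$ to evaluate $\cos(k\pi)=(-1)^k$ and collect the resulting powers of $2k\pi$, note that $\sqrt{2s_k}=(2k\pi)^m$ and that the surviving factors become $\prod_{1\le j<m}\sin\bigl(k\pi e^{j\pi i/m}\bigr)$. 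Gathering the numerical constants $2^m$ and $2m$ (the latter from $g'$) and the powers of $2k\pi$ then reproduces exactly the claimed series for $f_m(x)$, with the alternating sign $(-1)^{k-1}$ emerging from $-1/(-1)^k$ and the exponential reading $e^{-s_kx}=e^{-\frac12(2k\pi)^{2m}x}$.

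The main obstacle is the analytic justification of the contour closure and the term-by-term residue summation, not the algebra. For this I would bound $|\phi(s)|$ on a family of expanding contours chosen to pass between consecutive poles (for instance circular arcs of radius strictly between $s_k$ and $s_{k+1}$), where the sines in \eqref{gw-fp} stay bounded away from zero and $\phi$ grows only polynomially, so that the factor $e^{-sx}$ with $x>0$ drives the arc contributions to zero; the resulting residue series then converges absolutely and geometrically thanks to the dominant factor $e^{-\frac12(2k\pi)^{2m}x}$. As an independent check, the representation \eqref{KL2-w} exhibits $U^{[m]}$ as an infinite sum of independent $\operatorname{Exp}(s_k)$ variables (each $\xi_{2k-1}^2+\xi_{2k}^2$ being chi-square on two degrees of freedom, scaled by $\lambda_k^{-1}$), whose density is the classical hypoexponential mixture $\sum_k s_k\prod_{j\ne k}\frac{s_j}{s_j-s_k}e^{-s_kx}$; matching this with the residue computation confirms the coefficients.

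Finally, I would obtain the distribution function by integrating $f_m$ term by term, which is legitimate by the absolute convergence established above. Since $\int_x^\infty e^{-\frac12(2k\pi)^{2m}t}\dd t=2(2k\pi)^{-2m}e^{-\frac12(2k\pi)^{2m}x}$, the prefactor $m4^m$ and the power $(k\pi)^{3m-1}$ collapse to $2m$ and $(k\pi)^{m-1}$, yielding the stated expression for $\mathbb{P}\lpa{U^{[m]}>x}$; subtracting from $1$ gives the claimed distribution function for $x>0$.
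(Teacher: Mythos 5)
Your argument is correct and rests on the same core computation as the paper's: both identify the principal parts of the meromorphic MGF \eqref{EesUm} at its simple poles $s_k=\tfrac12(2k\pi)^{2m}$ using the finite-product form \eqref{gw-fp} of Theorem~\ref{thm:mgf-w}, and then invert termwise; your residue algebra (only the $j=0$ sine vanishes, $\cos(k\pi)=(-1)^k$, the powers of $2k\pi$ collapsing to $m4^m(k\pi)^{3m-1}$) checks out and reproduces the stated series, as does the termwise integration for the distribution function. The difference is one of packaging rather than substance: the paper avoids the Bromwich contour entirely by first proving the partial-fraction identity \eqref{gw-parfrac} --- whose coefficients are exactly your residues, obtained as $\prod_{j\ne k}(1-k^{2m}/j^{2m})^{-1}$ via the same pairing and reflection-formula manipulation used for \eqref{gw-fp} --- verifying its absolute convergence directly through the exponential decay of $\prod_{1\le j<m}\sin(k\pi e^{j\pi i/m})^{-1}$, and then reading off the density as that of an infinite mixture of exponentials, which is precisely the hypoexponential interpretation via \eqref{KL2-w} that you offer only as a sanity check. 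What the paper's route buys is that the one analytic step you leave as a plan (bounding $|\phi(s)|$ on arcs threading between consecutive poles so the rightward closure is legitimate) is replaced by the more elementary convergence estimate on the partial-fraction series; if you keep the contour-integration route you do need to carry out that bound, but it follows from essentially the same lower bound on the off-axis sines, so there is no gap in principle.
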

\begin{proof}
We start with the partial fraction expansion 
\begin{align}\label{gw-parfrac}
    \prod_{k\ge1} \frac1{1-\frac{2s}{(2km)^{2m}}}
    = 2m\sum_{k\ge1}\frac{(-1)^{k-1}}{1-\frac{2s}{(2km)^{2m}}}
    \cdot\frac{(k\pi)^{m-1}e^{\frac{m-1}2\pi i}}
    {\prod_{1\le j<m}\sin \lpa{k\pi e^{\frac{j\pi i}m}}},
\end{align}
which is absolutely convergent for $m\ge2$. This is proved by a 
standard argument
\[
    \prod_{k\ge1}\frac1{1-\frac{2s}{(2km)^{2m}}}
    = \sum_{k\ge1}\frac1{1-\frac{2s}{(2km)^{2m}}}
    \prod_{j\ne k}\frac1{1-\frac{k^{2m}}{j^{2m}}},
\]
where the inner product can be simplified by a similar method of 
proof used in proving Theorem~\ref{thm:mgf-w}
\[
    \prod_{j\not=k} \frac1{1-\frac{k^{2m}}{j^{2m}}}
    = (-1)^{k-1}\frac{2m(k\pi)^{m-1} e^{\frac{m-1}2\pi i}}
    {\prod_{1\le j<m}\sin\bigl(e^{\frac{j\pi i}m}k\pi\bigr)}
    \qquad(k\ge1; m\ge2).
\]
For large $k$ and $m\ge2$, the terms in \eqref{gw-parfrac} decrease 
in the order
\[
    2^m m (k\pi)^{m-1} 
    \exp\llpa{-k\pi \sum_{1\le j<m}
    \sin\frac{j\pi}m}
    = 2^m m (k\pi)^{m-1} 
    \exp\left(-k\pi \cot\frac{\pi}{2m} \right).
\]
Note that $\cot \frac{\pi}{2m}\sim \frac{2m}{\pi}$, we see that the 
partial fraction expansion \eqref{gw-parfrac} converge absolutely 
and very fast for $m\ge2$.

Thus the decomposition \eqref{gw-parfrac} implies that $U^{[m]}$ is 
an infinite sum of exponential distributions, and the expressions for 
the density and the distribution functions follow when $m\ge2$, which 
are also easily checked to hold when $m=1$ (see \cite{Watson61}).
\end{proof}

In particular, the density function of $U^{[1]}$ is given by 
\[
    f_1(x) = 4\sum_{k\ge1}(-1)^{k-1}
    (k\pi)^2 e^{-2(k\pi)^2x}\qquad(x>0);
\]
as in \cite{Watson61}, and that of $U^{[2]}$ by 
\[
    f_2(x) = 32\sum_{k\ge1}\frac{(-1)^{k+1} (k\pi)^5}
    {\sinh(k\pi)}\,e^{-8(k\pi)^4x}\qquad(x\ge0).
\]

\section{The generalized Cram\'er-von Mises statistics}
\label{sec:cvm}

In this section, we examine the corresponding generalized 
Cram\'er-von Mises (GCvM) test statistics. 

\subsection{A new class of GCvM goodness-of-fit statistics}
\label{subsec:proposal-cvm}

Recall that $F_N(x)$ denotes the empirical distribution function of
the i.i.d.\ sequence of random variables $\{X_1,\ldots,X_N\}$. The
Cram\'er-von Mises statistics is defined by
\[
    \omega_{N} 
    = \int_0^1 \widetilde{B}_{N}(x)^2 \dd x,
\]
where $\widetilde{B}_{N}(x) = \sqrt{N}(F_{N}(x)-x) = \sqrt{N}
\int_0^1 (\Ind_{\{t\le x\}}-x) \dd F_{N}(t)$ is as before. We
generalize the GCvM statistic by defining
\[
    \omega^{[m]}_{N} 
    = \int_0^1 \widetilde{B}^{[m]}_{N}(x)^2 \dd x
    \qquad(m\ge1),
\]
where $\widetilde{B}^{[1]}_{N}(x)=\widetilde{B}_{N}(x)$ and
\[
    \widetilde{B}^{[m]}_{N}(x)=
    \begin{cases}
        \displaystyle
        \int_0^x \widetilde{B}^{[m-1]}_{N}(t) \dd t, 
        & \mbox{if $m$ is odd}, \\
        \displaystyle
        \int_0^x \widetilde{B}^{[m-1]}_{N}(t) \dd t 
        - \int_0^1 (1-t) \widetilde{B}^{[m-1]}_{N}(t) \dd t, 
        & \mbox{if $m$ is even}.
    \end{cases}
\]
As we will see in detail later, the correction term $-\int_0^1 (1-t) 
\widetilde{B}^{[m-1]}_{N}(t)\dd t$ in the even case has the effect of 
keeping the template functions invariant in the sense of 
\eqref{cvm-invariance}, which induces the invariance of the 
statistics under the reflection of the data $X_i\mapsto 1-X_i$.

The iterated empirical measures $\widetilde{B}^{[m]}_{N}(x)$ are also
expressible in terms of template functions. Let $\b_m(y)$ be a
normalized Bernoulli polynomial defined in \eqref{bernoulli}. 

\begin{lm}\label{lm:hm-cvm}
For $m\ge1$
\[
    \widetilde{B}^{[m]}_{N}(x) 
	= \int_0^1 \widetilde{\tau}_m(t;x) \dd F_{N}(t),
\]
where
\begin{align}
    \widetilde{\tau}_m(t;x)
    &= \frac{(x-t)^{m-1}}{(m-1)!} \Ind_{\{t\le x\}} 
    - 2^{m-1} \left( \b_m\left(\sfrac{t+x}{2}\right)
    + (-1)^m \b_m\left(\sfrac{t-x}{2}\right) \right)
    \nonumber \\
    &= (-1)^{m-1} 2^{m-1} \b_{m}\left(\sfrac{t-x}{2}+1\right) 
    \Ind_{\{t\le x\}}
    + (-1)^{m-1} 2^{m-1} \b_{m}\left(\sfrac{t-x}{2}\right) 
    \Ind_{\{t>x\}} \nonumber \\
    & \qquad - 2^{m-1} \b_m\left(\sfrac{t+x}{2}\right).
\label{tau_m-cvm}
\end{align}
\end{lm}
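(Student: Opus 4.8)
The plan is to argue by induction on $m$, in the spirit of the proof of Lemma~\ref{lm:hm-w} for the generalized Watson statistic. For the base case $m=1$, I would verify directly that $\widetilde{\tau}_1(t;x)=\Ind_{\{t\le x\}}-x$: since $\b_1(y)=y-\tfrac12$, the two Bernoulli terms in \eqref{tau_m-cvm} collapse to $\b_1(\tfrac{t+x}{2})-\b_1(\tfrac{t-x}{2})=x$, so the representation reduces to the definition of $\widetilde{B}^{[1]}_N=\widetilde{B}_N$. For the inductive step, I would substitute the representation for $\widetilde{B}^{[m]}_N$ into the defining recursion and interchange the order of integration (legitimate since $\dd F_N$ is a finite measure), which reduces the claim to a recursion among the template functions alone:
\[
    \widetilde{\tau}_{m+1}(t;x)=\int_0^x \widetilde{\tau}_m(t;u)\dd u
\]
when $m+1$ is odd, and
\[
    \widetilde{\tau}_{m+1}(t;x)=\int_0^x \widetilde{\tau}_m(t;u)\dd u-\int_0^1 (1-u)\,\widetilde{\tau}_m(t;u)\dd u
\]
when $m+1$ is even.

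To evaluate the first integral I would split $\widetilde{\tau}_m(t;u)=\sigma_m(t;u)-2^{m-1}\b_m(\tfrac{t+u}{2})$ with $\sigma_m(t;u)=\tfrac{(u-t)^{m-1}}{(m-1)!}\Ind_{\{t\le u\}}+(-1)^{m-1}2^{m-1}\b_m(\tfrac{t-u}{2})$, so that the only indicator sits on the elementary power term. The power term integrates to $\tfrac{(x-t)^m}{m!}\Ind_{\{t\le x\}}$, while the two Bernoulli terms are handled by $\b_{m+1}'=\b_m$, the differentiated form of the recursion \eqref{recursive}, giving $\int_0^x \b_m(\tfrac{t\pm u}{2})\dd u=\pm 2\bigl(\b_{m+1}(\tfrac{t\pm x}{2})-\b_{m+1}(\tfrac{t}{2})\bigr)$. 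Since the antiderivatives reproduce the correctly normalized $\b_{m+1}$, collecting terms should yield
\[
    \int_0^x \widetilde{\tau}_m(t;u)\dd u=\widetilde{\tau}_{m+1}(t;x)+\bigl((-1)^{m-1}+1\bigr)2^m\,\b_{m+1}(\tfrac{t}{2}),
\]
where the last summand is an $x$-independent boundary contribution coming from the lower limit $u=0$.

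This boundary term is the entire source of the parity split and is the crux of the argument. When $m+1$ is odd, so that $m$ is even, the factor $(-1)^{m-1}+1$ vanishes and the bare integral already equals $\widetilde{\tau}_{m+1}$, exactly as the recursion (with no correction) demands. When $m+1$ is even, so that $m$ is odd, the boundary term equals $2^{m+1}\b_{m+1}(\tfrac{t}{2})$, and the step closes precisely when the subtracted term cancels it; thus the main remaining obstacle is the single identity
\[
    \int_0^1 (1-u)\,\widetilde{\tau}_m(t;u)\dd u=2^{m+1}\,\b_{m+1}(\tfrac{t}{2})\qquad(m\ \text{odd}).
\]
I would establish this exactly as in the covariance computation of Theorem~\ref{thm:cov-w}, encoding the Bernoulli polynomials through the generating function $V(z;x)=\tfrac{ze^{xz}}{e^z-1}$ of \eqref{gen_func} together with its functional equations \eqref{rec-gen}, and then reading off the coefficient of $z^{m+1}$. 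Working at the level of the generating function keeps the signs $(-1)^{m-1}$ and the competing factors $2^{m-1}$ and $2^m$ under control and lets both parities be treated within a single computation.
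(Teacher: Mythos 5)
Your proposal is correct, and it reaches the result by a genuinely different route from the paper. The paper does not verify the closed form inductively: it isolates the non-indicator part $g_m(t;x)=2^{m-1}\bigl(\b_m\bigl(\sfrac{t+x}{2}\bigr)+(-1)^m\b_m\bigl(\sfrac{t-x}{2}\bigr)\bigr)$, writes down the recursion \eqref{gm-tx} that it must satisfy, and then \emph{solves} that recursion: expanding $g_m(t;x)=\sum_k g_{m,k}(t)x^k$ gives $g_{m,k}=g_{m-k,0}/k!$, the scalar sequence $c_m=g_{m,0}$ is determined by a generating function in the order index $m$ (yielding $c_m=2^m\b_m(\sfrac t2)$ for even $m$ and $0$ for odd $m$), and the resulting convolution is resummed via $V(z;x)$ into the Bernoulli form \eqref{gmtx-Bm}. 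Your argument instead takes the stated formula as given and checks that it propagates through the recursion, which is all the lemma requires; your bookkeeping is right — the discrepancy $\int_0^x\widetilde{\tau}_m(t;u)\dd u-\widetilde{\tau}_{m+1}(t;x)=\bigl(1+(-1)^{m-1}\bigr)2^m\b_{m+1}\bigl(\sfrac t2\bigr)$ is exactly the boundary contribution at $u=0$, and it cleanly explains why the correction term appears only at even order, a point the paper's derivation leaves implicit. The one step you defer, $\int_0^1(1-u)\widetilde{\tau}_m(t;u)\dd u=2^{m+1}\b_{m+1}\bigl(\sfrac t2\bigr)$ for odd $m$, does hold and needs no generating function: integrating by parts twice with $\b_{m+1}'=\b_m$ leaves the terms $2^{m+1}\b_{m+1}\bigl(\sfrac t2\bigr)+2^{m+1}\bigl(\b_{m+2}\bigl(\sfrac{t-1}{2}\bigr)-\b_{m+2}\bigl(\sfrac{t+1}{2}\bigr)\bigr)$, and the difference equation \eqref{bernoulli} at $y=\sfrac{t-1}{2}$ turns the last bracket into $-\frac{(1-t)^{m+1}}{(m+1)!}$ (using that $m+1$ is even), which cancels the contribution $\frac{(1-t)^{m+1}}{(m+1)!}$ of the indicator part exactly. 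The trade-off is that the paper's method derives the formula from scratch, whereas yours only verifies it; but as a proof of the lemma as stated, yours is complete in outline and arguably more transparent about the parity split.
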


Following the same construction of the template functions used in 
the generalizations of the AD and Watson statistics, we require the 
template functions to be orthogonal to the eigenfunctions of lower 
degrees. From the eigenfunction expansion \eqref{cvm-eigen_expansion} 
given later, we also propose the truncated version of the template 
functions
\begin{equation*}
    \widetilde{\tau}^*_m(t;x) = \widetilde{\tau}_m(t;x)
    - 2 \sum_{1\le k<m} \frac{1}{(\pi k)^m} 
    \cos\Bigl(k\pi x - \sfrac{m\pi}{2}\Bigr) \cos(k\pi t)
\end{equation*}
and define the truncated version of the generalized CvM statistics
\begin{equation*}
    \omega^{*[m]}_{N} 
    = \int_0^1 \widetilde{B}^{*[m]}_{N}(x)^2 \dd x
    \quad\text{where}\quad
    \widetilde{B}^{*[m]}_{N}(x) 
    = \int_0^1 \widetilde{\tau}^*_m(t;x) \dd F_{N}(x).
\end{equation*}

A graphical rendering of the first four template functions of  
$\widetilde{\tau}^*_m(t;x)$ is given in Figure~\ref{fig:template3}.
\begin{figure}[h]
\begin{center}
\begin{tabular}{cc}
\includegraphics[height=3.2cm]{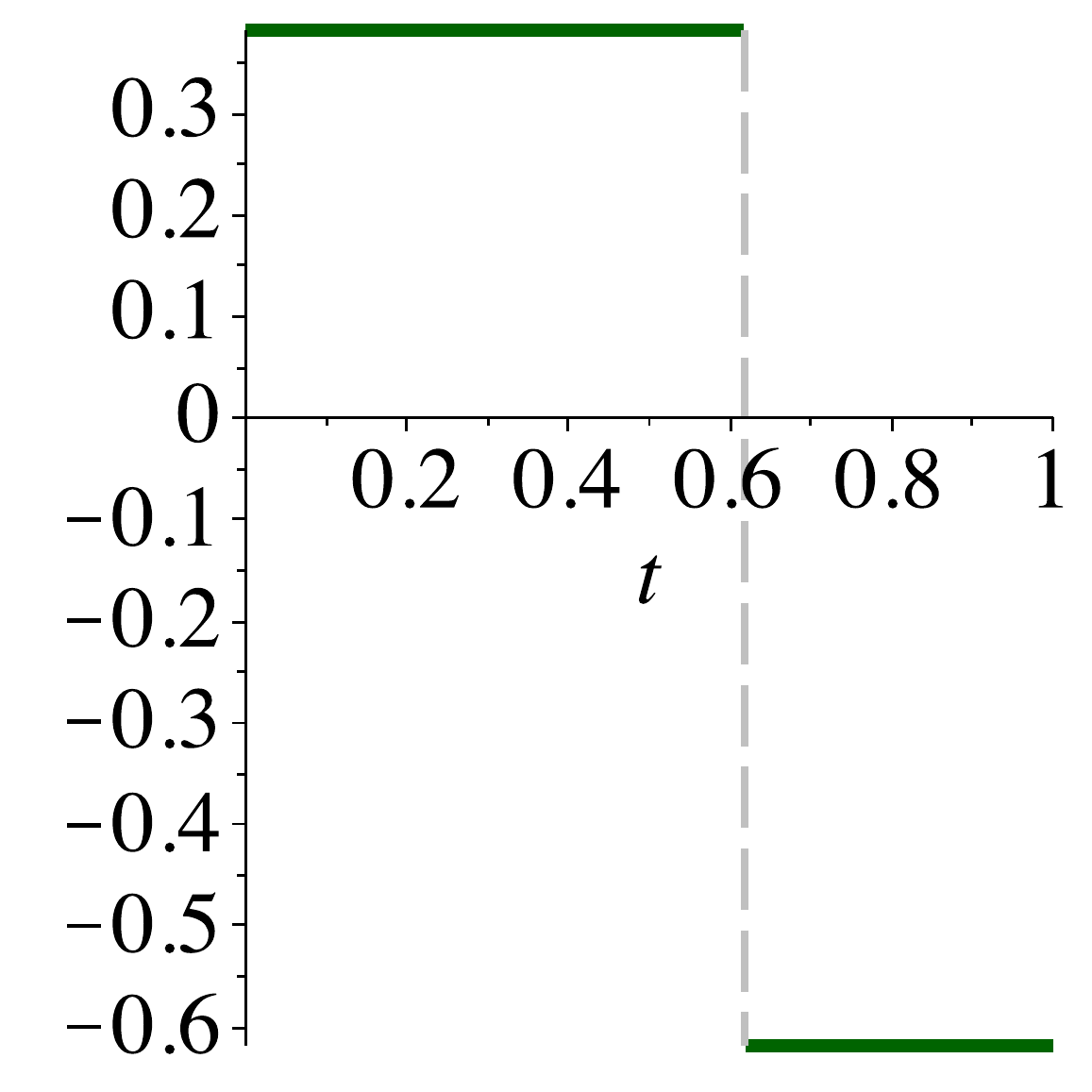} &
\includegraphics[height=3.2cm]{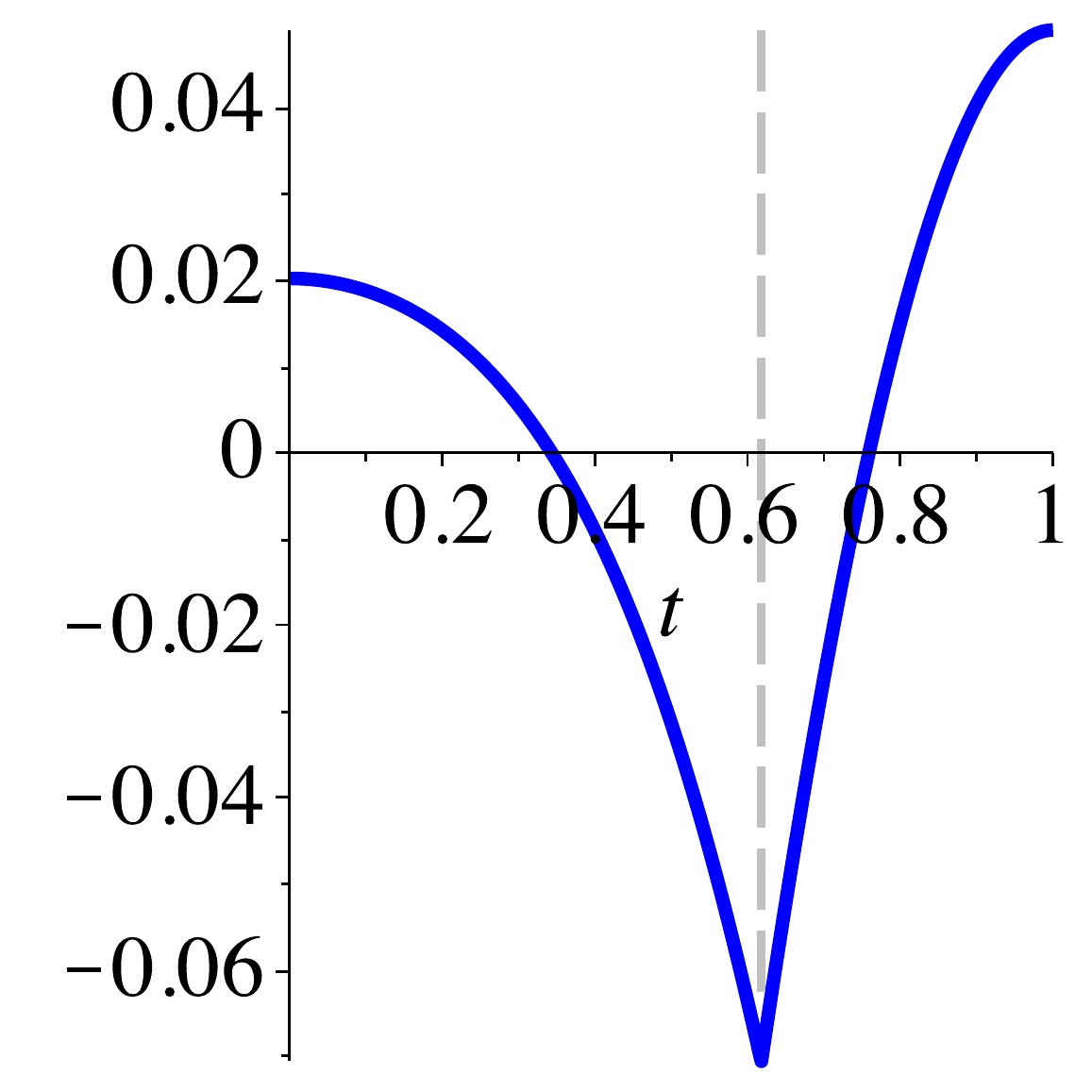} \\
\includegraphics[height=3.2cm]{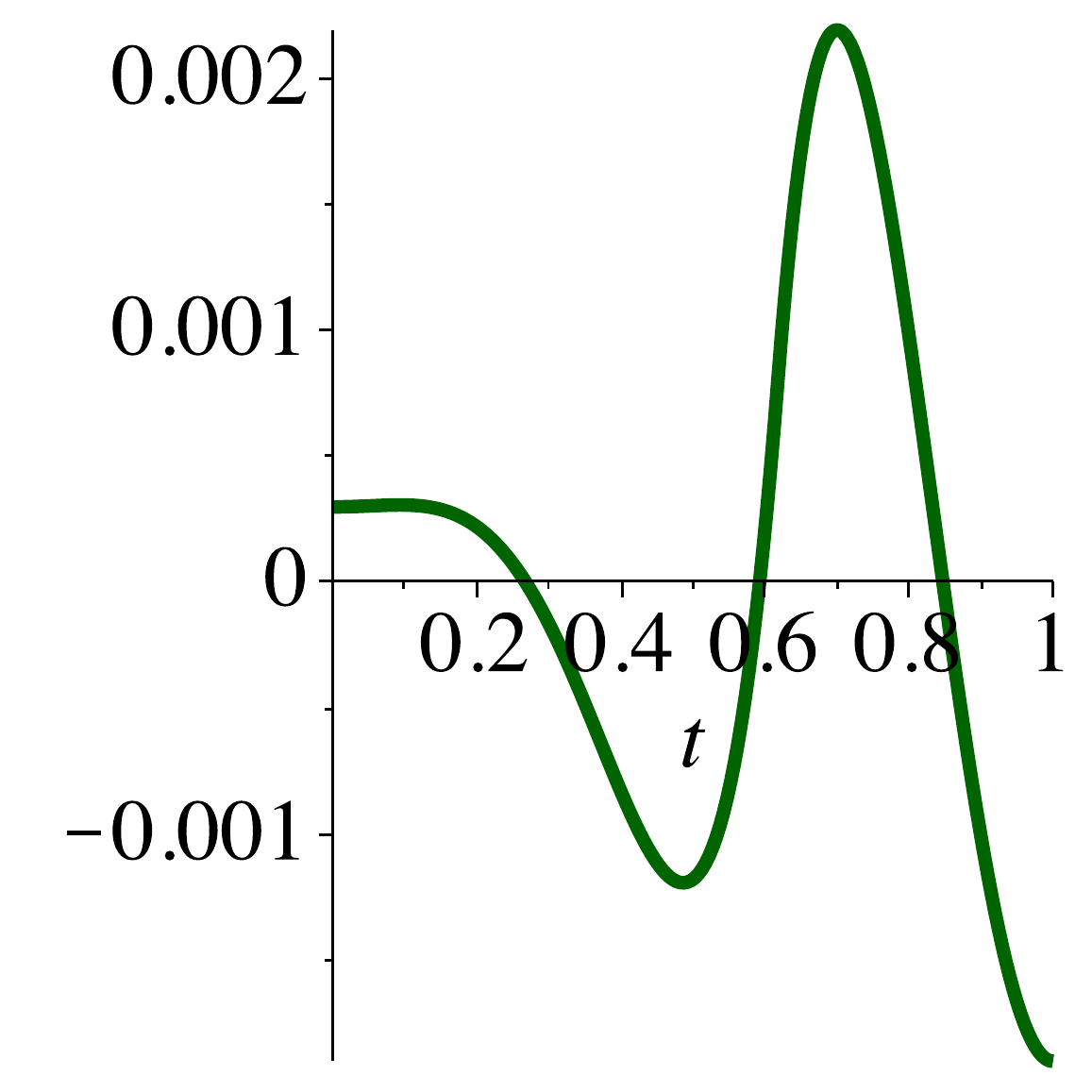} &
\includegraphics[height=3.2cm]{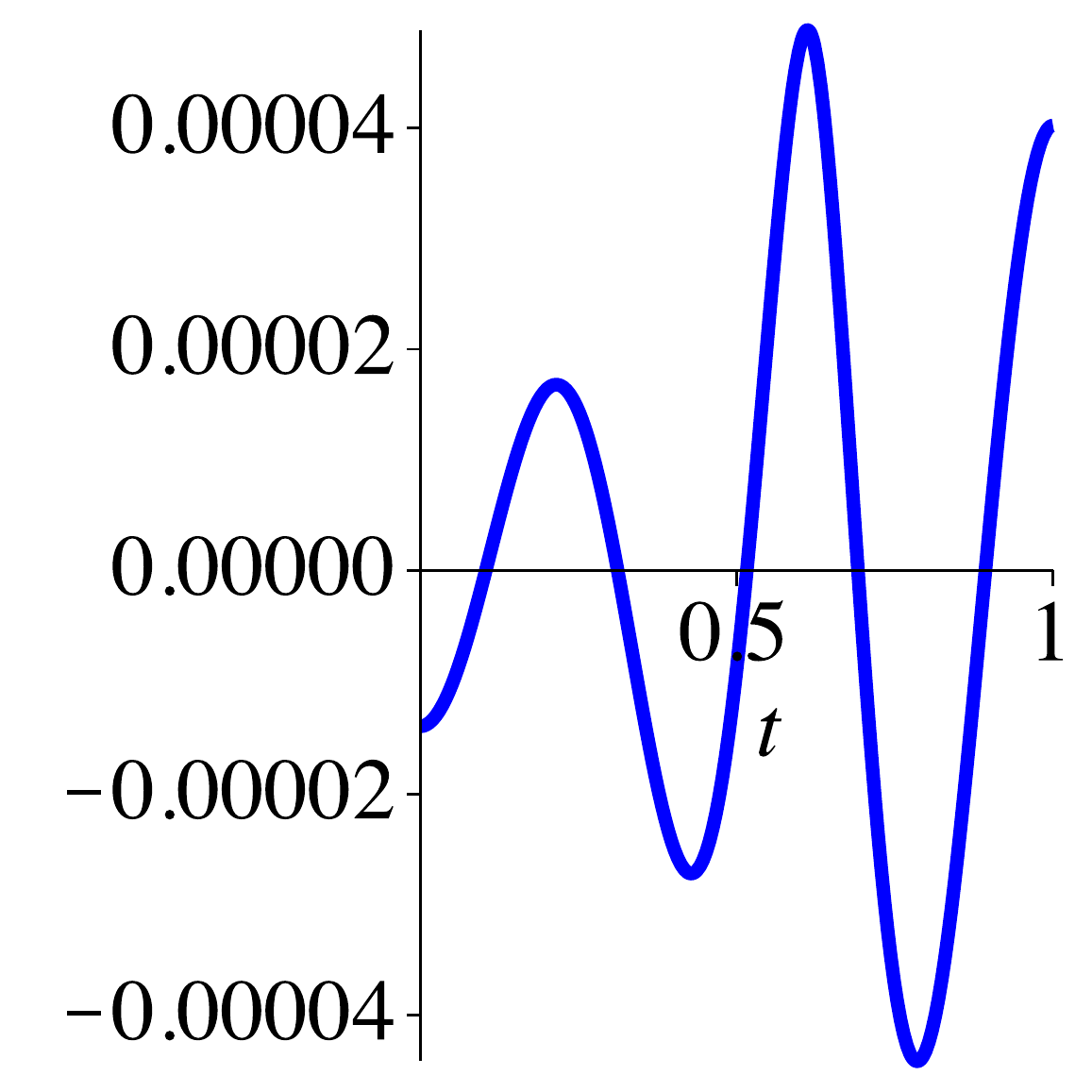}\\
\end{tabular}
\end{center}
\vspace*{-.3cm}
\caption{\emph{Template functions $\widetilde{\tau}^*_m (\cdot;x_0)$
for the generalized Cram\'er-von Mises statistics; here
$x_0=\frac{\sqrt{5}-1}2$, the reciprocal of the golden ratio.}}
\label{fig:template3}
\end{figure}

From the definitions, we obtain the following invariance properties.
\begin{lm}
The template functions satisfy 
\begin{equation}\label{cvm-invariance}
    \widetilde{\tau}_m(1-t;1-x) = (-1)^m \widetilde{\tau}_m(t;x) 
    \quad\text{and}\quad
    \widetilde{\tau}^*_m(1-t;1-x) = (-1)^m \widetilde{\tau}^*_m(t;x).
\end{equation}
The generalized CvM statistics are invariant under the reflection of 
the data $X_i\mapsto 1-X_i$.
\end{lm}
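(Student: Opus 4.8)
The plan is to establish the two template identities in \eqref{cvm-invariance} by direct substitution, and then read off the invariance of the statistics as a short corollary via the change of variables $u=1-x$. For the non-truncated template I would start from the explicit form
\[
    \widetilde{\tau}_m(t;x)
    = \frac{(x-t)^{m-1}}{(m-1)!} \Ind_{\{t\le x\}}
    - 2^{m-1}\Bigl( \b_m\bigl(\tfrac{t+x}{2}\bigr)
    + (-1)^m \b_m\bigl(\tfrac{t-x}{2}\bigr) \Bigr)
\]
in \eqref{tau_m-cvm}, replace $(t,x)$ by $(1-t,1-x)$, and simplify each piece using the two structural properties of the normalized Bernoulli polynomials recorded in \eqref{bernoulli}: the reflection relation $\b_m(1-y)=(-1)^m\b_m(y)$ and the functional equation $\b_m(y+1)=\b_m(y)+\frac{y^{m-1}}{(m-1)!}$. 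The ``sum'' argument yields $\b_m\bigl(1-\tfrac{t+x}{2}\bigr)=(-1)^m\b_m\bigl(\tfrac{t+x}{2}\bigr)$ at once, whereas the ``difference'' argument $\b_m\bigl(\tfrac{x-t}{2}\bigr)$ must first be rewritten as $(-1)^m\b_m\bigl(\tfrac{t-x}{2}+1\bigr)$ by reflection and then reduced by the functional equation, which is precisely where the extra monomial $\frac{(t-x)^{m-1}}{2^{m-1}(m-1)!}$ is produced.

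That monomial is the crux of the argument. Combined with the transformed indicator $\Ind_{\{t\ge x\}}$ coming from $\Ind_{\{1-t\le 1-x\}}$, the polynomial contribution collapses to $(-1)^m\frac{(x-t)^{m-1}}{(m-1)!}\Ind_{\{t<x\}}$, which is exactly — up to the measure-zero set $\{t=x\}$ — the indicator term of $(-1)^m\widetilde{\tau}_m(t;x)$. The main obstacle is therefore this bookkeeping step, namely tracking how the Bernoulli recurrence converts the ``wrong'' half-interval indicator into the correct one, together with the harmless boundary discrepancy at $t=x$. For $m\ge 2$ the template is continuous (indeed $C^{m-2}$) and the monomial vanishes there, so the identity holds pointwise; for $m=1$ it holds almost everywhere, which suffices since $\widetilde{\tau}_m$ enters only through integrals.

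For the truncated template $\widetilde{\tau}^*_m$ the plan is to show the subtracted correction term is itself $(-1)^m$-symmetric, so the symmetry of $\widetilde{\tau}_m$ transfers verbatim. Under $(t,x)\mapsto(1-t,1-x)$ I would use $\cos\bigl(k\pi(1-t)\bigr)=(-1)^k\cos(k\pi t)$ and $\cos\bigl(k\pi(1-x)-\tfrac{m\pi}{2}\bigr)=(-1)^k\cos\bigl(k\pi x+\tfrac{m\pi}{2}\bigr)$, together with the phase-shift identity $\cos\bigl(\alpha+\tfrac{m\pi}{2}\bigr)=(-1)^m\cos\bigl(\alpha-\tfrac{m\pi}{2}\bigr)$, which follows by adding $m\pi$ inside the cosine. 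The two factors $(-1)^k$ cancel and leave exactly $(-1)^m$, so each summand, and hence the whole correction, acquires the factor $(-1)^m$, giving $\widetilde{\tau}^*_m(1-t;1-x)=(-1)^m\widetilde{\tau}^*_m(t;x)$.

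Finally, for the invariance of the statistics I would set $x\mapsto 1-x$ in the template identity to obtain $\widetilde{\tau}_m(1-t;x)=(-1)^m\widetilde{\tau}_m(t;1-x)$, so that after the substitution $X_i\mapsto 1-X_i$,
\[
    \widetilde{B}^{[m]}_{N}(x)
    = \int_0^1 \widetilde{\tau}_m(t;x)\dd F_{N}(t)
    \longmapsto \int_0^1 \widetilde{\tau}_m(1-t;x)\dd F_{N}(t)
    = (-1)^m \widetilde{B}^{[m]}_{N}(1-x).
\]
Squaring removes the sign, and the change of variables $u=1-x$ in $\int_0^1\widetilde{B}^{[m]}_N(1-x)^2\dd x$ returns $\omega^{[m]}_{N}$; the identical argument with $\widetilde{\tau}^*_m$ handles $\omega^{*[m]}_{N}$. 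I expect no difficulty in this last step, as it is a two-line consequence of the template symmetry.
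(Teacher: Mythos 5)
Your proof is correct and is exactly the direct ``from the definitions'' verification that the paper merely asserts without writing out: the reflection $\b_m(1-y)=(-1)^m\b_m(y)$ handles the $\b_m\bigl(\tfrac{t+x}{2}\bigr)$ term, the functional equation in \eqref{bernoulli} produces the monomial that flips the half-interval indicator, and the $(-1)^k$ factors cancel in the truncated correction. The only point worth flagging is the one you already note — for $m=1$ the identity fails on the Lebesgue-null (in $x$) set $\{t=x\}$ owing to the $\Ind_{\{t\le x\}}$ convention, which is harmless for the statistics.
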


\begin{proof}[Proof of Lemma \ref{lm:hm-cvm}]
Write
\[
    \widetilde{\tau}_m(t;x) 
    = \frac{(x-t)^{m-1}}{(m-1)!}\,\Ind_{\{0\le t\le x\}} 
    - g_m(t;x),
\]
where
\[
    g_m(t;x) 
    = 2^{m-1} \left( \b_m\left(\sfrac{t+x}{2}\right)
    + (-1)^m \b_m\left(\sfrac{t-x}{2}\right) \right).
\]
Let $g_0(t;x)=x$.  Then, $g_m(t;x)$ satisfies, for $m\ge2$,
\begin{equation}\label{gm-tx}
    \begin{split}
		g_m(t;x)=
	    \begin{cases}\displaystyle
	        \int_0^x g_{m-1}(t;v)\dd v, & \mbox{if $m$ is odd}, \\
	        \displaystyle
	        \int_0^x g_{m-1}(t;v)\dd v +\frac{(1-t)^m}{m!}
	        \\ \displaystyle\qquad
			-\int_0^1 (1-v)g_{m-1}(t;v)\dd v, & \mbox{if $m$ is even}.
	    \end{cases}
    \end{split}
\end{equation}
To solve this recurrence relation, we write 
\[
    g_m(t;x) 
    = \sum_{0\le k\le m} g_{m,k}(t) x^k 
    \qquad(m\ge 1).
\]
Then we have $g_{1,1}=1$, and 
\[
    g_{m,k} 
    = \frac{g_{m-1,k-1}}{k}
    = \cdots 
    = \frac{g_{m-k,0}}{k!}
    \qquad (m\ge 2; k\ge 0).
\]
Thus it suffices to examine the sequence $c_m := g_{m,0}$ in more
detail. By \eqref{gm-tx}, we see that $c_m = 0$ if $m$ is odd, and
\[
    c_m 
    = \frac{(1-t)^m}{m!} 
    - \sum_{0\le j<m}\frac{c_{m-1-j}}{(j+2)!}
    \qquad (m\ge 2),
\]
if $m$ is even, with $c_0 := 1$. This sequence is easily solved by
considering the generating function $c(z) := \sum_{m\ge0}c_{2m} z^m$,
which satisfies
\[ 
    c(z) = \frac{\sqrt{z}(e^{(2-t)\sqrt{z}}
    +e^{t\sqrt{z}})}{e^{2\sqrt{z}}-1},
\]
and it follows that for even $m\ge0$
\[
    c_m = 2^{m-1}\left(\b_m\Lpa{1-\sfrac t2}
    +\b_m\Lpa{\sfrac t2}\right).
\]
Note that this holds also for odd $m$ for which $c_m=0$. By the
known relation 
\begin{align}\label{Bmx-dual}
    \b_m(1-x) = (-1)^m \b_m(x), 
\end{align}
we see that $c_m = 2^{m}\,\b_m\left(\sfrac{t}{2}\right)$ when $m$ is 
even. Consequently,
\[
    g_m(t;x) = \sum_{0\le j\le \left\lfloor m/2 \right\rfloor}
    \frac{2^{2j}x^{m-2j}}{(m-2j)!}\,
    \b_{2j}\left(\sfrac t2\right) \qquad (m\ge 0).
\]
This convolution sum can be simplified as follows.
\begin{align}
    g_m(t;x)
    &= [z^m] e^{xz}\left(\frac{ze^{tz}}{e^{2z}-1}
    +\frac{z e^{-tz}}{1-e^{-2z}}\right)
    \nonumber \\
    &= [z^m] \frac{ze^{xz} \left(e^{tz}
    + e^{(2-t)z} \right)}{e^{2z}-1} \nonumber \\
    &= 2^{m-1} \left(\b_m\left(\sfrac{x+t}2\right)
    +\b_m \left(1+\sfrac{x-t}2\right)\right) \nonumber \\
    &= 2^{m-1} \left(\b_m\left(\sfrac{x+t}2\right)
    +(-1)^m \b_m\left(\sfrac{t-x}2\right)\right),
\label{gmtx-Bm}
\end{align}
which completes the proof of the lemma.
\end{proof}

\subsection{The limiting GCvM processes: KL expansions and covariance kernel}
\label{subsec:kl-cov-cvm}

In this section, we examine the limiting distribution of the GCvM
statistic under the null hypothesis $H_0$. The arguments are almost
identical to those used for the GW statistics in Sec.~\ref{subsec:kl-cov-w}; thus some details are omitted.

Again the Wiener process on $[0,1]$ and the Brownian bridge are 
denoted by $W(x)$ and $B(x)$, respectively. Using the template
$\widetilde{\tau}_m(t;x)$ given in \eqref{tau_m-cvm}, we define
the limiting GCvM processes
\begin{align*}
    & \widetilde{B}^{[m]}(x)
    = \int_0^1 \widetilde{\tau}_m(t;x) \dd W(t)
    = \int_0^1 \widetilde{\tau}_m(t;x) \dd B(t), \\
    & 
    \widetilde{B}^{*[m]}(x)
    = \int_0^1 \widetilde{\tau}^*_m(t;x) \dd W(t)
    = \int_0^1 \widetilde{\tau}^*_m(t;x) \dd B(t).
\end{align*}
Since the Brownian bridge $B(\cdot)$ is of the class $C^0$ on
$[0,1]$, we have by induction that $\widetilde{B}^{[m]}(\cdot)$ and 
$\widetilde{B}^{*[m]}(\cdot)$ are of the class $C^{m-1}$ on $[0,1]$.

\begin{lm}
Let $\mu$ be any finite measure on $[0,1]$. Then, as $N\to\infty$,
$\widetilde{B}^{[m]}_{N}(\cdot) \stackrel{d}{\to}
\widetilde{B}^{[m]}(\cdot)$ and $\widetilde{B}^{*[m]}_{N}(\cdot)
\stackrel{d}{\to} \widetilde{B}^{*[m]}(\cdot)$ in $L^2([0,1],\mu)$
and in $L^\infty([0,1])$, and $\omega^{[m]}_{N}\stackrel{d}{\to}
\omega^{[m]}$ and $\omega^{*[m]}_{N}\stackrel{d}{\to} \omega^{*[m]}$,
where
\[
    \omega^{[m]} = \int_0^1 \widetilde{B}^{[m]}(x)^2 \dd x
    \quad\text{and}\quad
    \omega^{*[m]} = \int_0^1 \widetilde{B}^{*[m]}(x)^2 \dd x.
\]
\end{lm}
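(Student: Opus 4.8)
The plan is to follow verbatim the template already established in the proofs of Lemmas~\ref{lm:L2} and \ref{lm:Um}, since the present statement is the Cram\'er--von Mises analogue of those results and the argument is structurally identical: everything reduces to an application of the continuous mapping theorem to the weak convergence of the empirical process. First I would recall the standard Donsker-type fact (Sec.~1.8 and Example~1.8.6 of \cite{vanderVaart96}) that the stochastic map $x\mapsto B_{N}(x)=\sqrt{N}(F_{N}(x)-x)$ converges weakly to the Brownian bridge $B(\cdot)$ in both $L^2([0,1],\mu)$ and $L^\infty([0,1])$. This is the only probabilistic input; the rest is a deterministic continuity check on the functionals that build $\widetilde{B}^{[m]}$, $\widetilde{B}^{*[m]}$ and $\omega^{[m]}$, $\omega^{*[m]}$ out of the sample bridge.

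Second, I would show that the maps $B\mapsto \widetilde{B}^{[m]}$ and $B\mapsto \widetilde{B}^{*[m]}$, defined through integration against the template functions via Lemma~\ref{lm:hm-cvm}, are continuous in both topologies. The key observation, exactly as in the proof of Lemma~\ref{lm:L2}, is that since $\widetilde{\tau}_m(\,\cdot\,;x)$ is a $C^{m-2}$ piecewise polynomial in $t$ (by the Bernoulli-polynomial structure in \eqref{tau_m-cvm}), the stochastic integral can be rewritten by integration by parts as
\[
    \widetilde{B}^{[m]}(x)
    = \int_0^1 \widetilde{\tau}_m(t;x)\dd B(t)
    = -\int_0^1 \partial_t\widetilde{\tau}_m(t;x)\,B(t)\dd t,
\]
the boundary terms vanishing because $B(0)=B(1)=0$. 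This exhibits $\widetilde{B}^{[m]}(x)$ as a continuous linear functional of $B(\cdot)$, jointly continuous in $x$, and hence as a continuous map into $L^2([0,1],\mu)$ and into $L^\infty([0,1])$; the same applies to $\widetilde{B}^{*[m]}$ since its template differs only by finitely many smooth trigonometric terms.

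Third, the quadratic functionals $\omega^{[m]}=\int_0^1 \widetilde{B}^{[m]}(x)^2\dd x$ and $\omega^{*[m]}=\int_0^1 \widetilde{B}^{*[m]}(x)^2\dd x$ are continuous functions of $\widetilde{B}^{[m]}$, respectively $\widetilde{B}^{*[m]}$, in the $L^2$-norm, hence continuous in $B(\cdot)$ by composition. All four stated convergences then follow at once from the continuous mapping theorem. I do not expect a genuine obstacle here: the only point requiring care is the validity of the integration-by-parts step and the resulting continuity estimates, but these are settled by the piecewise-polynomial regularity of $\widetilde{\tau}_m$, which is entirely parallel to the GAD and GW cases already treated. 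For this reason I would present the proof tersely, invoking the computations of Lemmas~\ref{lm:L2} and \ref{lm:Um} and noting only that the template functions $\widetilde{\tau}_m$, $\widetilde{\tau}^*_m$ enjoy the same smoothness properties.
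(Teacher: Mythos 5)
Your proposal is correct and follows essentially the same route as the paper, which disposes of this lemma by noting it is proved exactly as Lemma~\ref{lm:Um} (weak convergence of $B_N$ to $B$ in $L^2([0,1],\mu)$ and $L^\infty([0,1])$, continuity of the template-integral maps via integration by parts, and the continuous mapping theorem). No substantive difference from the paper's argument.
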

\begin{proof}
The same as that for Lemma \ref{lm:Um}.
\end{proof}

We now derive the covariance functions of the continuous centered
Gaussian processes $\widetilde{B}^{[m]}(\cdot)$ on $[0,1]$.
\begin{thm} For $m\ge1$
\label{thm:cov-cvm}
\begin{align}
    \Cov\lpa{\widetilde{B}^{[m]}(x),\widetilde{B}^{[m]}(y)}
    &= -2^{2m-1} \left( \b_{2m} \Lpa{\sfrac{x+y}2}
    + (-1)^m \b_{2m} \Lpa{\sfrac{|x-y|}2} \right) \nonumber \\
    &= \displaystyle
    2\sum_{k\ge 1} \frac{\cos\lpa{k\pi x-\frac{1}{2}m\pi}
    \cos\lpa{k\pi y-\frac{1}{2}m\pi}}{(k \pi)^{2m}},
\label{Kmxy2-cvm}
\end{align}
\begin{equation}
\label{Kmxy2-cvm*}
\begin{aligned}
    \Cov\lpa{\widetilde{B}^{*[m]}(x),\widetilde{B}^{*[m]}(y)}
    &=
    \Cov\lpa{\widetilde{B}^{[m]}(x),\widetilde{B}^{[m]}(y)} \\
    \displaystyle
    &\quad
    - 2\sum_{1\le k<m} \frac{\cos\lpa{k\pi x-\frac{1}{2}m\pi}
    \cos\lpa{k\pi y-\frac{1}{2}m\pi}}{(k \pi)^{2m}}.
\end{aligned}
\end{equation}
In particular,
\begin{align*}
    & \Var\lpa{\widetilde{B}^{[m]}(x)} 
    = -2^{2m-1} \left( \b_{2m}(x) + (-1)^m \b_{2m}(0) \right), \\
    & \Var\lpa{\widetilde{B}^{*[m]}(x)} 
    = \Var\lpa{\widetilde{B}^{[m]}(x)} 
    - 2\sum_{1\le k<m,\,k:\rm even} \frac{1}{(k \pi)^{2m}}.
\end{align*}
\end{thm}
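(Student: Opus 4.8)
The plan is to follow the generating-function method of the proof of Theorem~\ref{thm:cov-w}, adapted to the CvM template \eqref{tau_m-cvm}. Since $\widetilde{B}^{[m]}(x)=\int_0^1\widetilde{\tau}_m(t;x)\dd B(t)$ is a centered Gaussian integral, the whole theorem rests on evaluating the single deterministic integral
\[
    \Cov\lpa{\widetilde{B}^{[m]}(x),\widetilde{B}^{[m]}(y)}
    =\int_0^1\widetilde{\tau}_m(t;x)\,\widetilde{\tau}_m(t;y)\dd t.
\]
First I would encode the $m$-dependent prefactors of \eqref{tau_m-cvm} through the generating function $V(z;\cdot)$ of \eqref{gen_func}, exactly as in Lemma~\ref{lm:hm-cvm}: using $2^{m-1}\b_m(u)=[z^m]\tfrac12 V(2z;u)$ and $(-1)^{m-1}2^{m-1}\b_m(u)=-[z^m]\tfrac12 V(-2z;u)$, the template becomes a single coefficient $\widetilde{\tau}_m(t;x)=[z^m]\Phi(z;t,x)$, where $\Phi$ is an explicit combination of $V(-2z;\tfrac{t-x}2+1)$ and $V(-2z;\tfrac{t-x}2)$ (gated by $\Ind_{\{t\le x\}}$ and $\Ind_{\{t>x\}}$) together with $V(2z;\tfrac{t+x}2)$. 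Multiplying the $x$- and $y$-representations converts the covariance into a double coefficient extraction $[z^m w^m]$ of $\int_0^1\Phi(z;t,x)\Phi(w;t,y)\dd t$.

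For the core integral I would take $0\le x\le y\le 1$ without loss of generality and split $\int_0^1\dd t$ at the two breakpoints into $[0,x]$, $[x,y]$, $[y,1]$, as in the Watson proof. On each piece every factor of $\Phi$ is a pure exponential in $t$, so the nine products arising from the three terms of $\widetilde{\tau}_m(t;x)$ against the three of $\widetilde{\tau}_m(t;y)$ integrate elementarily. Collecting them produces a rational-exponential expression in $z,w$ carrying the familiar $\tfrac{zw}{z\pm w}$ factors of Theorem~\ref{thm:cov-w}; the geometric cancellations in $z,w$ then collapse it, and extracting $[z^m w^m]$ together with the convolution identity \eqref{gmtx-Bm} already established for $g_m$ assembles the two surviving pieces into $-2^{2m-1}\lpa{\b_{2m}(\tfrac{x+y}2)+(-1)^m\b_{2m}(\tfrac{|x-y|}2)}$, which is the first equality of \eqref{Kmxy2-cvm}.

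The second equality I would deduce from the first rather than recompute: feeding the Fourier expansion \eqref{fourier} into each Bernoulli term gives $\b_{2m}\lpa{\tfrac{x+y}2}=-2(-1)^m\sum_{k\ge1}\tfrac{\cos(k\pi(x+y))}{(2k\pi)^{2m}}$ and $\b_{2m}\lpa{\tfrac{|x-y|}2}=-2(-1)^m\sum_{k\ge1}\tfrac{\cos(k\pi(x-y))}{(2k\pi)^{2m}}$. Substituting these and using $2^{2m}/(2k\pi)^{2m}=(k\pi)^{-2m}$ reduces the closed form to $\sum_{k\ge1}\tfrac{(-1)^m\cos(k\pi(x+y))+\cos(k\pi(x-y))}{(k\pi)^{2m}}$, and the product-to-sum identity $2\cos\lpa{k\pi x-\tfrac{m\pi}2}\cos\lpa{k\pi y-\tfrac{m\pi}2}=\cos(k\pi(x-y))+(-1)^m\cos(k\pi(x+y))$ matches this term by term, giving \eqref{Kmxy2-cvm}.

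For the truncated process it is convenient to record the expansion $\widetilde{\tau}_m(t;x)=2\sum_{k\ge1}(k\pi)^{-m}\cos\lpa{k\pi x-\tfrac{m\pi}2}\cos(k\pi t)$, read off against the orthonormal cosine basis $\{\sqrt2\cos(k\pi t)\}$; the truncation in $\widetilde{\tau}^*_m$ deletes its $1\le k<m$ terms. Using $\int_0^1\cos(k\pi t)\cos(\ell\pi t)\dd t=\tfrac12\Ind_{\{k=\ell\}}$ (for $k,\ell\ge1$) and $\int_0^1\widetilde{\tau}_m(t;x)\cos(k\pi t)\dd t=(k\pi)^{-m}\cos\lpa{k\pi x-\tfrac{m\pi}2}$, the cross- and self-terms of $\int_0^1\widetilde{\tau}^*_m(t;x)\widetilde{\tau}^*_m(t;y)\dd t$ combine so as to remove exactly the first $m-1$ summands, which is \eqref{Kmxy2-cvm*}. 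The two variance identities then follow by setting $x=y$ (so $\tfrac{|x-y|}2=0$ and $\tfrac{x+y}2=x$) and simplifying the resulting $\cos^2\lpa{k\pi x-\tfrac{m\pi}2}$ terms through the double-angle formula. I expect the main obstacle to be purely the bookkeeping of the core computation of the second paragraph: tracking the interlocking signs produced by the $\pm2z$ substitutions across the three subintervals and the nine products, and verifying that the extra $\b_m(\tfrac{t+x}2)$ contributions — absent in the Watson case — coalesce precisely into the $\b_{2m}(\tfrac{x+y}2)$ term rather than leaving a spurious residue.
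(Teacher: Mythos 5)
Your proposal is correct and follows essentially the same route as the paper: the covariance is computed as a double coefficient extraction $[z^{m}w^{m}]$ (the paper uses $[z^{m-1}w^{m-1}]$ of the generating functions $H(z;t,x)H(w;t,y)$ built from \eqref{Gztx}, which is the same device) with the $t$-integral split at the breakpoints, the resulting $\frac{1}{z\pm w}$ combinations collapsing to $-2^{2m-1}\lpa{\b_{2m}(\frac{x+y}{2})+(-1)^m\b_{2m}(\frac{|x-y|}{2})}$, the Fourier form of \eqref{Kmxy2-cvm} read off from \eqref{fourier} via the product-to-sum identity, and \eqref{Kmxy2-cvm*} obtained from the same two orthogonality/reproducing identities the paper invokes. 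The only cosmetic difference is that you phrase the truncation bookkeeping in terms of the cosine basis expansion \eqref{cvm-eigen_expansion} rather than stating the two integral identities directly, which changes nothing of substance.
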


The covariance functions (\ref{Kmxy2-cvm}) for $m=1,2$ are given, 
respectively, by 
\begin{align*}
    \Cov\lpa{\widetilde{B}^{[1]}(x),\widetilde{B}^{[1]}(y)}
    &= x\wedge y - xy, \\
    \Cov\lpa{\widetilde{B}^{[2]}(x),\widetilde{B}^{[2]}(y)}
    &= \sfrac{1}{45} -\sfrac{x^2+y^2}{6} 
    +\sfrac{(x\vee y)^3 +3(x\wedge y)xy}{6} 
    -\sfrac{x^4+6 x^2 y^2+y^4}{24}.
\end{align*}

\begin{proof}[Proof of Theorem \ref{thm:cov-cvm}]
The generating function of $g_m(t;x)$ in \eqref{gmtx-Bm} is 
\begin{align}\label{Gztx}
    G(z;t,x) 
    = \sum_{m\ge 1}g_m(t;x) z^{m-1} 
    = \frac{e^{xz}\left(e^{tz}
    + e^{(2-t)z}\right)}{e^{2z}-1} -\frac{1}{z},
\end{align}
and
\[
    H(z;t,x) = \sum_{m\ge 1} \widetilde{\tau}_m(t;x) z^{m-1} 
    = e^{(x-t)z}\Ind_{\{0\le t\le x\}} - G(z;t,x).
\]
Thus the covariance function of $\widetilde{B}^{[m]}(\cdot)$ can be 
represented as 
\begin{align*}
    K_m(x,y)
    &:= \Cov\lpa{\widetilde{B}^{[m]}(x),\widetilde{B}^{[m]}(y)} \\
    & = \int_0^1 \widetilde{\tau}_m(t;x)\widetilde{\tau}_m(t;y)\dd t 
    = [z^{m-1}w^{m-1}] J(z,w;x,y),
\end{align*}
where
\begin{align*}
    J(z,w;x,y)
    &= \int_0^1 H(z;t,x)H(w;t,y)\dd t \\
    &= \int_0^{x\wedge y} e^{(x-t)z+(y-t)w}\dd t 
    - \int_0^x e^{(x-t)z} G(w;t,y)\dd t \\
    &\quad
    - \int_0^y e^{(y-t)w} G(z;t,x)\dd t 
    + \int_0^1 G(z;t,x)G(w;t,y)\dd t.
\end{align*}
Let $\Delta := x+y$. Straightforward calculations using \eqref{Gztx} 
yield
\begin{align*}
    J(z,w;x,y)
    &= \frac1{z-w}\left(\frac{e^{w\Delta}}{e^{2w}-1} 
    - \frac{e^{z\Delta}}{e^{2z}-1} \right) - \frac1{zw} \\
    &\qquad
    + \begin{cases}\displaystyle
        \frac1{z+w}\left(\frac{e^{(2-\delta)z}}{e^{2z}-1} 
        + \frac{e^{\delta w}}{e^{2w}-1}\right),
        & \mbox{if }\delta := y-x>0, \\
        \displaystyle
        \frac1{z+w}\left(\frac{e^{\delta z}}{e^{2z}-1} 
        + \frac{e^{(2-\delta)w}}{e^{2w}-1}\right),
        & \mbox{if }\delta := x-y>0.
    \end{cases}
\end{align*}
It follows that 
\begin{align*}
    & [z^{m-1}w^{m-1}]\frac1{z-w}
    \left(\frac{e^{w\Delta}}{e^{2w}-1} 
    - \frac{e^{z\Delta}}{e^{2z}-1} \right) - \frac1{2zw} \\
    &\qquad
    = [z^{m-1}w^{m-1}]
    \sum_{\ell\ge2} \b_{\ell}\left(\sfrac\Delta2\right)\, 
    2^{\ell-1}\frac{w^{\ell-1}-z^{\ell-1}}{z-w}
    \\
    &\qquad= -\b_{2m}\left(\sfrac\Delta2\right)\,2^{2m-1}.
\end{align*}
Similarly, by \eqref{Bmx-dual},
\begin{align*}
    & [z^{m-1}w^{m-1}]\frac1{z+w}
    \left(\frac{e^{(2-\delta)z}}{e^{2z}-1} 
    + \frac{e^{\delta w}}{e^{2w}-1}\right) - \frac1{2zw} \\
    &\qquad
    = [z^{m-1}w^{m-1}]
    \sum_{\ell\ge2} \b_{\ell}\left(\sfrac\delta2\right)\,
    2^{\ell-1}\frac{w^{\ell-1}-(-z)^{\ell-1}}{z+w}
    \\
    &\qquad= (-1)^{m-1} \b_{2m}\left(\sfrac\delta2\right)\,2^{2m-1},
\end{align*}
when $x<y$. The same expression also holds when $x>y$ provided we
replace $\delta$ by $x-y$. Thus in either case
\[
    K_m(x,y) = -2^{2m-1}
    \left(\b_{2m} \left(\sfrac{x+y}2\right)
    + (-1)^m\b_{2m} \left(\sfrac{|x-y|}2\right) \right).
\]
Note that this identity implies another one by its decomposition
\begin{align}
    &\int_0^{x\wedge y} 
    \frac{(x-t)^{m-1}(y-u)^{m-1}} {(m-1)!^2}\dd t 
    - \int_0^x \frac{(x-t)^{m-1}}{(m-1)!}\, g_m(t;y) \dd t \nonumber \\
    &\qquad
    - \int_0^y \frac{(y-t)^{m-1}}{(m-1)!}\, g_m(t;x) \dd t 
    + \int_0^1 g_m(t;x)g_m(t;y) \dd t \nonumber \\
    &= -2^{2m-1}
    \left(\b_{2m} \Lpa{\sfrac{x+y}2}
    +(-1)^m\b_{2m} \left(\sfrac{|x-y|}2\right)\right).
\label{Km-id}
\end{align}
The proof of \eqref{Kmxy2-cvm*} relies on the relations
\begin{equation*}
    \int_0^1 \widetilde{\tau}_m(t;x)
    \cos\Bigl(k\pi y-\sfrac{m\pi}{2} \Bigr)\cos(k\pi t) \dd t
    = \frac{\cos\bigl(k\pi x -\frac{m\pi}{2}\bigr)
    \cos\bigl(k\pi y -\frac{m}{2}\pi\bigr)}{(k\pi)^m}
\end{equation*}
and
\[
    \begin{split}	
    &\int_0^1 
    \cos\Bigl(k\pi x-\sfrac{m\pi}{2} \Bigr)
    \cos(k\pi t)\cos\Bigl(k\pi y-\sfrac{m\pi}{2} \Bigr)
    \cos(k\pi t) \dd t \\
    &\qquad = \sfrac{1}{2} \cos\Bigl(k\pi x -\sfrac{m\pi}{2}\Bigr) 
    \cos\Bigl(k\pi y -\sfrac{m\pi}{2}\Bigr).
    \end{split}
\]
The former follows from the same generating function approach used 
above and the latter is trivial.
\end{proof}

Starting from the expansion
\begin{equation}\label{cvm-eigen_expansion}
    \widetilde{\tau}_m(t;x) 
    = 2 \sum_{k\ge 1} 
    \frac{1}{(\pi k)^m} \cos\Bigl(k\pi x - 
    \sfrac{m\pi}{2}\Bigr) \cos(k\pi t),
\end{equation}
and taking the stochastic integral $\int_0^1 \dd B(t)$ on both sides, 
we obtain formally the KL expansion
\begin{equation}\label{KL-cvm}
    \widetilde{B}^{[m]}(x)
    = \sqrt{2}\sum_{k\ge 1} 
    \frac{\cos(k\pi x-\frac{m\pi}{2})}{(k\pi)^m} \,\xi_k,
\end{equation}
where
\begin{equation}\label{xi-cvm}
    \xi_k 
    = \sqrt{2} \int_0^1 \cos(k\pi t) \dd B(t) 
    \sim N(0,1)\ \ \mbox{i.i.d.}
\end{equation}
These expansions hold true, again justified by Mercer's theorem.

\begin{thm}\label{thm:KL-cvm}
For $m\ge 1$, \eqref{KL-cvm} with $\xi_j$ defined by \eqref{xi-cvm} 
holds uniformly in $x\in[0,1]$ with probability 1, and
\begin{equation}\label{KL2-cvm}
    \omega^{[m]} 
    := \int_0^1 \widetilde{B}^{[m]}(x)^2 \dd x 
    = \sum_{k\ge 1} \frac{\xi_k^2}{(k \pi)^{2m}}
\end{equation}
and
\begin{equation}\label{KL2-cvm*}
    \omega^{*[m]} 
    := \int_0^1 \widetilde{B}^{*[m]}(x)^2 \dd x 
    = \sum_{k\ge m} \frac{\xi_k^2}{(k \pi)^{2m}}
\end{equation}
converges with probability 1.
\end{thm}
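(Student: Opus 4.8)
The plan is to transcribe, almost verbatim, the argument already carried out for the generalized Watson statistics in Theorem~\ref{thm:KL-w}, since the two cases share the same analytic skeleton: a continuous, symmetric, positive-definite covariance kernel whose eigenfunction expansion is a shifted (co)sine series with eigenvalues decaying like $(k\pi)^{-2m}$. First I would record the structural facts already in hand. The process $\widetilde{B}^{[m]}(\cdot)$ is a centered Gaussian process with continuous sample paths on $[0,1]$ (noted after its definition), and by Theorem~\ref{thm:cov-cvm} its covariance kernel $K_m(x,y)=\Cov\lpa{\widetilde B^{[m]}(x),\widetilde B^{[m]}(y)}$ is continuous on $[0,1]\times[0,1]$, symmetric, and positive-definite (its eigenvalues $(k\pi)^{-2m}$ are all positive). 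Since $K_m(x,x)=\Var\lpa{\widetilde B^{[m]}(x)}$ is a bounded Bernoulli polynomial, the trace condition $\int_0^1 K_m(x,x)\dd x=\sum_{k\ge1}(k\pi)^{-2m}<\infty$ holds for every $m\ge1$.

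The key algebraic input is the eigenequation. Setting $\phi_k(x):=\sqrt2\,\cos\lpa{k\pi x-\frac{m\pi}{2}}$, I would check by product-to-sum identities that $\{\phi_k\}_{k\ge1}$ is orthonormal on $[0,1]$; these reduce to $\sqrt2\sin(k\pi x)$ when $m$ is odd and to $\pm\sqrt2\cos(k\pi x)$ when $m$ is even, so the classical Cram\'er--von Mises eigenfunctions are recovered at $m=1$. The second equality in \eqref{Kmxy2-cvm} already exhibits $K_m$ as the series $\sum_{k\ge1}(k\pi)^{-2m}\phi_k(x)\phi_k(y)$; integrating this against $\phi_\ell$ and invoking orthonormality yields at once
\[
    \int_0^1 K_m(x,y)\,\phi_\ell(y)\dd y
    = \frac{1}{(\ell\pi)^{2m}}\,\phi_\ell(x),
\]
identifying $(\ell\pi)^{-2m}$ as the eigenvalues and $\phi_\ell$ as the eigenfunctions. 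I would also confirm, exactly as in the Watson case, that the $\xi_k$ of \eqref{xi-cvm} are genuinely i.i.d.\ $N(0,1)$: because $\int_0^1\cos(k\pi t)\dd t=0$ for $k\ge1$, the Brownian-bridge correction drops out and $\mathbb{E}(\xi_k\xi_\ell)=2\int_0^1\cos(k\pi t)\cos(\ell\pi t)\dd t=\Ind_{\{k=\ell\}}$. The formal expansion \eqref{KL-cvm} is then precisely what one gets by integrating the template expansion \eqref{cvm-eigen_expansion} against $\dd B(t)$.

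With these ingredients, the three conclusions follow from the standard machinery cited in Theorems~\ref{thm:KL} and \ref{thm:KL-w}. Mercer's theorem upgrades \eqref{Kmxy2-cvm} to uniform convergence on $[0,1]\times[0,1]$; the Kac--Siegert theorem yields the mean-square KL expansion \eqref{KL-cvm}; and Theorems~3.7 and 3.8 of \cite{Adler90} (cf.\ \cite[Ch.~5]{Shorack86}) promote it to convergence that is uniform in $x$ and almost sure. Integrating the square of \eqref{KL-cvm} term by term and using orthonormality then gives \eqref{KL2-cvm}, whose almost-sure convergence is the usual consequence of $\sum_{k\ge1}(k\pi)^{-2m}<\infty$ for $m\ge1$. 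The truncated statements for $\widetilde B^{*[m]}$ and \eqref{KL2-cvm*} are identical except that the summation begins at $k=m$, following the line used for $U^{*[m]}$.

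I do not expect a genuine obstacle, since Theorem~\ref{thm:cov-cvm} has already done the analytic heavy lifting by producing both the closed-form covariance and its spectral series. The one point needing a little care is the orthonormality-and-eigenequation bookkeeping for the shifted cosines $\phi_k$, where the parity of $m$ toggles $\cos\lpa{k\pi x-\frac{m\pi}{2}}$ between sine and cosine form; this must be handled uniformly in $m$ so that the single series \eqref{Kmxy2-cvm} is literally the Mercer expansion of $K_m$. Beyond that, the proof is a direct transcription of Theorem~\ref{thm:KL-w}, and the remaining details may reasonably be omitted.
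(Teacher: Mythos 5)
Your proposal is correct and is essentially the paper's own argument: the paper simply states that the proof follows \emph{mutatis mutandis} that of Theorem~\ref{thm:KL-w} (continuity of the process and of the kernel from Theorem~\ref{thm:cov-cvm}, the trace condition, the eigenequation for the shifted cosines, then Mercer, Kac--Siegert, and the Adler results for uniform almost-sure convergence). Your added bookkeeping on the orthonormality of $\sqrt2\cos\lpa{k\pi x-\frac{m\pi}{2}}$ and on the $\xi_k$ being i.i.d.\ $N(0,1)$ is exactly the ``mutatis mutandis'' the paper leaves implicit.
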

The proof follows \emph{mutatis mutandis} that of Theorem 
\ref{thm:KL-w} and is omitted.

\subsection{Statistic in terms of samples}
\label{subsec:sample-cvm}

We now derive the samples representation of the generalized 
Cram\'er-von Mises statistic. Since
\[
    \omega^{[m]}_{N}
    = N \int_0^1 \biggl( 
    \int_0^1 \widetilde{\tau}_m(t;x) \widetilde{\tau}_m(u;x) \dd x \biggr) 
    \dd F_{N}(t) \dd F_{N}(u),
\]
it suffices to evaluate the integral between the large parentheses.
The method of proof follows closely that of Theorem 
\ref{thm:cov-cvm}.

\begin{thm}\label{thm:sample-cvm}
The generalized Cram\'er-von Mises statistic is expressed
in terms of samples $X_i$'s as
\[
    \omega^{[m]}_{N}
    = \sfrac{1}{N} \sum_{1\le i,j\le N} 
    (-1)^{m-1} 2^{2m-1}
    \left( \b_{2m}\Lpa{\sfrac{X_i+X_j}{2}}
    +\b_{2m}\Lpa{\sfrac{|X_i-X_j|}{2}} \right),
\]
and
\[
    \omega^{*[m]}_{N}
    = \omega^{[m]}_{N} 
    - 2\sum_{\substack{1\le k<m  1\le i,j\le N}} 
    \frac{\cos\lpa{k\pi X_i-\frac{1}{2}m\pi}
    \cos\lpa{k\pi X_j-\frac{1}{2}m\pi}}{(k \pi)^{2m}},
\]
where $(2m)!\b_{2m}(x)$ denotes the Bernoulli polynomial of order 
$2m$.
\end{thm}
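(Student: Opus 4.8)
The plan is to reduce the statistic to the evaluation of a single scalar integral and then to diagonalise that integral by the cosine system underlying \eqref{cvm-eigen_expansion}. Using the atomic structure $\int g\dd F_N=\frac1N\sum_{1\le i\le N}g(X_i)$ in the double integral displayed just before the theorem, I would first write
\[
    \omega^{[m]}_N=\frac1N\sum_{1\le i,j\le N}I(X_i,X_j),\qquad
    I(t,u):=\int_0^1\widetilde{\tau}_m(t;x)\,\widetilde{\tau}_m(u;x)\dd x,
\]
so that everything reduces to computing $I(t,u)$ for fixed $t,u\in[0,1]$. Substituting the expansion \eqref{cvm-eigen_expansion} into both factors and integrating in $x$, the double sum collapses through the elementary orthogonality relation $\int_0^1\cos(k\pi x-\frac{m\pi}2)\cos(\ell\pi x-\frac{m\pi}2)\dd x=\frac12\Ind_{\{k=\ell\}}$ (the analogue, in the outer variable $x$, of the relations verified at the end of the proof of Theorem~\ref{thm:cov-cvm}), leaving the diagonal series
\[
    I(t,u)=2\sum_{k\ge1}\frac{\cos(k\pi t)\cos(k\pi u)}{(k\pi)^{2m}}.
\]

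The second step is to identify this series with Bernoulli polynomials. Writing $\cos(k\pi t)\cos(k\pi u)=\frac12\lpa{\cos(k\pi(t+u))+\cos(k\pi(t-u))}$ and invoking the Fourier series \eqref{fourier}, which (using $\cos(2k\pi z-m\pi)=(-1)^m\cos(2k\pi z)$ and the rescaling $z=\frac y2$) yields $\sum_{k\ge1}(k\pi)^{-2m}\cos(k\pi y)=(-1)^{m-1}2^{2m-1}\b_{2m}\lpa{\frac y2}$ for $\frac y2\in(0,1)$, I would obtain
\[
    I(t,u)=(-1)^{m-1}2^{2m-1}\Lpa{\b_{2m}\Lpa{\sfrac{t+u}2}+\b_{2m}\Lpa{\sfrac{|t-u|}2}},
\]
which after summation over $i,j$ is exactly the asserted closed form. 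For the truncated statistic the identical computation applies with the index restricted to $k\ge m$, because $\widetilde{\tau}^*_m$ is precisely the tail $\sum_{k\ge m}$ of \eqref{cvm-eigen_expansion}; removing the first $m-1$ terms produces the correction, which the cross- and square-terms of the truncation reduce to $-\frac2N\sum_{1\le i,j\le N}\sum_{1\le k<m}(k\pi)^{-2m}\cos(k\pi X_i)\cos(k\pi X_j)$.

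A route closer to the stated strategy ``follows Theorem~\ref{thm:cov-cvm}'' is the generating-function method: with $H(z;t,x)=\sum_{m\ge1}\widetilde{\tau}_m(t;x)z^{m-1}$ one has $I(t,u)=[z^{m-1}w^{m-1}]\int_0^1 H(z;t,x)H(w;u,x)\dd x$, and since every term of $H$ depends on the integration variable $x$ only through $e^{xz}$, the integral over the three subintervals $[0,t\wedge u]$, $[t\wedge u,t\vee u]$, $[t\vee u,1]$ is elementary; the resulting bivariate function is simplified by $1/(e^{2z}-1)$ and the reflection formula \eqref{Bmx-dual} exactly as in Theorem~\ref{thm:cov-cvm}. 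The hard part is the sign bookkeeping. Because $\widetilde{\tau}_m(t;x)$ carries the phase $-\frac{m\pi}2$ in its $x$-slot but none in its $t$-slot, integrating in $x$ here (rather than in $t$, as for the covariance) makes the two Bernoulli contributions appear with the \emph{same} sign $(-1)^{m-1}2^{2m-1}$, in contrast with the relative factor $(-1)^m$ in \eqref{Kmxy2-cvm}; keeping track of this slot asymmetry is the only delicate point. The same asymmetry is why the truncation correction above is phase-free in the samples, reducing to $(\sum_i\cos(k\pi X_i))^2$-type terms; since $\cos(k\pi X_i-\frac{m\pi}2)=\pm\cos(k\pi X_i)$ when $m$ is even, this matches the displayed form in that case, and the phase convention should be checked against this computation for odd $m$.
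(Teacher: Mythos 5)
Your proposal is correct, and for the main identity it takes a genuinely different route from the paper. You diagonalise $I(t,u)=\int_0^1\widetilde{\tau}_m(t;x)\widetilde{\tau}_m(u;x)\dd x$ directly in the integration variable $x$ using the eigenfunction expansion \eqref{cvm-eigen_expansion} and the orthogonality $\int_0^1\cos(k\pi x-\frac{m\pi}2)\cos(\ell\pi x-\frac{m\pi}2)\dd x=\frac12\Ind_{\{k=\ell\}}$ (valid because $2\cdot\frac{m\pi}2$ is an integer multiple of $\pi$), then resum $2\sum_{k\ge1}(k\pi)^{-2m}\cos(k\pi t)\cos(k\pi u)$ via the Fourier series \eqref{fourier} of $\b_{2m}$; the only analytic point to note is that for $m=1$ the expansion \eqref{cvm-eigen_expansion} converges only in $L^2(\dd x)$, which is still enough to integrate termwise by Parseval. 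The paper instead stays with the generating-function machinery: it writes $K_m^*(s,t)$ as the reflected kernel $K_m^*(1-s,1-t)$, observes $\widetilde g_m(t;x)=g_m(x;t)$ for even $m$ so that $K_m^*$ reduces to the covariance kernel $K_m$ already computed in Theorem~\ref{thm:cov-cvm}, and for odd $m$ computes the residual term $L_m(s,t)$ by carefully splitting the integration ranges and applying the sine Fourier series of odd-order Bernoulli polynomials. Your route buys uniformity (no parity case split, and the sign issue you flag is resolved automatically because both the $t+u$ and the $|t-u|$ channels pick up the same factor $(-1)^{m-1}2^{2m-1}$ after the $x$-integration); the paper's route buys consistency with its earlier covariance computation at the cost of the odd-$m$ correction.

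Your closing remark about the truncated statistic is well taken and is in fact a substantive point: the correct sample correction is the phase-free one you derive, $-\frac2N\sum_{1\le k<m}(k\pi)^{-2m}\sum_{i,j}\cos(k\pi X_i)\cos(k\pi X_j)$, because $\widetilde{\tau}^*_m$ subtracts modes carrying the phase $-\frac{m\pi}2$ in the $x$-slot only, and that slot is integrated out. The displayed formula for $\omega^{*[m]}_N$ in the statement, which carries the phases on the sample slots, agrees with this only when $m$ is even (where $\cos(\theta-\frac{m\pi}2)=\pm\cos\theta$) or when the sum is empty ($m=1$); for odd $m\ge3$ it would replace $\bigl(\sum_i\cos(k\pi X_i)\bigr)^2$ by $\bigl(\sum_i\sin(k\pi X_i)\bigr)^2$, which is not the same. (The displayed formula is also missing the $\frac1N$ normalisation that your version correctly includes.) So your computation is the one to trust here.
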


The statistics $\omega^{[m]}_{N}$ for $m=1,2$ are given, 
respectively, by 
\begin{align*}
    \omega^{[1]}_{N}
    &= \sfrac{1}{N} \sum_{1\le i,j\le N} \left(
    \sfrac{1}{3} - X_i\vee X_j 
	+ \sfrac{X_i^2 + X_j^2}{2} \right), \\
    \omega^{[2]}_{N}
    &= \sfrac{1}{N} \sum_{1\le i,j\le N} \left(
    \sfrac{1}{45} -\sfrac{X_i^2+X_j^2}{6} 
    +\sfrac{(X_i\vee X_j)^3+3 (X_i\wedge X_j) X_i X_j}{6} 
    \right.\\
	&\hspace*{4cm} \left.
	-\sfrac{X_i^4+6 X_i^2 X_j^2 +X_j^4}{24}
    \right).
\end{align*}

\begin{proof}
We consider the integral
\begin{align*}
    K_m^*(s,t)
    &:= \int_0^1 \widetilde{\tau}_m(s;x) 
	\widetilde{\tau}_m(t;x)\dd x \\
    &= \int_{s\vee t}^1 
    \frac{(x-s)^{m-1}(x-t)^{m-1}}{(m-1)!^2} \dd x
    - \int_s^1 \frac{(x-s)^{m-1}}{(m-1)!}\, g_m(t;x) \dd x \\
    &\quad
    - \int_t^1 \frac{(x-t)^{m-1}}{(m-1)!}\, g_m(s;x) \dd x
    + \int_0^1 g_m(s;x)g_m(t;x) \dd x.
\end{align*}
We will prove that
\[
    K_m^*(s,t) = (-1)^{m-1} 2^{2m-1}
    \left(\b_{2m}\left(\sfrac{s+t}2\right)
    +\b_{2m} \Lpa{\sfrac{|s-t|}2}\right).
\]
By \eqref{Bmx-dual} and \eqref{gmtx-Bm}, we have, by exchanging $x$ 
and $t$,
\[
    g_m(x;t) = 2^{m-1} \left(\b_m\left(\sfrac{x+t}2\right)
    +(-1)^m\b_m\left(\sfrac{x-t}2\right)\right),
\]
and  
\[
    \widetilde{g}_m(t;x) := g_m(1-t;1-x)
    = (-1)^m 2^{m-1} \left(\b_m\left(\sfrac{x+t}2\right)
    +\b_m\left(\sfrac{x-t}2\right)\right).
\]
These relations imply particularly that  
\begin{align}\label{gm-even}
    \widetilde{g}_m(t;x) = g(x;t) \qquad(m \mbox{ even}).
\end{align}
By the change of variables $x\mapsto 1-x$ and by the definition of
$\widetilde{g}_m(s;x)$, we see that
\begin{align*}
    K_m^*(1-s,1-t) 
    &= \int_0^{s\wedge t} 
    \frac{(s-x)^{m-1}(t-x)^{m-1}}{(m-1)!^2} \dd x
    - \int_0^{s} \frac{(s-x)^{m-1}}{(m-1)!}\, 
    \widetilde{g}_m(t;x) \dd x \\
    &\quad
    - \int_0^{t} \frac{(t-x)^{m-1}}{(m-1)!}\,
    \widetilde{g}_m(s;x) \dd x
    + \int_0^1 \widetilde{g}_m(s;x) \widetilde{g}_m(t;x) \dd x,
\end{align*}
which leads, by symmetry, \eqref{Km-id} and \eqref{gm-even}, to 
\[
    K_m^*(1-s,1-t)=K_m^*(s,t)
    =K_m(s;t) \quad(m \mbox{ even}).
\]

We are left with the case when $m$ is odd for which we have
\[
    \widetilde{g}_m(t;x) 
    = g(x;t) + 2^m\,\b_m\left(\sfrac{x+t}2\right),
\]
so that 
\[
    K_m^*(1-s,1-t) 
    = K_m^*(s,t) 
    = K_m(s,t) + L_m(s,t),
\]
where
\begin{align*}
    \sfrac{L_m(s,t)}2
    &:= 2^{m-1} \int_0^{s} \sfrac{(s-x)^{m-1}}{(m-1)!}\, 
    \b_m\left(\sfrac{x+t}2\right)\dd x \\
	&\qquad + 2^{m-1}
    \int_0^{t} \sfrac{(t-x)^{m-1}}{(m-1)!}\,
    \b_m\left(\sfrac{x+s}2\right)\dd x \\
    &\qquad
    + 4^{m-1} \int_0^1 \b_m\left(\sfrac{x+s}2\right)
    \b_m\left(\sfrac{x-t}2\right) \dd x\\
    &\qquad 
	+ 4^{m-1} \int_0^1 \b_m\left(\sfrac{x-s}2\right)
    \b_m\left(\sfrac{x+t}2\right) \dd x \\
    &=: L_{m,1}+L_{m,2}+L_{m,3}+L_{m,4}.
\end{align*}
We will prove that for odd $m$
\[
    L_m(s,t) 
    = 2^{2m} \,\b_{2m}\left(\sfrac{s+t}2\right).
\]
Indeed, this relation holds also for even $m$ but in the form
\[
    L_m(s,t) 
    = (-1)^{m-1} 2^{2m} \,
    \b_{2m}\left(\sfrac{s+t}2\right) \quad (m\ge 1).
\]
We will apply the Fourier series expansion
\begin{align}\label{Bm-sin}
    2^{m-1}\,\b_m(x)
    = (-1)^{\lceil \frac m2\rceil} 
    \sum_{j\ge1}\sfrac{\sin(2j\pi x)}{(j\pi)^m}
    \quad (m \mbox{ odd}).
\end{align}
which holds only for $0\le x\le1$ (for $0<x<1$ when $m=1$). Thus we 
cannot simply substitute the Fourier series into the integrals of
$L_m$. For that purpose, we split first the integrals at $x=t$ and 
use the relation
\[
    \b_m(-x) 
    = -\b_m(x) - \frac{x^{m-1}}{(m-1)!} 
    \quad (m \mbox{ odd})
\]
as follows.
\begin{align*}
    L_{m,3}
    &= 4^{m-1} \int_0^1 \b_m\left(\sfrac{x+s}2\right)
    \b_m\left(\sfrac{x-t}2\right)\dd x \\
    & = -4^{m-1} \int_0^t 
    \b_m\left(\sfrac{x+s}2\right)
    \b_m\left(\sfrac{t-x}2\right) \dd x \\
    &\quad
    -\underbrace{2^{m-1}\int_0^t \frac{(t-x)^{m-1}}
    {(m-1)!} \b_m\left(\sfrac{x+s}2\right) \dd x}_{=L_{m,2}} \\
    &\quad
    + 4^{m-1}
    \int_t^1 \b_m\left(\sfrac{x+s}2\right)
    \b_m\left(\sfrac{x-t}2\right)\dd x.
\end{align*}
Similarly, 
\begin{align*}
    L_{m,4}
    &= 4^{m-1} \int_0^1 \b_m\left(\sfrac{x-s}2\right)
    \b_m\left(\sfrac{x+t}2\right)\dd x \\
    & = -4^{m-1}\int_0^s 
    \b_m\left(\sfrac{s-x}2\right)
    \b_m\left(\sfrac{t+x}2\right) \dd x -L_{m,1} \\
    &\quad
    + 4^{m-1} \int_s^1 \b_m\left(\sfrac{x-s}2\right)
    \b_m\left(\sfrac{x+t}2\right)\dd x.
\end{align*}
Thus 
\begin{align*}
    \frac{L_m(s,t)}{2}
    :=&- 4^{m-1} \int_0^t \b_m\left(\sfrac{x+s}2\right)
    \b_m\left(\sfrac{t-x}2\right) \dd x\\
    &+ 4^{m-1} \int_t^1 \b_m\left(\sfrac{x+s}2\right)
    \b_m\left(\sfrac{x-t}2\right) \dd x\\
    &- 4^{m-1} \int_0^s \b_m\left(\sfrac{s-x}2\right)
    \b_m\left(\sfrac{t+x}2\right) \dd x\\
    &+ 4^{m-1} \int_s^1 \b_m\left(\sfrac{x-s}2\right)
    \b_m\left(\sfrac{x+t}2\right) \dd x.
\end{align*}
All parameters are now inside the unit interval, and we can then
apply the Fourier series expansion \eqref{Bm-sin}. It follows that
\[
    \sfrac{L_m(s,t)}2 := \sum_{j,\ell\ge 1}
    \sfrac{I(j,\ell;s,t)}{(j\pi)^m(\ell\pi)^m},
\]
where
\begin{align*}
    I(j,\ell;s,t)
    :=&-\int_0^t \sin(j\pi(x+s))\sin(\ell\pi (t-x))\dd x\\
    &+ \int_t^1 \sin(j\pi(x+s))\sin(\ell\pi (x-t))\dd x\\
    &-\int_0^s \sin(j\pi(s-x))\sin(\ell\pi (t+x))\dd x\\
    &+ \int_s^1 \sin(j\pi(x-s))\sin(\ell\pi (x+t))\dd x.
\end{align*}
It is then straightforward to show that 
\[
    I(j,\ell;s,t) =
    \begin{cases}
        0,                & \mbox{if }\ell\ne j \\
        \cos(j\pi (s+t)), & \mbox{if }\ell= j,
    \end{cases}
\]
implying that
\[
    L_m(s,t)
    = 2\sum_{j\ge1} \sfrac{\cos(j\pi (s+t))}{(j\pi)^{2m}}
    = 2^{2m}\,\b_{2m}\left(\sfrac{s+t}2\right)
\]
for odd $m$.
\end{proof}

\subsection{Power comparisons}
\label{subsec:power-cvm}

We gather here our Monte Carlo simulation results in Figures
\ref{fig:cvm-normal1}--\ref{fig:cvm-skew} for the power comparison of
the test statistics $\omega_N^{*[1]}=\omega_N^{[1]}$,
$\omega_N^{*[2]}$ and $\omega_N^{*[3]}$.
We adopt the same simulation settings as in the generalized AD
statistics in Sec.~\ref{subsec:finite}. The null hypothesis $H_0$ is
that $F$ is the normal distribution with zero mean and unit variance;
we assume that $N=100$ samples are available. The size of the tests
is 0.05, and the critical values are estimated by simulations. We
conducted Monte Carlo simulations with 100,000 replications.

We first assume that the true distribution is normal with mean $\mu$
and variance $\sigma^2$. The estimated power functions of
$\omega_N^{*[m]}$ for $m=1,2,3$ are summarized in Figures
\ref{fig:cvm-normal1} and \ref{fig:cvm-normal2}.

The pattern of the highest power variations exhibited in Figures
\ref{fig:cvm-normal1} and \ref{fig:cvm-normal2} is almost identical
to that of the generalized AD statistics in Figures \ref{fig:normal1}
and \ref{fig:normal2}, although the generalize AD is slightly
superior to the generalized CvM.

\begin{figure}[h]
\begin{center}
\begin{tabular}{cc}
\includegraphics[scale=0.25]{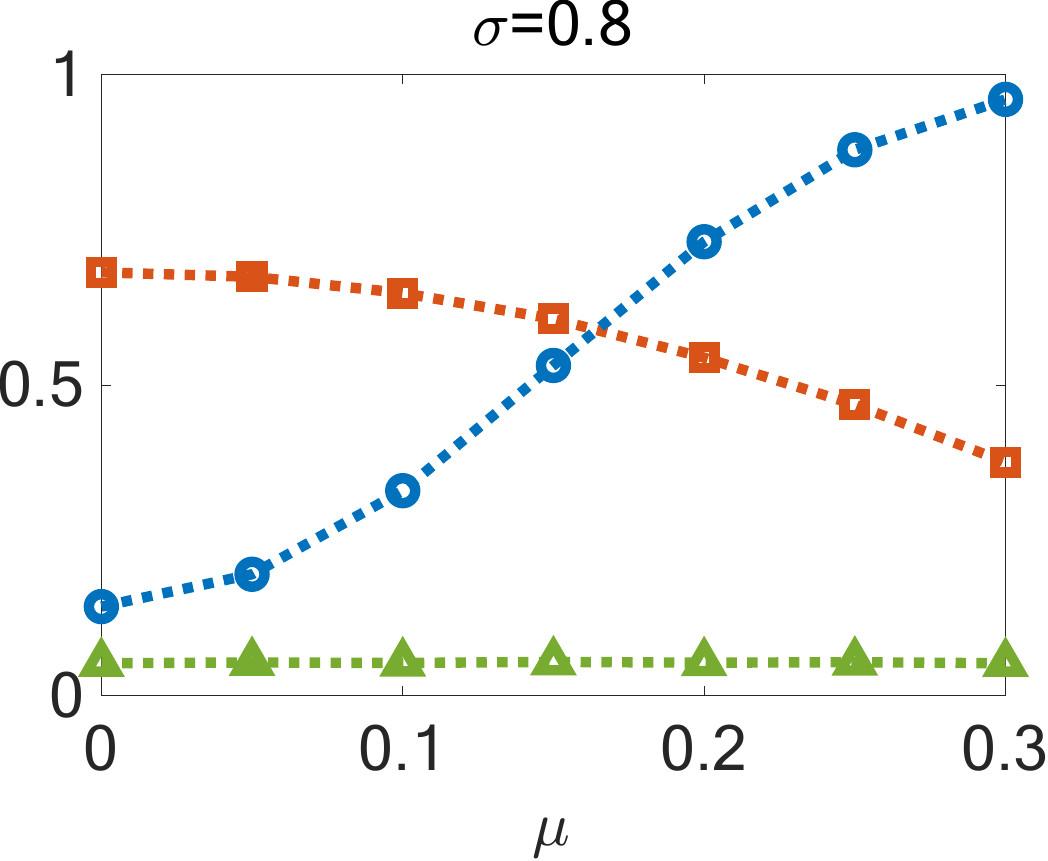} &
\includegraphics[scale=0.25]{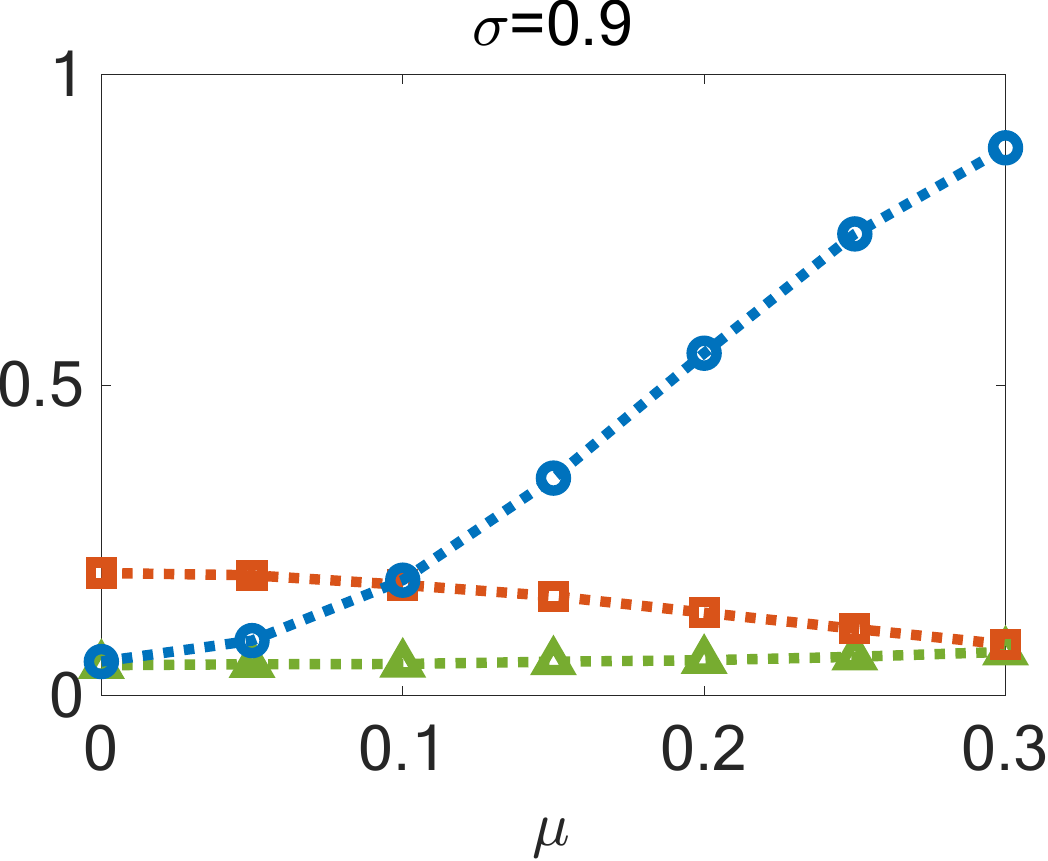} \\
\multicolumn{2}{c}{\includegraphics[scale=0.25]{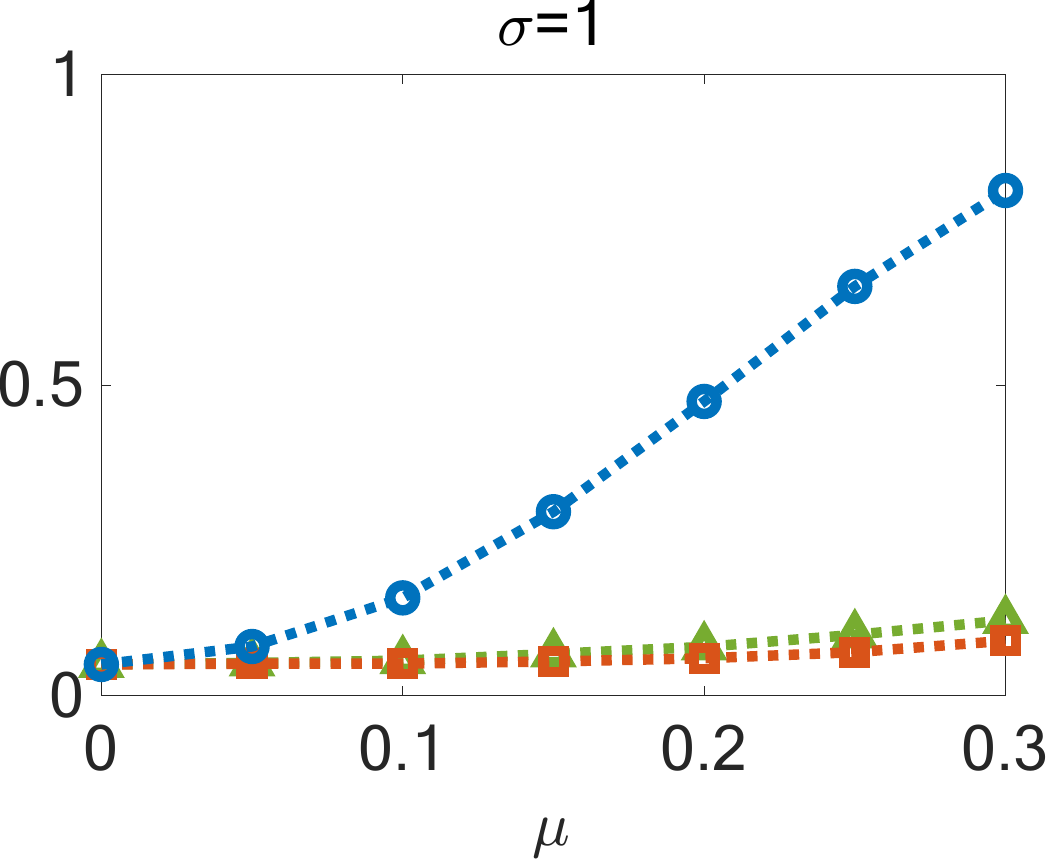}} \\
\includegraphics[scale=0.25]{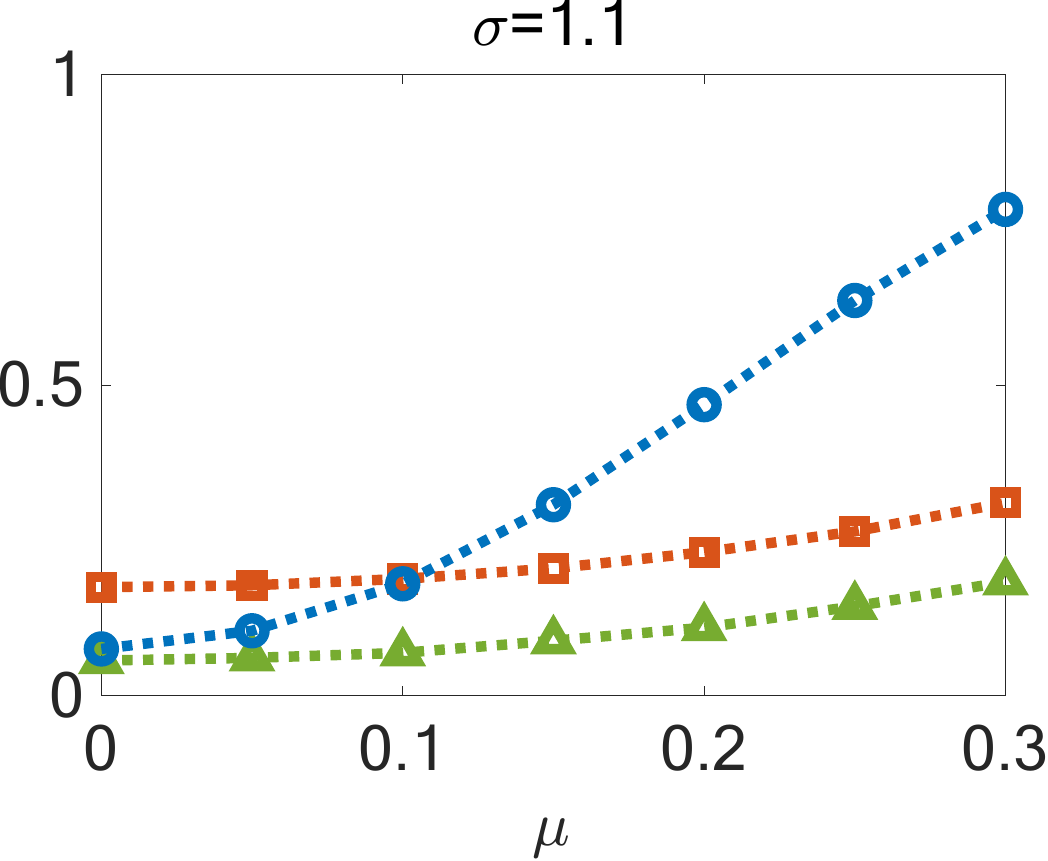} &
\includegraphics[scale=0.25]{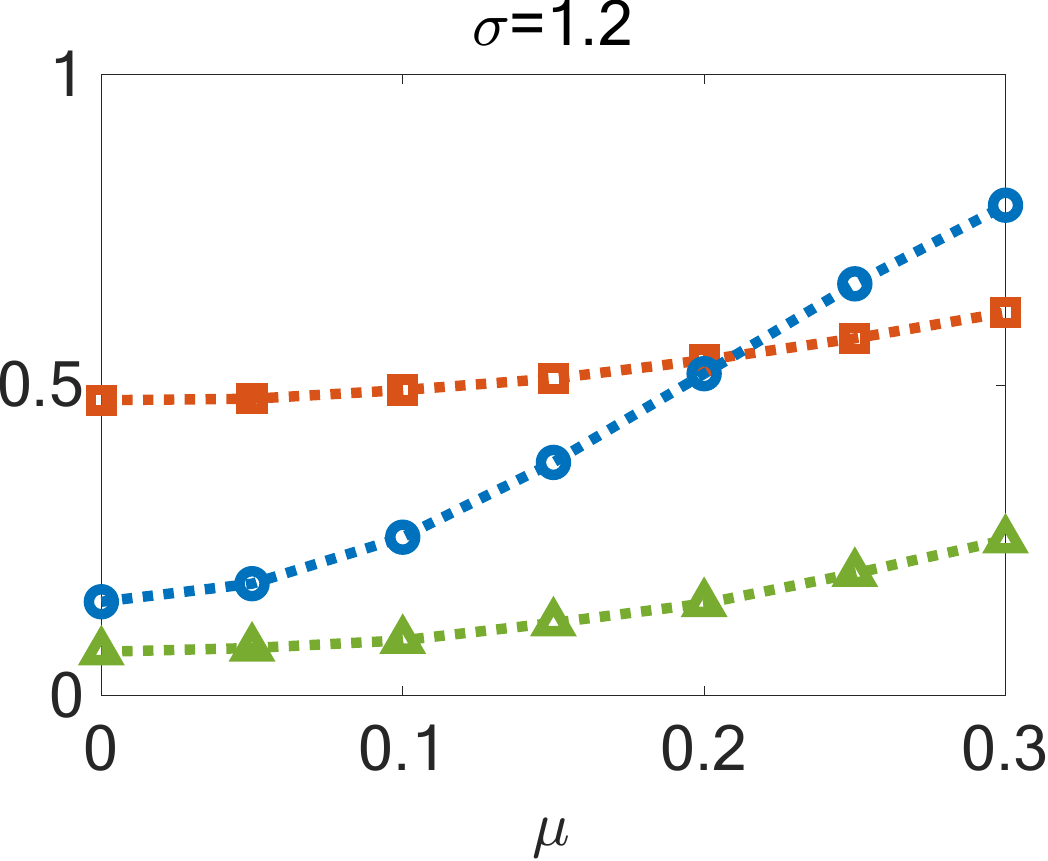}
\end{tabular}
\caption{Powers of $\omega^{[m]}_N$ for $\mu\in[0,0.3]$ and $\sigma\in\{0.8,0.9,1,1.1,1.2\}$.}
\smallskip
(\blueredgreen)
\label{fig:cvm-normal1}
\end{center}
\end{figure}

\begin{figure}
\begin{center}
\begin{tabular}{ccc}
\includegraphics[scale=0.25]{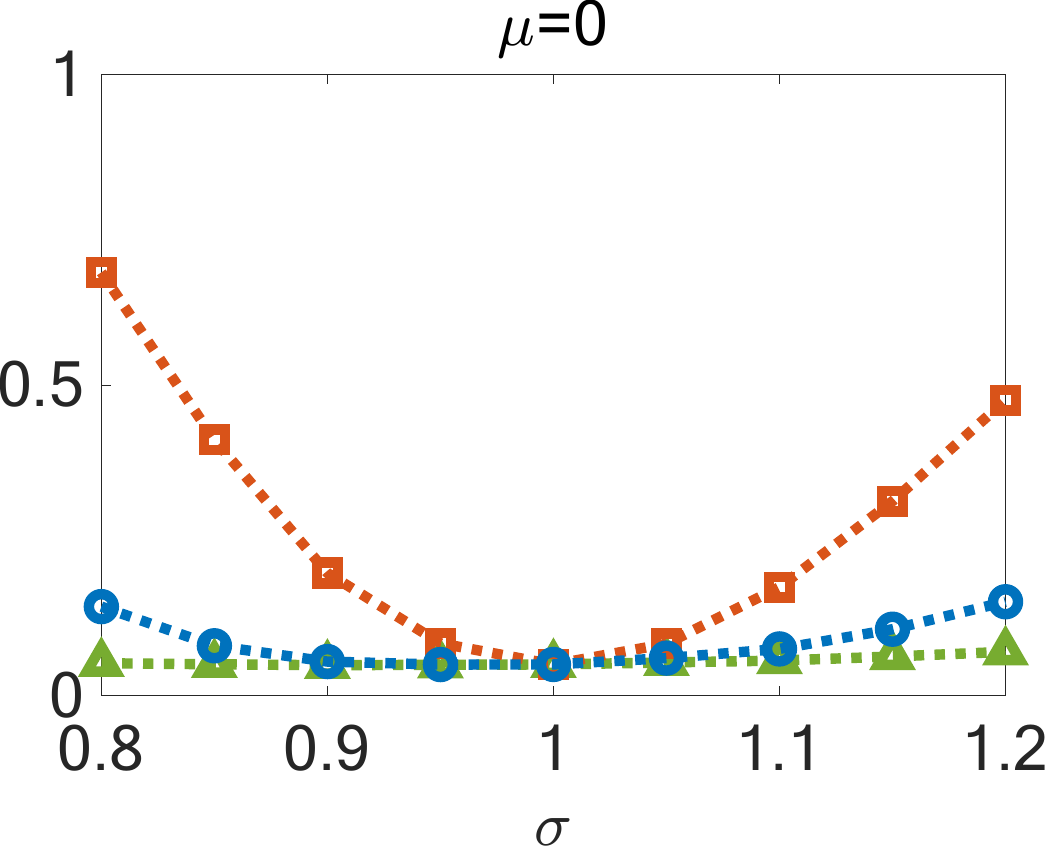} &
\includegraphics[scale=0.25]{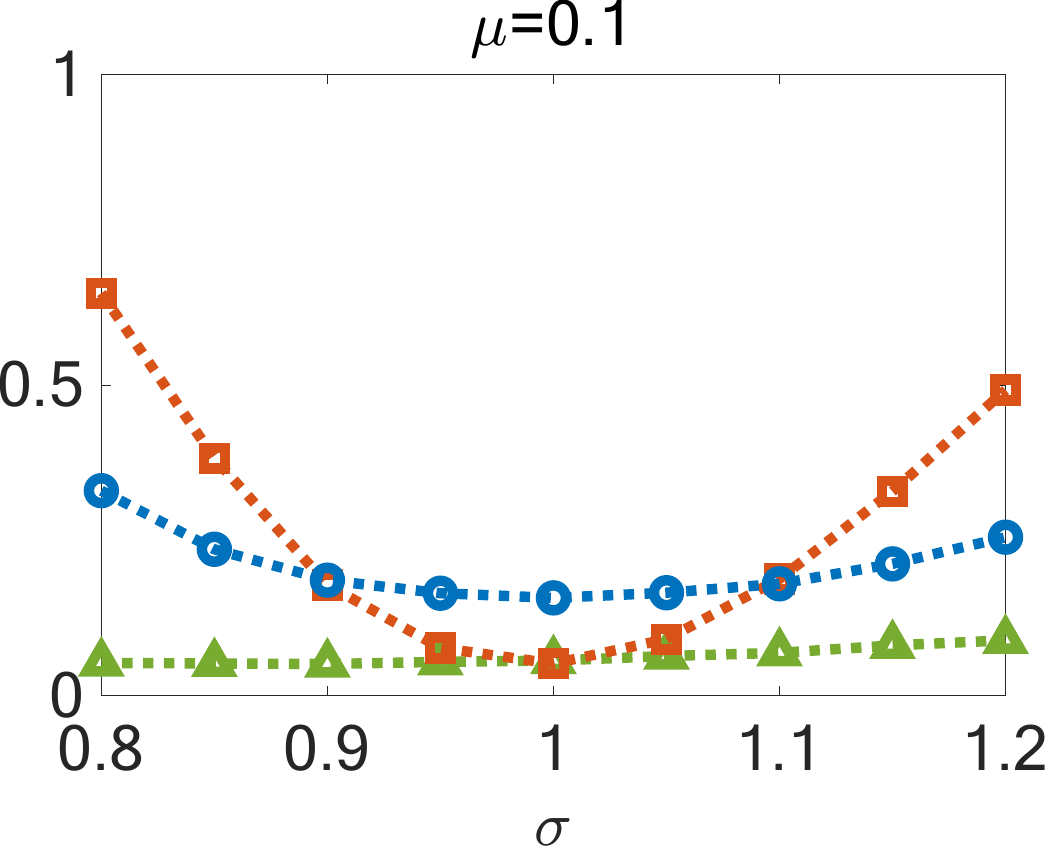} \\
\includegraphics[scale=0.25]{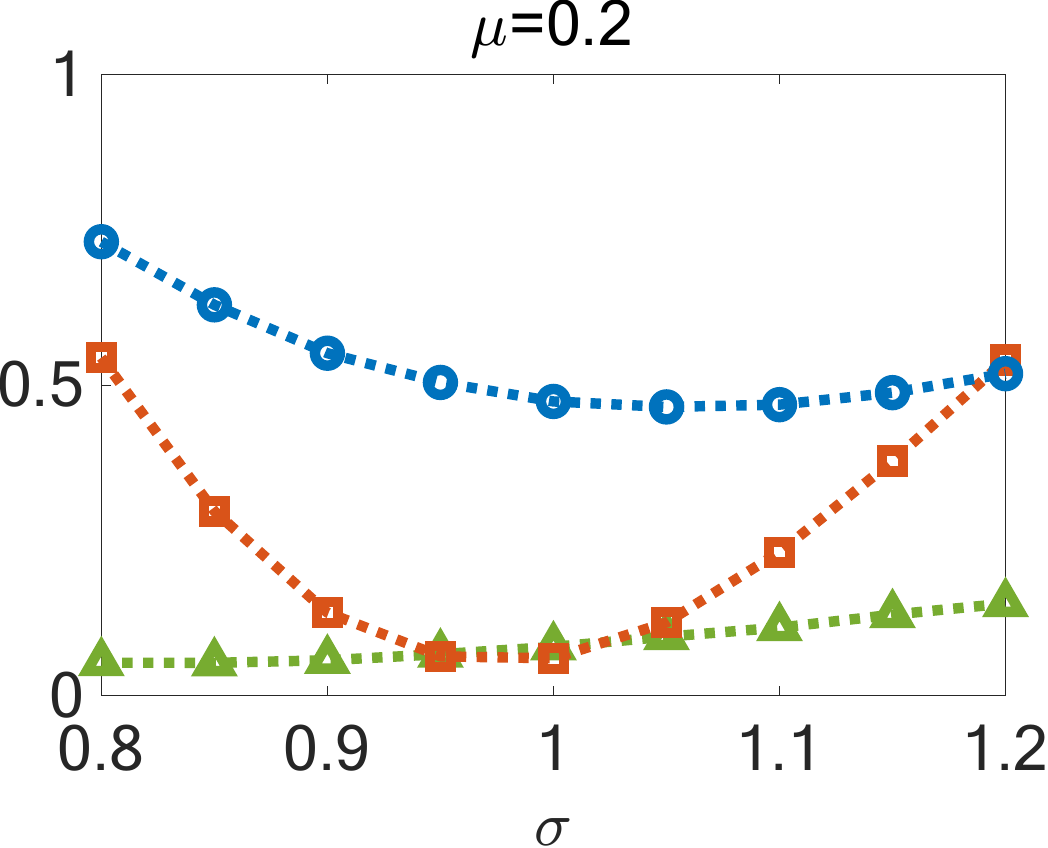} &
\includegraphics[scale=0.25]{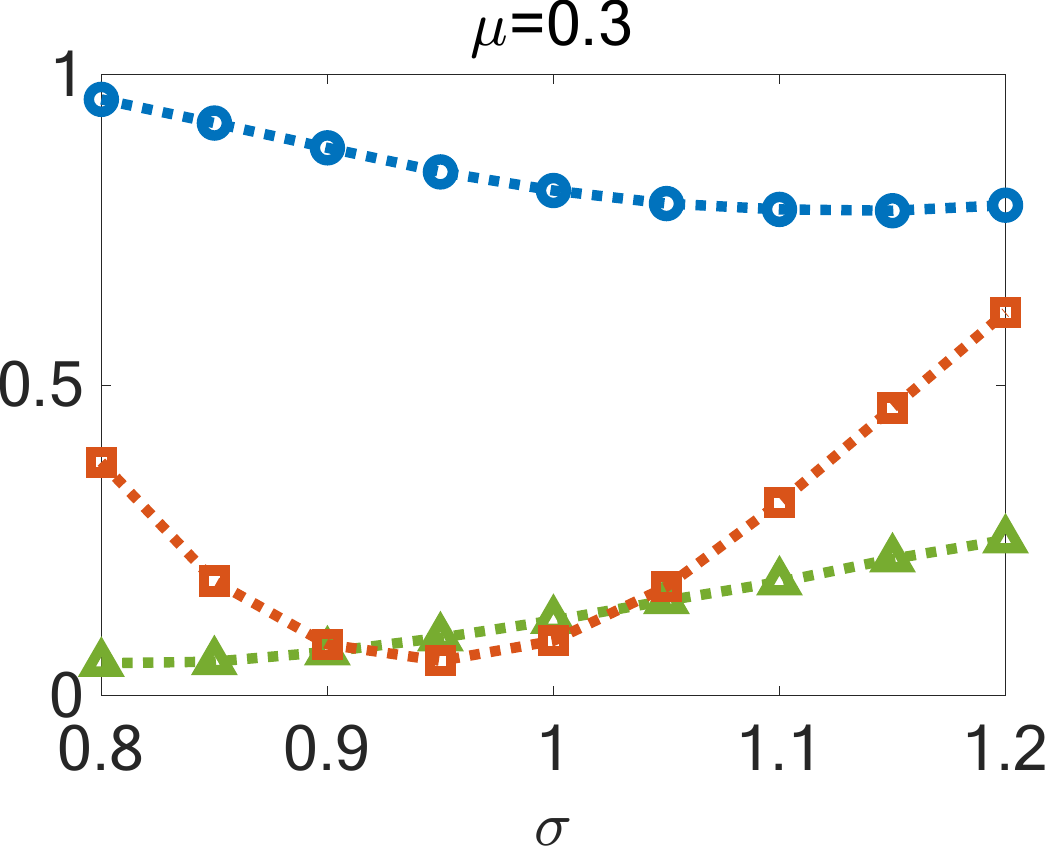}
\end{tabular}
\caption{Powers of $\omega^{[m]}_N$ for $\sigma\in[0.8,1.2]$ and $\mu\in\{0,0.1,0.2,0.3\}$.}
\smallskip
(\blueredgreen)
\label{fig:cvm-normal2}
\end{center}
\end{figure}

In the case of skew-normal distribution alternatives, Figure
\ref{fig:cvm-skew} shows the powers of $\omega_N^{*[m]}$ for
$m=1,2,3$ when the true density is (\ref{skew-normal}). We set the
null hypothesis to be the standard normal distribution which
corresponds to the case $\xi=0$, $\omega=1$, and $\alpha=0$ in
\eqref{skew-normal}. As in the $\alpha=0$ case, the highest power
patterns displayed in Figure \ref{fig:cvm-skew} are almost identical
to those of the generalized AD statistics in Figure \ref{fig:skew}.
Furthermore, there is a tendency that the powers of $\omega_N^{*[m]}$
in Figure \ref{fig:cvm-skew} are smaller than those of the
corresponding $A_N^{*[m]}$ in Figure \ref{fig:skew}. 

\begin{figure}
\begin{center}
\begin{tabular}{ccc}
\includegraphics[scale=0.21]{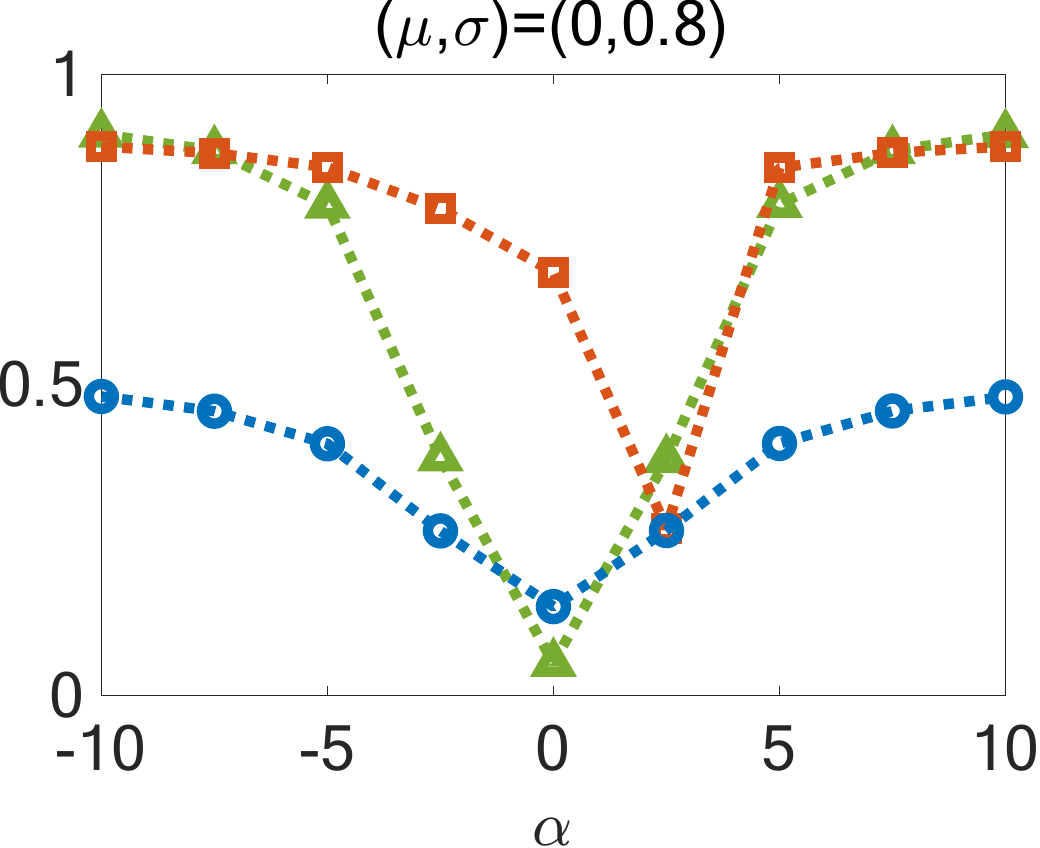} &
\includegraphics[scale=0.21]{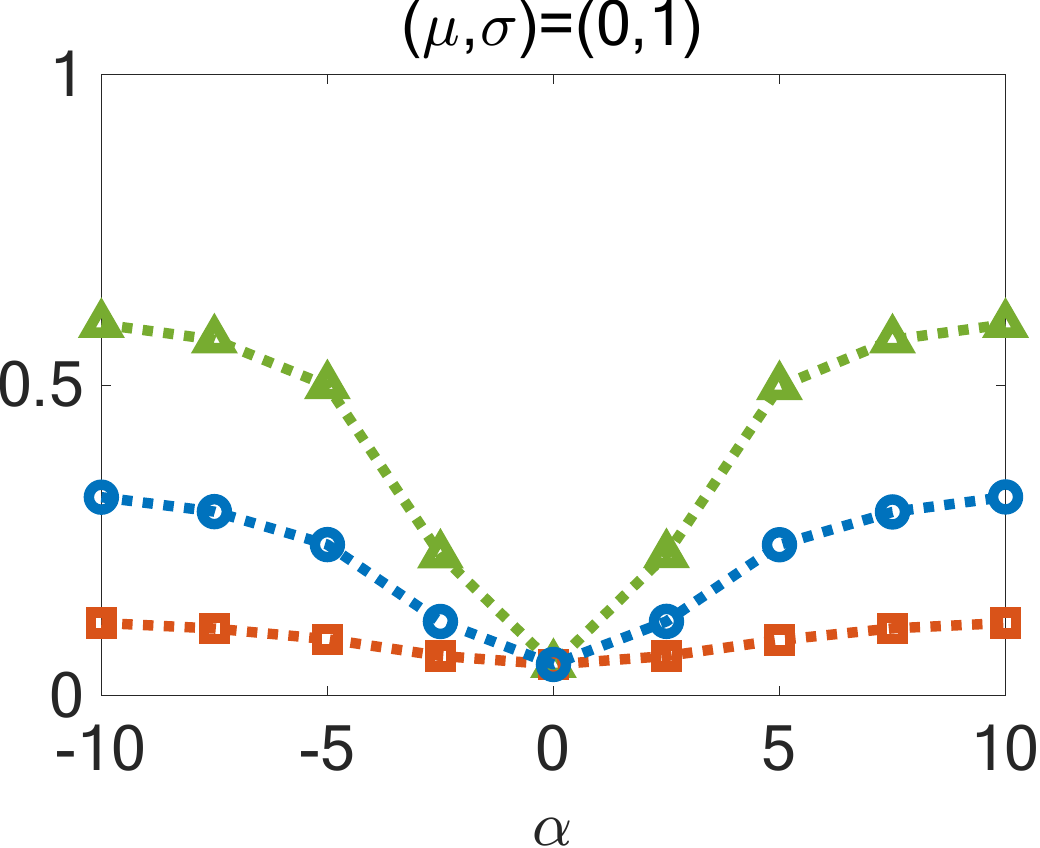} &
\includegraphics[scale=0.21]{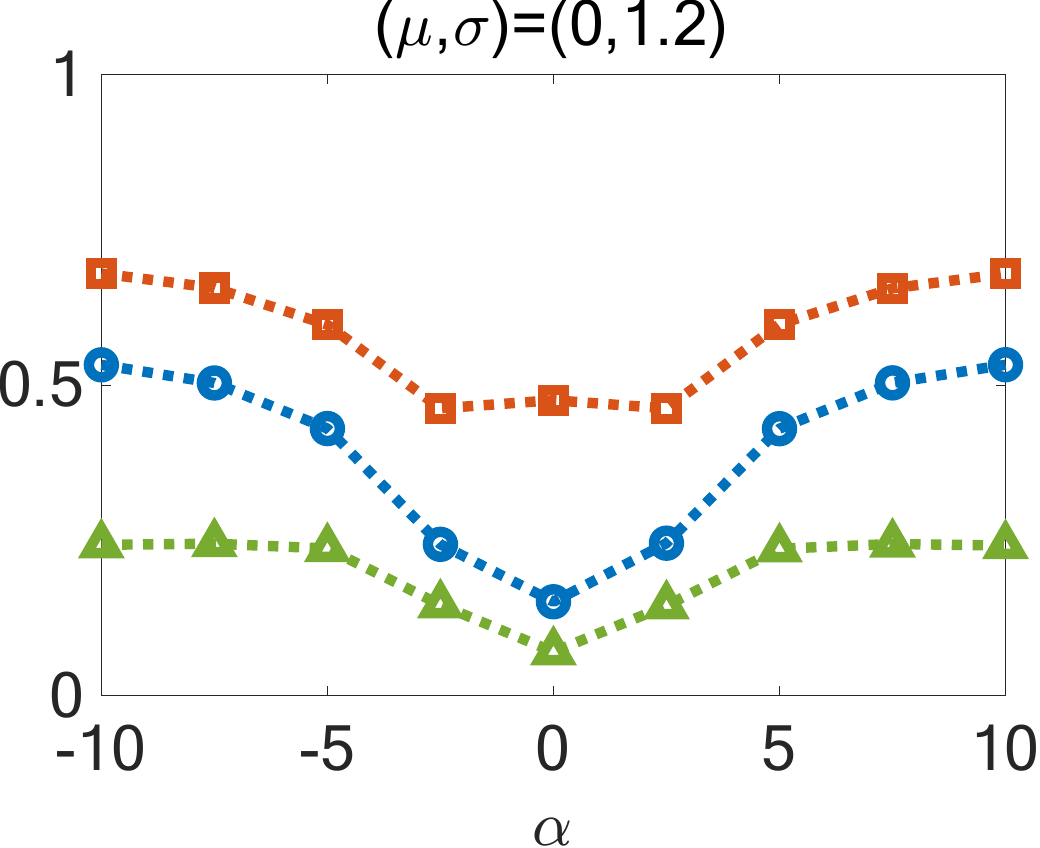} \\
\includegraphics[scale=0.21]{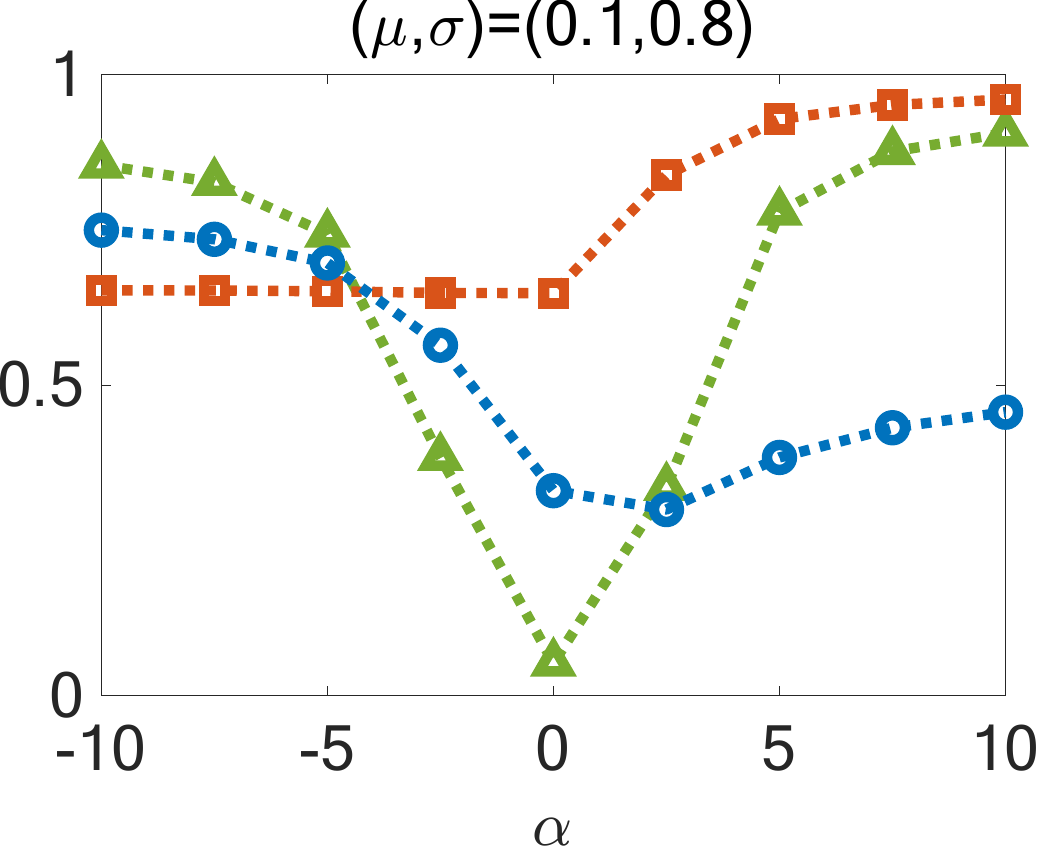} &
\includegraphics[scale=0.21]{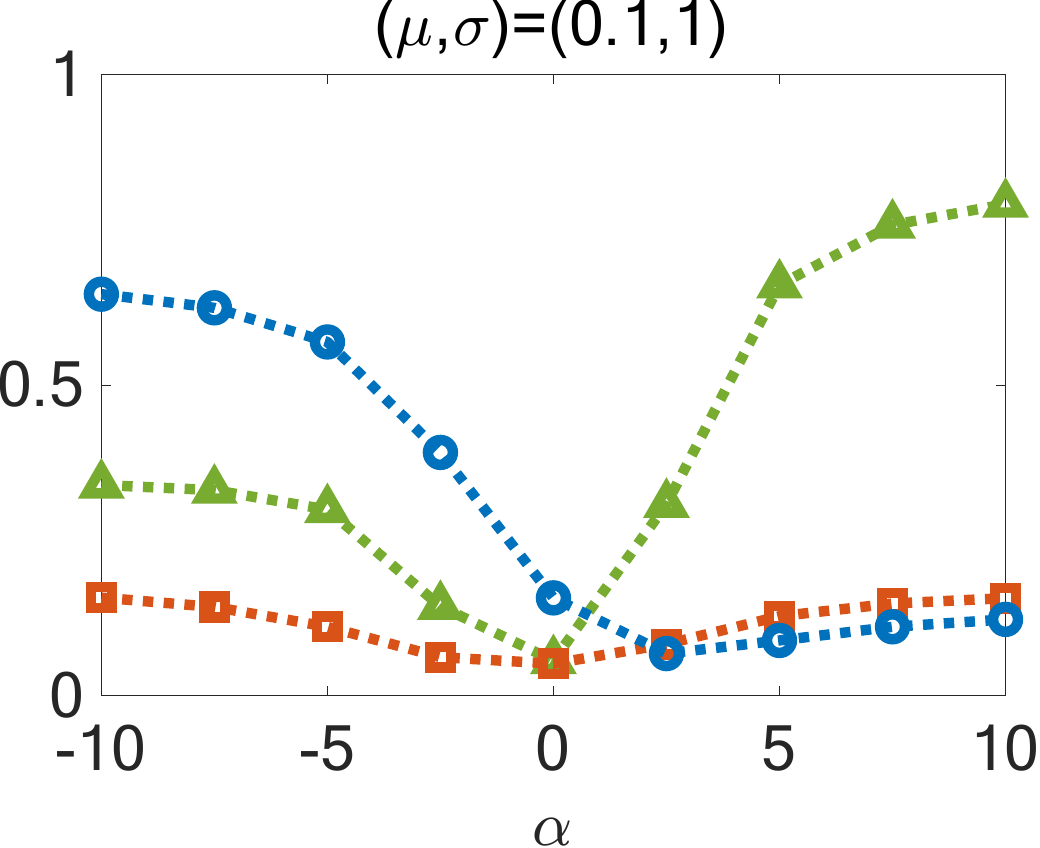} &
\includegraphics[scale=0.21]{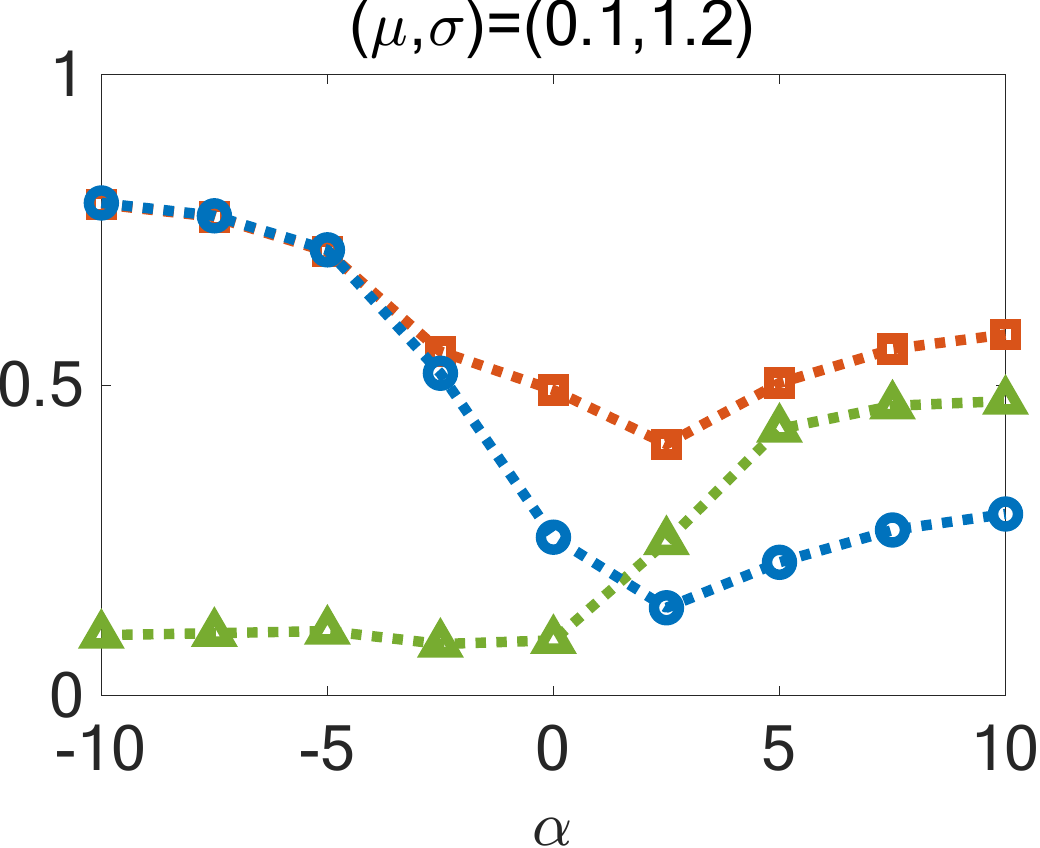} \\
\includegraphics[scale=0.21]{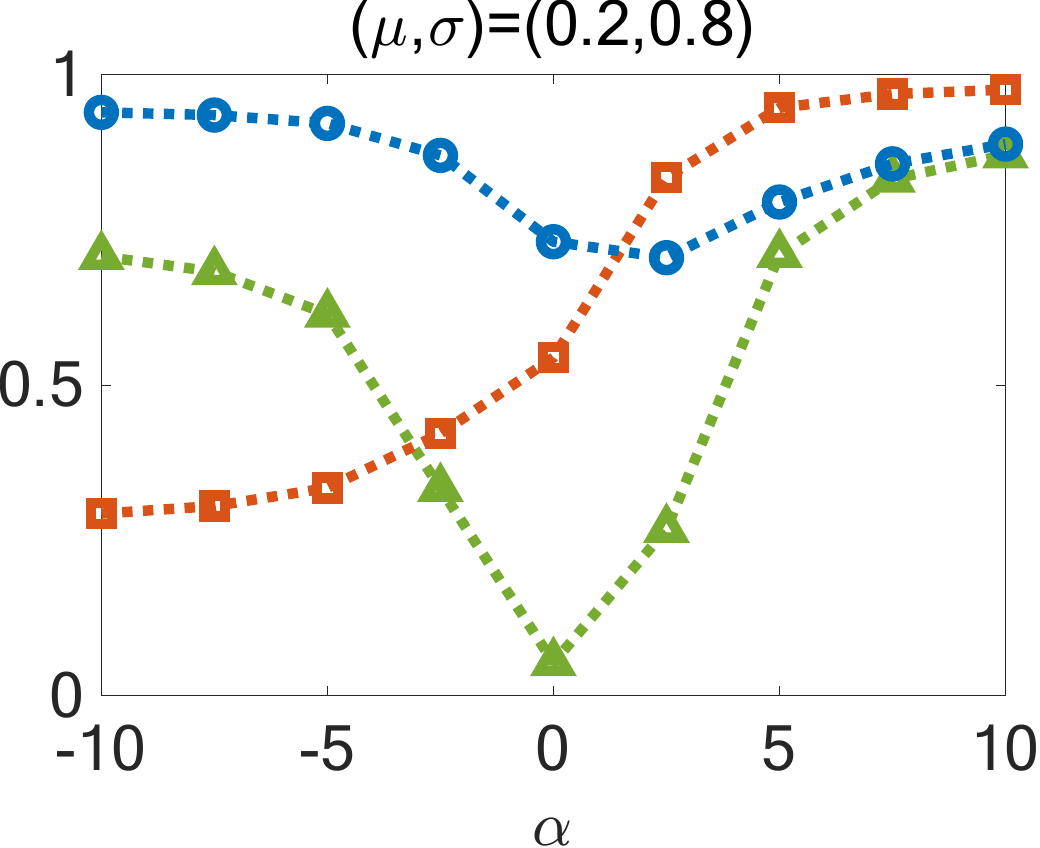} &
\includegraphics[scale=0.21]{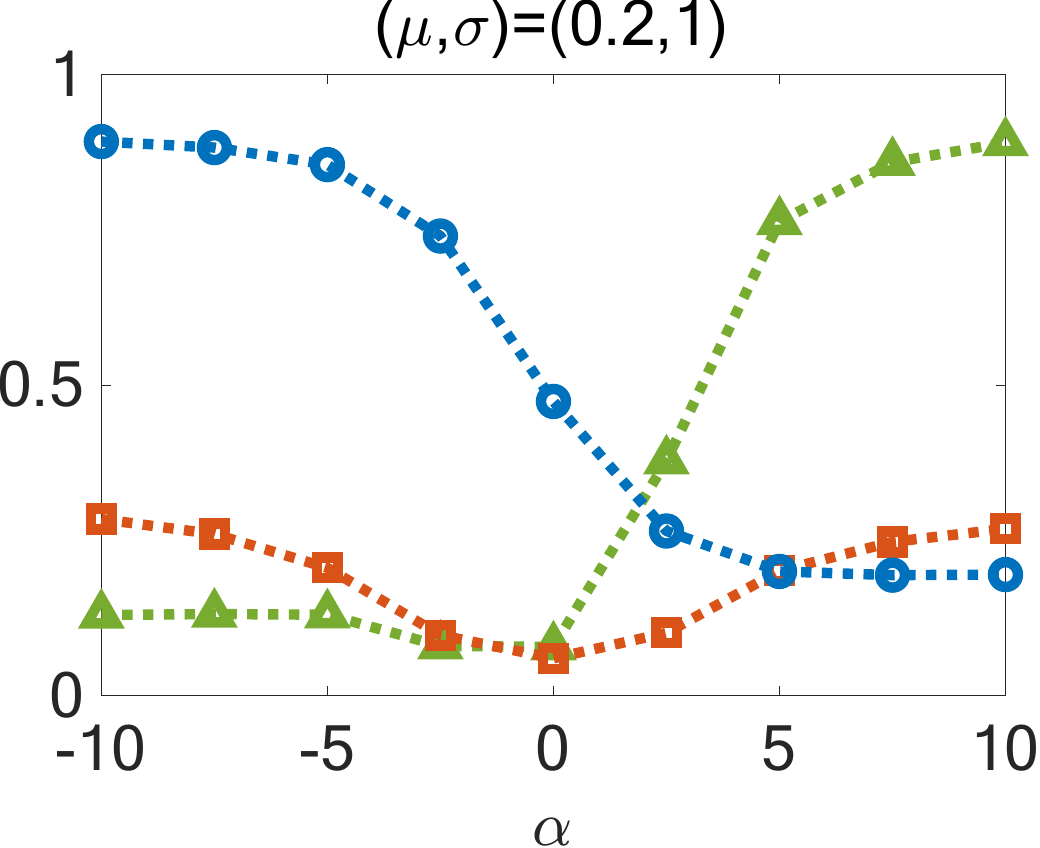} &
\includegraphics[scale=0.21]{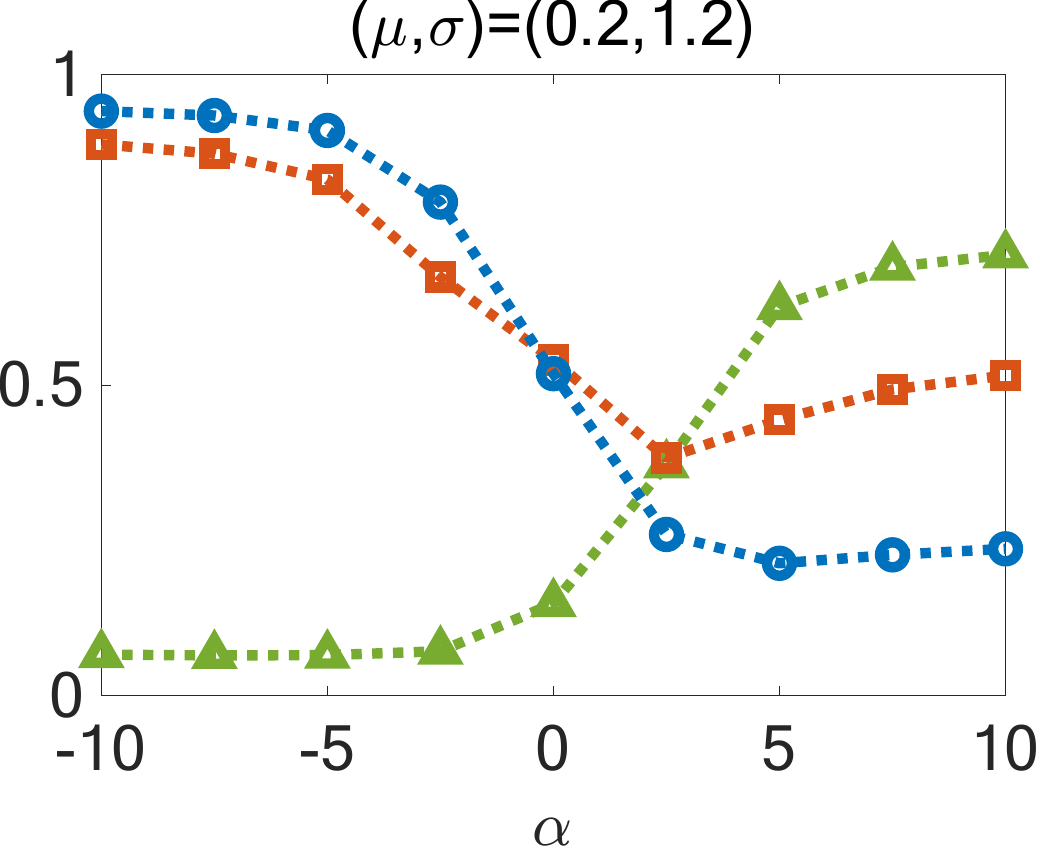}
\end{tabular}
\caption{Powers of $\omega^{[m]}_N$ under skew-normal distributions with index $\alpha$,
$\mu\in\{0,0.1,0.2\}$ and $\sigma\in\{0.8,1.0,1.2\}$.}
\smallskip
(\blueredgreen)
\label{fig:cvm-skew}
\end{center}
\end{figure}

\subsection{Moment generating function}
\label{subsec:mgf-cvm}

The MGF of the limiting GCvM statistics 
in \eqref{KL2-cvm} and \eqref{KL2-cvm*} have the representations
\[
    \mathbb{E}\lpa{e^{s \omega^{[m]}}}
    = \prod_{k\ge1} \biggl(1-\frac{2s}
    {\lambda_k}\biggr)^{-\frac{1}{2}}
    \quad\text{where}\quad
    \lambda_k=(\pi k)^{2m},
\]
and
\[
    \mathbb{E}\lpa{e^{s \omega^{*[m]}}}
    = \mathbb{E}\lpa{e^{s \omega^{[m]}}}
    \prod_{1\le k<m} \biggl(1-\frac{2s}
    {\lambda_k}\biggr)^{\frac{1}{2}},
\]
which have finite-product expressions as follows.
\begin{thm}\label{thm:mgf-cvm}
For $m\ge1$
\[
    \mathbb{E}\lpa{e^{s \omega^{[m]}}}
    = \sqrt{\frac{\sqrt{2s} \,e^{\frac{m-1}{2}\pi i}}  
    {\prod_{0\le j<m}
    \sin\lpa{(2s)^{\frac{1}{2m}}
    e^{\frac{j\pi i}{m}}}}}.
\]
\end{thm}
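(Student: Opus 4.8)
The plan is to follow the same finite-product strategy used for Theorems~\ref{thm:gad-cos} and \ref{thm:mgf-w}; the present case is in fact the most transparent of the three, since the zeros of the polynomial attached to the eigenvalues $\lambda_k=(\pi k)^{2m}$ are available in closed form and are symmetric about the origin. Starting from the infinite product $\mathbb{E}\lpa{e^{s\omega^{[m]}}}=\prod_{k\ge1}\lpa{1-\frac{2s}{(\pi k)^{2m}}}^{-1/2}$ recorded above, I would first factor $(\pi z)^{2m}-2s$ over its $2m$ roots $\zeta_j(s)=\frac1\pi(2s)^{1/(2m)}e^{j\pi i/m}$, $0\le j<2m$, to write
\[
    1-\frac{2s}{(\pi k)^{2m}}
    = \prod_{0\le j<2m}\Lpa{1-\frac{\zeta_j(s)}{k}}.
\]

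Second, I would pass to a finite product. As in the proof of Lemma~\ref{thm:mgf-ad}, the key structural fact is that the roots sum to zero, $\sum_{0\le j<2m}\zeta_j(s)=\frac1\pi(2s)^{1/(2m)}\sum_{0\le j<2m}e^{j\pi i/m}=0$, these being all the $2m$-th roots of a fixed number. I would exploit the symmetry $\zeta_{j+m}(s)=-\zeta_j(s)$ to group the $2m$ factors into the $m$ conjugate pairs $\{\zeta_j,-\zeta_j\}$, $0\le j<m$; each paired inner product then converges absolutely, and Euler's sine product $\frac{\sin\pi z}{\pi z}=\prod_{k\ge1}(1-z^2/k^2)$ gives, after interchanging the two products,
\[
    \prod_{k\ge1}\Lpa{1-\frac{2s}{(\pi k)^{2m}}}
    = \prod_{0\le j<m}\prod_{k\ge1}\Lpa{1-\frac{\zeta_j(s)^2}{k^2}}
    = \prod_{0\le j<m}\frac{\sin(\pi\zeta_j(s))}{\pi\zeta_j(s)}.
\]
(Equivalently one may route through $\prod_j\Gamma(1-\zeta_j(s))$ and Euler's reflection formula exactly as in Theorem~\ref{thm:mgf-w}; the vanishing of $\sum_j\zeta_j(s)$ is precisely what kills the stray $e^{\gamma(\cdots)}$ and $\prod_k e^{(\cdots)/k}$ factors.) Taking reciprocals and the overall square root, and substituting $\pi\zeta_j(s)=(2s)^{1/(2m)}e^{j\pi i/m}$, I obtain
\[
    \mathbb{E}\lpa{e^{s\omega^{[m]}}}
    = \sqrt{\prod_{0\le j<m}
    \frac{(2s)^{1/(2m)}e^{j\pi i/m}}
    {\sin\lpa{(2s)^{1/(2m)}e^{j\pi i/m}}}}.
\]
Finally I would collect the numerator: $\prod_{0\le j<m}(2s)^{1/(2m)}e^{j\pi i/m}=(2s)^{1/2}e^{\frac{m-1}{2}\pi i}$, the phase arising from $\sum_{0\le j<m}j=\frac{m(m-1)}2$, which turns the bracket into precisely the asserted expression.

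The main obstacle is not the algebra but the analytic bookkeeping around the branches. One must fix the principal branch of the fractional power $(2s)^{1/(2m)}$, track the accumulated phase $e^{\frac{m-1}{2}\pi i}$ consistently, and read the final formula as an identity of meromorphic functions of $s$: it is verified first for small real $s$, where both $\mathbb{E}\lpa{e^{s\omega^{[m]}}}$ and the right-hand side are real and positive (the poles at $s=\frac12\lambda_k$ matching the zeros of the $\sin$ factors), and then extended by analytic continuation to the cut plane. Apart from this, every step is the computation behind Theorem~\ref{thm:mgf-w} with the inner factor $\tfrac12$ and the outer $2^{-m}$ deleted—because here $\lambda_k=(\pi k)^{2m}$ rather than $(2k\pi)^{2m}$—and with a single overall square root restored, since each $\xi_k^2$ in \eqref{KL2-cvm} contributes one chi-square rather than the two appearing in \eqref{KL2-w}.
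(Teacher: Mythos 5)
Your proposal is correct, and it is essentially the argument the paper intends: the paper omits the proof of this theorem precisely because it is the computation behind Theorem~\ref{thm:mgf-w} with the factors of $\tfrac12$ and $2^{-m}$ removed and an overall square root restored, exactly as you observe. Your direct appeal to Euler's sine product in place of the paper's route through $\prod_j\Gamma(1-\zeta_j(s))$ and the reflection formula is an equivalent (and slightly more streamlined) way of packaging the same pairing of the zeros $\{\zeta_j,-\zeta_j\}$, and your bookkeeping of the phase $e^{\frac{m-1}{2}\pi i}$ and the branch of $(2s)^{1/(2m)}$ is right.
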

In particular, with $\sigma_m := (2s)^{\frac1{2m}}$, we have 
\begin{align*}
    \mathbb{E}\lpa{e^{s \omega^{[1]}}}
    &= \sqrt{\frac{\sigma_1}{\sin(\sigma_1)}}, \\
    \mathbb{E}\lpa{e^{s \omega^{[2]}}}
    &= \sqrt{\frac{\sigma_2^2}
    {\sin(\sigma_2)\sinh(\sigma_2)}}, \\
    \mathbb{E}\lpa{e^{s \omega^{[3]}}}
    &= \sqrt{\frac{\sigma_3^3}
    {\sin(\sigma_3)\lpa{\cosh(\sqrt{3}\sigma_3)-\cos(\sigma_3)}}}, \\
    \mathbb{E}\lpa{e^{s \omega^{[4]}}}
    &= \sqrt{\frac{2\sigma_4^4}
    {\sin(\sigma_4)\sinh(\sigma_4)\lpa{\cosh(\sqrt{2}\sigma_4)
    -\cos(\sqrt{2}\sigma_4)}}}.
\end{align*}

\begin{appendix}

\addcontentsline{toc}{section}{Appendices}

\section{Proof of (\ref{ANm-PQ}): generalized Anderson-Darling 
statistic in terms of the samples}
\label{subsec:Anm-PQ} 

We prove \eqref{ANm-PQ} in this Appendix, which is copied again here
for convenience:
\[
    A_N^{[m]} 
    = -\sfrac1N\sum_{1\le i,j\le N}
    R_m(a,b)(\log a + \log b)
    + \sfrac{1}{N}\sum_{1\le i,j\le N}Q_m(a,b),
\]
where for simplicity $a=a_{i,j} := 1-X_i\wedge X_j$ and $b=b_{i,j} := 
X_i\vee X_j$. While the expression we derive for $Q_m(a,b)$, based 
mostly on direct expansion and straightforward simplification, may 
not be optimal and likely to be further simplified, it is
sufficient to prove \eqref{ANm-PQ}.

For convenience, define 
\begin{align}\label{fmkx}
    f_{m,k}(x) := x^{-m} P_k^{(-m)}(x)
    = \sqrt{2k+1}\sum_{0\le \ell\le k}  
    \frac{(-1)^{k+\ell}(k+\ell)!}
    {\ell!(k-\ell)!(m+\ell)!}\, x^\ell.
\end{align}
Then, by the proof of Lemma~\ref{lm:invariance},
\begin{align}\label{A-P-I}
    A_N^{[m]} := \sfrac1N\sum_{1\le i,j\le N}
    \sum_{0\le k,\ell< m} P_k(1-a)P_\ell(b)
    I_{k,\ell}(a,b),
\end{align}
where, by \eqref{parentheses},
\begin{align}
    I_{k,\ell}(a,b)
    &:= \int_0^{1-a}\frac{x^{m}}{(1-x)^{m}}
    \,f_{m,k}(x)f_{m,\ell}(x)\dd x \nonumber \\
    &\quad
    +(-1)^{m+k} \int_{1-a}^b 
    f_{m,k}(1-x)f_{m,\ell}(x)\,\dd x \nonumber \\
    &\quad
    +(-1)^{k+\ell} \int_b^1 \frac{(1-x)^{m}}{x^{m}}
    f_{m,k}(1-x)f_{m,\ell}(1-x) \dd x \nonumber \\
    &= (-1)^{m+k} 
    \int_{1-a}^b f_{m,k}(1-x)f_{m,\ell}(x) \dd x \nonumber \\
    &\quad
    +\left(\int_a^1+(-1)^{k+\ell} \int_b^1\right) 
    \frac{(1-x)^{m}}{x^{m}}
    f_{m,k}(1-x)f_{m,\ell}(1-x)\,\dd x.
\label{Iklab}
\end{align}
Define
\[
    \phi_{m,k}(x) = 
    \frac{1}{(m-1)!}\int_0^1(1-x-t)^{m-1} P_k(t)\dd t.
\]
We now derive an alternative expression for $I_{k,\ell}(a,b)$
in which the dependence on $a$ and $b$ is explicitly separated.
\begin{lm} 
\begin{equation}\label{Ikl}
\begin{aligned}
    I_{k,\ell}(a,b)
    =& (-1)^{m+k+1} \int_a^1 \frac{\phi_{m,k}(1-x)}{x^{m}}
    \, f_{m,\ell}(1-x) \dd x \\
    & +(-1)^{k+\ell}
    \int_b^1 \frac{\phi_{m,\ell}(x)}{x^{m}}
    \, f_{m,k}(1-x) \dd x \\
    & +(-1)^{m+k}
    \int_0^1 f_{m,k}(1-x)f_{m,\ell}(x)\dd x.
\end{aligned}
\end{equation}
\end{lm}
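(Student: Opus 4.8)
The plan is to collapse the two-variable relation \eqref{Ikl} onto a single pointwise identity for the integrated Legendre polynomials. Starting from \eqref{Iklab}, I would first extract the last (full-range) integral of \eqref{Ikl} by splitting
\[
    \int_{1-a}^b = \int_0^1 - \int_0^{1-a} - \int_b^1
\]
in the first term of \eqref{Iklab}. The $\int_0^1$-part is exactly the last term of \eqref{Ikl}, and what remains are two correction integrals, one over $[0,1-a]$ and one over $[b,1]$. In the correction over $[0,1-a]$ I would apply the substitution $x\mapsto 1-x$, which moves its support to $[a,1]$ and converts the integrand $f_{m,k}(1-x)f_{m,\ell}(x)$ into $f_{m,k}(x)f_{m,\ell}(1-x)$; combined with the $\int_a^1$-term already present in the second part of \eqref{Iklab}, this produces a single integral over $[a,1]$ carrying the common factor $f_{m,\ell}(1-x)$, precisely the shape of the first term of \eqref{Ikl}.

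After this rearrangement the surviving terms separate into an ``$a$-group'' over $[a,1]$ (all proportional to $f_{m,\ell}(1-x)$) and a ``$b$-group'' over $[b,1]$ (all proportional to $f_{m,k}(1-x)$). Factoring out these common functions and multiplying through by $x^m$, and using $x^m f_{m,j}(x)=P_j^{(-m)}(x)$ together with $(1-x)^m f_{m,j}(1-x)=P_j^{(-m)}(1-x)$, I would reduce the $a$-group to
\[
    \phi_{m,k}(1-x)=P_k^{(-m)}(x)-(-1)^{m+k}P_k^{(-m)}(1-x),
\]
and the $b$-group to
\[
    \phi_{m,\ell}(x)=P_\ell^{(-m)}(1-x)-(-1)^{m+\ell}P_\ell^{(-m)}(x).
\]
Both are the single master identity $\phi_{m,j}(y)=P_j^{(-m)}(1-y)-(-1)^{m+j}P_j^{(-m)}(y)$ evaluated at $y=1-x$ (with $j=k$) and at $y=x$ (with $j=\ell$), so it suffices to prove this one identity.

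To prove the master identity, I would split the defining integral of $\phi_{m,j}$ at $t=1-y$:
\[
    \phi_{m,j}(y)=\frac{1}{(m-1)!}\biggl(\int_0^{1-y}+\int_{1-y}^1\biggr)(1-y-t)^{m-1}P_j(t)\dd t.
\]
The first piece is exactly $P_j^{(-m)}(1-y)$ by \eqref{Pk-m}. In the second piece, the substitution $t\mapsto 1-s$ turns $(1-y-t)^{m-1}$ into $(-1)^{m-1}(y-s)^{m-1}$ and, together with the reflection formula $P_j(1-s)=(-1)^jP_j(s)$, produces $(-1)^{m+j-1}P_j^{(-m)}(y)=-(-1)^{m+j}P_j^{(-m)}(y)$, which yields the claim.

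The routine but delicate part of the argument is the sign bookkeeping: the parities $m+k$, $k+\ell$, and $m+j$ must be tracked consistently through each change of variables and each use of $P_j(1-s)=(-1)^jP_j(s)$, and it is precisely here that the two pointwise reductions have to be matched against the single master identity. Everything else is elementary: linear splitting of the integration ranges, one reflection substitution per group, and the defining formula \eqref{Pk-m} for $P_j^{(-m)}$.
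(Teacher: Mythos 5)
Your proof is correct and follows essentially the same route as the paper's: both rest on the single identity $\phi_{m,k}(x)=P_k^{(-m)}(1-x)+(-1)^{m+k+1}P_k^{(-m)}(x)$ (proved exactly as you do, by splitting the defining integral of $\phi_{m,k}$ at $t=1-x$ and reflecting) together with a redistribution of the three integration ranges, your version splitting $\int_{1-a}^{b}$ outward into $\int_0^1-\int_0^{1-a}-\int_b^1$ where the paper instead absorbs the two weighted end integrals inward. The one caveat is a sign slip in your $a$-group: after the substitution $x\mapsto 1-x$ the bracket reduces to $\phi_{m,k}(x)=(-1)^{m+k+1}\phi_{m,k}(1-x)$ rather than to $\phi_{m,k}(1-x)$ itself, and that reflection factor is precisely the prefactor $(-1)^{m+k+1}$ appearing in the first term of \eqref{Ikl}, so the bookkeeping you defer does close up correctly.
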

\begin{proof}
The reason of introducing the additional polynomials $\phi_{m,k}$ is
because the relation
\begin{equation*}
    P_k(1-x) = (-1)^k P_k(x),
\end{equation*}
does not extend directly to $P_k^{(-m)}(x)$ for $m\ge0$
but we do have 
\[
    \phi_{m,k}(1-x) = (-1)^{m+k+1}\phi_{m,k}(x).
\]

The definition also implies that 
\[
    P_k^{(-m)}(1-x) + (-1)^{m+k+1}P_k^{(-m)}(x)  
    = (-1)^{m+k+1}\phi_{m,k}(x),
\]
which can be readily proved by definition. Here $\phi_{m,k}(x)$ is a
polynomial in $x$ of degree $m$. Note that both $P_k^{(-m)}(x)$ and
$P_k^{(-m)}(1-x)$ are of degree $m+k$, which means that the
coefficients of $x^j$ for $j>m$ on the left-hand side are all
cancelled.

With this relation, we have
\begin{align*}
    &\int_0^{1-a}\frac{x^{m}}{(1-x)^{m}}
    \,f_{m,k}(x)f_{m,\ell}(x)\dd x \\
    &\qquad = (-1)^{m+k}\left(
    \int_0^{1-a} f_{m,k}(1-x)f_{m,\ell}(x)\dd x
    -\int_0^{1-a} \frac{\phi_{m,k}(x)}{(1-x)^{m}}
    \, f_{m,\ell}(x) \dd x\right).
\end{align*}
Similarly, 
\begin{align*}
    &(-1)^{k+\ell}\int_b^1 \frac{(1-x)^{m}}{x^{m}}
    f_{m,k}(1-x)f_{m,\ell}(1-x) \dd x \\
    &\qquad = (-1)^{m+k}\left(
    \int_b^1 f_{m,k}(1-x)f_{m,\ell}(x)\dd x\right. \\
    &\hspace*{3cm} \left.
	-(-1)^{m+\ell+1}\int_b^1 \frac{\phi_{m,\ell}(x)}{x^{m}}
    \, f_{m,k}(1-x) \dd x\right).
\end{align*}
These decompositions, coupling with \eqref{Iklab}, prove \eqref{Ikl}.
\end{proof}

We now examine the contribution to $A_N^{[m]}$ \eqref{A-P-I}
of the first integral in \eqref{Ikl}.
\begin{lm}\label{lmm:inta1}
For $m\ge1$
\begin{equation}\label{inta1}
\begin{aligned}
    & \sum_{0\le k,\ell<m} P_k(1-a)P_\ell(b)
    \int_a^1\frac1{x^{m}}\,
    \phi_{m,k}(x)f_{m,\ell}(1-x)\dd x \\
    &\quad= -R_m(a,b) \log a +
    \sum_{0\le \ell\le r\le m-1} 
    c_{m,\ell,r} b^{m-1-r} (ab)^\ell (1-a-b)^{r-\ell},
\end{aligned}
\end{equation}
where $R_m(a,b)$ is given in \eqref{Rm} and 
\[
    c_{m,\ell,r} := 
    \begin{cases}\displaystyle
        \frac{(-1)^r\binom{m-1}{r}
        \binom{r}{\ell} \binom{m-1+\ell}{r}}
        {(m-1)!^2(m-1-r)}, 
        & \text{if } 0\le r\le m-2, \\
        \qquad & \\
        \displaystyle
        \frac{(-1)^{m-1}
        \binom{m-1}{\ell}\binom{m-1+\ell}{\ell}
        \left(H_{m-1}+H_\ell - H_{m-1+\ell} \right)}
        {(m-1)!^2}, &\text{if }r=m-1.
    \end{cases}
\]
Here $H_m := \sum_{1\le j\le m}j^{-1}$ denotes the harmonic numbers. 
\end{lm}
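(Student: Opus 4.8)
The plan is to first collapse the summation over $k$ by means of a reproducing property of the Legendre polynomials, thereby reducing the stated double sum to a single explicit integral, and then to separate the logarithmic and polynomial parts of that integral by a term-by-term Laurent expansion.

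I would begin with the $k$-summation. Introduce the aggregated polynomial $\Phi(x):=\sum_{0\le k<m}P_k(1-a)\phi_{m,k}(x)$, so that interchanging the finite sum with the integral turns the left-hand side of \eqref{inta1} into $\int_a^1 x^{-m}\Phi(x)\Psi(x)\,\dd x$, where $\Psi(x):=\sum_{0\le\ell<m}P_\ell(b)f_{m,\ell}(1-x)$. The crucial observation is that $\Phi$ has a closed form. For fixed $x$ the map $t\mapsto(1-x-t)^{m-1}/(m-1)!$ is a polynomial of degree $m-1$ in $t$ whose Legendre coefficients are exactly the $\phi_{m,k}(x)$; hence $\phi_{m,k}(x)=0$ for $k\ge m$ (orthogonality of $P_k$ to lower-degree polynomials) and $\frac{(1-x-t)^{m-1}}{(m-1)!}=\sum_{0\le k<m}\phi_{m,k}(x)P_k(t)$ identically in $t$. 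Evaluating this finite expansion at $t=1-a$ gives $\Phi(x)=\frac{(a-x)^{m-1}}{(m-1)!}$, and the whole left-hand side reduces to $\mathcal{I}:=\frac1{(m-1)!}\int_a^1\frac{(a-x)^{m-1}}{x^m}\Psi(x)\,\dd x$. The same collapse fails for $\Psi$, since $f_{m,\ell}$ stems from $P_\ell^{(-m)}$, whose defining integral runs only up to its argument and therefore does not enjoy the orthogonality that annihilates the high-degree terms; this asymmetry is exactly what leaves a genuine $b$-dependence in the answer.

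I would then evaluate $\mathcal{I}$. As $(a-x)^{m-1}\Psi(x)$ is a polynomial, the integrand is a Laurent polynomial in $x$, and $\int_a^1 x^{j-m}\dd x$ equals $-\log a$ when $j=m-1$ and $\frac{1-a^{j-m+1}}{j-m+1}$ otherwise. The coefficient of the resonant power $x^{-1}$ is the residue $\frac1{(m-1)!}[x^{m-1}]\bigl((a-x)^{m-1}\Psi(x)\bigr)$; using the explicit coefficients of $f_{m,\ell}$ in \eqref{fmkx} together with a Legendre/binomial summation identity of the same type as \eqref{inner_sum}, I would show this residue equals $R_m(a,b)$, producing the term $-R_m(a,b)\log a$. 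Integrating the remaining non-resonant powers and re-expanding $P_k(1-a)$ and $P_\ell(b)$ yields a polynomial in $a$ and $b$, which after collecting monomials in $ab$ and $1-a-b$ gives the double sum $\sum_{0\le\ell\le r\le m-1}c_{m,\ell,r}b^{m-1-r}(ab)^\ell(1-a-b)^{r-\ell}$; the negative powers of $a$ formally introduced by the lower endpoint cancel against the positive powers of $a$ carried by $(a-x)^{m-1}$.

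The main obstacle will be the bookkeeping in this last step, and specifically the appearance of the harmonic numbers in the coefficients $c_{m,\ell,m-1}$. These arise from the confluence at the resonant exponent: the non-logarithmic contributions whose monomial coincides with that of $R_m(a,b)$, namely the $r=m-1$ terms $(ab)^\ell(1-a-b)^{m-1-\ell}$, accumulate a sum of the shape $\sum 1/k$, read off as $H_{m-1}+H_\ell-H_{m-1+\ell}$, i.e. as the parameter-derivative of the Beta integral $B(\ell+1,m)=\frac{\ell!(m-1)!}{(\ell+m)!}$. Confirming that every other monomial assembles into the stated $c_{m,\ell,r}$ for $0\le r\le m-2$, and that both families fit the single compact formula, is the only genuinely laborious part; the reduction to the single integral $\mathcal{I}$ in the first step is what renders it manageable.
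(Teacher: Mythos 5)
Your outline is correct and follows the same two-stage skeleton as the paper's proof: collapse the $k$-sum to $\sum_{0\le k<m}P_k(1-a)\phi_{m,k}(x)=\frac{(a-x)^{m-1}}{(m-1)!}$, then evaluate $\frac{1}{(m-1)!}\int_a^1 (a-x)^{m-1}x^{-m}\Psi(x)\dd x$ by separating the logarithmic term from the polynomial remainder. Your justification of the first step is actually cleaner than the paper's: you observe that $\phi_{m,k}(x)$ is by construction the $k$-th Legendre coefficient of the degree-$(m-1)$ polynomial $t\mapsto (1-x-t)^{m-1}/(m-1)!$, so the finite expansion evaluated at $t=1-a$ gives the closed form at once, whereas the paper grinds through the explicit coefficients of $P_k$ and a binomial identity. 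For the second step you diverge in technique: you expand the integrand as a Laurent polynomial in $x$ and read off the residue at $x^{-1}$ (correctly the unique source of $\log a$, and your remark that each non-resonant term $e_j(a,b)a^{j-m+1}$ is already a polynomial in $a$ because $[x^j](a-x)^{m-1}\Psi(x)$ is divisible by $a^{m-1-j}$ is right), while the paper encodes the double sum $\sum_{k,\ell}\binom{k+\ell}{\ell}\binom{m-1}{k}(-b)^k x^\ell$ as a coefficient extraction $[z^{m-1}]\frac{(1-b-z)^{m-1}}{(1-z)^m}\int_a^1\frac{(x-a)^{m-1}}{x-z}\dd x$ and then applies the rational substitution $z\mapsto -au/(1-a-u)$, which is precisely what regroups everything into powers of $ab$ and $1-a-b$ and produces the harmonic numbers $H_{m-1}+H_\ell-H_{m-1+\ell}$ mechanically. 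What your route buys is transparency about where the $\log a$ comes from; what it costs is that the ``collect monomials in $ab$ and $1-a-b$'' step and the identification of the residue with $R_m(a,b)$ are left as asserted bookkeeping, and these are exactly the computations the paper's generating-function substitution automates. So the proposal is sound as a strategy, but to be a complete proof you would still need to execute the summation identity behind the residue evaluation and the regrouping that yields the stated $c_{m,\ell,r}$.
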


By symmetry, we also have the following explicit contribution to 
$A_N^{[m]}$ from the second integral in \eqref{Ikl}.
\begin{lm}
For $m\ge1$
\begin{align*} 
    & \int_b^1\frac1{x^{m}}
    \sum_{0\le k,\ell<m}(-1)^{k+\ell} P_k(1-a)P_\ell(b)
    \phi_{m,\ell}(x)f_{m,k}(1-x)\dd x \\
    &\quad = -R_m(a,b) \log b + 
    \sum_{0\le \ell\le r\le m-1} c_{m,\ell,r} a^{m-1-r} (ab)^\ell (1-a-b)^{r-\ell}.
\end{align*}
\end{lm}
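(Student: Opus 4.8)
The plan is to derive this statement directly from Lemma~\ref{lmm:inta1} by exploiting the $a\leftrightarrow b$ symmetry inherent in $A_N^{[m]}$ (recall from Lemma~\ref{lm:invariance} that the reflection $X_i\mapsto 1-X_i$ induces precisely the interchange $a\leftrightarrow b$). Writing $S_1(a,b)$ for the left-hand side of \eqref{inta1} and $S_2(a,b)$ for the left-hand side of the present statement, the first step is to establish the identity $S_2(a,b)=S_1(b,a)$. To see this, I would start from $S_1(b,a)=\sum_{0\le k,\ell<m}P_k(1-b)P_\ell(a)\int_b^1 x^{-m}\phi_{m,k}(x)f_{m,\ell}(1-x)\dd x$ and relabel the summation indices $k\leftrightarrow\ell$ (both run over the same range $0\le k,\ell<m$, so this is harmless), obtaining a sum with prefactor $P_\ell(1-b)P_k(a)$ and integrand $x^{-m}\phi_{m,\ell}(x)f_{m,k}(1-x)$.

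The second step is the prefactor manipulation. Applying the parity relation $P_j(1-x)=(-1)^jP_j(x)$ twice gives $P_\ell(1-b)=(-1)^\ell P_\ell(b)$ and $P_k(a)=(-1)^kP_k(1-a)$, so that $P_\ell(1-b)P_k(a)=(-1)^{k+\ell}P_k(1-a)P_\ell(b)$. This reproduces exactly the factor $(-1)^{k+\ell}P_k(1-a)P_\ell(b)$ appearing in $S_2(a,b)$, while the integrand $x^{-m}\phi_{m,\ell}(x)f_{m,k}(1-x)$ and the range of integration $\int_b^1$ already coincide with those of $S_2$. This establishes $S_2(a,b)=S_1(b,a)$.

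The final step transports the symmetry to the right-hand side. By Lemma~\ref{lmm:inta1}, $S_1(b,a)=-R_m(b,a)\log b+\sum_{0\le\ell\le r\le m-1}c_{m,\ell,r}\,a^{m-1-r}(ba)^\ell(1-b-a)^{r-\ell}$. Here the coefficients $c_{m,\ell,r}$ do not depend on $a,b$, the monomial $(ba)^\ell=(ab)^\ell$ and the factor $(1-b-a)^{r-\ell}=(1-a-b)^{r-\ell}$ are manifestly symmetric, and $R_m(b,a)=R_m(a,b)$ because the explicit formula \eqref{Rm} is symmetric in $a$ and $b$. Substituting these simplifications yields precisely the claimed right-hand side.

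I expect the only point requiring genuine care to be the bookkeeping in the first two steps: one must verify that the single index relabeling together with the two parity sign-flips combine to give exactly $(-1)^{k+\ell}$ (and not, say, a stray factor of $(-1)^m$), and that the swap $a\leftrightarrow b$ correctly sends the lower limit $a$ of the integral in \eqref{inta1} to the lower limit $b$ demanded here. The symmetry of $R_m$ and of the remaining monomial factors is immediate from \eqref{Rm} and poses no difficulty.
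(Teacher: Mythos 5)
Your proposal is correct and follows exactly the route the paper intends: the paper disposes of this lemma with the single phrase ``by symmetry,'' and your argument is precisely the careful spelling-out of that symmetry, i.e.\ the identity $S_2(a,b)=S_1(b,a)$ obtained by relabeling $k\leftrightarrow\ell$ and applying $P_j(1-x)=(-1)^jP_j(x)$ twice, combined with the manifest symmetry of $R_m$ and of the monomials in \eqref{Rm}. The bookkeeping you flag (the two parity flips yielding exactly $(-1)^{k+\ell}$ with no stray $(-1)^m$, and the lower limit $a\mapsto b$) checks out, so nothing further is needed.
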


\begin{proof}[Proof of Lemma~\ref{lmm:inta1}]
Consider first the integral
\[
    (-1)^{m+k+1}\int_a^1 \frac{\phi_{m,k}(1-x)}{x^{m}}
    \, f_{m,\ell}(1-x) \dd x
    = \int_a^1 \frac{\phi_{m,k}(x)}{x^{m}}
    \, f_{m,\ell}(1-x) \dd x.
\]
We begin by deriving a more explicit expression for $\phi_{m,k}(x)$ 
using \eqref{Pkx-closed-form}:
\begin{align*}
    \frac{\phi_{m,k}(x)}{\sqrt{2k+1}} 
    &= \frac{1}{(m-1)!\sqrt{2k+1}}\int_0^1(1-x-t)^{m-1} 
    P_k(t)\dd t \\
    &=\frac{1}{(m-1)!}
    \sum_{0\le r<m}\binom{m-1}{r}(-x)^{m-1-r}\\
	&\hspace*{2.5cm}\times
    \sum_{0\le \ell \le k} \binom{k+\ell}{\ell}
    \binom{k}{\ell}(-1)^{k+\ell} 
    \int_0^1(1-t)^r t^\ell\dd t \\
    &= \frac{1}{(m-1)!}\sum_{0\le r< m}
	\binom{m-1}{r}(-x)^{m-1-r}\\
    &\hspace*{2.5cm}\times
	\sum_{0\le \ell \le k} \binom{k+\ell}{\ell}
    \binom{k}{\ell}(-1)^{k+\ell}  
    \frac{\ell!r!}{(r+\ell+1)!} \\
    &= \sum_{0\le r< m}\frac{(-x)^{m-1-r}}{(m-1-r)!}
    \sum_{0\le \ell \le k} \frac{(k+\ell)!}
    {\ell!(k-\ell)!(r+\ell+1)!}\, (-1)^{k+\ell}.
\end{align*}
By the identity
\[
    \sum_{0\le \ell \le k} \frac{(k+\ell)!(-1)^{k+\ell}}
    {\ell!(k-\ell)!(r+\ell+1)!}
    = \frac{(-1)^kr!}{(r-k)!(r+k+1)!} \qquad (r\ge k),
\]
we then obtain 
\[
    \frac{\phi_{m,k}(x)}{\sqrt{2k+1}} 
    = \sum_{k\le r< m}\frac{r!(-1)^{k+m-1-r}}{(m-1-r)!
    (r-k)!(r+k+1)!} \, x^{m-1-r}.
\]
Consider now
\begin{align*}
    &\sum_{0\le k< m} P_k(1-a)\phi_{m,k}(x) \\
    &\quad = (-1)^{m-1}\sum_{0\le k< m}(2k+1)\\
    &\hspace*{2cm}\times \sum_{0\le r\le k}\sum_{k\le \ell< m}
    \frac{(k+r)!\ell! (-1)^{r+\ell}(1-a)^r x^{m-1-\ell}}
    {r!r!(k-r)!(m-1-\ell)!(\ell-k)!(\ell+k+1)!} \\
    &\quad = (-1)^{m-1}\sum_{0\le r\le \ell< m}
    \frac{\ell! (1-a)^r x^{m-1-\ell}}{r!r!(m-1-\ell)!
    }(-1)^{r+\ell}\\
	&\hspace*{2cm} \times\sum_{r\le k\le \ell} 
    \frac{(2k+1)(k-r)!}{(k-r)!(\ell-k)!(\ell+k+1)!}  \\
    &\quad = (-1)^{m-1}\sum_{0\le r\le \ell< m}
    \frac{\ell! (1-a)^r x^{m-1-\ell}}
    {r!r!(m-1-\ell)!}(-1)^{r+\ell}
    {\frac{r!}{\ell!(\ell-r)!}} \\
    &\quad=  (-1)^{m-1} \sum_{0\le r\le \ell< m}
    \frac{(1-a)^r x^{m-1-\ell}}
    {r!(m-1-\ell)!(\ell-r)!}(-1)^{r+\ell} \\
    &\quad= (-1)^{m-1}\frac{(x-a)^{m-1}}{(m-1)!}.
\end{align*}

Now consider the partial sum
\begin{align*}
    &\sum_{0\le k< m} P_k(1-a) f_{m,k}(1-x) \\
    &\quad= \sum_{0\le k< m} (2k+1)\sum_{0\le r_1,r_2\le k}
    \frac{(-1)^{r_1+r_2}(k+r_1)!(k+r_2)!(1-a)^{r_1}(1-x)^{r_2}}
    {r_1!r_1!(k-r_1)!r_2!(m+r_2)!(k-r_2)!} \\
    &\quad =\sum_{0\le r_1,r_2< m}
    \frac{(-1)^{r_1+r_2}(1-a)^{r_1}(1-x)^{r_2}}
    {r_1!r_1!r_2!(m+r_2)!}{
    \sum_{0\le k< m} (2k+1)\frac{(k+r_1)!(k+r_2)!}
    {(k-r_1)!(k-r_2)!}} \\
    &\quad= \sum_{0\le r_1,r_2< m}
    \frac{(-1)^{r_1+r_2}(1-a)^{r_1}(1-x)^{r_2}}
    {r_1!r_1!r_2!(m+r_2)!} \cdot
    {\frac{(m+r_1)!(m+r_2)!}
    {(r_1+r_2+1)(m-1-r_1)!(m-1-r_2)!}} \\
    &\quad= \sum_{0\le r_1< m} \frac{(-1)^{r_1}
    (m+r_1)!(1-a)^{r_1}}{r_1!r_1!(m-1-r_1)!}
    \sum_{0\le r_2< m}\frac{(-1)^{r_2}(1-x)^{r_2}}
    {r_2!(r_1+r_2+1)(m-1-r_2)!},
\end{align*}
where we used the same identity \eqref{inner_sum} as above.
Observe that
\begin{align*}
    &\sum_{0\le r_2< m}\frac{(-1)^{r_2}(1-x)^{r_2}}
    {r_2!(r_1+r_2+1)(m-1-r_2)!} \\
    &\qquad= \frac1{(m-1)!}\int_0^1 u^{r_1}
    \left(1-(1-x)u\right)^{m-1}\dd u \\
    &\qquad=\frac1{(m-1)!}\sum_{0\le r_2< m}
    \binom{m-1}{r_2} x^{r_2}
    \int_0^1 (1-u)^{m-1-r_2} u^{r_1+r_2} \dd u \\
    &\qquad= \frac1{(m-1)!}\sum_{0\le r_2< m}\binom{m-1}{r_2} 
    \frac{(m-1-r_2)!(r_1+r_2)!}{(m+r_1)!}\,x^{r_2} \\
    &\qquad= \sum_{0\le r_2< m} 
    \frac{(r_1+r_2)!}{r_2!(m+r_1)!}\,x^{r_2}.
\end{align*}
It follows that
\begin{equation}\label{double-sum-a}
\begin{aligned}
    &\sum_{0\le k< m} P_k(1-a) f_{m,k}(1-x) \\
    &\qquad= \frac1{(m-1)!}\sum_{0\le r_2< m} x^{r_2}
    \sum_{0\le r_1< m}\binom{r_1+r_2}{r_1}\binom{m-1}{r_1}
    (-1)^{r_1}(1-a)^{r_1}.
\end{aligned}
\end{equation}
In a similar way, 
\begin{equation}\label{sumPlfl}
\begin{aligned}
    &\sum_{0\le \ell< m} P_\ell(b) f_{m,\ell}(1-x) \\
    &\qquad= \frac1{(m-1)!}
    \sum_{0\le \ell_2< m} x^{\ell_2}
    \sum_{0\le \ell_1< m}
    \binom{\ell_1+\ell_2}{\ell_1}\binom{m-1}{\ell_1}
    (-1)^{\ell_1}b^{\ell_1}.
\end{aligned}
\end{equation}
Consequently, 
\begin{align*} 
    & \int_a^1\frac1{x^{m}}
    \sum_{0\le k,\ell<m} P_k(1-a)P_\ell(b)
    \phi_{m,k}(x)f_{m,\ell}(1-x)\dd x \\
    &\quad = \frac{(-1)^{m-1}}{(m-1)!^2}
    \int_a^1\frac{(x-a)^{m-1}}{x^{m}} 
    \sum_{0\le k,\ell< m} \binom{k+\ell}{\ell}
    \binom{m-1}{k}
    (-1)^{k}b^{k} x^{\ell} \dd x \\
    &\quad = \frac{(-1)^{m-1}}{(m-1)!^2}
    \int_a^1\frac{(x-a)^{m-1}}{x^{m}} 
    [z^0]\sum_{0\le k,\ell< m}
    z^{-\ell}(1-z)^{-k-1}\binom{m-1}{k}
    (-1)^{k}b^{k}  x^\ell  \dd x \\
    &\quad = \frac{(-1)^{m-1}}{(m-1)!^2} 
    [z^{m-1}] \frac{(1-b-z)^{m-1}}
    {(1-z)^{m}}\int_a^1\frac{(x-a)^{m-1}}{x-z} \dd x \\
    &\quad = \frac{1}{(m-1)!^2} [z^{m-1}] \frac{(1-b-z)^{m-1}}
    {(1-z)^{m}} \\
	&\qquad\qquad\times
	\sum_{0\le \ell < m}\binom{m-1}{\ell}
    (-1)^{\ell} (a-z)^{m-1-\ell} \int_a^1 (x-z)^{\ell-1} \dd x \\
    &\quad = \frac{1}{(m-1)!^2} [z^{m-1}] \frac{(1-b-z)^{m-1}}
    {(1-z)^{m}} \left( (a-z)^{m-1}\log\frac{1-z}{a-z} \right. \\
    &\qquad \left. +
    \sum_{1\le \ell < m}\binom{m-1}{\ell}
    \frac{(-1)^{\ell}}{\ell} \left((1-z)^\ell
    (a-z)^{m-1-\ell}-(a-z)^{m-1} \right)\right) \\
    &\quad = \frac{1}{(m-1)!^2} [z^{m-1}]
    \Biggl\{\frac{(1-b-z)^{m-1}(a-z)^{m-1}}
    {(1-z)^{m}}\left(\log\frac{1-z}{1-\frac{z}{a}} -\log a\right) 
    \\ &\hspace*{4cm} +
    \sum_{1\le \ell < m}\binom{m-1}{\ell}
    \frac{(-1)^{\ell}}{\ell} \cdot
    \frac{(1-b-z)^{m-1}(a-z)^{m-1-\ell}}
    {(1-z)^{m-\ell}} \\
    &\hspace*{4cm}- \frac{(1-b-z)^{m-1}(a-z)^{m-1}}
    {(1-z)^{m}}\sum_{1\le \ell < m}\binom{m-1}{\ell}
    \frac{(-1)^{\ell}}{\ell}
    \Biggr\}.
\end{align*}
Note that, by the change of variables $z \mapsto -au/(1-a-u)$, we
obtain
\begin{align*}
    &[z^{m-1}] \frac{(1-b-z)^{m-1}(a-z)^{m-1}}
    {(1-z)^{m}} \\
    &\qquad= (-1)^{m-1} [u^{m-1}]
    \frac{((1-a-b)(1-u)+ab)^{m-1}}
    {(1-u)^{m}} \\
    &\qquad= (-1)^{m-1} [u^{m-1}]\frac{((1-a-b)u+ab)^{m-1}}
    {(1-u)^{m}} \\
    &\qquad= (-1)^{m-1} \sum_{0\le \ell< m}
    \binom{m-1}{\ell}(ab)^\ell(1-a-b)^{m-1-\ell}
    [u^\ell](1-u)^{-m} \\
    &\qquad= (-1)^{m-1} \sum_{0\le \ell< m}\binom{m-1}{\ell}
    \binom{m-1+\ell}{\ell}(ab)^\ell(1-a-b)^{m-1-\ell}.
\end{align*}
Similarly, 
\begin{align*}
    &[z^{m-1}] \frac{(1-b-z)^{m-1}(a-z)^{m-1}}
    {(1-z)^{m}}\log\frac{1-z}{1-\frac za} \\
    &\quad= (-1)^{m} \sum_{0\le \ell< m}\binom{m-1}{\ell}
    \binom{m-1+\ell}{\ell}
    (ab)^\ell(1-a-b)^{m-1-\ell}\left(H_{m-1+\ell}-H_\ell\right).
\end{align*}
Also
\[
    \sum_{1\le \ell < m}\binom{m-1}{\ell}
    \frac{(-1)^{\ell}}{\ell} = -H_{m-1}.
\]
This proves \eqref{inta1}.
\end{proof}

We now examine the partial sum involving last integral in 
\eqref{Ikl}.
\[
    \sum_{0\le k,\ell<m} (-1)^{m+k} P_k(1-a)P_\ell(b)
    \int_{0}^1 f_{m,k}(1-x)f_{m,\ell}(x) \dd x.
\]
By \eqref{Pkx-closed-form} and \eqref{fmkx}, this is already a 
polynomial in $a$ of degree $m-1$ and in $b$ of degree $m-1$. We can 
derive a more precise but cumbersome expression for the coefficients.
\begin{lm}\label{int01}
\[
    \sum_{0\le k,\ell< m} (-1)^{m+k} P_k(1-a)P_\ell(b)
    \int_0^1 f_{m,k}(1-x)f_{m,\ell}(x) \dd x
    = \sum_{0\le k,\ell< m} d_{m,k,\ell} a^k b^\ell,
\]
where
\[
    d_{m,k,\ell} := (-1)^{m+k+\ell}\frac{
    (k+\ell)! (H_{m+\ell}-H_{\ell})+\displaystyle
    \sum_{\substack{0\le r< m \\ r\neq \ell}}
        \frac{(k+r)!}{\ell-r}\left(
        \frac{(m+\ell)!}{(m+r)!}- \frac{\ell!}{r!}
        \right)
    }{k!^2\ell!^2(m-1-k)!(m-1-\ell)!}.
\]
\end{lm}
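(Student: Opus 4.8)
The plan is to exploit the fact that the summand factors into a part depending only on $k$ and a part depending only on $\ell$, so that
\[
    \sum_{0\le k,\ell<m} (-1)^{m+k} P_k(1-a)P_\ell(b)
    \int_0^1 f_{m,k}(1-x)f_{m,\ell}(x) \dd x
    = \int_0^1 S_a(x)\,T_b(x)\dd x,
\]
where $S_a(x):=\sum_{0\le k<m}(-1)^{m+k}P_k(1-a)f_{m,k}(1-x)$ and $T_b(x):=\sum_{0\le\ell<m}P_\ell(b)f_{m,\ell}(x)$. First I would use the reflection identity $P_k(1-a)=(-1)^kP_k(a)$ to rewrite $S_a(x)=(-1)^m\sum_{0\le k<m}P_k(a)f_{m,k}(1-x)$, which is precisely the partial sum \eqref{double-sum-a} after the substitution $a\mapsto1-a$; likewise $T_b(x)$ is the partial sum \eqref{sumPlfl} after $x\mapsto1-x$. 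Both are already evaluated in the proof of Lemma~\ref{lmm:inta1}, so each factor becomes an explicit polynomial with no remaining sum over the Legendre index.

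Next I would extract the coefficient of $a^kb^\ell$. Reading off \eqref{double-sum-a} and \eqref{sumPlfl} gives
\[
    [a^k]S_a(x) = \frac{(-1)^{m+k}}{(m-1)!}\binom{m-1}{k}
    \sum_{0\le p<m}\binom{k+p}{k}x^p,
    \qquad
    [b^\ell]T_b(x) = \frac{(-1)^{\ell}}{(m-1)!}\binom{m-1}{\ell}
    \sum_{0\le q<m}\binom{\ell+q}{\ell}(1-x)^q.
\]
Multiplying these and integrating term by term reduces everything to the Beta integral $\int_0^1 x^p(1-x)^q\dd x=\frac{p!q!}{(p+q+1)!}$. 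After collecting the binomial factors (using $\binom{m-1}{k}/(m-1)!=1/(k!(m-1-k)!)$ and $\binom{k+p}{k}=\frac{(k+p)!}{k!p!}$) this yields
\[
    d_{m,k,\ell}
    = \frac{(-1)^{m+k+\ell}}{k!^2\ell!^2(m-1-k)!(m-1-\ell)!}\,
    \Sigma_{k,\ell},
    \qquad
    \Sigma_{k,\ell}:=\sum_{0\le p,q<m}\frac{(k+p)!(\ell+q)!}{(p+q+1)!}.
\]

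The substantive step is the closed-form evaluation of $\Sigma_{k,\ell}$, which I would carry out by summing over $q$ first for each fixed $p$. For $p\ne\ell$ the telescoping identity
\[
    \frac{(\ell+q)!}{(p+q+1)!}
    = \frac{1}{p-\ell}\left(\frac{(\ell+q)!}{(p+q)!}
    -\frac{(\ell+q+1)!}{(p+q+1)!}\right)
\]
collapses the inner sum to $\frac{1}{p-\ell}\bigl(\frac{\ell!}{p!}-\frac{(\ell+m)!}{(p+m)!}\bigr)$, while for $p=\ell$ the inner sum telescopes to the harmonic difference $\sum_{\ell<j\le\ell+m}\frac1j=H_{m+\ell}-H_\ell$. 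Recombining the two cases, multiplying by $(k+p)!$, and relabeling $p$ as $r$ reproduces exactly the numerator displayed in $d_{m,k,\ell}$, once one notes that $\frac{1}{p-\ell}\bigl(\frac{\ell!}{p!}-\frac{(\ell+m)!}{(p+m)!}\bigr)=\frac{1}{\ell-p}\bigl(\frac{(m+\ell)!}{(m+p)!}-\frac{\ell!}{p!}\bigr)$.

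I expect the telescoping evaluation of $\Sigma_{k,\ell}$ to be the main obstacle, and in particular the careful separation of the singular term $p=\ell$, where the denominator $p-\ell$ vanishes and is replaced by the harmonic contribution $H_{m+\ell}-H_\ell$. The rest is routine bookkeeping resting entirely on the two partial-sum identities \eqref{double-sum-a} and \eqref{sumPlfl} already established in the proof of Lemma~\ref{lmm:inta1}.
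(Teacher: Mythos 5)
Your argument is correct and follows essentially the same route as the paper: both reduce the left-hand side, via the reflection identity and the partial-sum expansions \eqref{double-sum-a} and \eqref{sumPlfl}, to a quadruple sum whose $x$-integral is a Beta integral, and then evaluate the resulting inner double sum $\sum_{p,q}\frac{(k+p)!(\ell+q)!}{(p+q+1)!}$ casewise in $p=\ell$ versus $p\neq\ell$. The only (minor) added value in your write-up is that you actually supply the telescoping proof of the casewise identity that the paper merely asserts.
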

\begin{proof}
We use the two expansions \eqref{double-sum-a} and 
\eqref{sumPlfl}, and obtain
\begin{align*}
    & \sum_{0\le k,\ell< m} (-1)^{m+k}P_k(1-a)P_\ell(b)
    f_{m,k}(1-x)f_{m,\ell}(x) \\
    &\quad  = (-1)^{m}\sum_{0\le k,\ell< m} P_k(a)P_\ell(b)
    f_{m,k}(1-x)f_{m,\ell}(x) \\
    &\quad  = \frac{(-1)^{m}}{(m-1)!^2}
    \sum_{\substack{0\le k_2,\ell_2<m \\
    0\le k_1,\ell_1<m}} 
    \binom{k_1+k_2}{k_1}\binom{\ell_1+\ell_2}{\ell_1}
    \binom{m-1}{k_1}\binom{m-1}{\ell_1}\\
    &\hspace*{5cm}\times(-1)^{k_1+\ell_1}a^{k_1} b^{\ell_1} 
    x^{k_2}(1-x)^{\ell_2} \\
    &\quad  = (-1)^{m}\hspace*{-.5cm}
    \sum_{0\le k_1,k_2,\ell_1,\ell_2<m} 
    \frac{(k_1+k_2)!(\ell_1+\ell_2)!
    (-1)^{k_1+\ell_1}a^{k_1} b^{\ell_1}}
    {k_1!k_1!k_2!\ell_1!\ell_1!\ell_2!(m—1-k_1)!(m-1-\ell_1)!}\,
    x^{k_2}(1-x)^{\ell_2}.
\end{align*}
Thus
\begin{align*}
    & \sum_{0\le k,\ell< m} (-1)^{m+k} P_k(1-a)P_\ell(b)
    \int_0^1 f_{m,k}(1-x)f_{m,\ell}(x) \dd x \\
    &\quad  = (-1)^{m}
    \sum_{0\le k_1,k_2,\ell_1,\ell_2<m} 
    \frac{(k_1+k_2)!(\ell_1+\ell_2)!(-1)^{k_1+\ell_1}
    a^{k_1} b^{\ell_1}}
    {k_1!k_1!\ell_1!\ell_1!(m-1-k_1)!(m-1-\ell_1)!(k_2+\ell_2+1)!} \\
    &\quad = (-1)^{m}\sum_{0\le k_1,\ell_1<m} 
    \frac{(-1)^{k_1+\ell_1}a^{k_1} b^{\ell_1}}
    {k_1!k_1!\ell_1!\ell_1!(m-1-k_1)!(m-1-\ell_1)!}\\
	&\hspace*{3cm}\times
    \sum_{0\le k_2,\ell_2<m} \frac{(k_1+k_2)!(\ell_1+\ell_2)!}
    {(k_2+\ell_2+1)!}.
\end{align*}
Now
\[
    \sum_{0\le \ell_2<m} \frac{(k_1+k_2)!(\ell_1+\ell_2)!}
    {(k_2+\ell_2+1)!} = 
    \begin{cases}
        \displaystyle \frac{(k_1+k_2)!}{\ell_1-k_2}\left(
        \frac{(m+\ell_1)!}{(m+k_2)!}- \frac{\ell_1!}{k_2!}
        \right), & \mbox{if } k_2\ne \ell_1, \\
        (k_1+k_2)!
        (H_{m+\ell_1}-H_{\ell_1}), &\mbox{if }k_2=\ell_1.
    \end{cases}
\]
This proves \eqref{int01}.
\end{proof}	
\end{appendix}

\begin{acks}[Acknowledgments]
The authors thank Yoichi Nishiyama for his helpful comments, and
Chihiro Hirotsu for bringing the authors' attention to change-point
analysis.
\end{acks}

\bibliographystyle{imsart-nameyear}
\bibliography{iem-ggof-2024}
\end{document}